\documentclass[11pt]{article}
\usepackage[utf8x]{inputenc}
\usepackage[letterpaper,top=1in,bottom=1in,left=1in,right=1in]{geometry}
\bibliographystyle{plain}

\newcommand{\Spacapan}{Špacapan{}}

\usepackage{comment}

\usepackage{amsmath}
\usepackage{amssymb}
\usepackage{amsthm}
\usepackage{hyperref}
\usepackage{fancyhdr}
\usepackage{booktabs}
\newcommand{\ra}[1]{\renewcommand{\arraystretch}{#1}}
\usepackage{graphicx}
\usepackage{tikz}
\usepackage{titlesec}
\usepackage{enumitem}


\usepackage{authblk}

\newcommand{\pa}[1]{\left(#1\right)}

\newcommand{\cO}{\mathcal{O}}
\newcommand{\NN}{\mathbb{N}}

\DeclareMathOperator{\Interior}{Int}

\iffalse
    \newcommand{\togreen}[2][]{}
    \newcommand{\toyellow}[2][]{}
    \newcommand{\tored}[2][]{}
    \newcommand{\toimp}[2][]{}
    \newcommand{\changed}[2][]{}
\else
    \usepackage[disable]{todonotes}
    \tikzset{notestyleraw/.append style={rectangle}}
    \setlength{\marginparwidth}{2.3cm}

    \newcommand{\togreen}[2][]{\todo[color=green!40, #1]{#2}}
    \newcommand{\toyellow}[2][]{\todo[color=yellow!40, #1]{#2}}
    \newcommand{\tored}[2][]{\todo[color=red!40, #1]{#2}}
    \newcommand{\toimp}[2][]{\todo[color=blue!20, #1]{#2}}
    \newcommand{\changed}[2][]{\todo[color=blue!20, #1]{#2}}
\fi


\newenvironment{case}
  {%
  \begin{description}}
  {\end{description}}

\newcommand{\ar}[1]{\renewcommand{\arraystretch}{#1}}
\newcommand{\gaphack}{\rule{0pt}{1.5\normalbaselineskip}}
\newcommand{\gaphacksingle}{\rule{0pt}{1.0\normalbaselineskip}}

\newtheorem{claim}{Claim}
\newtheorem{lemma}{Lemma}
\newtheorem{definition}{Definition}

\newtheorem{proposition}{Proposition}
\newtheorem{theorem}{Theorem}
\newtheorem{remark}{Remark}
\newtheorem{corollary}{Corollary}

\usetikzlibrary{backgrounds}
\tikzset{every node/.style={draw,circle,fill=black,minimum size=4pt,inner sep=0pt}}
\tikzset{based/.style={ultra thick,blue}}
\newcommand{\ppdraw}[1]{%
    \def\points{{#1}}
    \foreach \pa [count=\c,remember=\pa as \pb] in \points {
        \pgfextra{\ifnum\c=0\relax\else \draw (\pa) -- (\pb);\fi}
    }
}

\newcommand{\beforeafter}[3][1.0]{%
    \begin{center}%
        \begin{tikzpicture}[baseline=(current bounding box.center), scale=#1]
            #2
        \end{tikzpicture}%
        \hspace{0.5cm} $\Rightarrow$ \hspace{0.5cm}%
        \begin{tikzpicture}[baseline=(current bounding box.center), scale=#1] 
            #3
        \end{tikzpicture}%
    \end{center}%
    }

\begin{document}
\title{Triangulations Admit Dominating Sets of Size $2n/7$.}
\author[1]{Aleksander B. G. Christiansen\thanks{Research supported by VILLUM Foundation grant 37507 ``Efficient Recomputations for Changeful Problems''}}
\author[1]{Eva Rotenberg$^{\ast \dagger}$}
\author[1]{Daniel Rutschmann\thanks{Research supported by Eva Rotenberg's Carlsberg Foundation Young Researcher Fellowship CF21-0302 - ``Graph Algorithms with Geometric Applications''}}
\affil[1]{DTU Compute, Technical University of Denmark}

\thispagestyle{empty}

\maketitle

\begin{abstract}
We show that every planar triangulation on $n> 10$ vertices has a dominating set of size $2n / 7 = n/3.5$. This approaches the $n/4$ bound conjectured by Matheson and Tarjan~\cite{matheson_dominating_1996}
, and improves significantly on the previous best bound of $17 n/53 \approx n / 3.117$ by Špacapan~\cite{spacapan_domination_2020}.

From our proof it follows that every $3$-connected $n$-vertex near-triangulation (except for $3$ sporadic examples) has a dominating set of size $n/3.5$. On the other hand, for $3$-connected near-triangulations, we show a lower bound of $3(n-1)/11 \approx n/3.666$, demonstrating that the conjecture by Matheson and Tarjan~\cite{matheson_dominating_1996} cannot be strengthened to $3$-connected near-triangulations.

Our proof uses a penalty function that, aside from the number of vertices, penalises vertices of degree $2$ and specific constellations of neighbours of degree $3$ along the boundary of the outer face. To facilitate induction, we not only consider near-triangulations, but a wider class of graphs (\emph{skeletal} triangulations), allowing us to delete vertices more freely. Our main technical contribution is a set of \emph{attachments}, that are small graphs we inductively attach to our graph, in order both to remember whether existing vertices are already dominated, and that serve as a tool in a divide and conquer approach. Along with a well-chosen potential function, we thus both remove and add vertices during the induction proof.

We complement our proof with a constructive algorithm that returns a dominating set of size $\le 2n/7$. Our algorithm has a quadratic running time.
\end{abstract}

\thispagestyle{empty}

\newpage
\setcounter{page}{1}

\section{Introduction}

A \emph{dominating set} in an $n$ vertex graph $G$ is a subset $S$ of the vertices of $G$ such that every vertex in $G$ either is in $S$ or neighbours a vertex in $S$. 
When studying dominating sets, one is typically interested in making them as small as possible.
The minimum size of a dominating set in $G$ is denoted by $\gamma(G)$. 
Instead of studying the minimum dominating set for a particular graph, Matheson and Tarjan~\cite{matheson_dominating_1996} originally asked if one can determine an upper bound on $\gamma$ for classes of graphs. 
In particular, they studied two classes of graphs: plane \emph{triangulations} and plane \emph{near-triangulations}. Here a plane graph refers to a planar graph, i.e.\ a graph that may be embedded in the plane in such a way that no two edges cross, together with such a crossing-free embedding in the plane. 
A plane graph is \emph{internally triangulated} if every bounded face is bounded by a triangle. 
A \emph{near-triangulation} is a 2-connected internally triangulated plane graph, and a \emph{triangulation} is a near-triangulation with exactly three boundary vertices. 
Matheson and Tarjan~\cite{matheson_dominating_1996} showed that for any plane near-triangulation $G$, it holds that $ \gamma(G) \le \frac{n}{3} $. 
They also showed that this result is tight in the sense that there exists an infinite family of plane near-triangulation such that for every graph in the infinite family, the minimum dominating set has size exactly a third of the number of vertices in the graph, i.e.\ $ \gamma(G) = \frac{n}{3} $. 
However, for triangulations they were only able to prove an upper bound of  $\gamma(G) \le \frac{n}{3}$ and provide an infinite family where every graph in the family required $\frac{n}{4}$ points to dominate.
Aside from some small sporadic examples, they conjectured that asymptotically $\gamma(G) \le \lfloor n / 4 \rfloor$ when $G$ is a sufficiently large triangulation. 
This problem has proved difficult to approach, and for over 20 years, there were no improvements that applied to all triangulations. 
Recently, \Spacapan~\cite{spacapan_domination_2020} gave the first improved bound for general triangulations, when he showed that, in every large enough triangulation,
$\gamma(G) \le \lfloor 17n/53\rfloor \approx n / 3.117$.

In broad terms, the problem has been approached in two different ways. Either 1) papers have tried to find combinatorial objects -- like a colouring or a Hamiltonian cycle -- with certain properties that allows one to extract a small dominating set, or 2) one has attempted some sort of inductive or reduction based approach in order to try and iteratively reduce the problem complexity until it can be handled directly. 
The problem is elusive, as the above approaches has to deal with two obstructions: Firstly, the bound does not hold for small values of $n$ as there are small, sporadic counter examples, which means that one has to be careful when reducing the problem. Secondly, it seems difficult to pin-point enough structure in general triangulations to guarantee a combinatorial object with strong enough properties. 
This has motivated researchers to either restrict the problem to sub-classes of (near-)triangulations containing more structure like for instance triangulations with maximum degree $6$ \cite{KingPelsmajer10,LiuPelsmajer11}, Hamiltonian triangulations \cite{plummer_dominating_2020} or maximal outerplanar graphs \cite{campos_dominating_2013, tokunaga_dominating_2013}, or to consider broader classes of graphs in which it is easier to reduce the problem to one of smaller complexity \cite{spacapan_domination_2020}. 
See Table~\ref{tbl:upper_bounds} for an overview of known upper bounds. 

\begin{table*}
	\centering
\begin{tabular}{@{}llll@{}}
\toprule
Reference                                                                & Class of graphs                                                                  & Size of dom.\ set                                        & Comment                                                                                                                                         \\ \midrule
Matheson \& Tarjan~\cite{matheson_dominating_1996}                                                       & Near-triangulations                                                              & $\frac{n}{3}$                                                         &                                                                                                                                                 \\
	\begin{tabular}[c]{@{}l@{}}Campos \& Wakabayashi~\cite{campos_dominating_2013}\\ Tokunaga~\cite{tokunaga_dominating_2013}\end{tabular} & \begin{tabular}[c]{@{}l@{}}Maximal outer-\\ planar graphs\end{tabular}           & $\lceil \frac{n+t}{4} \rceil$                                         & \gaphack\begin{tabular}[c]{@{}l@{}}Here $t$ is the no.\textbackslash of\\ degree-2 vertices.\end{tabular}                                               \\
King \& Pelsmajer~\cite{KingPelsmajer10}                                                        & \gaphack\begin{tabular}[c]{@{}c@{}}Plane triangulations\\ with max-degree 6\end{tabular} & $\frac{n}{4}$                                                         &                                                                                                                                                 \\
Liu \& Pelsmajer~\cite{LiuPelsmajer11}                                                         & \begin{tabular}[c]{@{}c@{}}Plane triangulations\\ with max-degree 6\end{tabular} & $\frac{n}{6} + c$                                                     & For some constant $c$.                                                                                                                          \\
Plummer, Ye \& Zha~\cite{plummer_dominating_2016, plummer_dominating_2020}                                                       & \ar{1.0}\gaphack\begin{tabular}[c]{@{}l@{}}Hamiltonian trian-\\ gulations\end{tabular}           & $\frac{5n}{16}$                                                        &             For $n \geq 23$.                                                                                                                                    \\
\Spacapan~\cite{spacapan_domination_2020}                                                                 & Triangulations                                                                   & $\frac{17n}{53}$                                                      &                                                                                                                                                 \\ \midrule
New                                                                      & Triangulations                                                                   & $\frac{2n}{7}$                                                        &                                                                                                                                                 \\
New                                                                      &\gaphack\begin{tabular}[c]{@{}l@{}}3-connected near-\\ triangulations\end{tabular}       & $\frac{2n}{7}$                                                        &                                                                                                                                                 \\ \bottomrule
\end{tabular}
\caption{\label{tbl:upper_bounds} Upper bounds for the size of a minimum dominating set for various graph classes.}
\end{table*}

More specifically, in the first line of research: King and Pelsmajer~\cite{KingPelsmajer10} confirmed the conjecture for graphs of maximum degree 6, and Liu and Pelsmajer~\cite{LiuPelsmajer11} strengthened this result to show that in fact for these graphs $\gamma(G) \leq  \frac{n}{6} + c$ for some constant $c$. Plummer, Ye and Zha~\cite{plummer_dominating_2016} studied first $4$-connected plane triangulations, which in particular are Hamiltonian~\cite{thomassen1983theorem} and have minimum degree at least $4$, and showed the existence of a dominating set of size $\leq \max \{\lceil \frac{2n}{7}\rceil, \lfloor\frac{5n}{16}\rfloor\}$. Then in~\cite{plummer_dominating_2020}, they showed that for Hamiltonian triangulations of size at least $23$ it holds that $\gamma(G) \leq \frac{5n}{16}$. 
Finally, in maximal outerplanar graphs, even more fine-grained results are known: Campos and Wakabayashi \cite{campos_dominating_2013} showed $\gamma(G) \le \lfloor (n+t)/4 \rfloor$ where $t$ is the number of degree-2 vertices. In the three last results, a good understanding of the obstructions to achieving an $n/4$ bound, such as degree-2 vertices, is key. Tokunaga \cite{tokunaga_dominating_2013} gave an elegant proof of this bound via a coloring method.

On the other hand, \Spacapan~\cite{spacapan_domination_2020} considered a more general class of graphs that he denoted \emph{weak near-triangulations}. 
He showed how to reduce weak near-triangulations while staying inside the graph class, until one eventually ends up with an irreducible weak near-triangulation. 
These irreducible weak near-triangulations contained enough structure for \Spacapan{} to subsequently construct a small dominating set, if one begins with a triangulation. 
However, in \Spacapan{}'s framework one has to argue this in a manual fashion separately from the arguments that handle the reductions. 

In our approach, we extend the reduction step to make the construction of the small dominating set automatic.
Similarly to \Spacapan, we consider a more general class of graphs, however, in our case, we consider what we call \emph{skeletal triangulations}. 
In order to avoid having to extract the dominating set manually, we employ a penalty function that in a more fine-grained manner accounts for the cost of performing certain reductions. 
This penalty function not only penalises degree-2 vertices (more specifically degree-2 cut vertices and `ears'), but also a new type of attachment that we call \emph{facial bad 5-wheels}. 
To illustrate the importance of penalising these 5-wheels, we show an infinite family of near-triangulations with no degree-2 vertices in which the smallest dominating set has size $\frac{3n}{10}$.
Furthermore, we show that our analysis using this penalty function is tight in the sense that there exists an infinite family of near-triangulations which contain none of the penalised attachments
and admit no dominating sets with fewer than $\frac{2n}{7}$ vertices.
Finally,  we show that only penalising attachments arising from a 2-cut is not sufficient to achieve an $\frac{n}{4}$ bound for non-penalised near-triangulations, as we provide an infinite family of 3-connected near-triangulations with $\gamma(G) = 3n/11 - O(1)$. Interestingly, this indicates a big difference between what is conjectured for triangulations and what holds for 3-connected near-triangulations. 
In Table~\ref{tbl:lower_bounds} we give an overview over known lower bounds and the new lower bounds we introduce in this paper.
We introduce the lower bound constructions in Section~\ref{sec:lower_bounds}.
\begin{table*}
\begin{center}
\begin{tabular}{@{}lll@{}}
\toprule
Reference                                & Class of graphs                                                                                           & Size of dom.\ set \\ \midrule
Matheson \& Tarjan~\cite{matheson_dominating_1996}                       & Near-triangulations.                                                                                       & $\frac{n}{3}$                  \\
Matheson \& Tarjan~\cite{matheson_dominating_1996} & \gaphacksingle Triangulations.                                                                                            & $\frac{n}{4}$                  \\ \midrule
New                                      & \begin{tabular}[c]{@{}l@{}}Near-triangulations\\  with minimum degree $3$.\end{tabular}                                & $\frac{3n}{10}$                \\
New                                      & \gaphack\begin{tabular}[c]{@{}l@{}}3-connected near-\\ triangulations.\end{tabular}                                & $\frac{3n}{11}$                \\
New                                      & \gaphack\begin{tabular}[c]{@{}l@{}}Near-triangulations with neither\\ bad 5-wheels nor degree 2 vertices.\end{tabular} & $\frac{2n}{7}$                 \\
New                                      & \gaphacksingle Eulerian triangulations. & $\frac{n}{4}$                  \\ \bottomrule
\end{tabular}
\caption{\label{tbl:lower_bounds} Lower bounds for the size of a minimum dominating set for various graph classes.}
\end{center}
\vspace{-1em}
\end{table*}

Since we conduct our inductive argument over a broader class of graphs, we can reduce very aggressively while staying in the same class of graphs, but we now have the added difficulty of also carrying the penalty function along, as we reduce. 
In order to be able to do so, we apply two techniques. 
Firstly, we show how to encode the fact that some vertices might already be dominated in our candidate dominated set, while staying in the same graph class. 
To do so, we fuse small attachments to the graph and thus increase the number of vertices and create small cuts. 
Secondly, in order to be able to handle this broader class of graphs, we study small cuts of size $\leq 2$, and show that we may replace one side of the cut by one of a finite list of examples that `acts as' the cut that was just replaced on the rest of the graph. 
This allows us to assume that $G$ is ``almost'' 3-connected, which makes a deletion-based induction proof feasible.
We elaborate further on this in the proof-overview section.  In the next section, we sum up our contributions.
 
\subsection{Our contributions}
The following generalization of near-triangulations allows for cut vertices,
which gives some added flexibility when deleting vertices.

\begin{definition}[Skeletal triangulation]
    A \emph{skeletal triangulation} is a connected internally triangulated planar graph in which every vertex has degree $\geq 2$.
\end{definition}
Every near-triangulation is a skeletal triangulation. In fact, every connected weak-near triangulation
\cite{spacapan_domination_2020} is a skeletal triangulation, but not vice-versa.

\begin{definition}[Problematic configurations]
    Let $G$ be a skeletal triangulation. An \emph{ear} in $G$ is a facial triangle
    with at least one vertex of degree $2$ in $G$.
    A \emph{bad 5-wheel} is a subgraph $H \subseteq G$ isomorphic to the 5-wheel such that the
    outer 4-cycle in $H$ contains at least two consecutive $G$-boundary vertices of degree 3, called a \emph{3-pair}. (See Figure~\ref{fig:introdegtwoexample}.)
\end{definition}

\paragraph{Upper bounds.} Our main result is the following
\begin{theorem} \label{theo:main}
    Let $G$ be a skeletal triangulation on $n > 10$ vertices.
    Let $e$, $f$ and $t$ be the number of ears, bad 5-wheels and degree-2 cut vertices in $G$, respectively.
    Then, $\gamma(G) \le \lfloor \frac{n + e/2 + f/2 + t/2}{3.5}\rfloor$.
\end{theorem}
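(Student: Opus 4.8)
The plan is to prove the bound $\gamma(G)\le\lfloor\mu(G)/3.5\rfloor$, where $\mu(G):=n+e/2+f/2+t/2$, by strong induction on $n$. The guiding principle is amortised: committing one vertex to the dominating set should ``pay for'' a decrease of $3.5$ in the potential $\mu$. Concretely, if from $G$ we can pass to a strictly smaller skeletal triangulation $G'$ with $\gamma(G)\le 1+\gamma(G')$ and $\mu(G')\le\mu(G)-3.5$, then by induction $\gamma(G)\le 1+\lfloor(\mu(G)-3.5)/3.5\rfloor=\lfloor\mu(G)/3.5\rfloor$; and if we can split $G$ into two strictly smaller skeletal triangulations $G_1,G_2$ with $\gamma(G)\le\gamma(G_1)+\gamma(G_2)$ and $\mu(G_1)+\mu(G_2)\le\mu(G)$, then $\lfloor\mu(G_1)/3.5\rfloor+\lfloor\mu(G_2)/3.5\rfloor\le\lfloor\mu(G)/3.5\rfloor$ closes the case. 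The finitely many skeletal triangulations too small for any reduction to apply are checked directly (the sporadic exceptions among them, such as the octahedron, being exactly why the hypothesis $n>10$ cannot be dropped).

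First I would dispose of low connectivity. If $G$ has a cut vertex or a separating pair $\{a,b\}$, split $G$ along it into strictly smaller subgraphs $G_1,G_2$ meeting only in the separator, and glue to each piece one member of a finite catalogue of small \emph{attachments} along the separator. An attachment has two jobs: to restore the property of being a skeletal triangulation (e.g.\ by supplying the edge $ab$ when $ab\notin E(G)$, or by filling a degenerate face), and to record which separator vertices are already dominated ``from the other side'', so that a dominating set of the augmented $G_1$ and one of the augmented $G_2$ fuse into a dominating set of $G$. The delicate part is the bookkeeping $\mu(G_1)+\mu(G_2)\le\mu(G)$: the few extra vertices of the attachments, together with the degree-$2$ cut vertices, ears, and bad $5$-wheels that a split may newly expose near the separator, must be covered by the penalty those same configurations carry in $\mu$. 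I would enumerate the separator cases --- by whether $ab\in E(G)$, by the degrees of $a$ and $b$ in each side, and by which sides are trivial --- and verify the inequality in each; this is precisely what dictates the coefficients $1/2$ in front of $e,f,t$.

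After this we may assume $G$ is essentially $3$-connected, i.e.\ a near-triangulation with no separator of size $\le 2$ having two nontrivial sides. Now I would run a discharging argument based on Euler's formula to locate a bounded, reducible local configuration: an ear, a bad $5$-wheel, a degree-$3$ vertex (in particular a cluster of mutually adjacent degree-$3$ vertices), or a low-degree vertex with low-degree neighbours. In each case I would put a suitable vertex into the dominating set --- the apex of the ear, the hub of the $5$-wheel, a vertex adjacent to the degree-$3$ cluster --- delete its (now dominated) closed neighbourhood $N[v]$, re-triangulate the resulting hole and re-attach the finite gadgets along its boundary to remember which survivors are now dominated, and recurse on the residual skeletal triangulation. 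The crucial estimate is that $\mu$ drops by at least $3.5$. A ``generic'' chosen vertex of degree $\ge 5$ already removes $\ge 6$ vertices, far more than enough; the only dangerous cases are degree-$2$ and degree-$3$ vertices, which remove too few --- but those are exactly the vertices that carry an extra $1/2$ in $\mu$ as an ear or as part of a $3$-pair or bad $5$-wheel, and the refunded penalty supplies the missing slack. One also checks that the re-triangulation and re-attachment do not manufacture more penalty than the move destroys.

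I expect the main difficulty to be twofold and intertwined. (i) The finite catalogue of attachments must be \emph{faithful} (dominating sets of the pieces really do recombine, with no hidden cost from dominating the attachment vertices themselves) \emph{and} closed under every splitting and consuming operation, which forces its design to anticipate all the reductions at once. (ii) The potential accounting must be exactly tight: each reduction has to shed at least $3.5$ units of $\mu$ while never creating more penalty than it destroys, so the discharging rules of the $3$-connected case and the separator case analysis must be engineered hand in hand with the choice of penalties $e/2,f/2,t/2$. The fact that an ear, a $3$-pair, and a bad $5$-wheel are each worth precisely $1/2$ --- no more, no less --- is the arithmetic heart of why the constant is $2n/7$ rather than something weaker, and getting the discharging to always surface one of the four configurations above is where I expect most of the work to lie.
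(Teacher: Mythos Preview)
Your high-level architecture matches the paper's: induction, dispose of small cuts via attachments that remember domination status, then handle the essentially $3$-connected case by local reductions. But two of your concrete mechanisms are exactly the ones the paper identifies as \emph{not working}, and the paper's contribution is the machinery that replaces them.

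First, your reduction step ``put $v$ into the dominating set, delete $N[v]$, re-triangulate, attach gadgets'' is the natural first attempt, and your claim that ``a generic vertex of degree $\ge 5$ removes $\ge 6$ vertices, far more than enough'' is where it breaks. Deleting $N[v]$ exposes every former second-neighbour of $v$ on the new boundary, and each of those can become an ear tip, part of a new bad $5$-wheel, or a degree-$2$ cut vertex; the number of such new penalties is governed by $\deg(v)$, not bounded absolutely, so the $+1/2$'s can eat all of your slack and more. Worse, $G-N[v]$ need not be connected or have minimum degree $2$, so it may fall out of the class entirely. The paper's replacement for this step (Section~\ref{sect:threecontricks}) is deliberately asymmetric: rather than deleting $v$, one \emph{fuses a small AB} to $v$ (this forces $v$ into every neat dominating set at a cost of $+2.5$ to $\Phi$, which is morally ``$-1$'' after accounting for the extra dominating vertex), and rather than deleting all of $N(v)$, one fuses a \emph{small LR} to selected dominated neighbours (cost $+3.5$ each, but they are now cut vertices and hence immune to new penalties). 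Only a handful of vertices are actually deleted, chosen so that the new boundary is controlled. This is why the $3$-connected case in the paper is a long explicit case analysis on \emph{boundary degree patterns} (Sections~\ref{sect:cutable}--end), not a discharging argument: the structure comes from asking ``which boundary edge can I delete without creating a penalty?'', and if none exists, the boundary degree sequence is forced into one of eight short patterns.

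Second, induction on $n$ alone is not enough. Several of the paper's reductions delete a single boundary \emph{edge} (same $n$, one fewer interior vertex), or replace one side of a chord by a smaller attachment of the same acts-as type (same interior vertices, smaller $\Phi$), or fuse gadgets that \emph{increase} $n$ while decreasing the number of interior vertices. The induction is on a six-level lexicographic order (interior vertices, bridges, $\Phi$, blocks, $n$, degree-$2$ vertices), and getting this order right so that every reduction strictly decreases it is part of the work. Relatedly, your cut-splitting bookkeeping ``$\mu(G_1)+\mu(G_2)\le\mu(G)$'' is too coarse: the paper instead classifies each side by its \emph{acts-as type} (AB/LR/Nope for $1$-cuts, eleven types for $2$-cuts), proves a type-dependent lower bound $\phi\ge 3.5\gamma+c$ for the rooted potential, and uses this to justify \emph{replacing} one side by the unique minimal attachment of the same type. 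This replacement scheme, not a direct additive inequality, is what makes the divide-and-conquer go through.
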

\begin{corollary}\label{coro:main}
    Let $G$ be a triangulation or 3-connected near-triangulation on $n > 10$ vertices.
    Then, $\gamma(G) \le \lfloor \frac{n}{3.5} \rfloor$.
\end{corollary}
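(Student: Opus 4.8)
The plan is to derive Corollary~\ref{coro:main} directly from Theorem~\ref{theo:main} by checking that for triangulations and $3$-connected near-triangulations the penalty terms $e$, $f$ and $t$ all vanish. First I would observe that both classes of graphs are skeletal triangulations (a triangulation is a near-triangulation, hence $2$-connected and internally triangulated with minimum degree $\ge 2$; a $3$-connected near-triangulation is likewise internally triangulated with minimum degree $\ge 3 \ge 2$), so Theorem~\ref{theo:main} applies to any $G$ in either class on $n > 10$ vertices. It then suffices to show $e = f = t = 0$, since then $\gamma(G) \le \lfloor (n + 0 + 0 + 0)/3.5 \rfloor = \lfloor n/3.5 \rfloor$.

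Next I would knock out the terms one at a time. The count $t$ of degree-$2$ cut vertices is zero because both classes are $2$-connected (triangulations are near-triangulations, which are $2$-connected by definition; $3$-connected graphs are certainly $2$-connected), and a $2$-connected graph on more than two vertices has no cut vertices at all, in particular none of degree $2$. For $e$: an ear is a facial triangle containing a vertex of degree $2$. In a triangulation all but the three outer vertices have degree $\ge 3$, and one must check the three boundary vertices of a triangulation on $n > 3$ vertices also have degree $\ge 3$ — indeed in a triangulation every vertex lies on at least two bounded faces unless $n \le 3$, so for $n > 10$ there are no degree-$2$ vertices and hence no ears. In a $3$-connected near-triangulation every vertex has degree $\ge 3$ by $3$-connectivity, so again $e = 0$. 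For $f$: a bad $5$-wheel requires a $3$-pair, i.e.\ two consecutive boundary vertices of degree $3$ lying on the outer $4$-cycle of the wheel. A triangulation has only three boundary vertices, and as just argued each has degree $\ge 3$; but more to the point I need to rule out the whole configuration — here I would argue that if two adjacent boundary vertices $u,v$ both had degree exactly $3$ in a triangulation, then since the triangulation on $n > 10$ vertices is $3$-connected one can count that $u$ and $v$ together with their neighbours span the whole graph, forcing $n$ small, a contradiction; alternatively, a $3$-connected near-triangulation on $n > 10$ vertices cannot contain a bad $5$-wheel because its presence would exhibit a structure incompatible with $3$-connectivity on that many vertices, which I would verify by a short local argument. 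Either way $f = 0$.

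The main obstacle I anticipate is not the structure of the argument but pinning down the small-$n$ boundary behaviour precisely: the bound $n > 10$ in Theorem~\ref{theo:main} is exactly what lets me assert that triangulations have no degree-$2$ vertices and no bad $5$-wheels, so I must make sure the elimination of $e$ and $f$ genuinely uses $n > 10$ and does not secretly require a larger threshold or leave a sporadic exception uncovered (the abstract hints at $3$ sporadic near-triangulations, so the $3$-connected case in particular needs care). Concretely, the delicate point is verifying that a bad $5$-wheel cannot sit inside a $3$-connected near-triangulation on more than $10$ vertices: a $5$-wheel has $6$ vertices, its hub has degree $5$ inside the wheel, and the two degree-$3$ boundary vertices of the $3$-pair have all their neighbours inside the wheel, so the wheel is ``sealed off'' except through the remaining two rim vertices — I would check that this either disconnects $G$ by a $2$-cut (contradicting $3$-connectivity) or forces $G$ to equal the wheel itself ($n = 6 \le 10$), completing the proof. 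Once these finiteness checks are in place, the corollary follows immediately by substituting $e = f = t = 0$ into Theorem~\ref{theo:main}.
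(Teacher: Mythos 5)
Your reduction of $t$ and $e$ to $0$ is fine and matches the paper, but the claim that $f=0$ is where the argument breaks, and it breaks because the claim is actually false for $3$-connected near-triangulations. Your sketch says the bad $5$-wheel is ``sealed off except through the remaining two rim vertices,'' but you have forgotten the hub: the two degree-$3$ vertices of the $3$-pair have all their neighbours inside the wheel, yet the wheel meets the rest of $G$ through the two other rim vertices \emph{and} the hub --- a $3$-cut, which is perfectly compatible with $3$-connectivity. Concretely, take any triangulation $T$ on $m\ge 4$ vertices with outer triangle $u,x,w$, and add two new vertices $v_1\sim u,x,v_2$ and $v_2\sim v_1,x,w$ in the outer face. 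The result is a $3$-connected near-triangulation on $m+2$ vertices (arbitrarily large) whose outer face is the $4$-cycle $u,v_1,v_2,w$ and which contains the bad $5$-wheel with rim $u,v_1,v_2,w$ and hub $x$. So $f$ can equal $1$, and this cannot be waved away through the floor: for instance at $n=17$ one has $\lfloor (n+\tfrac12)/3.5\rfloor = 5 > 4 = \lfloor n/3.5\rfloor$, so Theorem~\ref{theo:main} applied with $f=1$ does not yield the corollary.

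The paper closes exactly this gap. It first shows that a bad $5$-wheel in a $3$-connected near-triangulation forces $G$ to have exactly four boundary vertices (each degree-$3$ vertex of the $3$-pair exposes its third neighbour on the boundary, and $3$-connectivity forbids the chord that more boundary vertices would create), hence there is at most one bad $5$-wheel; it then deletes the boundary edge between the two boundary vertices of degree $\ge 4$, which destroys the $5$-wheel without creating new problematic configurations, and applies Theorem~\ref{theo:main} to the resulting spanning subgraph with $e=f=t=0$. For triangulations your conclusion does hold (the four distinct rim vertices of a bad $5$-wheel would all have to be boundary vertices, but a triangulation has only three), though this is again the paper's four-boundary-vertex argument rather than the $2$-cut argument you propose. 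To repair your proof you would need to either reproduce this edge-deletion step or otherwise account for the case $f=1$.
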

\begin{proof}
    As $G$ is 3-connected, there are no ears or cut vertices and at most one bad 5-wheel.
    If there is a bad 5-wheel, then $G$ has exactly four boundary vertices,
    two of which have degree at least four. Deleting the boundary edge between
    the latter destroys the 5-wheel without creating any new problematic configurations.
    Finally, Theorem~\ref{theo:main} gives the result with $e = f = t = 0$.
\end{proof}

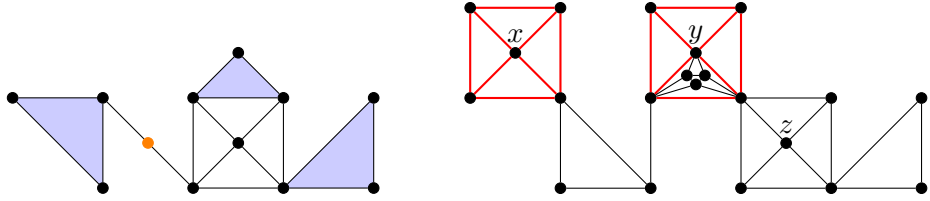
\begin{figure}
    \begin{center} \begin{tikzpicture}[scale=0.6]
        \node (A) at (0, 0) {};
        \node (B) at (2, 0) {};
        \node (C) at (2, 2) {};
        \node (D) at (0, 2) {};
        \node (E) at (1, 1) {};
        \node (T) at (1, 3) {};
        \node (L) at (-1, 1) [orange] {};
        \node (M) at (-2, 2) {};
        \node (N) at (-4, 2) {};
        \node (O) at (-2, 0) {};
        \node (R) at (4, 0) {};
        \node (S) at (4, 2) {};

        \draw (A) -- (B) -- (C) -- (D) -- (A);
        \draw (A) -- (E) -- (C) (B) -- (E) -- (D);
        \draw (C) -- (T) -- (D);
        \draw (A) -- (L) -- (M);
        \draw (M) -- (N) -- (O) -- (M);
        \draw (B) -- (R) -- (S) -- (B);
        \begin{scope}[on background layer]
            \path [fill=blue, opacity=0.2] (M.center) to (N.center) to (O.center) to cycle;
            \path [fill=blue, opacity=0.2] (C.center) to (T.center) to (D.center) to cycle;
            \path [fill=blue, opacity=0.2] (B.center) to (R.center) to (S.center) to cycle;
        \end{scope}
    \end{tikzpicture}\hspace{1cm}
    \begin{tikzpicture}[scale=0.6]
        \node (A) at (0, 0) {};
        \node (B) at (2, 0) {};
        \node (C) at (2, 2) {};
        \node (D) at (0, 2) {};
        \node (E) at (1, 1) [label=above:$y$] {};
        \node (c) at (1, 0.3) {};
        \node (b) at (0.8, 0.5) {};
        \node (a) at (1.2, 0.5) {};
        \node (W) at (3, -1) [label=above:$z$] {};
        \node (X) at (2, -2) {};
        \node (Y) at (4, -2) {};
        \node (Z) at (4, 0) {};
        \node (I) at (0, -2) {};
        \node (J) at (-2, -2) {};
        \node (P) at (-2, 0) {};
        \node (Q) at (-2, 2) {};
        \node (R) at (-4, 2) {};
        \node (S) at (-4, 0) {};
        \node (O) at (-3, 1) [label=above:$x$] {};
        \node (M) at (6, -2) {};
        \node (N) at (6, 0) {};

        \draw[red,thick] (A) -- (B) -- (C) -- (D) -- (A) -- (E) -- (C);
        \draw[red,thick] (B) -- (E) -- (D);
        \draw (B) -- (X) -- (Y) -- (Z) -- (B) -- (W) -- (Y);
        \draw (X) -- (W) -- (Z);
        \draw (A) -- (I) -- (J) -- (P) -- (I);
        \draw[red,thick] (P) -- (Q) -- (R) -- (S) -- (P);
        \draw[red,thick] (P) -- (O) -- (R) (Q) -- (O) -- (S);
        \draw (Y) -- (M) -- (N) -- (Y);
        \draw (A) -- (c) -- (B) -- (a) -- (E) -- (b) -- (A);
        \draw (a) -- (b) -- (c) -- (a);
    \end{tikzpicture}
    \end{center}
    \caption{(Left) a skeletal triangulation with three ears (blue) and a degree-2 cut vertex (orange).
	(Right) A skeletal triangulation with two bad 5-wheels (red), centered at $x$ and $y$.
	Note that there is no bad 5-wheel centered at $z$,
	as there are no two consecutive boundary vertices of degree $3$ on that 5-wheel.}
    \label{fig:introdegtwoexample}
\end{figure}

\paragraph{Lower bounds.} \label{sec:lower_bounds}
The following (infinite) families of examples motivate
our definition of skeletal triangulations and our selection of problematic configurations.
Matheson and Tarjan \cite{matheson_dominating_1996} constructed near-triangulations with $\gamma(G) = n/3$ and triangulations with $\gamma(G) = n/4$,
see Figure~\ref{fig:whypenalty}. The limiting factor in these examples are vertices of degree 2 and 3, respectively.
We construct near-triangulations with (a) no degree-2 vertices, no bad 5-wheels
and $\gamma(G) = 2n / 7$ and (b) no-degree 2 vertices and $\gamma(G) = 3n / 10$, see Figure~\ref{fig:examples}.
This shows that $n / 3.5$ is best possible given our choice of problematic configurations
and that penalizing both degree-2 vertices and bad 5-wheels is necessary to achieve the $n / 3.5$ bound.

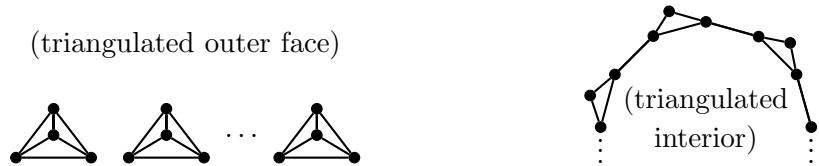
\begin{figure}[h!]
\begin{center}
\begin{tikzpicture}[scale=0.5]
\node (A) at (0, 0) {};
\node (B) at (2, 0) {};
\node (C) at (1, .6) {};
\node (D) at (1, 1.3) [] {};

\node (A1) at (3, 0) {};
\node (B1) at (5, 0) {};
\node (C1) at (4, .6) {};
\node (D1) at (4, 1.3) {};

\node[fill=none,draw=none] (text) at (6,.6) {$\ldots$};

\node (A2) at (7, 0) {};
\node (B2) at (9, 0) {};
\node (C2) at (8, .6) {};
\node (D2) at (8, 1.3) {};

\node[rectangle, draw=none,fill=none] (text) at (4.5,3) {(triangulated outer face)};
	
	\draw[thick] (A) -- (B) -- (C) -- (D) -- (A) -- (C) -- (D) -- (B);
	\draw[thick] (A1) -- (B1) -- (C1) -- (D1) -- (A1) -- (C1) -- (D1) -- (B1);
	\draw[thick] (A2) -- (B2) -- (C2) -- (D2) -- (A2) -- (C2) -- (D2) -- (B2);
\end{tikzpicture} \hfil
\begin{tikzpicture}[scale=0.7]
	\node (A) at (0, 2) {};
	\node (B) at (1,1.72) {};
	\node (C) at (1.72,1) {};
	\node (D) at (2, 0) {};

	\node (Z) at (-1,1.73) {};
	\node (Y) at (-1.73,1) {};
	\node (X) at (-2, 0) {};
	
	\node[fill=none,draw=none] (dotsI) at (2,-.3) {$\vdots$};
	\node[fill=none,draw=none] (dotsII) at (-2,-.3) {$\vdots$};
	\node[rectangle, draw=none,fill=none] (textI) at (0,.5) {(triangulated};
	\node[rectangle, draw=none,fill=none] (textII) at (0,-.25) {interior)};

	\node (e1) at (-2.2, .6) {};
	\node (e2) at (-.7, 2.2) {};
	\node (e3) at (1.6, 1.6) {};
	
	\draw[thick] (X) -- (Y) -- (Z) -- (A) -- (B) -- (C) -- (D);
	\draw[thick] (X) -- (e1) -- (Y) -- (Z) -- (e2) -- (A) -- (B) -- (e3) -- (C) -- (D);
\end{tikzpicture} 
\end{center}
\vspace{-1em}
\caption{(Left) copies of $K_4$ with the outer face triangulated arbitrarily shows that $n/4$ is needed~\cite{matheson_dominating_1996}. (Right) an outer-planar near-triangulation where every third vertex of the outer face has degree $2$ motivates penalising ears~\cite{matheson_dominating_1996}. \label{fig:whypenalty}}
\end{figure}

\begin{figure}[h!]
\begin{tikzpicture}[scale=0.6,
	specialvertex/.style={}]
	
\node (A) at (-1.5, 0) {};
\node (B) at (1.5, 0) {};
\node (C) at (0, .8) {};
\node (D) at (0, 2) {};

\node[specialvertex] (E1) at (-3,1.5) {};
\node[specialvertex] (F1) at (-1.5,3.5) {};
\node (G1) at (-1.5,1.75) {};

\node[specialvertex] (E2) at (3,1.5) {};
\node[specialvertex] (F2) at (1.5,3.5) {};
\node (G2) at (1.5,1.75) {};

\draw[] (E1) -- (G1) -- (A) -- (E1) -- (F1) -- (D) -- (G1) -- (F1);
\draw[] (E2) -- (G2) -- (B) -- (E2) -- (F2) -- (D) -- (G2) -- (F2);
\draw[] (A) -- (B) -- (C) -- (D) -- (A) -- (C) -- (D) -- (B);

\draw[very thick, brown!50!black] (A) -- (B);

\end{tikzpicture}\hfill
\begin{tikzpicture}[scale=0.35,
	specialvertex/.style={rectangle,fill=teal}]
	
	\node (Af) at (-1.5, 0) {};
	\node (Bf) at (1.5, 0) {};
	\node (Cf) at (0, .8) {};
	\node (Df) at (0, 2) {};
	
	\node (E1) at (-3,1.5) {};
	\node (F1) at (-1.5,3.5) {};
	\node (G1) at (-1.5,1.75) {};
	
	\node (E2) at (3,1.5) {};
	\node (F2) at (1.5,3.5) {};
	\node (G2) at (1.5,1.75) {};
	
	\draw[] (E1) -- (G1) -- (Af) -- (E1) -- (F1) -- (Df) -- (G1) -- (F1);
	\draw[] (E2) -- (G2) -- (Bf) -- (E2) -- (F2) -- (Df) -- (G2) -- (F2);
	\draw[] (Af) -- (Bf) -- (Cf) -- (Df) -- (Af) -- (Cf) -- (Df) -- (Bf);
	
	\draw[very thick, brown!50!black] (Af) -- (Bf);

\begin{scope}[rotate around={-75:(0,-4)}]
\node (As) at (-1.5, 0) {};
\node (Bs) at (1.5, 0) {};
\node (Cs) at (0, .8) {};
\node (Ds) at (0, 2) {};

\node (E1) at (-3,1.5) {};
\node (F1) at (-1.5,3.5) {};
\node (G1) at (-1.5,1.75) {};

\node (E2) at (3,1.5) {};
\node (F2) at (1.5,3.5) {};
\node (G2) at (1.5,1.75) {};

\draw[] (E1) -- (G1) -- (As) -- (E1) -- (F1) -- (Ds) -- (G1) -- (F1);
\draw[] (E2) -- (G2) -- (Bs) -- (E2) -- (F2) -- (Ds) -- (G2) -- (F2);
\draw[] (As) -- (Bs) -- (Cs) -- (Ds) -- (As) -- (Cs) -- (Ds) -- (Bs);
\draw[very thick, brown!50!black] (As) -- (Bs);
\end{scope}

\begin{scope}[rotate around={75:(0,-4)}]
	\node (At) at (-1.5, 0) {};
	\node (Bt) at (1.5, 0) {};
	\node (Ct) at (0, .8) {};
	\node (Dt) at (0, 2) {};
	
	\node (E1) at (-3,1.5) {};
	\node (F1) at (-1.5,3.5) {};
	\node (G1) at (-1.5,1.75) {};
	
	\node (E2) at (3,1.5) {};
	\node (F2) at (1.5,3.5) {};
	\node (G2) at (1.5,1.75) {};
	
	\draw[] (E1) -- (G1) -- (At) -- (E1) -- (F1) -- (Dt) -- (G1) -- (F1);
	\draw[] (E2) -- (G2) -- (Bt) -- (E2) -- (F2) -- (Dt) -- (G2) -- (F2);
	\draw[] (At) -- (Bt) -- (Ct) -- (Dt) -- (At) -- (Ct) -- (Dt) -- (Bt);
	\draw[very thick, brown!50!black] (At) -- (Bt);
\end{scope}

\draw[] (Af) -- (Bt);
\draw[] (As) -- (Bf);
	
\node[rectangle, draw=none,fill=none] (textI) at (0,-3) {(triangulated};
\node[rectangle, draw=none,fill=none] (textII) at (0,-4.5) {interior)};

\end{tikzpicture}\hfill
\begin{tikzpicture}[scale=0.6,
	spex/.style={}]
	
	\node (A) at (-2, 0) {};
	\node (B) at (2, 0) {};
	\node[spex] (C) at (0, .8) {};
	\node (D) at (0, 1.7) {};
	\node[spex] (E) at (0, 4) {};
	\node[spex] (F) at (-.4, 2.2) {};
	\node[spex] (G) at (.4, 2.2) {};
	
	\draw (A) -- (B) -- (C) -- (D) -- (A) -- (C) -- (D) -- (B) -- (E) -- (A) -- (F) -- (G) -- (D) -- (F) -- (E) -- (G) -- (B);

	\draw[very thick, brown!50!black] (A) -- (B);

\end{tikzpicture}

\caption{(Left) a $10$-vertex graph that requires a dominating set of size $3$. (Middle) attaching this to every second edge of a triangulated polygon yields a class with $10k$ vertices and $\gamma = 3k$. (Right) a $7$ vertex graph demanding $2$ vertices to dominate. By a similar construction as in (middle), this gives a class with $7k$ vertices and $\gamma = 2k$. \label{fig:examples}}
\end{figure}
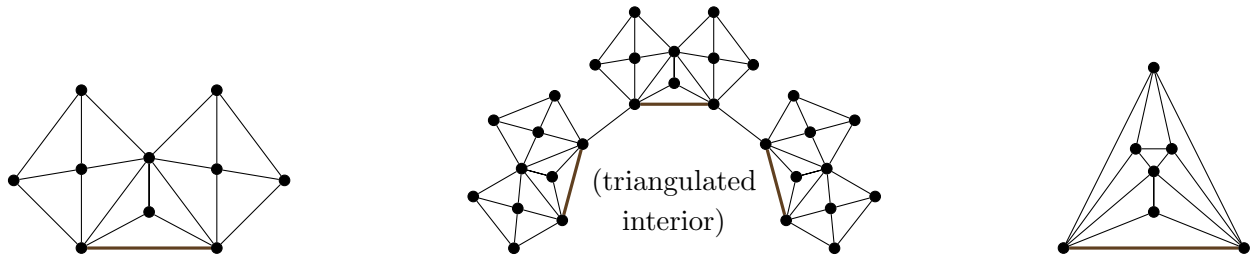

Requiring skeletal triangulations to (a) be connected avoids disjoint unions of octahedra, with $\gamma(G) = n / 3$,
and (b) have minimum degree 2 avoids caterpillars with $\gamma(G) = n / 2$.
A penalty of $\frac{1}{2}$ on degree-2 cut vertices is motivated by the example shown in Figure~\ref{fig:balloons}.

So far, our lower bounds for near-triangulations describe classes of graphs with many chords. It is natural to think that chords, or two-cuts, are the sole reason $n/4$ does not suffice for these graphs. 
In \cite{tokunaga_dominating_2013}, Tokunaga  conjectured that every 3-connected near-triangulation satisfies $\gamma(G) \le \lfloor (n+2)/4 \rfloor$.
We construct 3-connected triangulations with $\gamma(G) = 3n/11 - O(1)$, refuting this conjecture, see Figure~\ref{fig:triconnected} (left and middle).
To our surprise, this either shows a stark difference between triangulations and 3-connected near-triangulations, or is a counter-indication of the $n/4$ conjecture.
In particular, a proof of the $n/4$ conjecture might have to approach triangulations via 4-connected triangulations and separating triangles, in order 
to break through this $3n/11$ barrier.

Finally, we construct a triangulation with no odd-degree
vertices and $\gamma(G) = n/4$, see Figure~\ref{fig:triconnected} (right). Placing disjoint copies of this graph and carefully triangulating the outer face (similar to Figure~\ref{fig:whypenalty} (left)) yields an infinite class of even graphs with $\gamma(G) = n/4$. In particular, the conjectured $n/4$ bound is best possible even in the absence of degree-3 vertices.

\begin{figure}[h]
\begin{center}
\begin{tikzpicture}[scale=0.5,
	spex/.style={rectangle,fill=blue}]
	\begin{scope}[rotate around={90:(0,-4)}]	
		\node (A) at (-2, 0) {};
		\node (B) at (2, 0) {};
		\node (C) at (0, .8) {};
		\node (D) at (0, 1.7) {};
		\node (E) at (0, 4) {};
		\node (F) at (-.4, 2.2) {};
		\node (G) at (.4, 2.2) {};
		\node (H) at (0, -1) {};
		\node (I) at (0, -2.5) {};
		\node[spex] (J) at (0, -4) {};
		
		\draw (H) -- (A) -- (B) -- (C) -- (D) -- (A) -- (C) -- (D) -- (B) -- (E) -- (A) -- (F) -- (G) -- (D) -- (F) -- (E) -- (G) -- (B) -- (H) -- (I) -- (J);
	\end{scope}
\end{tikzpicture}\hspace{.3\linewidth}
\begin{tikzpicture}[scale=0.25,
	spex/.style={rectangle,fill=blue}]
   \begin{scope}[rotate around={25:(0,-15)}]
		\node (A) at (-2, 0) {};
		\node (B) at (2, 0) {};
		\node (C) at (0, .8) {};
		\node (D) at (0, 1.7) {};
		\node (E) at (0, 4) {};
		\node (F) at (-.4, 2.2) {};
		\node (G) at (.4, 2.2) {};
		\node (H) at (0, -1) {};
		\node (I) at (0, -2.5) {};
		\node[spex] (J1) at (0, -4) {};
		
		\draw (H) -- (A) -- (B) -- (C) -- (D) -- (A) -- (C) -- (D) -- (B) -- (E) -- (A) -- (F) -- (G) -- (D) -- (F) -- (E) -- (G) -- (B) -- (H) -- (I) -- (J1);
	\end{scope}
   \begin{scope}[rotate around={0:(0,0)}]	
	\node (A) at (-2, 0) {};
	\node (B) at (2, 0) {};
	\node (C) at (0, .8) {};
	\node (D) at (0, 1.7) {};
	\node (E) at (0, 4) {};
	\node (F) at (-.4, 2.2) {};
	\node (G) at (.4, 2.2) {};
	\node (H) at (0, -1) {};
	\node (I) at (0, -2.5) {};
	\node[spex] (J2) at (0, -4) {};
	
	\draw (H) -- (A) -- (B) -- (C) -- (D) -- (A) -- (C) -- (D) -- (B) -- (E) -- (A) -- (F) -- (G) -- (D) -- (F) -- (E) -- (G) -- (B) -- (H) -- (I) -- (J2);
\end{scope}
   \begin{scope}[rotate around={-25:(0,-15)}]	
	\node (A) at (-2, 0) {};
	\node (B) at (2, 0) {};
	\node (C) at (0, .8) {};
	\node (D) at (0, 1.7) {};
	\node (E) at (0, 4) {};
	\node (F) at (-.4, 2.2) {};
	\node (G) at (.4, 2.2) {};
	\node (H) at (0, -1) {};
	\node (I) at (0, -2.5) {};
	\node[spex] (J3) at (0, -4) {};
	
	\draw (H) -- (A) -- (B) -- (C) -- (D) -- (A) -- (C) -- (D) -- (B) -- (E) -- (A) -- (F) -- (G) -- (D) -- (F) -- (E) -- (G) -- (B) -- (H) -- (I) -- (J3);
\end{scope}
\begin{scope}[rotate around={-50:(0,-15)}]	
	\node[draw=none,fill=none] (J4) at (0, -4) {};
\end{scope}
\begin{scope}[rotate around={50:(0,-15)}]	
	\node[draw=none,fill=none] (J0) at (0, -4) {};
\end{scope}
\draw (J0) -- (J1) -- (J2) -- (J3) -- (J4);
\end{tikzpicture}
\end{center}
\vspace{-1em}
\caption{(Left) A gadget $G$ on $10$ vertices with $\gamma(G) = 3$. (Right) Attaching copies of this gadget to each vertex of a triangulated polygon yields a
skeletal triangulation with $k$ degree-2 cut vertices and $\gamma = (n+\frac{k}{2})/3.5$.\label{fig:balloons}}
\end{figure}
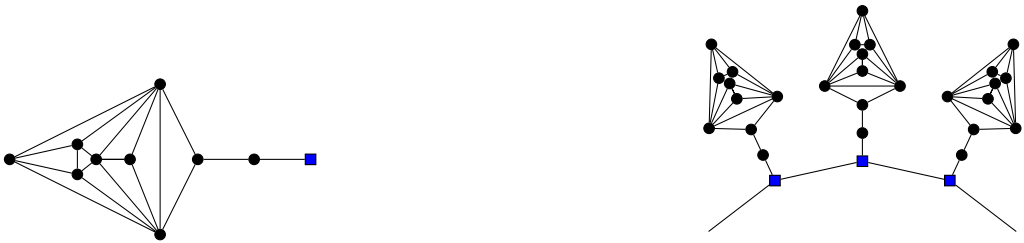

\begin{figure}[h!]
\begin{tikzpicture}[scale=0.5,
	spex/.style={circle,fill=blue,inner sep=2.5pt}]
	
	\node (A) at (-2, 0) {};
	\node (B) at (2, 0) {};
	\node (C) at (0, -.8) {};
	\node[rectangle] (D) at (0, -2.5) {};
	\node[rectangle] (E) at (0, 2.5) {};
	\node (F) at (-.4, 1) {};
	\node (G) at (.4, 1) {};
	\node (H) at (0, .5) {};
	\node (I) at (-3.5, -1) {};
	\node (J) at (-3.5, 1) {};
	\node (K) at (-2.75,-.5) {};
	\node[spex] (X) at (5,0) {};
	
	\draw (C) -- (A) -- (H) -- (B) -- (G) -- (H) -- (F) -- (A) -- (E) -- (F) -- (G) -- (E) -- (B) -- (D) -- (A) -- (B) -- (C) -- (D) -- (K) -- (A) -- (J) -- (K) -- (I) -- (J) -- (E) -- (X) -- (B) -- (X) -- (D) -- (I) ;
	
	\draw[thick, blue!50!black] (D) -- (X) -- (B) -- (X) -- (E);
\end{tikzpicture}\hfill
\begin{tikzpicture}[scale=0.25,
	spex/.style={circle,fill=blue,inner sep=1pt}]

	\node (A) at (-2, 0) {};
	\node (B) at (2, 0) {};
	\node (C) at (0, -.8) {};
	\node[rectangle] (D) at (0, -2.5) {};
	\node[rectangle] (E) at (0, 2.5) {};
	\node (F) at (-.4, 1) {};
	\node (G) at (.4, 1) {};
	\node (H) at (0, .5) {};
	\node (I) at (-3.5, -1) {};
	\node (J) at (-3.5, 1) {};
	\node (K) at (-2.75,-.5) {};
	\node[spex] (X) at (5,0) {};

\begin{scope}[rotate around={-75:(5,0)}]
\node (A1) at (-2, 0) {};
\node (B1) at (2, 0) {};
\node (C1) at (0, -.8) {};
\node[rectangle] (D1) at (0, -2.5) {};
\node[rectangle] (E1) at (0, 2.5) {};
\node (F1) at (-.4, 1) {};
\node (G1) at (.4, 1) {};
\node (H1) at (0, .5) {};
\node (I1) at (-3.5, -1) {};
\node (J1) at (-3.5, 1) {};
\node (K1) at (-2.75,-.5) {};
\end{scope}

\begin{scope}[rotate around={-150:(5,0)}]
	\node (A2) at (-2, 0) {};
	\node (B2) at (2, 0) {};
	\node (C2) at (0, -.8) {};
	\node[rectangle] (D2) at (0, -2.5) {};
	\node[rectangle] (E2) at (0, 2.5) {};
	\node (F2) at (-.4, 1) {};
	\node (G2) at (.4, 1) {};
	\node (H2) at (0, .5) {};
	\node (I2) at (-3.5, -1) {};
	\node (J2) at (-3.5, 1) {};
	\node (K2) at (-2.75,-.5) {};
\end{scope}

\begin{scope}[rotate around={-225:(5,0)}]
\node[fill=none,draw=none] (D3) at (0, -2.5) {};
\end{scope}

	\draw (C) -- (A) -- (H) -- (B) -- (G) -- (H) -- (F) -- (A) -- (E) -- (F) -- (G) -- (E) -- (B) -- (D) -- (A) -- (B) -- (C) -- (D) -- (K) -- (A) -- (J) -- (K) -- (I) -- (J) -- (E) -- (X) -- (B) -- (X) -- (D) -- (I) ;
	
	\draw[thick, blue!50!black] (D) -- (X) -- (B) -- (X) -- (E);

	\draw (C1) -- (A1) -- (H1) -- (B1) -- (G1) -- (H1) -- (F1) -- (A1) -- (E1) -- (F1) -- (G1) -- (E1) -- (B1) -- (D1) -- (A1) -- (B1) -- (C1) -- (D1) -- (K1) -- (A1) -- (J1) -- (K1) -- (I1) -- (J1) -- (E1) -- (X) -- (B1) -- (X) -- (D1) -- (I1) ;
	\draw[thick, blue!50!black] (D1) -- (X) -- (B1) -- (X) -- (E1);

	\draw (C2) -- (A2) -- (H2) -- (B2) -- (G2) -- (H2) -- (F2) -- (A2) -- (E2) -- (F2) -- (G2) -- (E2) -- (B2) -- (D2) -- (A2) -- (B2) -- (C2) -- (D2) -- (K2) -- (A2) -- (J2) -- (K2) -- (I2) -- (J2) -- (E2) -- (X) -- (B2) -- (X) -- (D2) -- (I2) ;
	\draw[thick, blue!50!black] (D2) -- (X) -- (B2) -- (X) -- (E2);
	
	\draw (E) -- (D1); 
	\draw (E1) -- (D2);
	\draw[dotted] (E2) -- (D3);
\end{tikzpicture}\hfill
\begin{tikzpicture}[scale=0.6,yscale=.5,
spex/.style={circle,fill=blue,inner sep=3pt}]		
\begin{scope}[rotate around={90:(0,0)}]
\node (A) at (4,0) {};
\node (B1) at (-5,4) {};
\node (B2) at (-5,-4) {};
\node (C1) at (-3,.5) {};
\node (C2) at (-3,-.5) {};
\node (D) at (-4,0) {};
\node (E1) at (-1.35,1.15) {};
\node (F1) at (-1.5,1.75) {};
\node (G1) at (-2.35,1.55) {};
\node (E2) at (-1.35,-1.15) {};
\node (F2) at (-1.5,-1.75) {};
\node (G2) at (-2.35,-1.55) {};
\end{scope}

\draw (A) -- (B1) -- (C1) -- (A) -- (E1) -- (F1) -- (G1) -- (C1) -- (E1) -- (G1) -- (B1) -- (F1) -- (A) -- (B2) -- (D) -- (C1) -- (C2) -- (D) -- (B1) -- (B2) -- (C2) -- (A) -- (E2) -- (F2) -- (G2) -- (C2) -- (E2) -- (G2) -- (B2) -- (F2) -- (A);
\end{tikzpicture}
\caption{(Left) Even if the rightmost large vertex is added to the dominating set for free, it still requires $3$ vertices to dominate the remaining $11$. (Middle) Identifying several copies of these by the large vertex and adding edges between the rectangular vertices ($\blacksquare$) to make it $3$-connected yields a graph class with $n=11k+1$ and $\gamma = 3k$. (Right) An even graph with $\gamma = n/4$. \label{fig:triconnected}} 
\end{figure}
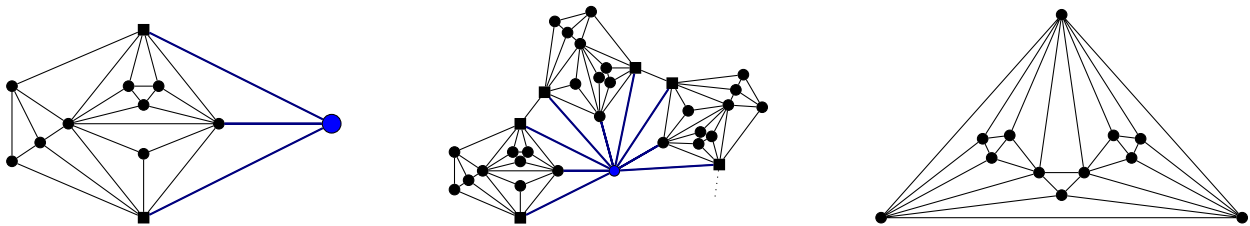

\vspace{-1em}
\paragraph{Algorithm}
We complement our upper bound of Theorem~\ref{theo:main} and Corollary~\ref{coro:main} by a quadratic-time algorithm. The algorithm takes as input an $n$-vertex 
skeletal triangulation $G$ with $\Phi = n+(e+f+t)/2$, the algorithm outputs a dominating set of size $\le 2\Phi/7$. Particularly, if $G$ is a triangulation, it outputs a dominating set of size $\le 2n/7$.

\subsection{Further related work}
The original bound due to Matheson and Tarjan~\cite{matheson_dominating_1996} has been extended to other surfaces than the sphere.
In~\cite{HonjoKawarabayashi} and~\cite{Plummer} it is shown that the $\frac{n}{3}$ bound holds for a larger class of graphs, including those embedded on a torus, the Klein bottle, and the projective plane.
In~\cite{FuruyaMatsumoto}, these results are further extended to all triangulations embedded on a closed surface. 
A related question is 
to upper bound the domination number of planar graphs with small diameter~\cite{GoddardHenning,MacGillivray}. 
Here, it is shown that all sufficiently large planar graphs with diameter $3$ can be dominated by at most $6$ vertices. 

Total domination of a graph differs from domination in that every vertex must have a neighbour in the totally dominating set, regardless of whether the vertex belongs to the set itself. Lemanska, Zuazua, and Zylinski~\cite{LemanskaZZ17} study the total domination number of maximal outerplanar graphs, and show that $2n/5$ vertices suffice to totally dominate this class of graphs, as is also shown in~\cite{DorflingHJ16}. Similarly to our problem, they also have to consider reducing one side of a two-cut. Since maximal outerplanar graphs allow induction over the dual tree, this allows them to provide a simple and elegant proof. In~\cite{ClaverolGHHMMT21}, these bounds for maximal outerplanar graphs serve as a stepping stone for improving the bound for general triangulations; namely via what in retrospect can be interpreted as a form of \emph{attachment} as the ones introduced in the paper at hand. 


\section{Main Techniques}\label{sec:techniques}

In this section, we develop the techniques needed for our proof.
Our proof is by induction and, roughly speaking, consists of two main parts.
In the first part, we deal with any small cuts, such as bridges, cut vertices or chords.
Here, a general classification scheme for dominating sets along small cuts allows us to replace
one side of the cut by a finite list of ``minimal attachments'', which can then be checked by hand.
In the second part, $G$ is 3-connected, and we want to add a vertex to our dominating set
and delete many of its neighbors without creating too many problematic configurations.
Here, some of our minimal attachments help keep track of which vertices are in the dominating
set or are already dominated. Also, in many cases, these deletions create bridges or cut vertices,
which necessitates working with skeletal triangulations.

\subsection{Skeletal triangulations with small vertex cuts}

\paragraph{Fusing}
Let $u \in G$ be a cut vertex in a skeletal triangulation,
whose removal splits $G$ into two components $C_1, C_2$.
For $i \in \{1, 2\}$, let $G_i$ be the graph induced by $C_i \cup \{u\}$
and let $u_i \in G_i$ correspond to $u \in G$. See Figure~\ref{fig:fusing} on page~\pageref{fig:fusing}.
If $\deg_{G_i}(u) = 1$, then $G_i$ need not be a skeletal triangulation, but this is the only obstruction.

\begin{definition}
A \emph{rooted skeletal triangulation} $(G, u)$ \emph{with root $u$} is a connected triangulated planar graph in which every vertex
except possibly $u$ has degree $\ge 2$.
\end{definition}

In the above setting, $(G_1, u_1)$ and $(G_2, u_2)$ are both rooted skeletal triangulations.
The following operation reconstructs $G$ from $G_1$ and $G_2$:

\begin{definition}[Fusing]
Let $(G_1, u_1)$ and $(G_2, u_2)$ be rooted skeletal triangulations.
We \emph{fuse} $(G_1, u_1)$ to $(G_2, u_2)$
by taking the disjoint union $G_1 \sqcup G_2$ and identifying $u_1$ with $u_2$.
\end{definition}

In the above setting, $G$ is the graph obtained by fusing $(G_1, u_1)$ to $(G_2, u_2)$.

\paragraph{Classifying dominating sets} In the same setting, let $S \subseteq G$ be a dominating set.
Put $S_1 = S \cap G_1$, then $S_1$ dominates all vertices in $G_1 - u_1$, and $u_1$ is either (a) contained in $S_1$, (b) dominated by $S_1$, or (c) not dominated by $S_1$.
Intuitively speaking, extending $S_1$ to a small dominating set in $G$ is easiest in case (a) and most difficult in case (c).
In fact, this case distinction perfectly describes which vertices in $G_2$ still have to be dominated.
This motivates the following definition.

\begin{definition}[Acts as]
A \emph{rooted dominating set} $S$ in $(G, u)$ is a set that dominates every vertex except maybe $u$.
$\gamma(G, u)$ denotes the size of a minimum rooted dominating set.
In the following, each case excludes all the previous ones.
We say $G$ \emph{acts as}
\begin{itemize}[itemsep=1pt, parsep=0pt, topsep=4pt, itemindent=16pt]
    \item [AB] if $G$ has a rooted dominating set of size $\gamma(G, u)$ that contains $u$,
    \item [LR] if $G$ has a rooted dominating set of size $\gamma(G, u)$ that dominates $u$, and
\item [Nope] otherwise.
\end{itemize}
\end{definition}
Figure~\ref{fig:smallfuse} on page~\pageref{fig:smallfuse} depicts the smallest rooted skeletal triangulation of each act-as type.
The following lemma illustrates how small ABs and small LRs can be used to ``remember'' that
certain vertices are required to be in $S$ or are already dominated.

\begin{lemma}[Forcing and Covering] \label{lemm:forcecover}
Let $G$ be a skeletal triangulation, with boundary vertex $u$. Let $s \in \NN$ be arbitrary.
\begin{enumerate}
\item Let $H_1$ be obtained by fusing a small AB to $(G, u)$. Then, $H_1$ has a dominating set of size $s$ if and only if $G$ has a dominating set of size $s$ that contains $u$.
\item Let $H_2$ be obtained by fusing a small LR to $(G, u)$. Then, $H_2$ has a dominating set of size $s+1$ if and only if $G$ has a set of size $s$ that dominates all vertices except maybe $u$.
\end{enumerate}
\end{lemma}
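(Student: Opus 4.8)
The plan is to verify the two equivalences directly, reading off the shape of the smallest AB and the smallest LR from Figure~\ref{fig:smallfuse}. The single structural observation that drives everything is that, after fusing an attachment $A$ at the boundary vertex $u$, the vertex sets $V(G)$ and $V(A)$ meet only in $u$; hence every $v\in V(G)\setminus\{u\}$ has all its neighbours in $G$, and every non-root vertex of $A$ has all its neighbours in $A$. (Since $u$ is a boundary vertex, $H_1$ and $H_2$ are again skeletal triangulations.) The two facts I need about the attachments are: \textbf{(i)} the smallest AB is the triangle, so its root is universal, i.e.\ a single vertex placed at the root dominates the whole attachment; and \textbf{(ii)} the smallest LR has four vertices, and -- being an LR with $\gamma(\cdot,u)=1$ -- it contains a universal non-root vertex $p$ (so $\{p\}$ dominates everything, including the root) and a non-root vertex $z$ not adjacent to the root. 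I read ``has a dominating set of size $s$'' as ``of size at most $s$''; padding with arbitrary vertices handles exact cardinalities, and the content is the implied identity of minimum sizes.

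\emph{Part 1.} Here $H_1=G\cup\{x,y\}$ with $uxy$ a triangle and $x,y$ new. ``$\Leftarrow$'': a dominating set $S\ni u$ of $G$ already dominates $x$ and $y$ (as $u$ is universal in the triangle), so $S$ is a dominating set of $H_1$ of the same size. ``$\Rightarrow$'': let $T$ dominate $H_1$. If $u\in T$, then $T\cap V(G)$ still dominates $G$ -- the only vertices $x,y$ could dominate are $u,x,y$, all dominated by $u$ -- and contains $u$, so we are done. If $u\notin T$, then $T$ must meet $\{x,y\}$ in order to dominate $x$ (whose closed $H_1$-neighbourhood is $\{u,x,y\}$); therefore $\abs{T\cap V(G)}\le\abs{T}-1$, and $(T\cap V(G))\cup\{u\}$ is a dominating set of $G$ of size $\le\abs{T}$ that contains $u$ -- it dominates $V(G)\setminus\{u\}$ as above and dominates $u$ by itself.

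\emph{Part 2.} Here $H_2=G\cup(V(A)\setminus\{u\})$ for $A$ the small LR, with $p,z$ as in (ii). ``$\Leftarrow$'': if $S\subseteq V(G)$ dominates every vertex of $G$ except possibly $u$, then $S\cup\{p\}$ dominates $H_2$ -- $p$ dominates $u$ and every non-root vertex of $A$, and $S$ handles the rest of $G$ -- and $\abs{S\cup\{p\}}=\abs{S}+1$ because $p\notin V(G)$. ``$\Rightarrow$'': let $T$ dominate $H_2$. Since the closed $H_2$-neighbourhood of $z$ lies entirely among the non-root vertices of $A$, $T$ must contain one of them; hence $\abs{T\cap V(G)}\le\abs{T}-1$, and $T\cap V(G)$ dominates every vertex of $G$ except possibly $u$. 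With $\abs{T}=s+1$ this is exactly the desired set of size $\le s$.

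The proof is essentially bookkeeping once (i) and (ii) are in hand, and those are what the classification of act-as types (hence Figure~\ref{fig:smallfuse}) supplies. The one step that requires care -- and that is really the heart of both ``forcing'' and ``covering'' -- is the forward direction: one must check that a dominating set of $H_i$ is \emph{forced} to spend a vertex strictly inside the attachment (so that this vertex can be refunded and $u$ inserted into, or omitted from, the dominating set of $G$ as needed). This forcing is precisely what (i) gives once $u$ is excluded (Part~1) and what (ii) gives unconditionally (Part~2), via a vertex of the attachment whose only dominators lie inside the attachment.
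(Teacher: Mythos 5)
Your proof is correct and is exactly the "straight-forward" verification the paper leaves to the reader: you correctly identify the small AB as the triangle (root universal) and the small LR as the two-triangle graph with a universal non-root vertex and a non-root vertex not adjacent to the root, and the forcing/refunding bookkeeping in both directions is sound. No further comment needed.
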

\begin{proof}
Straight-forward.
\end{proof}

The following lemma illustrates that rooted skeletal triangulations of the same act-as type are essentially interchangeable.
This enables us to use a divide-and-conquer approach later on.
\begin{lemma}[Fusing replacement] \label{lemm:fusereplace}
Let $(G_1, u_1), (G_2, u_2)$ be skeletal triangulations.
Let $H$ be obtained by fusing $(G_2, u_2)$ to $(G_1, u_1)$.
Then $\gamma(H) - \gamma(G_2, u_2)$ depends only on $(G_1, u_1)$ and on the act-as type of $(G_2, u)$
(but not on the precise graph-structure of $G_2$).
\end{lemma}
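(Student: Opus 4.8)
The plan is to reduce the statement to three numerical invariants of a rooted skeletal triangulation $(G,u)$: the rooted domination number $\gamma(G,u)$, the ordinary domination number $\gamma(G)$, and the quantity $\gamma^{+}(G,u)$, defined as the minimum size of a dominating set of $G$ that contains $u$. First I would record the elementary inequalities $\gamma(G,u)\le\gamma(G)\le\gamma^{+}(G,u)$, together with $\gamma(G)\le\gamma(G,u)+1$ (add a neighbour of $u$ to a minimum rooted dominating set — one exists since $\deg_G(u)\ge 1$) and $\gamma^{+}(G,u)\le\gamma(G,u)+1$ (add $u$ itself). Consequently the pair $\big(\gamma(G)-\gamma(G,u),\ \gamma^{+}(G,u)-\gamma(G,u)\big)$ takes one of the values $(0,0)$, $(0,1)$, $(1,1)$, and unwinding the definition of ``acts as'' shows it equals $(0,0)$ precisely when $G$ acts as AB, $(0,1)$ precisely when $G$ acts as LR, and $(1,1)$ precisely when $G$ acts as Nope. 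In particular, the act-as type of $(G_2,u_2)$ pins down both $\gamma(G_2)-\gamma(G_2,u_2)$ and $\gamma^{+}(G_2,u_2)-\gamma(G_2,u_2)$.

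Next I would compute $\gamma(H)$ in terms of these invariants. Let $u$ be the vertex of $H$ obtained by identifying $u_1$ with $u_2$ in $G_1\sqcup G_2$, so that $V(G_1)\cap V(G_2)=\{u\}$, and for a dominating set $S$ of $H$ put $S_i=S\cap V(G_i)$. Then $|S|=|S_1|+|S_2|$ if $u\notin S$ and $|S|=|S_1|+|S_2|-1$ if $u\in S$; moreover — since every neighbour in $H$ of a vertex $v\neq u$ of $G_i$ lies in $V(G_i)$ — the set $S$ dominates $H$ if and only if $S_i$ dominates $G_i-u$ for $i=1,2$ and $u$ is dominated by $S_1$ or by $S_2$. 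Splitting on the status of $u$: if $u\in S$ then each $S_i$ is a dominating set of $G_i$ containing $u$, so $|S|\ge\gamma^{+}(G_1,u_1)+\gamma^{+}(G_2,u_2)-1$; if $u\notin S$ but $u$ is dominated by $S_1$, then $S_1$ dominates all of $G_1$ and $S_2$ dominates $G_2-u$, so $|S|\ge\gamma(G_1)+\gamma(G_2,u_2)$; symmetrically, if $u$ is dominated by $S_2$ then $|S|\ge\gamma(G_1,u_1)+\gamma(G_2)$; and $u$ undominated by both sides is impossible for a dominating set of $H$. Each of the three lower bounds is attained by a union of suitable optimal sets (for instance, an optimal dominating set of $G_1$ together with an optimal rooted dominating set of $G_2$ realises the middle one), so
\[
\gamma(H)=\min\!\big(\gamma^{+}(G_1,u_1)+\gamma^{+}(G_2,u_2)-1,\ \ \gamma(G_1)+\gamma(G_2,u_2),\ \ \gamma(G_1,u_1)+\gamma(G_2)\big).
\]

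Finally, subtracting $\gamma(G_2,u_2)$ and writing $\Delta_2=\gamma(G_2)-\gamma(G_2,u_2)$ and $\Delta_2^{+}=\gamma^{+}(G_2,u_2)-\gamma(G_2,u_2)$ yields
\[
\gamma(H)-\gamma(G_2,u_2)=\min\!\big(\gamma^{+}(G_1,u_1)+\Delta_2^{+}-1,\ \ \gamma(G_1),\ \ \gamma(G_1,u_1)+\Delta_2\big),
\]
in which $\gamma^{+}(G_1,u_1),\gamma(G_1),\gamma(G_1,u_1)$ depend only on $(G_1,u_1)$, while $\Delta_2^{+}$ and $\Delta_2$ depend only on the act-as type of $(G_2,u_2)$ by the first step. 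This is exactly the assertion. The only step that needs genuine care is establishing the displayed formula for $\gamma(H)$: one must account correctly for the $-1$ correction coming from the shared vertex $u$, and check that the case split over ``who dominates $u$'' is exhaustive. Everything else is bookkeeping — the real work is the reduction of ``$S$ dominates $H$'' to separate conditions on $S_1$ and $S_2$, which is what makes the vertex-identification structure tractable.
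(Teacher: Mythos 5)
Your proposal is correct and takes essentially the paper's approach: case-analyze a minimum dominating set $S$ of $H$ by whether $u\in S$ and, if not, which of $S\cap G_1$ or $S\cap G_2$ dominates $u$, then read off a formula for $\gamma(H)$. The paper simply asserts the closed form $\gamma(H)=\gamma(G_1,u_1)+\gamma(G_2,u_2)+c$; your min-of-three expression is equivalent to it (one checks all nine combinations of act-as types), and your encoding of the act-as trichotomy via the pair $\big(\gamma(G)-\gamma(G,u),\ \gamma^{+}(G,u)-\gamma(G,u)\big)\in\{(0,0),(0,1),(1,1)\}$ is a tidy way of making explicit what the paper's one-line proof leaves implicit.
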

\begin{proof}
Looking at suitable rooted dominating sets shows that
\[
\gamma(H) = \gamma(G_1, u_1) + \gamma(G_2, u_2) + c
\]
where $c=-1$ if both $G_1$ and $G_2$ act as AB, $c=1$ if both act as Nope, and $c=0$ otherwise.
\end{proof}

\paragraph{Near-triangulations and chords} The above machinery allows us to deal with bridges and cut vertices.
For chords (=2-vertex-cuts) in near triangulations, we use similar techniques:
\begin{definition}
A \emph{rooted near-triangulation} $(G, u, v)$ is a near-triangulation $G$ with boundary edge $\{u, v\}$.
A \emph{rooted dominating set} $S \subseteq G$ is a set that dominates every vertex except maybe $u$ and $v$.
$\gamma(G, u, v)$ denotes the size of a minimum rooted dominating set.
If $(G_1, u_1, v_1)$ and $(G_2, u_2, v_2)$ are rooted near-triangulation,
then we \emph{attach} the latter to the former by taking the disjoint union $G_1 \sqcup G_2$ and identifying
$u_1$ with $u_2$ and $v_1$ with $v_2$.
\end{definition}
The attaching operation creates a chord, see Figure~\ref{fig:attaching} on page~\pageref{fig:attaching}.
Having two root vertices greatly increases the number of acts-as types.
\begin{definition}
Let $(G, u, v)$ be a rooted near-triangulation. Let $\gamma = \gamma(G, u, v)$.
In the following, each case excludes all the previous ones. We say $G$ \emph{acts as}
\begin{itemize}[itemsep=1pt, parsep=0pt, topsep=4pt, itemindent=16pt]
\item[A+B] if $G$ has a dominating set of size $\gamma$ that contains $u$ and $v$,
\item[OR] if $G$ has two dominating sets of size $\gamma$, one that contains $u$ and one that contains $v$,
\item[A] if $G$ has a dominating set of size $\gamma$ that contains $u$,
\item[B] if $G$ has a dominating set of size $\gamma$ that contains $v$,
\item[AND] if $G$ has a dominating set of size $\gamma$ and a dominating set of size $\gamma+1$ that contains both $u$ and $v$,
\item[L+R]if $G$ has a dominating set of size $\gamma$,
\item[OCTA] if $G$ has two rooted dominating set of size $\gamma$, one that dominates $u$ and one that dominates $v$, plus
a dominating set of size $\gamma+1$ that contains $u$ and $v$.
\item[L OR R] if $G$ has two rooted dominating sets of size $\gamma$, that, respectively, dominate $u$ and $v$, 
\item[L] if $G$ has a rooted dominating set of size $\gamma$ that dominates $u$,
\item[R] if $G$ has a rooted dominating set of size $\gamma$ that dominates $v$, and
\item[None] if otherwise.
\end{itemize}
\end{definition}
Here, the list of cases considered is tailored to our proof and non-exhaustive.
For example, in the L, R, None cases, we could also distinguish whether $G$ has a dominating
set of size $\gamma+1$ that contains both $u$ and $v$. Figure~\ref{fig:smallattach} on page~\pageref{fig:smallattach} gives an example of each case.

\paragraph{General $k$-vertex cuts} The idea of considering ``rooted'' instances is a technical contribution that we hope has applications in other 
classes of graphs. It can be generalized to $k$-vertex cuts for
any $k \ge 1$: Pick $k$ distinguished vertices.
For each of those vertices, we may (a) require it to be in the dominating set, (b) require it to be dominated, or (c) not require anything.
This yields $3^k$ combinations of restrictions in total.
The acts-as type is the $3 \times \dots \times 3$ tensor that describes how much each restriction increases the size of a minimum rooted dominating set.
One can show that the entries in such a tensor decrease along each dimension and decrease by at most 1 at a time, and that the number of such tensors is $\le 3^{k \cdot 2^{(k-1)}}$. 

\subsection{Penalty functions}
To facilitate a divide-and-conquer approach that deals with bridges, cut vertices and chords,
we want to generalize Theorem~\ref{theo:main} to the rooted setting.
There, we should only count problematic configurations that remain even after a fusing operation.

\begin{definition}[Penalty function]
    If $G$ is a skeletal triangulation on $n$ vertices, define $\Phi(G) = n + e/2 + f/2 + t/2$ where $e, f$ and $t$ are the number of ears, bad 5-wheels and degree-2 cut vertices in $G$, respectively.
    
    If $(G, u)$ is a rooted skeletal triangulation on $n+1$ vertices, define $\phi(G, u) = n + e/2 + f/2 + t/2 + r/2$ where $e$ is the number of ears containing a degree-2 vertex other than $u$,
    $f$ is the number of bad 5-wheels with a 3-pair disjoint from $u$, $t$ is the number of degree-2 cut vertices not equal to $u$, and $r$ is $1$ if $\deg_G(u) = 1$ and zero otherwise.
    
    If $(G, u, v)$ is a rooted near-triangulation on $n+2$ vertices, define $\phi(G, u, v) = n + e/2 + f/2 + t/2$ where $e$ is the number of ears containing a degree-2 vertex other than $u$ or $v$,
    $f$ is the number of bad 5-wheels with a 3-pair disjoint from $\{u, v\}$, and $t$ is the number of degree-2 cut vertices not equal to $u$ or $v$.
\end{definition}
The following properties follow immediately from the definitions:
\begin{align*}
    \phi(G, u)+1 &\le \Phi(G) \le \phi(G, u) + 1.5\\
    \phi(G, u, v) + 2 &\le \Phi(G) \le \phi(G, u, v) + 2.5
\end{align*}
If $G$ is obtained by attaching $(G_2, u_2, v_2)$ to $(G_1, u_1, v_1)$, then
\[
    \Phi(G) = \phi(G_1, u_1, v_1) + \phi(G_2, u_2, v_2) + 2.
\]
If $G$ is obtained by fusing $(G_2, u_2)$ to $(G_1, u_1)$, then
\[
    \Phi(G) \le \phi(G_1, u_1) + \phi(G_2, u_2) + 1,
\]
with equality if $\deg_{G_1}(u_1) \ne 1 \ne \deg_{G_2}(u_2)$.

Theorem~\ref{theo:main} states that, for any skeletal triangulation $G$ on $n > 10$ vertices, $\gamma(G) \le \lfloor \Phi(G) / 3.5 \rfloor$.
Using Theorem~\ref{theo:main}, we can show the following:
\begin{corollary}[Skeletal triangulation acts-as bounds] \label{coro:skeletalbound}
    Let $(G, u)$ be a rooted skeletal triangulation. If $(G, u)$ acts as
    \begin{itemize}[itemsep=1pt, parsep=0pt, topsep=4pt, itemindent=16pt]
        \item[AB] then $\phi(G, u) \ge 3.5 \cdot \gamma(G, u) - 1$,
        \item[LR] then $\phi(G, u) \ge 3.5 \cdot \gamma(G, u)$, 
        \item[Nope] then $\phi(G, u) \ge 3.5 \cdot \gamma(G, u) + 1.5$.
    \end{itemize}
\end{corollary}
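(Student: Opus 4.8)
The plan is to deduce all three inequalities from Theorem~\ref{theo:main} by turning the ``rooted'' hypothesis into an ordinary domination statement on a slightly larger skeletal triangulation, obtained by fusing one of the tiny attachments supplied by Lemmas~\ref{lemm:forcecover} and~\ref{lemm:fusereplace} to $(G,u)$, and then translating the resulting $\Phi$-bound back into a $\phi(G,u)$-bound via the penalty identities recorded just above the corollary. Throughout, one silently sets aside the finitely many $(G,u)$ that are too small for the fused graph to have more than $10$ vertices; these are handled by the last step.

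For the \textbf{LR} case I would fuse a minimal \emph{AB}-attachment — namely $K_3$ rooted at one of its vertices, which has $\gamma=1$, acts as AB, and has $\phi=2.5$ — to $(G,u)$, obtaining a skeletal triangulation $H$. The Fusing replacement lemma (mixed case, $c=0$) gives $\gamma(H)=\gamma(G,u)+1$, while the fusing penalty bound gives $\Phi(H)\le\phi(G,u)+2.5+1=\phi(G,u)+3.5$. Theorem~\ref{theo:main} then yields $\gamma(G,u)+1=\gamma(H)\le\Phi(H)/3.5\le(\phi(G,u)+3.5)/3.5$, i.e. $\phi(G,u)\ge 3.5\,\gamma(G,u)$. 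For the \textbf{AB} case the key point is to fuse a minimal \emph{LR}-attachment instead of an AB one: take $K_4$ minus an edge, rooted at one of its two degree-$2$ vertices (this has $\gamma=1$, acts as LR, and has $\phi=3.5$), and fuse it to $(G,u)$ to obtain $H$. Again $c=0$ and $\gamma(H)=\gamma(G,u)+1$, while now $\Phi(H)\le\phi(G,u)+3.5+1=\phi(G,u)+4.5$; Theorem~\ref{theo:main} gives $\gamma(G,u)+1\le(\phi(G,u)+4.5)/3.5$, hence $\phi(G,u)\ge 3.5\,\gamma(G,u)-1$. It is precisely the choice of an LR-attachment with $\phi=3.5$, rather than forcing $u$ into the dominating set with the AB-attachment of $\phi=2.5$, that makes the constant come out to $-1$ instead of the much weaker value one gets otherwise.

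For the \textbf{Nope} case, first assume $\deg_G(u)\ge 2$, so that $G$ is itself a skeletal triangulation. Since no minimum rooted dominating set of $(G,u)$ dominates $u$, every dominating set of $G$ has size $\ge\gamma(G,u)+1$, and adjoining a neighbour of $u$ to a minimum rooted dominating set shows $\gamma(G)=\gamma(G,u)+1$; then Theorem~\ref{theo:main} together with $\Phi(G)\le\phi(G,u)+1.5$ gives $3.5(\gamma(G,u)+1)\le\phi(G,u)+1.5$, i.e. $\phi(G,u)\ge 3.5\,\gamma(G,u)+2$. If instead $\deg_G(u)=1$, let $w$ be the neighbour of $u$. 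A minimum rooted dominating set of $(G,u)$ never uses $u$, and — because $(G,u)$ is Nope — never uses $w$ either, so it is exactly a minimum dominating set of $G-u$; thus $\gamma(G-u,w)=\gamma(G-u)=\gamma(G,u)$. Moreover $(G-u,w)$ cannot be Nope: if $\gamma(G-u,w)<\gamma(G-u)$, then adding $w$ to a minimum rooted dominating set of $(G-u,w)$ would be a minimum dominating set of $G-u$ containing $w$, which is impossible; so $(G-u,w)$ acts as LR. Deleting the pendant removes one vertex and the ``$r$''-term and, in the worst case, creates one new ear at $w$; a short check (using that $\phi(\cdot,\text{root})$ never penalizes the root) gives exactly $\phi(G,u)=\phi(G-u,w)+1.5$, and combining this with the already established LR bound for $(G-u,w)$ yields $\phi(G,u)=\phi(G-u,w)+1.5\ge 3.5\,\gamma(G-u,w)+1.5=3.5\,\gamma(G,u)+1.5$.

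The main obstacle is making the additive constants land on exactly $-1$, $0$ and $+1.5$: the naïve inequality chains are lossy, so one has to pick attachments whose own act-as type and $\phi$-value are tuned so that the ``$+1$'' from the fusing-penalty relation and the shift in $\gamma$ cancel correctly. Verifying the claimed $\gamma$, act-as type and $\phi$ of the two tiny attachments, checking that fusing (resp. deleting the pendant) at $u$ neither creates nor destroys problematic configurations in an uncontrolled way, and establishing the identity $\phi(G,u)=\phi(G-u,w)+1.5$, are all elementary but must be done with care. Finally, since Theorem~\ref{theo:main} needs more than $10$ vertices, the fused graphs must be large enough, which forces a direct inspection of the finitely many rooted skeletal triangulations on at most a small constant number of vertices (and of trivial instances such as the single vertex, for which the Nope inequality as literally written must be read modulo such exceptions).
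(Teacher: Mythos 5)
Your proposal is correct and follows essentially the same route as the paper: fuse the small LR ($\phi=3.5$) in the AB case and the small AB ($\phi=2.5$) in the LR case, apply Theorem~\ref{theo:main} to the fused graph together with the fusing penalty relation, and handle Nope either directly via $\Phi(G)\le\phi(G,u)+1.5$ or, when $\deg(u)=1$, by deleting the pendant and reducing to the LR bound for $(G-u,w)$. The only differences are presentational (you route the $\gamma(H)$ computation through Lemma~\ref{lemm:fusereplace} and make the small-graph caveat explicit, where the paper simply checks graphs on at most $10$ vertices by hand).
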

\begin{proof}
    If $G$ has $\le 10$ vertices, check by hand. In practice, only three specific triangulations have to be checked.
    Suppose $(G, u)$ acts as AB. Let $H$ be obtained by attaching a small LR to $(G, u)$. Then
    $\Phi(H) \le \phi(G, u) + 3.5 + 1$ and $\gamma(H) = \gamma(G, u) + 1$. Theorem~\ref{theo:main} yields
    \[
        \phi(G, u) \ge \Phi(H) - 4.5 \ge 3.5 \gamma(H) - 4.5 = 3.5\gamma(G, u) - 1.
    \]
    Suppose $(G, u)$ acts as LR. Let $H$ be obtained by attaching a small AB to $(G, u)$. Then
    $\Phi(H) \le \phi(G, u) + 2.5 + 1$ and $\gamma(H) = \gamma(G, u) + 1$. Theorem~\ref{theo:main} yields
    \[
        \phi(G, u) \ge \Phi(H) - 3.5 \ge 3.5 \gamma(H) - 3.5 = 3.5 \gamma(G, u).
    \]
    Suppose $(G, u)$ acts as Nope. If $\deg_{G}(u) \ne 1$, then $G$ is a skeletal triangulation, with
    $\Phi(G) \le \phi(G, u) + 1.5$ and $\gamma(G) = \gamma(G, u) + 1$. Theorem~\ref{theo:main} yields
    \[
        \phi(G, u) \ge \Phi(G) - 1.5 \ge 3.5 \gamma(G) - 1.5 = 3.5 \gamma(G, u) + 2
    \]
    If $\deg_G(u) = 1$, then let $v$ be the neighbor of $u$ and let $H = G - u$.
    Then $(H, v)$ acts as LR, $\phi(H, v) \le \phi(G, u) - 1.5$ and $\gamma(H, u) = \gamma(G, u)$. The LR case yields 
    \[
        \phi(G, u) \ge \phi(H, v) + 1.5 \ge 3.5 \gamma(H, v) + 1.5 = 3.5 \gamma(G, u) + 1.5.\qedhere
    \]
\end{proof}

Note that these bounds are tight in the examples in Figure~\ref{fig:smallfuse} on page~\pageref{fig:smallfuse}.
For rooted near-triangulations, there are analogous bounds, but those are not tight in all cases.

\begin{corollary}[Near-triangulation acts-as bounds] \label{coro:nearbound}
    Let $(G, u, v)$ be a rooted near-triangulation. If $(G, u, v)$ acts as
    \begin{itemize}[itemsep=1pt, parsep=0pt, topsep=4pt, itemindent=140pt]
        \item [A+B, OR] then $\phi(G, u, v) \geq 3.5 \cdot \gamma(G, u, v) - 2$,
        \item [A, B] then $\phi(G, u, v) \geq 3.5 \cdot \gamma(G, u, v) - 1$,
        \item [AND, L+R, OCTA, L OR R] then $\phi(G, u, v) \geq 3.5 \cdot \gamma(G, u, v)$,
        \item [L, R] then $\phi(G, u, v) \geq 3.5 \cdot \gamma(G, u, v) + 0.5$,
        \item [None] then $\phi(G, u, v) \geq 3.5 \cdot \gamma(G, u, v) + 1.5$.
    \end{itemize}
\end{corollary}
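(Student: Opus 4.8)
The plan is to push the rooted statement onto Theorem~\ref{theo:main} exactly as in the proof of Corollary~\ref{coro:skeletalbound}, case-splitting on the acts-as type, using three tools in combination. \emph{(i) Attaching a small gadget across the chord:} if $H$ is obtained by attaching a small rooted near-triangulation $(G_2, u_2, v_2)$ to $(G, u, v)$, then $\Phi(H) = \phi(G, u, v) + \phi(G_2, u_2, v_2) + 2$, and $\gamma(H) = \gamma(G, u, v) + \gamma(G_2, u_2, v_2) + c$ where the integer $c \in \{-2,-1,0,1\}$ depends only on the acts-as types of the two sides --- this ``combination rule'' is the two-rooted analogue of Lemma~\ref{lemm:fusereplace}, proved by the same trace argument: $S \cap V(G_i)$ is a rooted dominating set, its size is forced up by one precisely when its type forbids the needed root behaviour at minimum size, and pasting optimal witnesses of complementary types gives the matching upper bound. \emph{(ii) Fusing a small Forcing/Covering gadget at a single root:} using Lemma~\ref{lemm:forcecover} and the fusing inequality $\Phi(H) \le \phi(G, u) + \phi(G_2, u_2) + 1$, one turns ``$G$ has a minimum dominating set containing (resp.\ dominating) a given root'' into a statement about $\gamma(H)$ for a slightly larger $H$. \emph{(iii) The sandwich $\phi(G, u, v) + 2 \le \Phi(G) \le \phi(G, u, v) + 2.5$,} together with $3.5\,\gamma(G, u, v) \le 3.5\,\gamma(G) \le \Phi(G)$ from Theorem~\ref{theo:main}. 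As always, instances with $|V(H)| \le 10$ are finished by hand, and in practice only a few graphs arise.

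The cases split according to how far the ``free'' estimate $\phi(G, u, v) \ge 3.5\,\gamma(G) - 2.5$ is from the claim. For L, R and None the type excludes L+R, so $\gamma(G) \ge \gamma(G, u, v) + 1$ and the estimate improves to $\phi(G, u, v) \ge 3.5\,\gamma(G, u, v) + 1$; this already proves the L and R rows ($+0.5$) and leaves only a further half unit for None. For AND, L+R, OCTA, L OR R one has $\gamma(G) = \gamma(G, u, v)$, so the free estimate is too weak; here I would attach a single triangle across $\{u, v\}$ (with $\gamma_2 = 1$, $\phi_2 = 1.5$). Since none of these types has a minimum rooted dominating set containing a root, the triangle's new degree-$2$ vertex cannot be dominated for free, forcing $\gamma(H) = \gamma(G, u, v) + 1$, whence by (i), $\phi(G, u, v) = \Phi(H) - 3.5 \ge 3.5\,\gamma(H) - 3.5 = 3.5\,\gamma(G, u, v)$: exactly offset $0$. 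For A+B, OR, A, B one has $\gamma(G) = \gamma(G, u, v)$ but the triangle trick saves nothing (a minimum set already contains a root); for A and B I would instead fuse a small AB gadget at the root that no minimum rooted dominating set contains, which bumps $\gamma$ by one and, since the AB gadget contributes penalty $2.5$ against a fusing cost of only $1$, yields offset $-1$ via (ii). For A+B and OR the free estimate together with the sandwich will give $-2$ once the upper value of the sandwich is excluded, which is the remaining point.

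That point is whether $\Phi(G) - \phi(G, u, v)$ can equal $2.5$, i.e.\ whether $G$ has an ear whose only degree-$2$ vertex is a root, or a bad $5$-wheel whose only $3$-pair meets $\{u, v\}$. For A+B this cannot happen: if a root $u$ has degree $2$, say $u \sim v, w$ with $uvw$ a face, then $w \sim v$, so deleting $u$ from an A+B witness still dominates everything, contradicting $\gamma(G) = \gamma(G, u, v)$; hence $\Phi(G) - \phi(G, u, v) = 2$ for A+B and the free estimate gives $-2$ directly. For OR, A, B the analogous but more delicate subcases (a degree-$2$ root, or a bad $5$-wheel touching a root) are handled either by deleting or contracting the offending vertex and re-invoking the appropriate acts-as bound, or --- as in the bad-$5$-wheel analysis of Corollary~\ref{coro:main} --- by observing that the ambient near-triangulation is then so rigid (four boundary vertices) that it falls within the hand-checked range.

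The main obstacle is the bookkeeping. Against the three acts-as types of Corollary~\ref{coro:skeletalbound} there are now ten, and the combination constant $c$ has to be pinned down for every pairing that occurs; this is delicate for the compound types AND, OCTA and L OR R, whose definitions bundle a size-$\gamma$ witness with a size-$(\gamma+1)$ witness carrying an extra property, so that which side dominates which root interacts non-trivially with which of the two witnesses a minimum dominating set of $H$ uses. One must also verify that the attached or fused gadgets never create a new ear, a bad $5$-wheel with a $3$-pair disjoint from the roots, or a degree-$2$ cut vertex, so that the penalty identities hold on the nose; this is why the gadgets must be taken from the explicit finite list of Figure~\ref{fig:smallattach} rather than improvised. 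Finally, unlike Corollary~\ref{coro:skeletalbound}, these bounds are not tight in every case, so there is no per-case extremal example to check the arithmetic against --- one simply confirms that the chosen gadget (or direct argument) delivers the stated offset.
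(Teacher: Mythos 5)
Your middle block (AND, L+R, OCTA, L OR R) is exactly the paper's argument: attach the triangle (the small OR), get $\gamma(H)=\gamma(G,u,v)+1$ and $\Phi(H)=\phi(G,u,v)+3.5$, done. Your L, R argument is actually a shortcut the paper does not use: since these types exclude L+R, $\gamma(G)\ge\gamma(G,u,v)+1$, and the crude sandwich $\phi\ge\Phi(G)-2.5\ge 3.5\gamma(G)-2.5$ already gives $+1\ge+0.5$; that is clean and correct. The problems are in the remaining cases. First, the \textbf{None} case is never finished: your estimate yields $+1$, you note that ``a further half unit'' is needed, and then the proposal moves on. That half unit is the genuinely hard part. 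The paper gets it by splitting on whether $\Phi(G)=\phi(G,u,v)+2$ (then the crude bound gives $+1.5$ directly), and otherwise either applying the single-rooted bound $\phi(G,v)\ge 3.5\gamma(G,v)-1$ to one root that is not involved in a low-degree problem, or, when $\{u,v\}$ is itself the 3-pair of a bad 5-wheel, contracting $u,v$ and invoking the Nope bound for the resulting rooted skeletal triangulation. None of this is in your proposal.

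Second, your route for A+B, OR, A, B repeatedly collides with the slack $\Phi(G)\le\phi(G,u,v)+2.5$ and only partially defuses it. For A+B you handle the ear-tip root but not the case where $\{u,v\}$ is the unique 3-pair of a bad 5-wheel (the same ``replace by the hub'' argument works, but you must say it); for OR, A, B you defer to ``delete or contract the offending vertex,'' which is not an argument — an OR-type graph with an ear-tip root is not obviously impossible, and your fuse-an-AB computation for A/B, done with the unrooted $\Phi(H)\le\Phi(G)+2.5\le\phi+5$, only yields $\phi\ge 3.5\gamma-1.5$, which is too weak. The paper's uniform fix, which you should adopt, is to attach the \emph{complementary} small gadget across the base edge $\{u,v\}$ in every one of these cases (L+R for A+B/OR, the opposite of A/B for A/B): then $u,v$ become chord endpoints of $H$, are never involved in low-degree problems, and the identity $\Phi(H)=\phi(G,u,v)+\phi_2+2$ holds with no slack at all. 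Finally, your blanket ``combination rule'' (i) — that the constant $c$ depends only on the two acts-as types — is stronger than anything the paper proves and is not implied by the listed types, which the paper explicitly notes are non-exhaustive (e.g.\ two None-type graphs may differ in whether they admit a size-$(\gamma+1)$ set containing both roots); you only need $c$ for the specific gadget pairings, so restrict the claim to those.
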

\begin{proof}
    Similar to the proof of Corollary~\ref{coro:skeletalbound}. We omit the details.
\end{proof}

\paragraph{Lower-bound examples} These bounds suggest that the most efficient building blocks for
lower-bound examples typically act as A+B  or OR. Indeed, the building blocks in Figure~\ref{fig:examples} both act as A+B.
The left building block in Figure~\ref{fig:triconnected} acts as A on the bottom edge and as B on the top edge,
and was found by enumerating 3-connected near-triangulations with Plantri~\cite{brinkmann2007fast} and filtering for large domination numbers and
interesting combinations of acts-as types. Filtering for acts-as types enables us to find this building block at $n=12$ already,
even though the constructed example only exceeds the $\lfloor n/4 \rfloor$ bound at $n=21$.

\subsection{The divide-and-conquer technique} \label{sect:divcong}
Consider a skeletal triangulation.
The machinery we introduced so far allows us to assume that, for any bridge, cut vertex, or 2-vertex-cut in $G$,
one side of the cut has constant size. We illustrate this in the case of cut vertices.

Let $G$ be a skeletal triangulation obtained by fusing $(G_2, u_2)$ to $(G_1, u_1)$.
Suppose, for example, that $(G_2, u_2)$ acts as AB.
Let $H$ be obtained by fusing a small AB, denoted $(H_2, v)$, to $(G_1, u_1)$.
Then, by Lemma~\ref{lemm:fusereplace},
\[
    \gamma(G) = \gamma(H) + \gamma(G_2, u_2) - \gamma(H_2, v).
\]
The small AB satisfies $\phi(H_2, v) = 3.5 \cdot \gamma(H_2, v) - 1$.
By Corollary~\ref{coro:skeletalbound}, $\phi(G_2, u_2) \ge 3.5 \cdot \gamma(G_2, u_2) - 1$. Therefore,
\begin{align*}
    \Phi(G) - \Phi(H) &\ge \phi(G_1, u_1) + \phi(G_2, u_2) + 1 - \Big(\phi(G_1, u_1) + \phi(H_2, v) + 1 \Big)\\
    &= \phi(G_2, u_2) - \phi(H_2, v)\\
    &\ge 3.5 \Big(\gamma(G_2, u_2) -  \gamma(H_2, v) \Big) = 3.5 \Big(\gamma(G) - \gamma(H) \Big)
\end{align*}
In particular, if $H$ satisfies Theorem~\ref{theo:main}, i.e. if $\Phi(H) \ge 3.5 \gamma(H)$,
then so does $G$.

In the actual proof, some care has to be taken to avoid circular arguments inside the induction step.
For example, in order to use Corollary~\ref{coro:skeletalbound}, $G_2$ should not be a small AB / LR / Nope.

\subsection{Dealing with the 3-connected case} \label{sect:threecontricks}
Once $G$ is 3-connected, we manually pick specific (high-degree) vertices to be in the dominating set and then delete
the picked vertices and sufficiently many of their neighbors. Intuitively, this should always be possible
by looking at a large enough section of the graph, given that
we expect 3-connected near-triangulations to satisfy $\gamma \le 3n / 11$, which is a bit stronger than the $2n/7$ bound we are aiming for.

The main difficulty is that deleting a high-degree vertex may yield many problematic configurations, increasing $\Phi$. Even worse, the graph might get separated into many small components, for which Theorem~\ref{theo:main} on longer holds. We deal with these issues in three different ways: (1) delete edges instead of vertices. Deleting an edge only affects the two incident vertices, which is much easier to handle than a vertex deletion. For example, a (non-bridge) boundary edge between two vertices of degree $\ge 5$ may always be deleted,
as this never creates any problematic configurations. (2) whenever we delete vertices, fuse a small LR to any vertex that is already dominated. This gets rid of any problematic configurations caused by that vertex. (3) when picking a vertex to be in the dominating set, instead of deleting that vertex, fuse a small AB to it. Fusing a small AB increases $\Phi$ by $2.5$ while (often) increasing $\gamma$ by $1$. This has essentially the same effect as decreasing $\Phi$ by $1$ by deleting the vertex, but avoids the aforementioned issues around vertex deletions.

Nevertheless, this part of our proof contains many cases. This is likely unavoidable:
Since Theorem~\ref{theo:main} does not hold for $n=10$, our proof needs
to look at a large enough piece of the graph to avoid a specific 10-vertex example,
see Theorem~\ref{theo:mainprecise}.

\section{A sketch of the full proof}\label{sec:sketch}

The precise version of Theorem~\ref{theo:main} is the following.
\begin{theorem}\label{theo:mainprecise}
    Let $G$ by a skeletal triangulation that is not the 3-bifan, octahedron or the special 4343434-heptagon (see Figure~\ref{fig:sporadic}). 
    Then $\gamma(H) \le \lfloor \frac{\Phi(G)}{3.5} \rfloor$.
\end{theorem}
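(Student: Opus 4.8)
The plan is to prove Theorem~\ref{theo:mainprecise} by strong induction on $n$, the number of vertices of the skeletal triangulation $G$. The base cases are all $G$ with $n \le 10$; these are finitely many (bounded-size internally-triangulated planar graphs), and one checks by hand that $\gamma(G) \le \lfloor \Phi(G)/3.5 \rfloor$ holds for all of them except the three sporadic examples (the 3-bifan, the octahedron, and the special $4343434$-heptagon), which are therefore excluded from the statement. For the inductive step we assume $n > 10$ and that every skeletal triangulation on fewer vertices, other than the three sporadic ones, satisfies the bound; we must produce a dominating set of the required size for $G$.

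\textbf{Phase 1: reducing small cuts via divide and conquer.} First I would dispose of the case where $G$ has a bridge, a cut vertex, or (if $G$ is a near-triangulation) a chord. Using the fusing/attaching machinery of Section~\ref{sec:techniques}: write $G$ as a fusion (resp.\ attachment) of two rooted pieces $(G_1,u_1)$ and $(G_2,u_2)$ along the cut, with $G_2$ the ``small'' side. Determine the acts-as type of $(G_2,u_2)$, replace $G_2$ by a \emph{minimal} rooted skeletal triangulation (resp.\ rooted near-triangulation) of that type from the finite hand-checked list, obtaining a smaller graph $H$. By Lemma~\ref{lemm:fusereplace}, $\gamma(G) - \gamma(H) = \gamma(G_2,u_2) - \gamma(H_2,v)$, and by Corollary~\ref{coro:skeletalbound} (resp.\ Corollary~\ref{coro:nearbound}) together with the tightness of the $\phi$-bounds on the minimal attachments, $\Phi(G) - \Phi(H) \ge 3.5(\gamma(G) - \gamma(H))$; applying the induction hypothesis to $H$ then transfers the bound back to $G$, as shown in the sample computation in Section~\ref{sect:divcong}. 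Care is needed to keep this non-circular: if $G_2$ is \emph{itself} one of the minimal attachments (or too small for Corollary~\ref{coro:skeletalbound}), we do not reduce but instead note that the other side $G_1$ is already the bulk of $G$ and handle $(G_1,u_1)$ directly via the rooted version of the induction hypothesis. One also has to make sure that after reduction we do not accidentally land on one of the three sporadic graphs, or that if we do, a slightly different reduction (e.g.\ choosing a different minimal representative, or deleting a boundary edge as in the proof of Corollary~\ref{coro:main}) avoids it.

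\textbf{Phase 2: the 3-connected case.} After Phase 1 we may assume $G$ is 3-connected (and, modulo deleting one boundary edge, has no bad 5-wheel, as in Corollary~\ref{coro:main}), so $\Phi(G) = n$ and every vertex has degree $\ge 3$. Now I would follow the program of Section~\ref{sect:threecontricks}: locate a suitable high-degree vertex $w$ (or a short structure on the boundary), put $w$ into the dominating set, and ``pay'' for it by fusing a small AB to $w$ and fusing a small LR to every neighbor of $w$ that thereby becomes dominated, rather than literally deleting anything; where possible, first delete boundary edges between two vertices of degree $\ge 5$, since such deletions create no problematic configuration. Each such operation changes $(\Phi,\gamma)$ in a controlled way — fusing a small AB costs $+2.5$ in $\Phi$ for $+1$ in $\gamma$ (net effect like spending $1$ unit of $\Phi$ per dominator), fusing a small LR costs $+1$ in $\Phi$ for $0$ in $\gamma$ but removes the penalties the now-dominated vertex would otherwise incur — so that the resulting graph $H$ (smaller than $G$ in an appropriate well-founded measure, or strictly smaller in vertex count after the dust settles) satisfies $\Phi(H) \ge 3.5(\gamma(G) - k) + (\text{slack})$ where $k$ is the number of vertices we committed to $S$; induction on $H$ plus the committed vertices yields a dominating set of $G$ of the right size. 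The choice of $w$ and the local surgery must be organized into cases according to the degree of $w$ and of its neighbors and second-neighbors along the boundary, checking in each case that the number of newly created ears / bad 5-wheels / degree-2 cut vertices is small enough that the $\Phi$-budget balances, and that no component of the leftover graph is too small (or is a sporadic graph) for the induction hypothesis; when a small or sporadic component does appear, it is absorbed into the hand-checked casework.

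\textbf{Where the difficulty lies.} The base-case verification and Phase 1 are essentially bookkeeping once the framework of Section~\ref{sec:techniques} is in place; the genuine obstacle is Phase 2. The constant $2n/7 = n/3.5$ is tight (Figure~\ref{fig:examples}), so there is no room to be wasteful: when we pick $w$ and remove a chunk of the graph around it, we need on the order of $3.5$ vertices removed per dominator picked, \emph{and} we must not create more than a $\Phi$-cost's worth of new degree-2 vertices, ears, and bad 5-wheels at the boundary of the hole. Guaranteeing that a large enough, clean enough local configuration always exists in an arbitrary 3-connected near-triangulation — strong enough to hit the $3.5$ ratio while controlling all three penalty terms simultaneously — is exactly the point that forces the long case analysis and the $n>10$ hypothesis (the $4343434$-heptagon being the obstruction that a too-local argument would miss). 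I expect essentially all of the real work, and all of the case explosion, to be concentrated here.
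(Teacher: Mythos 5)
Your overall architecture matches the paper's (induction; classify small cuts by acts-as type and reduce; then a long 3-connected case analysis using forcing with small ABs and covering with small LRs), but there are two concrete gaps. First, the induction measure: you propose strong induction on the number of vertices $n$, yet the very operations you describe in Phase 2 — fusing a small AB to a chosen vertex and fusing a small LR to each of its dominated neighbours — strictly \emph{increase} the vertex count (and often $\Phi$ as well), so the resulting graph $H$ is not smaller in your ordering and the induction does not close. The paper resolves this with a lexicographic well-ordering led by the number of \emph{interior} vertices (then number of bridges, then $\Phi$, then number of blocks, then $n$, then number of degree-2 vertices): the surgeries delete interior structure while only adding boundary attachments, so they decrease the leading coordinate even when $n$ and $\Phi$ go up. Your hedge ``smaller in an appropriate well-founded measure'' acknowledges the problem but does not supply the measure, and supplying it is not cosmetic — several cases (e.g.\ the bad 5-wheel reduction, where $\Phi(H) > \Phi(G)$) only terminate because interior vertices decrease.

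Second, your Phase 1 for chords assumes you can always replace the small side of a 2-cut by a minimal attachment of the same acts-as type and invoke ``tightness of the $\phi$-bounds on the minimal attachments.'' That tightness fails for the types OCTA, L OR R, L and R: as recorded in Table~\ref{tab:actbound}, the smallest examples of these types have $\phi$ strictly larger than the lower bound of Corollary~\ref{coro:nearbound}, so swapping in the small representative loses ground in $\Phi$ and the inequality $\Phi(G)-\Phi(H) \ge 3.5(\gamma(G)-\gamma(H))$ breaks. For those types the paper instead \emph{deletes} the attached side (possibly together with one endpoint of the chord) and charges the newly created low-degree problems against the slack in the corollary; only the types A+B, OR, A, B admit replacement. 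Relatedly, after the chord phase the graph is not yet 3-connected — it is a 3-connected ``polygon'' carrying residual small A, B and OR attachments, and eliminating those is itself a substantial block of casework that your sketch folds silently into ``Phase 1 is bookkeeping.'' Your identification of the 3-connected degree-pattern analysis as the hard core is fair, but as written the reduction phase has a step that would fail, not merely a step that is tedious.
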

We prove this via induction, using a carefully chosen partial ordering on skeletal triangulations.
\begin{definition}
    Let $G$ and $H$ be skeletal triangulations. We say $G$ is \emph{smaller} than $H$ if, in decreasing order of importance: (1) $G$ has fewer interior vertices (than $H$), (2) $G$ has fewer bridges, (3) $G$ has smaller $\Phi$, (4) $G$ has fewer blocks (2-connected components), (5) $G$ has fewer vertices, (6) $G$ has fewer degree-2 vertices.
\end{definition}

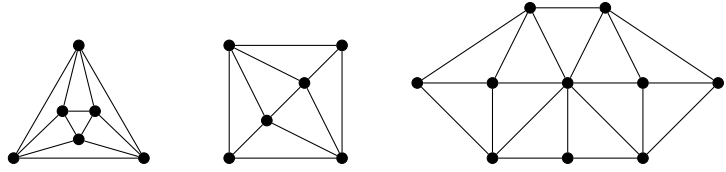
\begin{figure}
	\begin{center}
		\begin{tikzpicture}
			\begin{scope}[scale=0.5]
				\begin{scope}[shift={(0, 1)}]
					\node (A) at (210:2) {};
					\node (B) at (-30:2) {};
					\node (C) at  (90:2) {};
					\node (X) at ( 30:0.5) {};
					\node (Y) at (150:0.5) {};
					\node (Z) at (-90:0.5) {};
					\draw (A) -- (B) -- (C) -- (A);
					\draw (A) -- (Z) -- (B) -- (X) -- (C) -- (Y) -- (A);
					\draw (X) -- (Y) -- (Z) -- (X);
				\end{scope}
				\begin{scope}[shift={(4,0)}]
					\node (A) at (0, 0) {};
					\node (B) at (3, 0) {};
					\node (C) at (3, 3) {};
					\node (D) at (0, 3) {};
					\node (X) at (1, 1) {};
					\node (Y) at (2, 2) {};
					\draw (A) -- (B) -- (C) -- (D) -- (A);
					\draw (A) -- (X) -- (Y) -- (C);
					\draw (B) -- (X) -- (D) -- (Y) -- (B);
				\end{scope}
				\begin{scope}[shift={(13, 2)}]
					\node (A) at (-1, 2) {};
					\node (B) at (-4, 0) {};
					\node (C) at (-2, -2) {};
					\node (D) at (0, -2) {};
					\node (E) at (2, -2) {};
					\node (F) at (4, 0) {};
					\node (G) at (1, 2) {};
					\node (X) at (-2, 0) {};
					\node (Y) at (0, 0) {};
					\node (Z) at (2, 0) {};
					\draw (A) -- (B) -- (C) -- (D) -- (E) -- (F) -- (G) -- (A) {};
					\draw (B) -- (X) -- (Y) -- (Z) -- (F) {};
					\draw (C) -- (X) -- (A) -- (Y) -- (G) -- (Z) -- (E) -- (Y) -- (C);
					\draw (D) -- (Y);
				\end{scope}
			\end{scope}
		\end{tikzpicture}
	\end{center}
	\caption{From left to right: octahedron, $3$-bifan, special 4343434-heptagon.}
	\label{fig:sporadic}
\end{figure}

\noindent
Here is a rough sketch of how we prove the theorem.
\begin{enumerate}[itemsep=1pt, parsep=0pt, topsep=4pt]
    \item If $G$ has a bridge, apply Corollary~\ref{coro:skeletalbound} to both sides of the bridge, then check the 9 combinations of AB / LR / Nope. The corollary may be used as both sides have fewer bridges (and at most as many interior vertices) as $G$, hence Theorem~\ref{theo:main} holds for these graphs by induction. Conclusion: $G$ has no bridges.
    \item If $G$ has a cut vertex, use Section~\ref{sect:divcong} to replace one side by a small AB / LR / Nope. Then, replace the AB by an A attachment, delete the Nope and LR. In the LR case, delete the cut vertex too if there is a problematic configuration. This is justified by Lemma~\ref{lemm:forcecover}. Conclusion: $G$ has no cut vertices.
    \item If $G$ has a chord and one side acts as AND, L+R, OCTA, L OR R, L, R, None, use Corollary~\ref{coro:nearbound} to bound that side and delete it, possibly together with one of the endpoints of the chord.
    Some of the bounds in Corollary~\ref{coro:nearbound} are not tight on any small example, so we cannot just replace these attachments by small ones.
    \item If $G$ has a chord, then one side acts as A+B, OR, A, B. Replace that side by a small OR, OR, A, B.
    (Here, Corollary~\ref{coro:nearbound} is tight.)
    \item Handle small As and Bs by (a) deleting boundary edges leading to high-degree vertices and (b) deleting neighboring low-degree vertices that are dominated by the ``forced'' vertex in the A / B.
    \item Handle small ORs by considering many cases. After this step, we may conclude: $G$ is 3-connected as there are no chords.
    \item Try deleting any boundary edge without creating problematic configurations. If this does not work, then the boundary of $G$ consists of problematic configurations that are ``covered'' by a single edge. After this step, we conclude: $G$ has many degree-3 boundary vertices. Moreover, the degrees on the boundary of $G$ follow one of the following patterns: $345^{+}43$, $345^{+}3$, $35^{+}43$, $35^{+}3$, $34443$, $3443$, $343$, $33$.
    \item Handle the degree patterns $345^{+}43$, $345^{+}3$, $35^{+}43$ and $34443$, followed by $33$, followed by $35^{+}3$ and $3443$. This involves checking many cases by hand. Using the techniques from Section~\ref{sect:threecontricks}, this is not difficult, but it is a bit tedious. Conclusion: only the degree patterns $3443$ and $343$ remain.
    \item Handle the remaining cases while avoiding the 3-bifan, octahedron and special 4343434-heptagon. If $G$ has many boundary vertices, this is easy, but if $G$ has few vertices, we have to be careful to avoid these examples.
\end{enumerate}



\section{Full Proof in Detail}\label{sect:actasrefine}

In the following, we will provide the details sketched in Sections~\ref{sec:techniques} and \ref{sec:sketch}. 

\tored{Make this connect with the previous chapters. E.g. the definition of ears is different.}

\subsection{Attaching}

``Attaching'' small near-triangulations to a boundary edge of a given planar graph turns out to be
a useful tool for manipulating dominating sets. The natural way of doing
this is by creating a 2-cut.

\toyellow{Add some references to the later section to the first 10 pages?}
\toyellow{Replace $s$ by $\gamma$?}
\begin{definition}[Rooted near-triangulation]
    A \emph{rooted near-triangulation} $G = (G, u, v) = (V, E, u, v)$ \emph{with base} $u, v$ is
    a near-triangulation $G$ with a boundary edge $u, v$. The \emph{base vertices} are $u, v$ and the
    \emph{base edge} is $\{u, v\}$.
\end{definition}

\begin{definition}[Attaching] \label{def:attaching}
    Let $G_1$ be a skeletal triangulation with boundary edge $u_1, v_1$.
    Let $G_2 = (G_2, u_2, v_2)$ be a rooted near-triangulation.
    We can \emph{attach} $G_2$ to $u_1, v_1$ as follows: Consider the disjoint union $G_1 \sqcup G_2$
    and identify $u := u_1 = u_2$ and $v := v_1 = v_2$.
\end{definition}

\begin{figure}
    \begin{center} \begin{tikzpicture} \begin{scope}[scale=0.7]
        \newcommand{\doit}[2]{
            \node (AA) at (-3, 1.7) {};
            \node (A) at (-2, 0) {};
            \node (B) at (-1, 1.7) {};
            \node (C) at (0, 0) {};
            \node (D) at (1, 1.7) [#1] {};
            \node (E) at (2, 0) [#2] {};
            \node (F) at (1, 0.6) {};
            \draw (AA) -- (A) -- (B) -- (C) -- (D) -- (E) -- (C) (B) -- (AA);
            \draw (F) -- (C) (F) -- (D) (F) -- (E);
        }
        \begin{scope}
            \doit{label=above right:$u_1$}{label=above right:$v_1$}
        \end{scope}
        \begin{scope}[shift={(3.5, 0)}]
            \node (T) at (0, 0) [label=below:$u_2$] {};
            \node (U) at (2, 0) [label=below:$v_2$] {};
            \node (V) at (2, 1.3) {};
            \node (W) at (0, 1.3) {};
            \draw (T) -- (U) -- (V) -- (W) -- (T);
            \draw (U) -- (W);
        \end{scope}
        \begin{scope}[shift={(10, 0)}]
            \doit{label=above left:$u$}{label=below right:$v$}
            \node (V) at (3, 0.8) {};
            \node (W) at (2, 2.45) {};
            \draw (D) -- (W) -- (V) -- (E);
            \draw (E) -- (W);
        \end{scope}

    \end{scope} \end{tikzpicture} \end{center}
    \caption{A skeletal triangulation with boundary edge $\{u_1, v_1\}$, a rooted near triangulation with base $u_2, v_2$ and the result of attaching the later to $u_1, v_1$.}
    \label{fig:attaching}
\end{figure}

See Figure~\ref{fig:attaching} for an example.
The resulting graph $G$ is a (unrooted) skeletal triangulation with chord $u, v$,
with one ``side'' (including $u, v$) being isomorphic to $G_1$ and the other side isomorphic to $G_2$.
If $G_1$ was a near-triangulation, then so is $G$.
The following generalization of dominating sets behaves well with regards to
attaching.

\begin{definition}[Rooted dominating set]
    A \emph{rooted dominating set} of a rooted near-triangulation with base $u, v$
    is a subset $S \subseteq V$ such that $N[S] \supseteq V \setminus \{u, v\}$,
    i.e. such that every vertex except maybe $u, v$ is either in $S$ or has a neighbor in $S$.
    We denote the size of a minimum rooted dominating set by $s(G, u, v)$.
\end{definition}
\begin{lemma} \label{lemm:attachdom}
    In the setting of Definition~\ref{def:attaching},
    let $S_1$ be a dominating set in $G_1$ and let $S_2$ be a rooted dominating set in $G_2$.
    Then $S_1 \cup S_2$ is a dominating set in the graph $G$ obtained by attaching $G_2$ to $u_1, v_1$.
    In particular, $s(G) \le s(G_1) + s(G_2, u, v)$.
\end{lemma}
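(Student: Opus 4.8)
The statement to prove is Lemma~\ref{lemm:attachdom}: in the setting of Definition~\ref{def:attaching}, if $S_1$ is a dominating set in $G_1$ and $S_2$ is a rooted dominating set in $G_2$, then $S_1 \cup S_2$ is a dominating set in the graph $G$ obtained by attaching $G_2$ to $u_1, v_1$, and consequently $s(G) \le s(G_1) + s(G_2, u, v)$.

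This is essentially a matter of chasing definitions. Let me sketch the plan.

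\textbf{Plan.} The graph $G$ is the disjoint union $G_1 \sqcup G_2$ with $u_1 = u_2 =: u$ and $v_1 = v_2 =: v$ identified. Every vertex $w$ of $G$ is either a vertex of $G_1$ or a vertex of $G_2$ (with $u, v$ being in both). The key observation is that $G_1$ and $G_2$ are \emph{induced} subgraphs of $G$ meeting exactly in $\{u,v\}$, so no new adjacencies are created: $N_G[S_i] \cap V(G_i) \supseteq N_{G_i}[S_i]$ for $i = 1,2$.

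First I would take an arbitrary vertex $w \in V(G)$ and split into two cases. If $w \in V(G_1)$: since $S_1$ dominates all of $G_1$, there is some $x \in S_1$ with $x \in N_{G_1}[w]$, hence $x \in N_G[w]$, so $w$ is dominated by $S_1 \cup S_2$. If $w \in V(G_2)$: since $S_2$ is a rooted dominating set of $(G_2, u, v)$, we have $N_{G_2}[S_2] \supseteq V(G_2) \setminus \{u, v\}$; so if $w \notin \{u, v\}$, then $w$ is dominated by $S_2$ in $G_2$ and hence in $G$. The only remaining vertices are $u$ and $v$ themselves — but these lie in $V(G_1)$, so they are already covered by the first case (since $S_1$ is a full, not merely rooted, dominating set of $G_1$). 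Hence every vertex of $G$ is in $S_1 \cup S_2$ or adjacent to it, i.e. $S_1 \cup S_2$ is a dominating set of $G$.

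For the ``in particular'' clause: take $S_1$ to be a minimum dominating set of $G_1$, so $|S_1| = s(G_1)$, and $S_2$ a minimum rooted dominating set of $(G_2, u, v)$, so $|S_2| = s(G_2, u, v)$. Then $S_1 \cup S_2$ is a dominating set of $G$ of size $|S_1 \cup S_2| \le |S_1| + |S_2| = s(G_1) + s(G_2, u, v)$, so $s(G) \le s(G_1) + s(G_2, u, v)$.

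\textbf{Main obstacle.} There is no real obstacle here; the only thing to be careful about is the handling of the base vertices $u$ and $v$, which is exactly why $S_1$ is required to be a genuine dominating set of $G_1$ while $S_2$ need only be a rooted one — any vertex of $G_2$ that fails to be dominated within $G_2$ must be one of $u, v$, and those get dominated from the $G_1$ side. One should also note explicitly that attaching creates no edges between $V(G_1)\setminus\{u,v\}$ and $V(G_2)\setminus\{u,v\}$, so domination ``does not leak'' in an unhelpful direction, but this also means it cannot help — all the work is done by the two given sets within their own sides.
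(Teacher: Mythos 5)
Your proof is correct and is exactly the routine definition-chase the paper has in mind (the paper dismisses it as ``Straight-forward''); the case split on $w \in V(G_1)$ versus $w \in V(G_2)$, with $u,v$ handled by the full dominating set $S_1$, is the right and only point to check.
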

\begin{proof} Straight-forward. \end{proof}

\subsection{Fusing}

``Fusing'' is the analog of attaching, but now both graphs are skeletal triangulations
and we identify only a single vertex.
The natural way of doing this is by creating a 1-cut. 
We want to entertain the possibility of creating a bridge or degree-2 cut vertex this way,
so we allow the ``fused'' vertex to have degree one.

\begin{definition}[Rooted skeletal triangulation]
    A \emph{rooted skeletal triangulation} $G = (G, u) = (V, E, u)$ \emph{with root $u$}
    is a connected planar graph in which every bounded face is a triangle and every vertex
    except maybe $u$ has degree $\ge 2$.
\end{definition}

\begin{definition}[Fusing] \label{def:fusing}
Let $G_1$ be a skeletal triangulation with boundary vertex $u_1$ or a rooted
skeletal triangulation with root $u_1$.
Let $G_2$ be a rooted skeletal triangulation with root $u_2$.
We can \emph{fuse} $G_2$ to $u_1$ as follows: Consider the disjoint union $G_1 \sqcup G_2$
and identify $u := u_1 = u_2$.
\end{definition}

\begin{figure}
    \begin{center} \begin{tikzpicture} \begin{scope}[scale=0.7]
        \newcommand{\doit}[1]{
            \node (AA) at (-3, 1.7) {};
            \node (A) at (-2, 0) {};
            \node (B) at (-1, 1.7) {};
            \node (C) at (0, 0) {};
            \node (D) at (1, 1.7) [#1] {};
            \node (E) at (2, 0) {};
            \node (F) at (1, 0.6) {};
            \draw (AA) -- (A) -- (B) -- (D) (C) -- (D) -- (E) -- (C) (B) -- (AA);
            \draw (F) -- (C) (F) -- (D) (F) -- (E);
        }
        \begin{scope}
            \doit{label=above right:$u_1$}
        \end{scope}
        \begin{scope}[shift={(3.5, 0)}]
            \node (T) at (0, 0) [label=below:$u_2$] {};
            \node (U) at (1, 0.6) {};
            \node (V) at (2, 1.3) {};
            \node (W) at (0, 1.3) {};
            \node (X) at (1, 1.7) {};
            \draw (T) -- (U) -- (V) -- (W) -- (X) -- (V);
        \end{scope}
        \begin{scope}[shift={(10, 0)}]
            \doit{label=above:$u$}
            \node (W) at (3, 0.4) {};
            \node (V) at (2, 1) {};
            \node (X) at (3, 1.5) {};
            \node (Y) at (2, 2) {};
            \draw (D) -- (V) -- (W) -- (X) -- (Y) -- (W);
        \end{scope}

    \end{scope} \end{tikzpicture} \end{center}
    \caption{A skeletal triangulation with boundary vertex $\{u_1\}$, a rooted skeletal triangulation with base $u_2$ and the result of fusing the later to $u_1$.}
    \label{fig:fusing}
\end{figure}

See Figure~\ref{fig:fusing} for an example.
The resulting graph $G$ is a skeletal triangulation
with cut vertex $u$, with each block in $G$ corresponding to a block in exactly one of $G_1, G_2$.
The following generalization of dominating sets behaves well with regards to
fusing.

\begin{definition}[Rooted dominating set]
    A \emph{rooted dominating set} in a rooted skeletal triangulation $G$ with root $u$
    is a subset $S \subseteq V$ such that $N[S] \supseteq V \setminus \{u\}$,
    i.e. such that every vertex except maybe $u$ is either in $S$ or has a neighbor in $S$.
    We denote the size of a minimum rooted dominating set by $s(G, u)$.
\end{definition}
\begin{lemma}\label{lemm:fusedom}
    In the setting of Definition~\ref{def:fusing},
    let $S_1$ be a dominating set in $G_1$ and let $S_2$ be a rooted dominating set in $G_2$,
    then $S_1 \cup S_2$ is a dominating set in the graph $G$ obtained by fusing $G_2$ to $u_1$.
    In particular, $s(G) \le s(G_1) + s(G_2, u_2)$.
\end{lemma}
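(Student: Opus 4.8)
The plan is to verify directly that $S := S_1 \cup S_2$ is a dominating set of the fused graph $G$, relying on the fact that fusing is a purely local operation: beyond identifying $u_1$ with $u_2$, no new edges are created. So I would first record the obvious facts about closed neighbourhoods in $G$, writing $u = u_1 = u_2$: for $w \in V(G_1) \setminus \{u\}$ we have $N_G[w] = N_{G_1}[w]$; for $w \in V(G_2) \setminus \{u\}$ we have $N_G[w] = N_{G_2}[w]$; and $N_G[u] = N_{G_1}[u_1] \cup N_{G_2}[u_2]$. In particular $N_{G_1}[w] \subseteq N_G[w]$ for every $w \in V(G_1)$, and $N_{G_2}[w] \subseteq N_G[w]$ for every $w \in V(G_2)$.

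Next I would take an arbitrary vertex $w$ of $G$ and split into two cases according to whether $w \in V(G_1)$ or $w \in V(G_2) \setminus \{u\}$; these cover all of $V(G) = V(G_1) \cup V(G_2)$. If $w \in V(G_1)$, then since $S_1$ is a genuine (unrooted) dominating set of $G_1$ we have $N_{G_1}[w] \cap S_1 \ne \emptyset$, hence $N_G[w] \cap S \ne \emptyset$; note this case in particular handles $w = u$, because a dominating set of $G_1$ must dominate $u_1$. If $w \in V(G_2) \setminus \{u\}$, then since $S_2$ is a rooted dominating set of $G_2$ it dominates every vertex of $G_2$ except possibly $u_2 = u$, so $N_{G_2}[w] \cap S_2 \ne \emptyset$, hence $N_G[w] \cap S \ne \emptyset$. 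Thus every vertex of $G$ lies in $S$ or has a neighbour in $S$, i.e. $S$ is a dominating set of $G$.

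For the displayed inequality, I would choose $S_1$ to be a minimum dominating set of $G_1$ and $S_2$ a minimum rooted dominating set of $G_2$, so that $|S_1| = s(G_1)$ and $|S_2| = s(G_2, u_2)$. By the first part $S_1 \cup S_2$ is a dominating set of $G$, and $|S_1 \cup S_2| \le |S_1| + |S_2|$, whence $s(G) \le s(G_1) + s(G_2, u_2)$.

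I do not expect a genuine obstacle here — the statement is marked ``straight-forward'' for good reason. The only point worth a moment's care is the asymmetry of the hypotheses: $S_1$ must be a \emph{full} dominating set of $G_1$, so that the glued vertex $u$ is dominated from the $G_1$ side, whereas $S_2$ need only be \emph{rooted} (it may leave $u$ undominated, which is harmless). This is exactly the $1$-cut analogue of Lemma~\ref{lemm:attachdom} for $2$-cuts, and the proof transfers essentially verbatim.
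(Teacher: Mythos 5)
Your proof is correct and is exactly the direct verification the paper has in mind (its own "proof" is just the word "Trivial"). The case split on $V(G_1)$ versus $V(G_2)\setminus\{u\}$, with the observation that $u$ is dominated from the $G_1$ side, is the right and only point of care here.
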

\begin{proof} Trivial. \end{proof}

\subsection{The Penalty Function}

Let $G$ be a skeletal triangulation, rooted skeletal triangulation or rooted near-triangulation.

\begin{definition}[Cluster]
    Let $G$ be a graph and let $P$ be some property that a vertex in $G$ may or may not have,
    e.g. being adjacent to a fixed vertex $u$ or having a certain degree.
    A \emph{cluster} in $G$ is a maximal connected subgraph consisting only of vertices that satisfy property $P$.
\end{definition}

\begin{definition}[Ears and Pivoting Triangles]
    An \emph{Ear} in $G$ is a facial triangle with exactly one vertex of degree two,
    called the \emph{ear tip}.
    Equivalently, an ear tip is a size-1 cluster of degree-2 non-cut vertices.

    A \emph{pivoting triangle} in $G$ is a facial triangle with exactly two vertices of degree two.
    Equivalently, a pivoting triangle is a size-2 cluster of degree-2 non-cut vertices + their shared neighbor.

    An \emph{isolated triangle} in $G$ is a facial triangle with exactly three vertices of degree two.
    This implies the whole graph is a triangle. (See Figure \ref{fig:degtwoexample}.)
\end{definition}
\begin{figure}
    \begin{center} \begin{tikzpicture}[scale=0.9]
        \node (A) at (0, 0) {};
        \node (B) at (2, 0) {};
        \node (C) at (2, 2) {};
        \node (D) at (0, 2) {};
        \node (E) at (1, 1) {};
        \node (T) at (1, 3) [red] {};
        \node (L) at (-1, 1) [teal] {};
        \node (M) at (-2, 2) {};
        \node (N) at (-4, 2) {};
        \node (O) at (-2, 0) {};
        \node (R) at (4, 0) {};
        \node (S) at (4, 2) {};

        \draw (A) -- (B) -- (C) -- (D) -- (A);
        \draw (A) -- (E) -- (C) (B) -- (E) -- (D);
        \draw (C) -- (T) -- (D);
        \draw (A) -- (L) -- (M);
        \draw (M) -- (N) -- (O) -- (M);
        \draw (B) -- (R) -- (S) -- (B);
        \begin{scope}[on background layer]
            \path [fill=blue, opacity=0.2] (M.center) to (N.center) to (O.center) to cycle;
            \path [fill=orange, opacity=0.2] (C.center) to (T.center) to (D.center) to cycle;
            \path [fill=blue, opacity=0.2] (B.center) to (R.center) to (S.center) to cycle;
        \end{scope}
    \end{tikzpicture}\hspace{1.0cm} \begin{tikzpicture}[scale=0.9]
        \node (A) at (0, 0) {};
        \node (B) at (2, 0) {};
        \node (C) at (1, 2) {};
        \draw (A) -- (B) -- (C) -- (A);
    \end{tikzpicture} \end{center}
    \caption{On the left: A skeletal triangulation with an ear (orange) with ear tip (red), two pivoting triangles (blue)
    and a degree-2 cut vertex (teal). On the right: An isolated triangle.}
    \label{fig:degtwoexample}
\end{figure}
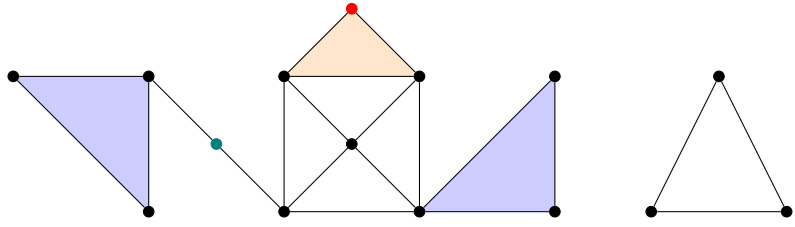
\begin{definition}[Bad 5-wheel]
    A \emph{bad 5-wheel} in $G$ is a subgraph $H \subseteq G$ isomorphic to the 5-wheel
    such that the outer 4-cycle in $H$ contains two consecutive $G$-boundary vertices of degree $3$, called a 3-pair.
    A bad 5-wheel contains $1-4$ such 3-pairs. (See Figure~\ref{fig:badwheels}.)
\end{definition}

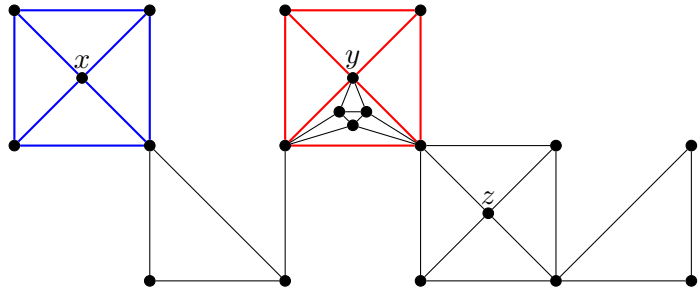
\begin{figure}
    \begin{center} \begin{tikzpicture}[scale=0.9]
        \node (A) at (0, 0) {};
        \node (B) at (2, 0) {};
        \node (C) at (2, 2) {};
        \node (D) at (0, 2) {};
        \node (E) at (1, 1) [label=above:$y$] {};
        \node (c) at (1, 0.3) {};
        \node (b) at (0.8, 0.5) {};
        \node (a) at (1.2, 0.5) {};
        \node (W) at (3, -1) [label=above:$z$] {};
        \node (X) at (2, -2) {};
        \node (Y) at (4, -2) {};
        \node (Z) at (4, 0) {};
        \node (I) at (0, -2) {};
        \node (J) at (-2, -2) {};
        \node (P) at (-2, 0) {};
        \node (Q) at (-2, 2) {};
        \node (R) at (-4, 2) {};
        \node (S) at (-4, 0) {};
        \node (O) at (-3, 1) [label=above:$x$] {};
        \node (M) at (6, -2) {};
        \node (N) at (6, 0) {};

        \draw[red,thick] (A) -- (B) -- (C) -- (D) -- (A) -- (E) -- (C);
        \draw[red,thick] (B) -- (E) -- (D);
        \draw (B) -- (X) -- (Y) -- (Z) -- (B) -- (W) -- (Y);
        \draw (X) -- (W) -- (Z);
        \draw (A) -- (I) -- (J) -- (P) -- (I);
        \draw[blue,thick] (P) -- (Q) -- (R) -- (S) -- (P);
        \draw[blue,thick] (P) -- (O) -- (R) (Q) -- (O) -- (S);
        \draw (Y) -- (M) -- (N) -- (Y);
        \draw (A) -- (c) -- (B) -- (a) -- (E) -- (b) -- (A);
        \draw (a) -- (b) -- (c) -- (a);
    \end{tikzpicture} \end{center}
    \caption{A skeletal triangulation with two bad 5-wheels: the blue one centered at $x$ and the
    red one centered at $y$. Note that there is no bad 5-wheel centered at $z$,
    as there are no two consecutive boundary vertices of degree $3$ on that 5-wheel.}
    \label{fig:badwheels}
\end{figure}
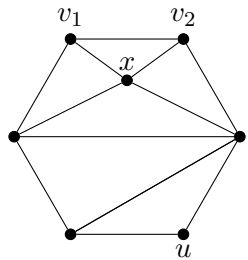
\begin{figure}
    \begin{center} \begin{tikzpicture}[scale=1.5]
        \node (O) at (90:0.5) [label=above:$x$] {};
        \node (A) at (0:1) {};
        \node (B) at (60:1) [label=above:$v_2$] {};
        \node (C) at (120:1) [label=above:$v_1$] {};
        \node (D) at (180:1) {};
        \node (E) at (240:1) {};
        \node (F) at (300:1) [label=below:$u$] {};
        \draw (O) -- (A) -- (B) -- (C) -- (D) -- (E) -- (F) -- (A) -- (E);
        \draw (O) -- (B) (O) -- (C) (O) -- (D) -- (A);
        \draw (A) -- (E);
    \end{tikzpicture}\end{center}
    \caption{A near-triangulation $G$ with an ear tip $u$ and a bad 5-wheel $N[x]$, in which $v_1, v_2$ are consecutive boundary vertices of degree $3$,
    i.e. a 3-pair.
    Note that $\{v_1, v_2\}$ is a cluster of $x$-adjacent degree-3 vertices.
    In terms of the penalty function, $n = 7$, $e=1$, $t=1$ and $\Phi=9$.
    If we instead consider this as a rooted near-triangulation with base $v_1, v_2$, then $n = 5$, $e=1$, $t = 0$ and
    $\phi(G, v_1, v_2) = 6$. If we consider it as a rooted skeletal triangulation with root $u$,
    then $n=6$, $e=0$, $t=1$ and $\phi(G, u) = 7$.}
    \label{fig:examplepenalty}
\end{figure}

\begin{definition}[Penalty Function]
    Let $G$ be a skeletal triangulation. The penalty function is $\Phi = \Phi(G) = n + e/2 + f/2$
    where $n$ is the number of vertices, $e$ is the number of ears, pivoting triangles,
    isolated triangles and degree-2 cut vertices and $f$ is the number of bad 5-wheels.

    Let $(G, u, v)$ be a rooted near-triangulation. The penalty function
    is $\phi = \phi(G, u, v) = n + e/2 + f/2$ where $n$ is the number of non-$u, v$ vertices, $e$ is the number of
    ears with tip $\neq u, v$, and $f$ is the number of bad 5-wheels that contain 3-pair
    disjoint from $\{u, v\}$.
    
    Let $(G, u)$ be a rooted skeletal triangulation. The penalty function is
    $\phi = \phi(G, u) = n + e/2 + f/2 + r/2$
    where $n$ is the number of non-$u$ vertices, $e$ is the number of ears with tips $\neq u$,
    pivoting triangles (which may include $u$), isolated triangles
    and non-$u$ degree-2 cut vertices, $f$ is the number of bad 5-wheels that contain a 3-pair 
    disjoint from $\{u\}$
    and $r$ is $1$ if $\deg(u) = 1$ and $0$ otherwise.
\end{definition}
\begin{definition}
    A \emph{low-degree problem} in a (rooted) near triangulation or (rooted) skeletal triangulation
    is anything that contributes to $\Phi$ or $\phi$ other than the $n$ term.
    A vertex is \emph{involved in} a low-degree problem if it is a degree-2 vertex in
    an ear / pivoting triangle / isolated triangle / degree-2 cut vertex or if it is in a 3-pair in a bad 5-wheel.
\end{definition}

\begin{remark}\label{rem:noninvolved}
    In a skeletal triangulation, a vertex that is a cut vertex is never involved in any low-degree problems.
    In a near-triangulation a vertex that is incident to a chord is never involved in any low-degree problems.
\end{remark}

Note that (rooted) near-triangulations do not contain pivoting triangles;
all their low-degree problems are ears or bad 5-wheels.
See Figure~\ref{fig:examplepenalty} for an example involving one of each. 
The three penalty functions are closely related.
\begin{lemma}(De-rooting) \label{lemm:unroot}
    If $G$ is a skeletal triangulation with boundary vertex $u$ then
    \[
        \phi(G, u)+1 \leq \Phi(G) \leq \phi(G, u) + 1.5.
    \]
    Moreover, $\Phi(G) = \phi(G, u) + 1.5$ if and only if
    $u \in G$ is an ear tip, a degree-2 cut vertex, or is contained in every 3-pair of a bad 5-wheel in $G$.
    Otherwise, $\Phi(G) = \phi(G) + 1$.

    If $(G, u, v)$ is a rooted near-triangulation,
    then
    \[
        \phi(G, u, v)+2 \leq \Phi(G) \leq \phi(G, u, v) + 2.5
    \]
    Moreover, $\Phi(G) = \phi(G, u, v) + 2.5$ if and only if one or both of $u, v \in G$
    is an ear tip or $u, v$ is a 3-pair of a bad 5-wheel and $G$ is not an isolated 5-wheel.
    Otherwise, i.e. if neither case happens, then $\Phi(G) = \phi(G, u, v) + 2$.
\end{lemma}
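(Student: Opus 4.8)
The plan is to prove all four inequalities by carefully comparing the three penalty functions term by term, tracking exactly how the quantities $n$, $e$ (ears / pivoting triangles / isolated triangles / degree-2 cut vertices), and $f$ (bad 5-wheels) change when we pass between the unrooted and rooted viewpoints. The key observation driving everything is stated already in Remark~\ref{rem:noninvolved}: a cut vertex of a skeletal triangulation is never involved in a low-degree problem, and a vertex incident to a chord of a near-triangulation is never involved in a low-degree problem. Hence when we ``root'' at a boundary vertex $u$ (or a boundary edge $\{u,v\}$), the only low-degree problems whose counts can differ between $\Phi$ and $\phi$ are those that literally involve $u$ (resp.\ $u$ or $v$): an ear with tip $u$, a degree-2 cut vertex equal to $u$, a pivoting/isolated triangle involving $u$, or a bad 5-wheel all of whose 3-pairs meet $\{u\}$ (resp.\ a 3-pair equal to $\{u,v\}$).

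First I would treat the rooted skeletal triangulation case. Write $G$ as having $N$ vertices, so $\Phi(G) = N + e_\Phi/2 + f_\Phi/2$ and $\phi(G,u) = (N-1) + e_\phi/2 + f_\phi/2 + r/2$. Thus $\Phi(G) - \phi(G,u) = 1 + (e_\Phi - e_\phi)/2 + (f_\Phi - f_\phi)/2 - r/2$. By the definitions, $e_\phi$ counts the same configurations as $e_\Phi$ except it excludes an ear whose tip is $u$ and a degree-2 cut vertex equal to $u$, while still counting pivoting and isolated triangles even when they involve $u$; and $f_\phi$ excludes exactly those bad 5-wheels whose every 3-pair contains $u$. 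Since $u$ is a single vertex, it can be the tip of at most one ear, equal to at most one degree-2 cut vertex, and (crucially, because a degree-2 vertex cannot simultaneously be a cut vertex and an ear tip) these are mutually exclusive; so $e_\Phi - e_\phi \in \{0,1\}$. Likewise a single vertex $u$ lies on at most one bad 5-wheel's rim in a way that dominates all its 3-pairs, so $f_\Phi - f_\phi \in \{0,1\}$, and one checks these cannot both be $1$ (if $u$ is a degree-2 vertex or ear tip it has degree $2$, hence cannot be a rim vertex of a 5-wheel). This gives $\Phi(G) - \phi(G,u) \in \{1 - r/2,\ 1.5 - r/2\}$. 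Finally I would dispose of the $r$-term: if $\deg(u) = 1$ then $r = 1$, but a degree-1 vertex cannot be an ear tip (ear tips have degree $2$), cannot be a degree-2 cut vertex, and cannot be on a 5-wheel rim, so in that case $e_\Phi = e_\phi$ and $f_\Phi = f_\phi$, giving $\Phi(G) - \phi(G,u) = 1 - 1/2 = 1/2$ — wait, that contradicts the claimed lower bound $1$. Here I would need to be careful: I expect the resolution is that a degree-1 vertex $u$ in a skeletal triangulation forces $G$ to be a single edge or makes $G-u$ have a new degree-2 cut vertex / ear, so the bookkeeping in $e_\phi$ actually picks up a compensating $+1$; pinning down this interaction is the place where the argument needs the most care.

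Then I would handle the rooted near-triangulation case analogously, now with $\Phi(G) = N + e_\Phi/2 + f_\Phi/2$, $\phi(G,u,v) = (N-2) + e_\phi/2 + f_\phi/2$ (no $r$-term since both base vertices have degree $\ge 2$ on a boundary edge), so $\Phi(G) - \phi(G,u,v) = 2 + (e_\Phi - e_\phi)/2 + (f_\Phi - f_\phi)/2$. Near-triangulations have no pivoting or isolated triangles, so $e$ counts only ears and degree-2 cut vertices; since $\{u,v\}$ is a chord's-worth of two vertices, each can be an ear tip, but a degree-2 vertex on the boundary edge $\{u,v\}$ cannot be a cut vertex of a $2$-connected graph, so degree-2 cut vertices are irrelevant and $e_\Phi - e_\phi$ counts the number of ears with tip in $\{u,v\}$, which is $0$, $1$, or $2$ — but two ears both with tips in $\{u,v\}$ would force $u,v$ to both have degree $2$, i.e.\ $G$ is a triangle or a small graph, which one checks directly, so generically $e_\Phi - e_\phi \in \{0,1\}$. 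For the $f$-term, $f_\Phi - f_\phi$ counts bad 5-wheels all of whose 3-pairs meet $\{u,v\}$; since a 5-wheel has only $\le 4$ boundary-rim vertices and $\{u,v\}$ is itself such a potential 3-pair, there is at most one such wheel (and the ``isolated 5-wheel'' exception is exactly the degenerate case where $G$ is the 5-wheel itself with $\{u,v\}$ its 3-pair, for which the count behaves differently). Combining, $\Phi(G) - \phi(G,u,v) \in \{2, 2.5\}$, with the value $2.5$ exactly when $u$ or $v$ is an ear tip, or $\{u,v\}$ is a 3-pair of a bad 5-wheel (excluding the isolated 5-wheel), as claimed. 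Throughout, the main obstacle is the exhaustive but finite case analysis verifying the ``at most one'' claims and especially the degree-1 root interaction in the first part; none of it is deep, but it must be done carefully to get the exact constants $1, 1.5$ and $2, 2.5$ rather than weaker bounds, and to correctly carve out the sporadic degenerate graphs (triangle, isolated 5-wheel) where the generic count fails.
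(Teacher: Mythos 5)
Your term‑by‑term bookkeeping is the right high‑level approach, and the near‑triangulation half of your sketch is essentially sound, but the first half contains a genuine confusion that you yourself flag without resolving. You worry that if $\deg(u)=1$ then $r=1$ and $\Phi(G)-\phi(G,u)$ would be $1/2$, contradicting the lower bound. This case simply cannot occur under the hypotheses of the lemma: the statement assumes $G$ itself is a skeletal triangulation, which by definition has minimum degree $\ge 2$, so $\deg_G(u)\ge 2$ and hence $r=0$ automatically. The $r$‑term in $\phi(\cdot,u)$ exists only for rooted skeletal triangulations that arise from splitting along a bridge or a cut vertex (where the root may legitimately have degree $1$); when de‑rooting a genuine skeletal triangulation, it is vacuously zero. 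Your speculation that ``the bookkeeping in $e_\phi$ actually picks up a compensating $+1$'' is not needed and is not what happens — there is nothing to compensate for. Pinning this down is not ``the place where the argument needs the most care''; it is a misreading of the hypothesis, and leaving it unresolved means your proof does not actually establish the lower bound $\phi(G,u)+1\le\Phi(G)$.

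Beyond this, two smaller things are worth tightening. First, in the skeletal case you should explicitly observe that the pivoting‑triangle and isolated‑triangle contributions cancel — $\phi(G,u)$ still counts pivoting/isolated triangles even when they involve $u$ — which is why those configurations do not show up in the equality characterization, only ear tips, degree‑2 cut vertices, and 5‑wheels whose every 3‑pair contains $u$. Second, in the near‑triangulation case you argue that two ears both with tips in $\{u,v\}$ would force $u,v$ to both have degree $2$, hence $G$ is tiny; the cleaner reason is that $u$ and $v$ are adjacent along the boundary edge, so if both had degree $2$ then $G$ is a single triangle, where the facial triangle is an isolated triangle rather than two ears — a single low‑degree problem, not two. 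With the $r$‑confusion removed and these two points made explicit, your approach matches what the paper intends by ``trivial, but you should check it.''
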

\begin{proof}
    Trivial, but this lemma is very important, so you should check it.
\end{proof}
At first, it might seem a bit weird to have the $+1$ and $+2$ here, but this leads to nicer formulas when fusing and attaching:
\begin{lemma}(Detaching and defusing) \label{lemm:defuse}
    Let $G_1$ be a skeletal triangulation with boundary vertex $u_1$ and let $(G_2, u_2)$ be a rooted skeletal triangulation.
    Let $G$ be the graph obtained by fusing $G_2$ to $u_1$. Then
    \[
        \Phi(G) \leq \Phi(G_1) + \phi(G_2, u_2) \le \Phi(G) + 0.5.
    \]
    If moreover $\deg_{G_2}(u_2) \ne 1$, i.e. if $G_2$ is a skeletal triangulation, then
    \[
        \Phi(G) = \phi(G_1, u_1) + \phi(G_2, u_2) + 1
    \]

    Let $H_1$ be a skeletal triangulation with boundary edge $u_1, v_1$ and let $(H_2, u_2, v_2)$ be a rooted near-triangulation.
    Let $H$ be the graph obtained by attaching $H_2$ to $u_1, v_1$. Then
    \[
        \Phi(H) \le \Phi(H_1) + \phi(H_2, u_2, v_2) \le \Phi(H) + 0.5.
    \]
    If moreover $H_1$ is a near-triangulation, then
    \[
        \Phi(H) = \phi(H_1, u_1, v_1) + \phi(H_2, u_2, v_2) + 2.
    \]
\end{lemma}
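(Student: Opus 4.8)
All three penalty functions have the same shape: a vertex-count term plus $\tfrac12$ times a count of \emph{low-degree problems} (ears, pivoting triangles, isolated triangles, degree-$2$ cut vertices, bad $5$-wheels), with an extra $\tfrac12 r$ in the rooted skeletal case. So the whole statement is a bookkeeping argument, and for each claimed (in)equality I would split the verification into two parts: first, that the vertex-count terms on the two sides agree \emph{exactly}; second, that the low-degree-problem terms differ by the stated amount. The first part is immediate from the construction: fusing identifies a single vertex, so $n(G)=n(G_1)+(n(G_2)-1)$, which is exactly how the ``$n$''-part of $\Phi(G_1)$ pairs with that of $\phi(G_2,u_2)$; attaching identifies two vertices, so $n(H)=n(H_1)+(n(H_2)-2)$. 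The cleanest route for the second part is to go through the De-rooting Lemma~\ref{lemm:unroot}: I would first prove the \emph{exact} identities $\Phi(G)=\phi(G_1,u_1)+\phi(G_2,u_2)+1$ (valid when $\deg_{G_2}(u_2)\neq 1$) and $\Phi(H)=\phi(H_1,u_1,v_1)+\phi(H_2,u_2,v_2)+2$ (valid when $H_1$ is a near-triangulation), and then combine them with $\phi(G_1,u_1)+1\le\Phi(G_1)\le\phi(G_1,u_1)+1.5$ and $\phi(H_1,u_1,v_1)+2\le\Phi(H_1)\le\phi(H_1,u_1,v_1)+2.5$ from Lemma~\ref{lemm:unroot} to read off the two sandwich inequalities.

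For the fusing identity, the crucial point is that the identified vertex $u$ is a cut vertex of $G$, hence by Remark~\ref{rem:noninvolved} it is involved in no low-degree problem of $G$. Every ear, pivoting triangle, isolated triangle, degree-$2$ cut vertex or bad $5$-wheel of $G$ is a $2$-connected, or otherwise ``local'', object, so it cannot straddle the cut vertex: it lies entirely inside $G_1$ or entirely inside $G_2$, and it avoids $u$. Since fusing leaves the degree, the cut-vertex status, the boundary membership and the cyclic boundary order of every non-$u$ vertex unchanged (the attached copy goes into the outer face, so bounded faces are preserved), such a problem on the $G_1$-side is literally a low-degree problem of $G_1$ that avoids $u_1$, hence counted in $\phi(G_1,u_1)$, and on the $G_2$-side it is a low-degree problem of $G_2$ that avoids $u_2$, counted in $\phi(G_2,u_2)$. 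Conversely, every problem counted in $\phi(G_1,u_1)$ or in $\phi(G_2,u_2)$ avoids the respective root and therefore persists in $G$. Thus the low-degree-problem terms match, and the ``$+1$'' accounts for the single extra vertex $u$, which the two $\phi$'s omit but $\Phi(G)$ counts. When $\deg_{G_2}(u_2)=1$ the fusing creates a bridge at $u$; here the exact identity can slip by $\tfrac12$, so I would treat this case separately, establishing the one-sided bound $\Phi(G)\le\phi(G_1,u_1)+\phi(G_2,u_2)+1$ directly and combining it with Lemma~\ref{lemm:unroot} applied to $(G_1,u_1)$, while checking the few degenerate shapes of $G_1$ near $u_1$ by hand. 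This is precisely why the fusing statement has a $+0.5$ on the right rather than being an equality.

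The attaching case is entirely parallel. The identified pair $\{u,v\}$ becomes a chord of $H$, so by Remark~\ref{rem:noninvolved} neither $u$ nor $v$ is involved in any low-degree problem of $H$; moreover (rooted) near-triangulations have no pivoting or isolated triangles and no cut vertices, so the only low-degree problems in sight are ears and bad $5$-wheels. Exactly as above, every low-degree problem of $H$ sits strictly on one side of the chord and corresponds, degree- and boundary-order-preservingly, to a low-degree problem of $H_1$ avoiding $\{u_1,v_1\}$ or of $H_2$ avoiding $\{u_2,v_2\}$, and conversely. This yields $\Phi(H)=\phi(H_1,u_1,v_1)+\phi(H_2,u_2,v_2)+2$ when $H_1$ is a near-triangulation (the ``$+2$'' being the two shared vertices), and combining with the near-triangulation de-rooting bounds gives $\Phi(H)\le\Phi(H_1)+\phi(H_2,u_2,v_2)\le\Phi(H)+0.5$ in general.

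The part I expect to demand genuine care is the local analysis at the shared vertex (resp.\ shared edge): one has to verify that raising $\deg(u_1)$ (resp.\ $\deg(u_1)$ and $\deg(v_1)$) to its value in $G$ (resp.\ $H$) can only \emph{destroy} low-degree problems of $G_1$ (resp.\ $H_1$) that actually involve that vertex, and can never create a new one away from $u$. The most delicate case is bad $5$-wheels: I need that a bad $5$-wheel of $G_1$ fails to be one in $G$ only when \emph{every} one of its $3$-pairs passes through $u_1$ — there is at most one such wheel, namely the one whose hub is the unique interior neighbour of the degree-$3$ vertex $u_1$ — and that a $3$-pair disjoint from $u_1$ stays a pair of consecutive boundary vertices of degree $3$ in $G$, which holds because the boundary of $G$ is obtained from that of $G_1$ merely by splicing in the boundary of $G_2$ at $u_1$. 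Tracking the degenerate $\deg_{G_2}(u_2)=1$ configuration by hand, the sole source of the $+0.5$ slack on the right of the fusing inequality, is the other place where one cannot simply wave one's hands.
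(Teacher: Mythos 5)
Your proof takes essentially the same route as the paper's: the paper's entire argument for this lemma is the one-line ``crucial observation'' that every low-degree problem of the fused (resp.\ attached) graph occurs in exactly one of the two pieces and avoids the identified vertex (resp.\ edge) — ``this needs exactly the right definition of a bad 5-wheel'' — with the rest declared straightforward, and your write-up is precisely that observation fleshed out, combined with the de-rooting bounds of Lemma~\ref{lemm:unroot} to obtain the sandwich inequalities. The one spot you flag but leave to a by-hand check (the $\deg_{G_2}(u_2)=1$ case of the first inequality, where $u_1$ may itself be an ear tip or degree-2 cut vertex of $G_1$) is exactly the spot the paper's proof also glosses over, so there is no divergence in approach.
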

\begin{proof}
    We make the following crucial observation: If there is a low-degree problem in $G$ (or $H$), then
    the same low-degree problem occurs in exactly one of $G_1$ and $G_2$. This needs exactly the right definition of a bad 5-wheel.
    The rest is straight-forward.
\end{proof}

\section{Setting up}

\subsection{The Main Result}

The main result in this document is the following;
\begin{theorem}\label{theo:mainresult}
    Let $G$ be a skeletal triangulation,
    then $G$ has a dominating set of size
    $\left\lfloor\frac{\Phi(G)}{3.5}\right\rfloor$ unless $G$ is one of the following:
    \begin{itemize}
        \item octahedron
        \item $3$-bifan (= octahedron minus one edge)
        \item special 4343434 heptagon
    \end{itemize}
    We call these the \emph{sporadic examples}. They are depicted in Figure~\ref{fig:sporadic}.
\end{theorem}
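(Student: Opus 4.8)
The plan is to prove Theorem~\ref{theo:mainresult} by induction with respect to the partial order on skeletal triangulations defined above (first fewer interior vertices, then fewer bridges, then smaller $\Phi$, then fewer blocks, then fewer vertices, then fewer degree-$2$ vertices). The base of the induction is the finite set of skeletal triangulations on at most $10$ vertices; these are enumerated and checked directly, and the only failures turn out to be the three sporadic examples of Figure~\ref{fig:sporadic}. For the inductive step we fix $G$, not sporadic, and assume every strictly smaller non-sporadic skeletal triangulation satisfies the bound; we then successively rule out structural features of $G$ until it is constrained enough that a dominating set of the required size can be exhibited.

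First I would eliminate bridges: if $\{a,b\}$ is a bridge, both sides $(G_1,a)$ and $(G_2,b)$ are rooted skeletal triangulations with strictly fewer bridges and no more interior vertices, so Corollary~\ref{coro:skeletalbound} applies to each; combining the acts-as bounds over the $3\times 3$ combinations of AB/LR/Nope together with $\Phi(G)\le \phi(G_1,a)+\phi(G_2,b)+1$ from Lemma~\ref{lemm:defuse} yields $\gamma(G)\le\lfloor\Phi(G)/3.5\rfloor$, so $G$ is bridgeless. Next, cut vertices: by the divide-and-conquer argument of Section~\ref{sect:divcong}, replace the smaller side of a $1$-cut by a small AB/LR/Nope of the same acts-as type via Lemma~\ref{lemm:fusereplace}; then, using Lemma~\ref{lemm:forcecover}, replace the small AB by an ``A'' attachment (a forced vertex), delete a small Nope or LR outright, and in the LR case delete the cut vertex whenever a low-degree problem would otherwise survive. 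Each replacement is strictly smaller, so $G$ has no cut vertex. Chords are handled analogously but with Corollary~\ref{coro:nearbound}: if one side of a $2$-cut acts as one of AND, L+R, OCTA, L OR R, L, R, None, its acts-as bound is already strong enough to delete that side (possibly with one endpoint of the chord); otherwise the side acts as A+B, OR, A or B, the near-triangulation acts-as bound is tight, and we replace it by a small OR, OR, A or B respectively. After tidying up the resulting small A/B attachments (delete a boundary edge to a high-degree neighbour, or delete a dominated low-degree neighbour of the forced vertex) and small OR attachments (a finite case check), $G$ has no chord, i.e.\ $G$ is $3$-connected.

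For the $3$-connected case I would use the toolkit of Section~\ref{sect:threecontricks}: to put a high-degree vertex $x$ into the dominating set, fuse a small AB to $x$ instead of deleting $x$ (this raises $\Phi$ by $2.5$ and $\gamma$ by $1$, the net effect of a ``$-1$'' on $\Phi$ without fragmenting the graph); to discard an already-dominated vertex, fuse a small LR to it, killing any low-degree problem it could cause; and, where possible, delete a boundary edge between two vertices of degree $\ge 5$, which never creates a low-degree problem. First I would argue that if no boundary edge can be deleted without creating a low-degree problem, then the boundary cycle is ``covered'' by single edges, which forces its degree sequence to be one of $345^{+}43$, $345^{+}3$, $35^{+}43$, $35^{+}3$, $34443$, $3443$, $343$, $33$. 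Then I would dispatch these patterns in the order $345^{+}43$, $345^{+}3$, $35^{+}43$, $34443$, then $33$, then $35^{+}3$, $3443$: in each case choose a concrete high-degree vertex to dominate, fuse an AB there, fuse LRs onto the vertices it dominates, and invoke the induction hypothesis on the strictly smaller graph that results. This leaves only the patterns $3443$ and $343$.

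The hard part will be this last step: the patterns $3443$ and $343$ on a $3$-connected near-triangulation, where $G$ may be small and the reductions are essentially forced, so one must avoid ``reducing to'' (or being) one of the three sporadic graphs — which is exactly why the theorem fails on them. When the outer face is large this is routine, since any deletion leaves a still-large, non-sporadic graph; for bounded-size $G$ the choice of which vertex to dominate and which neighbours to delete has to be made explicitly and checked against the sporadic list. This, together with the sheer number of boundary configurations arising after $3$-connectivity is reached, is where essentially all the technical work lies; the bridge/cut/chord reductions, by contrast, are mechanical consequences of Corollaries~\ref{coro:skeletalbound} and~\ref{coro:nearbound} and Lemmas~\ref{lemm:forcecover}, \ref{lemm:fusereplace} and~\ref{lemm:defuse}.
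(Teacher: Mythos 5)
Your plan reproduces the paper's own structure (bridges, cut vertices, chords, then a long case analysis on the $3$-connected core), and the individual reductions you describe for $1$-cuts and $2$-cuts via Corollaries~\ref{coro:skeletalbound} and~\ref{coro:nearbound}, Lemma~\ref{lemm:forcecover}, Lemma~\ref{lemm:fusereplace}, and the forcing-and-covering tricks of Section~\ref{sect:threecontricks} are the ones the paper actually uses. Two things, however, would prevent this from working as written.

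The substantive gap is the outerplanar case, which your proposal never treats and which none of the reductions you list can reach. After bridges and cut vertices are ruled out, $G$ is a near-triangulation but may have no interior vertices. Your chord step then breaks down: the near-triangulation acts-as bound of Corollary~\ref{coro:nearbound} is derived from Proposition~\ref{prop:nearact}, whose hypothesis is that Theorem~\ref{theo:mainresult} already holds for all near-triangulations with the \emph{same} number of interior vertices as the side $G_2$ you are bounding. If $G$ is outerplanar, then $G_2$ has $0$ interior vertices, and that hypothesis asks for the theorem on all outerplanar near-triangulations, which includes $G$ itself — circular. The paper escapes this by inserting a dedicated outerplanar case between the cut-vertex and chord steps: for a $2$-connected outerplanar $G$ with $k$ degree-$2$ vertices, it combines the classical Matheson--Tarjan bound $\gamma\le n/3$ with the Campos--Wakabayashi / Tokunaga bound $\gamma\le (n+k)/4$ into $\gamma\le (2n+k)/7 \le \Phi(G)/3.5$, with no induction at all. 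Only after that can one assume $G$ has an interior vertex, which makes the chord argument sound (the side $G_1$ with an interior vertex guarantees that graphs with as few interior vertices as $G_2$ are strictly smaller than $G$).

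This also invalidates your proposed base case. The induction order decreases primarily by interior-vertex count, so the minimal stratum is exactly the outerplanar graphs, which is infinite; checking skeletal triangulations with $\le 10$ vertices does not cover it, and no reduction in your list drives an outerplanar $G$ to one with $\le 10$ vertices without first establishing the acts-as bounds you would need to justify that reduction. Once the outerplanar case is added as a stand-alone step, the rest of your outline is the paper's argument and no separate finite base case is required: every non-outerplanar $G$ is either handled directly by one of the small-polygon subcases or reduced to a strictly smaller non-sporadic instance.
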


In Section~\ref{sect:mainproof}, we prove Theorem~\ref{theo:mainresult} via induction. This requires a very particular ordering on skeletal triangulations.

\begin{definition}[Smaller]
    Let $G_1, G_2$ be skeletal triangulations. We say $G_1$ is \emph{smaller than} $G_2$ if
    \begin{itemize}
        \item $G_1$ has fewer interior vertices than $G_2$, or the same number and
        \item $G_1$ has fewer bridges than $G_2$, or the same number and
        \item $G_1$ has smaller $\Phi$ than $G_2$, or the same number and
        \item $G_1$ has fewer blocks (i.e. 2-connected components) than $G_2$, or the same number and
        \item $G_1$ has fewer vertices than $G_2$, or the same number and
        \item $G_1$ has fewer degree-2 vertices than $G_2$.
    \end{itemize}
\end{definition}

\noindent
Formally, we show the following
\begin{proposition}[Induction Step] \label{prop:inductionstep}
    Let $G$ be a skeletal triangulation that is not one of the sporadic examples.
    Suppose every $G'$ that is smaller than $G$ satisfies the following:
    \begin{itemize}
        \item (Induction Hypothesis) If $G'$ is not one of the sporadic examples, then it has a dominating set of size
            $\left\lfloor \frac{\Phi(G')}{3.5} \right\rfloor$.
    \end{itemize}
    Then $G$ has a dominating set of size $\left\lfloor \frac{\Phi(G)}{3.5} \right\rfloor$.
\end{proposition}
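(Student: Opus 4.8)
The plan is to prove Proposition~\ref{prop:inductionstep} by following the nine-step sketch from Section~\ref{sec:sketch}, processing cases roughly in increasing order of connectivity and then by the structure of the outer boundary, always making sure that the auxiliary graphs we pass to the induction hypothesis or to the acts-as corollaries are strictly \emph{smaller} in the sense of the partial ordering just defined. The overall strategy is: reduce to the $3$-connected case first (because small cuts can be handled by the fusing/attaching machinery), and then in the $3$-connected case pick high-degree vertices into the dominating set and delete enough neighbours — but, crucially, implement ``picking'' via attaching a small $\mathrm{AB}$ and ``remembering already-dominated vertices'' via fusing a small $\mathrm{LR}$, so that we never have to control the zoo of low-degree problems created by a raw vertex deletion.

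First I would dispatch the disconnecting cases. If $G$ has a bridge, cut it, apply Corollary~\ref{coro:skeletalbound} to each side (each side has no more interior vertices and strictly fewer bridges, hence the induction hypothesis applies), and check the nine combinations of $\mathrm{AB}/\mathrm{LR}/\mathrm{Nope}$ by hand using Lemma~\ref{lemm:fusereplace} together with the $\Phi$/$\phi$ identities from Lemma~\ref{lemm:defuse}; the $\lfloor\cdot\rfloor$ never hurts us because the penalty slacks ($-1$, $0$, $+1.5$) are designed to absorb the fusing correction term $c\in\{-1,0,1\}$. So assume $G$ has no bridge. If $G$ has a cut vertex, use the divide-and-conquer argument of Section~\ref{sect:divcong} to replace one side by a small $\mathrm{AB}$, $\mathrm{LR}$, or $\mathrm{Nope}$ (this lowers $\Phi$ or keeps it equal while keeping the interior-vertex count and bridge count from increasing, so the resulting $H$ is smaller), then further simplify: replace the $\mathrm{AB}$ by an ``A'' attachment (Lemma~\ref{lemm:forcecover} lets us turn ``$u$ forced into $S$'' into this bookkeeping), delete the $\mathrm{Nope}$ side, and in the $\mathrm{LR}$ case delete the $\mathrm{LR}$ and, if it leaves a low-degree problem at $u$, delete $u$ too. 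So assume $G$ is $2$-connected, i.e.\ a near-triangulation. If $G$ has a chord, then by Corollary~\ref{coro:nearbound}: if one side acts as $\mathrm{AND}$, $\mathrm{L{+}R}$, $\mathrm{OCTA}$, $\mathrm{L\,OR\,R}$, $\mathrm L$, $\mathrm R$, or $\mathrm{None}$, bound that side directly and delete it (possibly with one endpoint of the chord); otherwise one side acts as $\mathrm{A{+}B}$, $\mathrm{OR}$, $\mathrm A$, or $\mathrm B$, in which case Corollary~\ref{coro:nearbound} is tight and we replace that side by the corresponding small attachment. After handling small $\mathrm A$s and $\mathrm B$s (delete boundary edges to high-degree vertices, delete low-degree neighbours dominated by the forced vertex) and small $\mathrm{OR}$s (many hand cases), we conclude $G$ is $3$-connected.

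Next comes the $3$-connected core. If some boundary edge can be deleted without creating any low-degree problem — e.g.\ a non-bridge boundary edge between two degree-$\ge 5$ vertices — delete it; the resulting graph has the same interior vertices, no more bridges, and strictly smaller $\Phi$, so induction finishes. If no such edge exists, the boundary is ``covered'' by single edges, which forces the boundary degree sequence into one of the listed patterns $345^{+}43$, $345^{+}3$, $35^{+}43$, $35^{+}3$, $34443$, $3443$, $343$, $33$. For each pattern, I would, following Section~\ref{sect:threecontricks}: locate a vertex $w$ of large degree near the degree-$3$ block, attach a small $\mathrm{AB}$ at $w$ (this costs $+2.5$ in $\Phi$ and buys $+1$ in $\gamma$, simulating ``delete $w$, $\Phi{-}1$'' cleanly), fuse small $\mathrm{LR}$s onto the neighbours of $w$ that are thereby dominated (killing their low-degree problems), and then verify that what remains is smaller than $G$ and not sporadic, so the induction hypothesis applies; summing the penalties, $\Phi$ drops by enough to pay for the extra dominating-set vertex. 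The patterns $345^{+}43$, $345^{+}3$, $35^{+}43$, $34443$, $33$, $35^{+}3$, $3443$ are cleared this way, leaving only $3443$ and $343$, which are handled last with extra care so as to steer around the $3$-bifan, octahedron, and special $4343434$-heptagon (these are precisely the small obstructions where the count is too tight); when $G$ has many boundary vertices there is slack and the argument is routine, and when $G$ is small one checks the finitely many remaining graphs directly.

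The main obstacle I expect is \emph{not} any single step in isolation but the bookkeeping that keeps the induction well-founded: every reduction must strictly decrease the ordering $(\#\text{interior vertices}, \#\text{bridges}, \Phi, \#\text{blocks}, \#\text{vertices}, \#\text{degree-2 vertices})$, and the delicate point is that attaching a small $\mathrm{AB}$ \emph{increases} the vertex count and creates a new block and a chord — so one has to check that some earlier coordinate (interior vertices, or $\Phi$ after the subsequent deletions) compensates, which is exactly why $\#\text{interior vertices}$ and $\#\text{bridges}$ sit at the top of the ordering. The second genuine difficulty is the sheer case volume in the $3$-connected part (Steps~6--9): for each boundary-degree pattern one must argue that a large enough piece of the graph is visible to force the existence of a good vertex to pick and enough dominated neighbours to delete, while simultaneously avoiding the three sporadic graphs; the $10$-vertex failure of Theorem~\ref{theo:main} means there is essentially no room for a slicker uniform argument, so this part is unavoidably a careful finite case analysis — tedious but, given the tools in Section~\ref{sect:threecontricks}, not conceptually hard. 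Everywhere, the floor function is handled by the built-in half-integer slack in $\Phi$ and in the acts-as bounds, so it never needs separate attention.
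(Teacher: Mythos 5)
Your plan is essentially the paper's own proof outline (it reproduces the nine-step sketch of Section~\ref{sec:sketch} almost verbatim), and at that level of granularity it is the right strategy; the actual proof is the long case analysis of Section~\ref{sect:mainproof}, which you correctly identify as unavoidable but only gesture at.

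One concrete step in your plan would fail as written: you pass directly from ``$G$ is $2$-connected'' to the chord case, but the chord argument needs the retained side $G_1$ to contain at least one \emph{interior} vertex. That is what makes the auxiliary graphs (obtained by attaching a small OR/A/B to $G_2$, or by invoking Proposition~\ref{prop:nearact}) strictly smaller in the ordering, whose first coordinate is the number of interior vertices. If $G$ is outerplanar, every side of every chord is interior-vertex-free, $\Phi$ need not drop, and the divide-and-conquer has nowhere to recurse. The paper therefore inserts a separate outerplanar case \emph{before} the chord case and settles it non-inductively, by taking the convex combination $\tfrac{3}{7}\cdot\tfrac{n}{3}+\tfrac{4}{7}\cdot\tfrac{n+k}{4}=\tfrac{2n+k}{7}\le\tfrac{2}{7}\Phi(G)$ of the Matheson--Tarjan bound and the Campos--Wakabayashi bound ($k$ the number of degree-$2$ vertices). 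You would need to add this base case (or an equivalent one) for the induction to be well-founded; after it, your ``hence $G$ has an interior vertex'' is available and the rest of the plan matches the paper.
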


\begin{remark}[Pitfalls]
    The conditions in the induction hypothesis might look innocuous, but we have to be very careful when applying the induction hypothesis
    to some graph $G'$ we constructed. Here are some common pitfalls and how we might deal with them:
    \begin{itemize}
        \item If $G'$ is disconnected, then it is not a skeletal triangulation. Solution: Handle cut vertices,
            chords or shared interior neighbors in earlier cases. This allows for stronger connectivity assumptions in later cases.
        \item If $G'$ contains a leaf, then it is not a skeletal triangulation. Solution: When deleting things, pay
            special attention to vertices that loose two or more neighbors. A vertex of degree $\ge 3$ can only turn into
            a leaf if it looses at least two neighbors.
        \item $G'$ might be a sporadic example. Solution: The sporadic examples are all 3-connected. If $G'$ is the result
            of an attaching or fusing operation, then $G'$ is not 3-connected and hence not a sporadic example.
    \end{itemize}
\end{remark}

\subsection{Acts as}

Intuitively speaking, if a rooted skeletal triangulation (or rooted near triangulation)
has a minimum rooted dominating set that contains the root (or base), this makes it ``easier''
to find small dominating sets in the graph obtained by fusing (or attaching). Our goal is to establish
a precise relation between this ``easier'' and the minimum possible penalty $\phi$.

\begin{definition}[Acts as]
    Let $(G, u)$ be a rooted skeletal triangulation. We say $(G, u)$ \emph{acts as}
    \begin{itemize}
        \item[AB] if $G$ has a minimum rooted dominating set that contains $u$,
        \item[LR] if $G$ has a minimum rooted dominating set that dominates $u$ (and $G$ does not act as A+B), and
        \item[Nope] otherwise.
    \end{itemize}
\end{definition}

\noindent
The sporadic examples, rooted at any boundary vertex, all act as Nope.
If we know what $G$ acts as, we can make Theorem~\ref{theo:mainresult} more specific.

\begin{theorem}\label{theo:skeletalact}
    Let $(G, u)$ be a rooted skeletal triangulation.
    Let $s = s(G, u)$ and let $\phi = \phi(G, u)$. If $(G, u)$ acts as
    \begin{itemize}
        \item [AB] then $\phi \geq 3.5 s - 1$.
        \item [LR] then $\phi \geq 3.5s$.
        \item [Nope] then $\phi \geq 3.5s + 1.5$.
    \end{itemize}
\end{theorem}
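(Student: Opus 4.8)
The plan is to prove all three bounds by the same de-rooting device that underlies Corollary~\ref{coro:skeletalbound}: fuse a tiny, hand-verified gadget to $(G,u)$ at $u$, pass to the resulting unrooted skeletal triangulation $H$, apply Theorem~\ref{theo:mainresult} there, and translate the bound on $\gamma(H)$ back into a bound on $\phi(G,u)$ using Lemma~\ref{lemm:fusereplace} and the penalty identity of Lemma~\ref{lemm:defuse}. Two gadgets suffice: the triangle $K_3$ rooted at one of its vertices, which one checks by hand acts as AB with $s = 1$ and $\phi = 2.5$; and $K_4$ minus an edge rooted at a degree-$2$ vertex, which acts as LR with $s = 1$ and $\phi = 3.5$ (these are the smallest examples, cf.\ Figure~\ref{fig:smallfuse}). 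A preliminary remark used throughout: the fused graph $H$ always has a cut vertex (the image of $u$) and so is never one of the $3$-connected sporadic examples, and its minimum degree stays $\ge 2$ since the gadget roots have degree $\ge 1$; hence Theorem~\ref{theo:mainresult} applies verbatim to $H$. When this lemma is invoked inside the induction of Proposition~\ref{prop:inductionstep}, one instead argues that $H$ is \emph{smaller} than the graph currently being processed and uses the induction hypothesis; the bounded gadget size and the extra cut vertex are what make this legitimate. Finally, a root $u$ with $\deg_G(u) = 1$ is dealt with by passing to $G - u$: this shifts the penalty by exactly $1.5$ (the vertex plus the $r/2$ term) and changes the act-as type in a controlled way, so one may assume $\deg_G(u) \ge 2$ in the AB and LR cases.

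Case AB: fuse the LR-gadget to $(G,u)$ to get $H$. By Lemma~\ref{lemm:fusereplace} the AB/LR combination has $c = 0$, so $\gamma(H) = s(G,u) + 1$, and by Lemma~\ref{lemm:defuse} (both root-degrees being $\ne 1$) $\Phi(H) = \phi(G,u) + 3.5 + 1 = \phi(G,u) + 4.5$. Then $s(G,u)+1 = \gamma(H) \le \Phi(H)/3.5$ by Theorem~\ref{theo:mainresult}, which rearranges to $\phi(G,u) \ge 3.5\,s(G,u) - 1$. Case LR is the same with the AB-gadget in place of the LR-gadget: now $\gamma(H) = s(G,u)+1$ and $\Phi(H) = \phi(G,u) + 2.5 + 1 = \phi(G,u) + 3.5$, giving $\phi(G,u) \ge 3.5\,s(G,u)$.

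Case Nope, with $\deg_G(u) \ge 2$: then $G$ is itself a skeletal triangulation, and no dominating set of $G$ can have size $s(G,u)$ — such a set would be a minimum rooted dominating set that contains or dominates $u$, contradicting the Nope assumption — so $\gamma(G) \ge s(G,u)+1$. If $G$ is not a sporadic example, Theorem~\ref{theo:mainresult} and the bound $\Phi(G) \le \phi(G,u) + 1.5$ of Lemma~\ref{lemm:unroot} yield $\phi(G,u) \ge 3.5\gamma(G) - 1.5 \ge 3.5\,s(G,u) + 2$; if $G$ is one of the three sporadic examples, the inequality $\phi(G,u) \ge 3.5\,s(G,u) + 1.5$ is checked by hand (for instance the octahedron has $s(G,u) = 1$ and $\phi(G,u) = 5$). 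Case Nope, with $\deg_G(u) = 1$: let $w$ be the unique neighbor of $u$ and put $H = G - u$. One shows that $(H,w)$ acts as LR with $s(H,w) = s(G,u)$: since $u$ is a leaf, a minimum rooted dominating set of $(G,u)$ may be taken to avoid both $u$ and $w$, and is then exactly a minimum dominating set of $H$; the Nope hypothesis forbids ``saving'' a vertex by leaving $w$ undominated, which forces $s(H,w) = \gamma(H) = s(G,u)$ and pins the type to LR. Combining $\phi(G,u) = \phi(H,w) + 1.5$ with the already-proved LR bound applied to $(H,w)$ gives $\phi(G,u) \ge 3.5\,s(H,w) + 1.5 = 3.5\,s(G,u) + 1.5$.

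I expect the bulk of the difficulty to be bookkeeping rather than ideas. The most delicate point is the $\deg_G(u) = 1$ subcase of Nope: establishing that $(H,w)$ is of type LR (not AB or Nope) and that $s(H,w)$ does not drop below $s(G,u)$ both use the Nope hypothesis in a slightly subtle way, and verifying $\phi(G,u) = \phi(H,w) + 1.5$ requires checking that deleting the pendant edge creates no uncounted low-degree problem at $w$ — here it matters that a degree-$1$ vertex is never an ear tip or part of a bad $5$-wheel, and that any ear newly created at $w$ has tip $w$ and is therefore not counted in $\phi(H,w)$. A second recurring obligation is, each time Theorem~\ref{theo:mainresult} is cited for a constructed graph, to confirm that the graph is covered by the induction hypothesis and is not inadvertently a sporadic example.
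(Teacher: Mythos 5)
Your proposal is correct and follows essentially the same route as the paper's proof (Proposition~\ref{prop:skeletalact}): fuse a small LR gadget for the AB bound, force $u$ via a small AB/A gadget for the LR bound, de-root directly for Nope when $\deg(u)\ge 2$ (checking the sporadic examples by hand), and delete the pendant root and reduce to the LR case otherwise. The only deviation is your preliminary reduction of degree-$1$ roots in the AB and LR cases, which is unnecessary — the inequality direction of Lemma~\ref{lemm:defuse} already gives $\Phi(H)\le\phi(G,u)+\phi(\text{gadget})+1$ for pendant roots, so the fusing constructions go through unchanged — and, as stated, glosses over the fact that passing to $G-u$ changes both the act-as type (an AB with pendant root yields a Nope-rooted $G-u$) and the value of $s$; dropping that detour makes your argument coincide with the paper's.
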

\begin{proof}
    Follows from Theorem~\ref{theo:mainresult}, see Section~\ref{sect:actasrefine}.
\end{proof}

\begin{figure}
    \begin{center} \begin{tikzpicture} \begin{scope}
        \begin{scope}
            \node (A) at (0, 0) [label=below:$u$] {};
            \node (B) at (-0.7, 1) {};
            \node (C) at (0.7, 1) {};
            \draw (A) -- (B) -- (C) -- (A);
        \end{scope}
        \begin{scope}[shift={(3.3, 0)}]
            \node (A) at (0, 0) [label=below:$u$] {};
            \node (B) at (-0.7, 1) {};
            \node (C) at (0.7, 1) {};
            \node (D) at (0, 2) {};
            \draw (A) -- (B) -- (C) -- (A);
            \draw (B) -- (D) -- (C);
        \end{scope}
        \begin{scope}[shift={(7, 0)},scale=0.7 ]
            \node (A) at (0, 0) [label=below:$u$] {};
            \node (B) at (0, 1) {};
            \node (C) at (-0.7, 2) {};
            \node (D) at (0.7, 2) {};
            \node (E) at (0, 3) {};
            \draw (A) -- (B) -- (C) -- (D) -- (E) -- (C);
            \draw (B) -- (D);
        \end{scope}

    \end{scope} \end{tikzpicture} \end{center}
    \caption{From left to right: small AB, small LR, small Nope, each with root $u$.}
    \label{fig:smallfuse}
\end{figure}
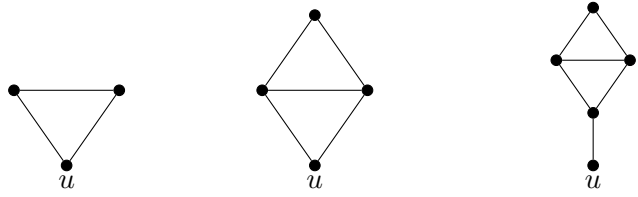

Figure~\ref{fig:smallfuse} depicts a small example for each case. The bounds in Theorem~\ref{theo:skeletalact} are tight in those examples.
Next, we consider a similar notion and bound for rooted near-triangulations.

\begin{definition}[Acts as]
    Let $(G, u, v)$ be a rooted near-triangulation. Let $s = s(G, u, v)$.
    In the following, each case excludes the preceding ones.
    We say $(G, u, v)$ \emph{acts as}
    \begin{itemize}
        \item [A+B] if $G$ has a rooted dominating set of size $s$ that contains both $u$ and $v$.
        \item [OR] if $G$ has a two rooted dominating sets of size $s$ with one containing $u$ and one containing $v$.
        \item [A] if $G$ has a rooted dominating set of size $s$ that contains $u$.
        \item [B] if $G$ has a rooted dominating set of size $s$ that contains $v$.
        \item [AND] if $G$ has a dominating set of size $s$ and a rooted dominating set of size $s+1$ that contains both $u$ and $v$.
        \item [L+R] if $G$ has a dominating set of size $s$.
        \item [OCTA] if $G$ has two rooted dominating sets of size $s$ with one dominating $u$ and one dominating $v$, plus a rooted dominating set of size $s+1$ that contains both $u$ and $v$.
        \item [L OR R] if $G$ has two rooted dominating sets of size $s$ with one dominating $u$ and one dominating $v$.
        \item [L] if $G$ has a rooted dominating set of size $s$ that dominates $u$.
        \item [R] if $G$ has a rooted dominating set of size $s$ that dominates $v$.
        \item [None] if otherwise.
    \end{itemize}
\end{definition}

\begin{theorem} \label{theo:nearact}
    Let $(G, u, v)$ be a rooted near-triangulation.
    Let $s = s(G, u, v)$ and let $\phi = \phi(G, u, v)$. If $(G, u, v)$ acts as
    \begin{itemize}[itemindent=80pt]
        \item [A+B, OR] then $\phi \geq 3.5s - 2$.
        \item [A, B] then $\phi \geq 3.5s - 1$.
        \item [AND, L+R, OCTA, L OR R] then $\phi \geq 3.5s$.
        \item [L, R] then $\phi \geq 3.5s + 0.5$.
        \item [None] then $\phi \geq 3.5s + 1.5$.
    \end{itemize}
\end{theorem}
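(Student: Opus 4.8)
The plan is to mirror the proof of Theorem~\ref{theo:skeletalact}. Given a rooted near-triangulation $(G,u,v)$ of a prescribed acts-as type, I would attach to the base edge $\{u,v\}$ a small rooted near-triangulation $(G_2,u_2,v_2)$ of a carefully chosen ``complementary'' type, producing an unrooted skeletal triangulation $H$, and then apply Theorem~\ref{theo:mainresult} to $H$. Since $H$ has a chord it is neither $3$-connected nor a sporadic example (cf.\ the Pitfalls remark), so Theorem~\ref{theo:mainresult} gives $\Phi(H) \ge 3.5\, s(H)$. Two bookkeeping facts turn this into the claim: Lemma~\ref{lemm:defuse} gives $\Phi(H) = \phi(G,u,v) + \phi(G_2,u_2,v_2) + 2$, and a dominating set of $H$ restricts along the chord to rooted dominating sets of the two sides, so $s(H) = s(G,u,v) + s(G_2,u_2,v_2) + c$ for a small integer $c$ depending only on the two acts-as types (the attaching analogue of Lemma~\ref{lemm:fusereplace}). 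Combining, $\phi(G,u,v) \ge 3.5\, s(G,u,v) + \bigl(3.5\, s(G_2,u_2,v_2) + 3.5\, c - \phi(G_2,u_2,v_2) - 2\bigr)$, and it remains to choose $G_2$ so that the parenthesised quantity meets the target offset ($-2$, $-1$, $0$, $+0.5$ or $+1.5$) for the given type.

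The first ingredient is the attaching-interaction table for $s$. I would show that if $S$ dominates $H$ then $S_i := S \cap V(G_i)$ is a rooted dominating set of $(G_i,u_i,v_i)$ — the two sides share only $u$ and $v$, so an interior vertex of $G_i$ can only be dominated from within $G_i$ — and $|S| = |S_1| + |S_2| - |S_1 \cap S_2|$ with $S_1 \cap S_2 \subseteq \{u,v\}$; conversely, optimal rooted dominating sets with compatible ``coverage promises'' about $u$ and $v$ can be glued. The eleven acts-as cases were designed to record, for each side, the minimum rooted-dominating-set size under each of the nine promises ``$u$ (resp.\ $v$) is in $S$ / is dominated / is unconstrained'' together with which of these are simultaneously realisable at the optimum, and taking the minimum over compatible pairs of promises expresses $s(H)$ through the two types alone. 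The second ingredient is the finite catalogue of Figure~\ref{fig:smallattach}: a smallest rooted near-triangulation realising each type, whose $s(\cdot,u,v)$ and $\phi(\cdot,u,v)$ are tabulated by hand. Care is needed because these representatives have at most ten vertices, so one must check that nothing built along the way is the octahedron, $3$-bifan, or special $4343434$-heptagon; this is automatic since every auxiliary graph here carries a chord.

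With these in hand the proof is a finite check over the eleven types. For the tight cases A+B, OR, A, B, I expect to attach a small gadget (a small A or B, or a small None / L OR R) whose interaction forces $s(H)$ to jump by exactly the $s$-value of the gadget, so that the offsets $-2$ and $-1$ come out exactly; these cases are tight precisely because of the extremal building blocks of Figure~\ref{fig:examples}. For the remaining cases AND, L+R, OCTA, L OR R, L, R, None — where the stated bound is only an inequality and is not tight on small examples — a bespoke attachment may fail to be tight, but the offsets $0$, $+0.5$, $+1.5$ are generous enough that it suffices either to use $G$ itself as an unrooted skeletal triangulation via Lemma~\ref{lemm:unroot} (using $\phi(G,u,v)+2 \le \Phi(G)$ and $s(G) \ge s(G,u,v)$, upgrading to $s(G) = s(G,u,v)$ when the acts-as type forces $u,v$ into a common dominating set), or to attach a small A+B / OR to absorb the extra slack.

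The main obstacle is the attaching-interaction analysis: verifying that $s(H) - s(G_2,u_2,v_2)$ is indeed a function of $(G_1,u_1,v_1)$ and of the acts-as type of $(G_2,u_2,v_2)$ alone is noticeably more delicate than the single-root computation behind Lemma~\ref{lemm:fusereplace}, because promises about $u$ and about $v$ interact and because a vertex of $\{u,v\}$ may lie in $S_1 \cap S_2$ (saving one) or may be dominated only from the side that does not carry the optimal set. Once the rows of that table involving our chosen small attachments are nailed down, the per-type arithmetic is routine, and the boundary case ($n \le 10$) is dispatched by the same hand-check that underlies Theorem~\ref{theo:mainresult}.
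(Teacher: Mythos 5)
Your framework is the same as the paper's (attach a small gadget of a complementary acts-as type to the base edge, apply Theorem~\ref{theo:mainresult} to the result, and chain the bookkeeping identities for $\Phi$ and $s$), and it does carry the cases A+B, OR, A, B, AND, L+R, OCTA, L~OR~R, and even L, R (for the last two, your fallback of de-rooting $G$ via Lemma~\ref{lemm:unroot} gives $\phi \ge 3.5(s+1) - 2.5 = 3.5s+1$, which exceeds the required $+0.5$). One secondary imprecision: the gadgets you name for the tight cases do not hit the target offsets. With your own formula, attaching a gadget of penalty $\phi_2$ and rooted domination number $s_2$ with interaction constant $c$ yields the offset $3.5(s_2+c)-\phi_2-2$; a small None gives $-3.5$ and a small L~OR~R gives $-3$, not the required $-2$ for A+B/OR. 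The correct choices are a small L+R (offset $3.5-3.5-2=-2$) for A+B/OR and a small B (resp.\ A) forcing the opposite base vertex (offset $3.5-2.5-2=-1$) for A (resp.\ B); your optimisation over the catalogue would find these, so this is fixable.

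The genuine gap is the None case, where the target offset is $+1.5$ and neither of your two fallbacks reaches it. De-rooting $G$ gives only $\phi \ge 3.5 s(G) - 2.5 = 3.5s + 1$ whenever $\Phi(G) = \phi + 2.5$, i.e.\ whenever $u$ or $v$ is an ear tip or $u,v$ is a 3-pair of a bad 5-wheel. And no attachment works either: for a gadget of type $T$ with the tight $\phi_2 = 3.5 s_2 + \delta_T$ from Table~\ref{tab:actbound}, the offset is $3.5c - 2 - \delta_T$, and running through all eleven types with the correct value of $c$ (which is $0$ whenever the gadget fully dominates $u$ and $v$ with $s_2$ vertices, as for L+R, AND, A+B, OR, and at most $1$ otherwise) caps the achievable offset at $+1$ (attained, e.g., by an OCTA gadget). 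The paper closes this gap with two arguments absent from your proposal: when one of $u,v$ is not involved in a low-degree problem, pass to the single-rooted skeletal triangulation $(G,v)$, note $\phi(G,v) = \phi(G,u,v)+1$ and $s(G,v) = s(G,u,v)+1$, and invoke the uniform bound $\phi(G,v) \ge 3.5\,s(G,v) - 1$ of Proposition~\ref{prop:skeletalact}; and when $u,v$ is a 3-pair in a bad 5-wheel, contract $\{u,v\}$ to a single root $x$, observe that $(G/\{u,v\}, x)$ acts as Nope with unchanged $s$ and $\phi$, and invoke the Nope bound $\phi \ge 3.5s + 1.5$ of that proposition. Without a contraction or single-root de-rooting step of this kind, the None bound $\phi \ge 3.5s + 1.5$ is out of reach of your toolkit.
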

\begin{proof}
    Follows from Theorem~\ref{theo:mainresult}, see Section~\ref{sect:actasrefine}.
\end{proof}

\begin{figure}
    \begin{center} \begin{tikzpicture} \begin{scope}
        \begin{scope} 
            \node(A) at (0, 0) {};
            \node(B) at (1, 0) {};
            \node(C) at (-1, 1) {};
            \node(D) at (0, 1) {};
            \node(E) at (1, 1) {};
            \node(F) at (2, 1) {};
            \draw (A) -- (C) -- (D) -- (E) -- (F) -- (B) -- (E) -- (A) -- (D);
            \draw (A) -- (C);
            \draw[based] (A) -- (B);
        \end{scope} 
        \begin{scope}[shift={(3, 0)}]
            \node (A) at (0, 0) {};
            \node (B) at (1, 0) {};
            \node (C) at (0.5, 1) {};
            \draw (A) -- (C) -- (B);
            \draw[based] (A) -- (B);
        \end{scope} 
        \begin{scope}[shift={((5, 0)}]
            \node (A) [red] at (0, 0) {};
            \node (B) at (1, 0) {};
            \node (C) at (0, 1) {};
            \node (D) at (1, 1) {};
            \draw (A) -- (C) -- (D) -- (B);
            \draw (A) -- (D);
            \draw [based] (A) -- (B);
        \end{scope} 
        \begin{scope}[shift={(7, 0)}]
            \node (A) at (0, 0) {};
            \node (B) [red] at (1, 0) {};
            \node (C) at (0, 1) {};
            \node (D) at (1, 1) {};
            \draw (A) -- (C) -- (D) -- (B);
            \draw (B) -- (C);
            \draw [based] (A) -- (B);
        \end{scope}
    \end{scope} \end{tikzpicture}\end{center}
    
    \begin{center} \begin{tikzpicture} \begin{scope}
        \begin{scope} 
            \node (A) at (0, 0) {};
            \node (B) at (1, 0) {};
            \node (C) at (-0.5, 1) {};
            \node (D) at (0.5, 1) {};
            \node (E) at (1.5, 1) {};
            \draw (A) -- (C) -- (D) -- (E) -- (B);
            \draw (A) -- (D) -- (B);
            \draw[based] (A) -- (B);
        \end{scope}
        \begin{scope}[shift={(2.5, 0)}] 
            \node (A) at (0, 0) {};
            \node (B) at (1, 0) {};
            \node (C) at (0, 1) {};
            \node (D) at (1, 1) {};
            \node (E) at (0.5, 1.7) {};
            \draw (A) -- (C) -- (E) -- (D) -- (B);
            \draw (A) -- (D) -- (C);
            \draw[based] (A) -- (B);
        \end{scope}
        \begin{scope}[shift={(5.5,0.48)},scale=0.5]
            \node (A) at (210:2) {};
            \node (B) at (-30:2) {};
            \node (C) at  (90:2) {};
            \node (X) at ( 30:0.5) {};
            \node (Y) at (150:0.5) {};
            \node (Z) at (-90:0.5) {};
            \draw (A)  (B) -- (C) -- (A);
            \draw (A) -- (Z) -- (B) -- (X) -- (C) -- (Y) -- (A);
            \draw (X) -- (Y) -- (Z) -- (X);
            \draw[based] (A) -- (B);
        \end{scope}
        \begin{scope}[shift={(7.5,0)}] 
            \node(A) at (0, 0) {};
            \node (B) at (1, 0) {};
            \node (C) at (0.5, 0.5) {};
            \node (D) at (0, 1) {};
            \node (E) at (1, 1) {};
            \node (F) at (0.5, 1.7) {};
            \draw (A) -- (D) -- (F) -- (E) -- (B) -- (C) -- (A);
            \draw (D) -- (C) -- (E) -- (D);
            \draw[based] (A) -- (B);
        \end{scope}
    \end{scope} \end{tikzpicture}\end{center}
    
    \begin{center} \begin{tikzpicture} \begin{scope}
        \newcommand{\drawL}[1]{
            \begin{scope}[#1] 
                \node (A) at (0, 0) {};
                \node (B) at (1, 0) {};
                \node (C) at (0, 1) {};
                \node (D) at (1, 1) {};
                \node (E) at (0, 2) {};
                \node (F) at (1, 2) {};
                \draw (A) -- (C) -- (E) -- (F) -- (D) -- (B);
                \draw (A) -- (D) -- (C) -- (F);
                \draw[based] (A) -- (B);
            \end{scope}
        }
        \drawL{} 
        \drawL{shift={(3, 0)}, xscale=-1} 
        \begin{scope}[shift={(5, 0)}] 
            \node (A) at (-0.5, 0) {};
            \node (B) at (1.5, 0) {};
            \node (C) at (-0.1, 0.8) {};
            \node (D) at (0.5, 0.5) {};
            \node (E) at (1.1, 0.8) {};
            \node (F) at (0.5, 1.1) {};
            \node (G) at (0.5, 2) {};
            \draw (A) -- (C) -- (D) -- (E) -- (B);
            \draw (A) -- (D) -- (B);
            \draw (C) -- (F) -- (E) -- (G) -- (C);
            \draw (G) -- (F) -- (D);
            \draw[based] (A) -- (B);
        \end{scope}
    \end{scope} \end{tikzpicture} \end{center}
    \caption{Small examples, with base edge in blue and bold. Each row to be read from left to right.
    Top row: A+B, OR, A, B, the later two with their red vertex. Middle row: AND, L+R, OCTA, L OR R. Bottom row: L, R, None.
    It is worth noting that the only non-outerplanar examples here
    are OCTA, L OR R and None.}
    \label{fig:smallattach}
\end{figure}

Figure~\ref{fig:smallattach} depicts a small example for each case. The most important ones are small A, small B and small OR.
Take special note of the \emph{red vertex} in the small A, B.
Note in the OCTA, L OR R, L, R cases, the bound in proposition~\ref{prop:nearact} is not tight.
This is illustrated in Table~\ref{tab:actbound}. The loose bounds are sufficient for our proof.

\begin{table}\centering
    \ra{1.2}
    \begin{tabular}{@{}lllll@{}}\toprule
        Act as & $s$ & $\phi$ & formula & lower bound\\ \midrule
        A+B & $2$ & $5$ & $\phi = 3.5s - 2$ & $\phi \geq 3.5s - 2$ \\
        OR & $1$ & $1.5$ & $\phi = 3.5s - 2$ & $\phi \geq 3.5s - 2$ \\
        A, B & $1$ & $2.5$ & $\phi = 3.5s - 1$ & $\phi \geq 3.5s - 1$ \\
        AND & $1$ & $3.5$ & $\phi = 3.5s$ & $\phi \geq 3.5s$ \\
        L+R & $1$ & $3.5$ & $\phi = 3.5s$ & $\phi \geq 3.5s$ \\
        OCTA & $1$ & $4$ & $\phi = 3.5s+0.5$ & $\phi \geq 3.5s$ \\
        L OR R & $1$ & $4.5$ & $\phi = 3.5s+1$ & $\phi \geq 3.5s$ \\
        L, R & $1$ & $4.5$ & $\phi = 3.5s+1$ & $\phi \geq 3.5s + 0.5$ \\
        None & $1$ & $5$ & $\phi = 3.5s+1.5$ & $\phi \geq 3.5s + 1.5$ \\
        \bottomrule
    \end{tabular}
    \caption{The relation between $\phi$ and the size $s$ of a minimum rooted dominating set.
    For each act-as type, we compare the small example in Figure~\ref{fig:smallattach}
    to the lower bound in Proposition~\ref{prop:nearact}.}
    \label{tab:actbound}
\end{table}

\subsection{Toolbox: Replacing attachments}

\begin{lemma}\label{lemm:replaceattach}
    Let $G_0$ be a near triangulation with boundary edge $u_0, v_0$.
    Let $(G_1, u_1, v_1)$ and $(G_2, u_2, v_2)$ be rooted near-triangulations that act as the same type.
    Let $H_i$ be the result of attaching $G_i$ to $u_0, v_0$.
    Then
    \begin{align*}  
        s(H_2) - s(H_1) &= s(G_2, u_2, v_2) - s(G_1, u_1, v_1)\\
        \Phi(H_2) - \Phi(H_1) &= \phi(G_2, u_2, v_2) - \phi(G_1, u_1, v_1)
    \end{align*}
\end{lemma}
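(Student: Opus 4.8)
The $\Phi$-identity is immediate from Lemma~\ref{lemm:defuse}. Since $G_0$ is a near-triangulation with boundary edge $u_0,v_0$ and $H_i$ is obtained by attaching $(G_i,u_i,v_i)$ to it, the ``moreover'' clause of Lemma~\ref{lemm:defuse} for attaching (the case where the outer graph is a near-triangulation) gives $\Phi(H_i)=\phi(G_0,u_0,v_0)+\phi(G_i,u_i,v_i)+2$ for $i\in\{1,2\}$. Subtracting the two instances yields $\Phi(H_2)-\Phi(H_1)=\phi(G_2,u_2,v_2)-\phi(G_1,u_1,v_1)$; this half does not even use that $G_1$ and $G_2$ act as the same type.

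For the domination identity I would first isolate the combinatorics of attaching. In $H_i$ the two sides share exactly the vertices $u:=u_0=u_i$ and $v:=v_0=v_i$, and no edge of $H_i$ joins $V(G_0)\setminus\{u,v\}$ to $V(G_i)\setminus\{u,v\}$. Hence $S\subseteq V(H_i)$ is a dominating set iff $S_0:=S\cap V(G_0)$ dominates $V(G_0)\setminus\{u,v\}$, $S_i:=S\cap V(G_i)$ dominates $V(G_i)\setminus\{u,v\}$, and each of $u,v$ is dominated by $S_0$ (in $G_0$) or by $S_i$ (in $G_i$); moreover $|S|=|S_0|+|S_i|-|S\cap\{u,v\}|$. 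To capture the possible behaviours of each side, for a rooted near-triangulation $(G,x,y)$ and $\alpha,\beta\in\{A,L,N\}$ let $s_{\alpha\beta}(G,x,y)$ be the minimum size of a set dominating $V(G)\setminus\{x,y\}$ that moreover contains $x$ if $\alpha=A$, dominates $x$ if $\alpha=L$, and is unconstrained at $x$ if $\alpha=N$ (symmetrically for $y,\beta$); these nine numbers are the $k=2$ instance of the tensor from the $k$-vertex-cut discussion, with $s_{NN}=s(G,x,y)$. Restricting an optimal dominating set of $H_i$, and conversely gluing together minimizers for matching constraints, then yields a formula of the shape
\[
s(H_i)=\min\ \Big(s_{\alpha_0\beta_0}(G_0)+s_{\alpha_i\beta_i}(G_i)-\delta_u-\delta_v\Big),
\]
the minimum taken over the ``consistent'' quadruples $(\alpha_0,\beta_0,\alpha_i,\beta_i)$ — those under which $u$ and $v$ both end up dominated in $H_i$ — and $\delta_u,\delta_v\in\{0,1\}$ being the corrections for vertices of $\{u,v\}$ that the choice forces into $S$. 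This is the attaching analogue of Lemma~\ref{lemm:fusereplace} and its proof.

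Granting this formula, the remainder is bookkeeping. Fixing $G_0$ together with its base edge, the formula exhibits $s(H_i)$ as a function of the tensor of $(G_i,u_i,v_i)$ alone; subtracting $s(G_i,u_i,v_i)=s_{NN}(G_i)$ turns this into a function of the \emph{normalized} tensor $\big(s_{\alpha\beta}(G_i)-s_{NN}(G_i)\big)_{\alpha,\beta\in\{A,L,N\}}$. It therefore suffices to check that the act-as type of $(G_i,u_i,v_i)$ pins down every normalized entry that this formula actually reads — which is exactly what the list of eleven act-as classes is designed to guarantee. For each act-as type one reads off from its definition which normalized entries are $0$ and which are $1$, using monotonicity of $s_{\alpha\beta}$ in the order $N<L<A$ and that consecutive entries differ by at most $1$ (and, where the definition alone does not determine an entry, a short structural observation together with the fact that the combination formula never selects that entry as its unique optimum). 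Once this finite check is done, $(G_1,u_1,v_1)$ and $(G_2,u_2,v_2)$ acting as the same type have identical normalized tensors, hence $s(H_1)-s(G_1,u_1,v_1)=s(H_2)-s(G_2,u_2,v_2)$, which is the claimed identity.

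The main obstacle is getting the combination formula exactly right: the consistency rules and the corrections $\delta_u,\delta_v$ must be arranged so that both the lower bound (restrict an optimal $S$) and the upper bound (glue minimizers) hold with no off-by-one, the delicate case being that a side required only to \emph{dominate} one of $u,v$ may or may not place it in its own set, which interacts with the double-counting of $\{u,v\}$. A secondary, purely finite, difficulty is confirming that no act-as class leaves undetermined an entry that the formula genuinely uses.
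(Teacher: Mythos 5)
Your proof follows essentially the same route as the paper's: the $\Phi$-identity is read off from the equality case of the detaching lemma, and the $s$-identity comes from decomposing an optimal dominating set of $H_i$ along the chord into a $G_0$-part and a rooted dominating set of $G_i$ classified by its behaviour at $u,v$, then swapping the $G_i$-part for one of the same act-as type. Your tensor formulation is just a more explicit rendering of the paper's one-line argument, and you correctly isolate the one point both versions ultimately rest on (and which the paper passes over silently): that the minimum in the combination formula is never attained solely at an entry that the non-exhaustive act-as classification leaves undetermined.
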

\begin{proof}
    For the statement on $s$, note that a $H_i$-dominating set is the disjoint union of a set in $G_0$
    and a rooted dominating set in $G_i$. Since $G_1$ and $G_2$ act as the same type,
    we can switch between rooted dominating sets of the two that both contain $u$ and/or $v$ or
    both do not contain $u$ and/or $v$.
    The statement on $\Phi$ follows from the equality cases in Lemmas~\ref{lemm:unroot} and~\ref{lemm:defuse} (unrooting and detaching).
\end{proof}

\subsection{Deleting one problem creates at most one new one}
Sometimes, we want to delete a low degree vertex that is already dominated for one reason or another.
It is crucial that this does not increase $\Phi$ by too much.

\begin{lemma}[Problems are not adjacent]
    Let $G$ be a skeletal triangulation. Suppose $u, v$ are each involved in distinct low-degree problems that are not degree-2 cut vertices,
    i.e. each a degree-3 vertex in a 3-pair of a distinct bad 5-wheel or each a degree-2 vertex in a distinct isolated triangle / pivoting triangle / ear.
    Then $u$ is not adjacent to $v$.
\end{lemma}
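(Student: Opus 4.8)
Assume for contradiction that $u$ is adjacent to $v$, and split into three cases according to whether each of $u,v$ is a degree-$2$ vertex (in an ear, pivoting triangle or isolated triangle) or a degree-$3$ vertex in a $3$-pair of a bad $5$-wheel. The guiding principle is that a vertex involved in such a low-degree problem has a very constrained neighborhood, and adjacency to another such vertex would force the two problems to actually be the \emph{same} problem.

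\textbf{Case 1: $u$ and $v$ are both degree-$2$.} By hypothesis neither is a cut vertex, so both lie in the cluster of degree-$2$ non-cut vertices. Since $u\sim v$, they lie in the \emph{same} such cluster $C$, which (being the degree-$2$ vertices of one facial triangle) has size $1$, $2$, or $3$. It cannot have size $1$ since it contains $u\neq v$, so $C$ has size $2$ or $3$, i.e.\ $C$ is the set of degree-$2$ vertices of one pivoting triangle or one isolated triangle. Hence $u$ and $v$ are involved in the \emph{same} low-degree problem, contradicting that they are involved in distinct ones (and, in particular, neither can be an ear tip, so the "ear" subcases never occur).

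\textbf{Case 2: $u$ has degree $2$ and $v$ has degree $3$ in a $3$-pair of a bad $5$-wheel $H$.} In $H$, the vertex $v$ is a rim vertex, adjacent to the two neighboring rim vertices and the hub; since $\deg_G(v)=3$, these three vertices are \emph{all} of $v$'s $G$-neighbors. But in $H$ the rim vertices have degree $3$ and the hub has degree $4$, so every neighbor of $v$ has $G$-degree $\geq 3$. Thus a degree-$2$ vertex cannot be adjacent to $v$, so $u\not\sim v$, a contradiction.

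\textbf{Case 3: $u$ and $v$ both have degree $3$, in $3$-pairs of distinct bad $5$-wheels $H_u$ (hub $x$) and $H_v$ (hub $y$).} As in Case 2, $N_G(u)$ consists of the hub $x$ and $u$'s two rim-neighbors in $H_u$, and likewise for $v$ in $H_v$. Since $v\sim u$ and $\deg_G(v)=3<4\le\deg_G(x)$, we get $v\neq x$, so $v$ is one of $u$'s rim-neighbors in $H_u$; symmetrically $u$ is a rim-neighbor of $v$ in $H_v$. Writing the rim $4$-cycle of $H_u$ as $u\!-\!v\!-\!p\!-\!q\!-\!u$, we have $N_G(u)=\{v,q,x\}$ and $N_G(v)=\{u,p,x\}$ with $p\neq q$, so the \emph{unique} common neighbor of $u$ and $v$ is $x$. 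But $y$ is also a common neighbor of $u$ and $v$ (the hub of $H_v$), so $y=x$. Then in $H_v$, the rim-neighbor of $u$ other than $v$ must lie in $N_G(u)\setminus\{v,x\}=\{q\}$ and that of $v$ other than $u$ in $N_G(v)\setminus\{u,x\}=\{p\}$, so the rim $4$-cycle of $H_v$ is $u\!-\!v\!-\!p\!-\!q\!-\!u$ as well (note $p\sim q$ since it is an edge of the rim of $H_u$). Hence $H_v$ has the same vertex set, hub, and edges as $H_u$, i.e.\ $H_u=H_v$, contradicting that the two bad $5$-wheels are distinct.

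\textbf{Main obstacle.} Cases 1 and 2 are essentially degree bookkeeping. The substantive step is Case 3: one must exploit that a degree-$3$ vertex pins down its entire neighborhood inside its bad $5$-wheel, and then chase the adjacency constraints carefully enough to conclude that a shared edge between two such vertices forces the two $5$-wheels to coincide (first the hubs, via the uniqueness of the common neighbor, then the rims). Once that rigidity is set up, the contradiction with "distinct" is immediate.
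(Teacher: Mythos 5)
Your proposal is correct. The paper itself dismisses this lemma with ``a tedious but straight-forward case analysis'' and gives no details, and your three cases (both degree~2, mixed, both in 3-pairs) supply exactly that analysis; in particular the rigidity argument in Case~3 — that a degree-3 rim vertex determines its whole neighbourhood, forcing the two hubs and then the two rims to coincide — is the substantive step and is carried out correctly.
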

\begin{proof}
    A tedious but straight-forward case analysis.
\end{proof}

\begin{corollary}[Degree bound on problems] \label{coro:fewproblems}
    Let $G$ be a near-triangulation with boundary vertex $u$. Suppose $G - u$ is a skeletal triangulation.
    Then $\Phi(G - u) \le \Phi(G) - 1 + \lfloor \frac{\deg(u)+1}{2}\rfloor$.
    In other words, deleting $u$ creates at most $\lfloor \frac{\deg(u)+1}{2} \rfloor$ new low-degree problems.
\end{corollary}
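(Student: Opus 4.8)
The plan is to account for the change in $\Phi$ caused by deleting $u$. Since $G-u$ has exactly one fewer vertex than $G$,
\[
    \Phi(G-u) = \Phi(G) - 1 + \tfrac12\bigl(p(G-u) - p(G)\bigr),
\]
where $p(\cdot)$ is the total number of low-degree problems (ears, pivoting triangles, isolated triangles, degree-$2$ cut vertices and bad $5$-wheels) in the indicated graph. Any low-degree problem of $G$ that persists in $G-u$ is counted on both sides, so $p(G-u)-p(G)$ is at most the number $N$ of \emph{new} low-degree problems, i.e.\ those present in $G-u$ but not in $G$. Hence it suffices to prove $N\le\deg(u)$: this already gives $\Phi(G-u)\le\Phi(G)-1+\tfrac12\deg(u)\le\Phi(G)-1+\lfloor\tfrac{\deg(u)+1}{2}\rfloor$. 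The sharper count $N\le\lfloor\tfrac{\deg(u)+1}{2}\rfloor$ claimed in the ``in other words'' then follows from the same bookkeeping together with the lemma ``Problems are not adjacent'', as indicated at the end.

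\emph{Step 1: every new low-degree problem has an involved vertex in $N_G(u)$.} Deleting $u$ creates no new edge and no new subgraph, changes the degree of a vertex only if it is adjacent to $u$, and turns an interior vertex into a boundary vertex only if it is adjacent to $u$ (the faces that merge into the enlarged outer face are exactly those incident to $u$). Now check the types. The tip of a new ear, a degree-$2$ vertex of a new pivoting or isolated triangle, and a new degree-$2$ cut vertex all have degree $2$ in $G-u$; such a vertex cannot have had degree $2$ in $G$ — a degree-$2$ neighbour of $u$ becomes a leaf, which contradicts $G-u$ being skeletal, while for a degree-$2$ non-neighbour of $u$ the local configuration is unchanged (here one uses that in a $2$-connected internally triangulated graph on more than three vertices every degree-$2$ vertex lies in a facial triangle and so is never a cut vertex), so the problem would already occur in $G$ — hence it has degree $3$ in $G$ and lost the neighbour $u$. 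For a new bad $5$-wheel with $3$-pair $\{a,b\}$, the underlying $5$-wheel already sits in $G$; as the wheel is new, $\{a,b\}$ must have become a $3$-pair, so one of $a,b$ either dropped to degree $3$ (losing $u$) or became a boundary vertex — and either way it lies in $N_G(u)$.

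\emph{Step 2: each vertex $w\in N_G(u)$ is an involved vertex of at most one new problem.} Every vertex of $N_G(u)$ has $\deg_G\ge 3$ (else it becomes a leaf), so $\deg_{G-u}(w)\ge 2$. If $\deg_{G-u}(w)=2$, then the unique facial triangle at $w$, or its absence, determines which single one of the four types (ear, pivoting triangle, isolated triangle, degree-$2$ cut vertex) $w$ realizes, and $w$ lies in no bad $5$-wheel since $3$-pair vertices have degree $3$. If $\deg_{G-u}(w)=3$, then $w$ is in no degree-$2$ problem, and it remains to see that $w$ lies in at most one new bad $5$-wheel. Suppose $w$ lay in two distinct new bad $5$-wheels $H_1\ne H_2$. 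Since $\deg_{G-u}(w)=3$, $w$ is a rim vertex of each, so in each wheel its three neighbours split as two rim-neighbours plus the centre. Comparing the two wheels, either the centres coincide, in which case $w$, its two rim-neighbours and the two distinct ``opposite'' vertices form a theta-subgraph around the common centre to all of whose three paths the centre would have to be adjacent — impossible — or the centres differ, forcing a $K_4$ on $w$ together with its three neighbours. In the $K_4$ case, as $w$ must be a boundary vertex it is not the enclosed vertex, so a neighbour of $w$ is enclosed by a triangle; that neighbour then has degree $\ge 4$ and cannot be a wheel centre, and the two rim vertices adjacent to $w$ gain a common further neighbour and so have degree $\ge 4$ and cannot be in a $3$-pair with $w$ — contradicting that $H_1$ (and $H_2$) is bad. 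This planar case analysis is the main obstacle: it is the only spot where the estimate is not immediate, and the interior-becomes-boundary mechanism for bad $5$-wheels must be tracked carefully here.

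\emph{Conclusion.} By Steps 1 and 2, every new problem can be charged to one of its involved vertices in $N_G(u)$, and no vertex is charged twice, so $N\le|N_G(u)|=\deg(u)$, which gives the stated inequality. For the stronger count $N\le\lfloor\tfrac{\deg(u)+1}{2}\rfloor$, recall that $N_G(u)$ is the link of $u$, a path on $\deg(u)$ vertices; by ``Problems are not adjacent'', two link vertices involved in \emph{distinct} new problems are non-adjacent unless one is a degree-$2$ cut vertex, and such a cut vertex can only arise at a chord of $u$, which is handled separately. Up to that exception the charged vertices form an independent set in a path on $\deg(u)$ vertices, and such a set has at most $\lceil\deg(u)/2\rceil=\lfloor\tfrac{\deg(u)+1}{2}\rfloor$ elements.
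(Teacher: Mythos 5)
Your proof is, at its core, the same as the paper's: the paper's entire argument is your final paragraph (charge each new problem to an involved vertex in $N_G(u)$, invoke the ``Problems are not adjacent'' lemma to make the charged vertices an independent set in the path $N_G(u)$, and bound that by $\lceil \deg(u)/2\rceil = \lfloor(\deg(u)+1)/2\rfloor$). Your Steps 1 and 2 spell out what the paper leaves implicit, namely that every new problem really does have an involved vertex adjacent to $u$ and that the charging can be made injective; this is legitimate extra care, since without injectivity the independent-set bound controls the number of charged vertices rather than the number of problems.

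Two spots are not fully closed. First, in the $K_4$ sub-case of Step 2 the sentence ``that neighbour then has degree $\ge 4$ and cannot be a wheel centre'' is backwards --- wheel centres have degree $\ge 4$, so a high degree does not exclude being a centre; what you actually need (and what the rest of your sentence correctly supplies) is that all potential $3$-pair partners of $w$ acquire degree $\ge 4$, so $w$ lies in no $3$-pair and hence is not \emph{involved} in either wheel. Second, and more substantively, the ``Problems are not adjacent'' lemma explicitly excludes degree-$2$ cut vertices, and you dispose of this exception with ``such a cut vertex can only arise at a chord of $u$, which is handled separately'' --- but nothing in your write-up handles it, and new degree-$2$ cut vertices genuinely can appear in $G-u$ (the whole point of skeletal triangulations is to permit them). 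To be fair, the paper's two-line proof cites the same lemma and silently skates over the same exception; but if you are going to flag it, you need to either show that a new degree-$2$ cut vertex in $N_G(u)$ cannot be adjacent to a representative of another new problem, or fold it into the independent-set count some other way.
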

\begin{proof}
    Every newly created low-degree problem contains at least one vertex adjacent to $u$. Pick one such vertex
    for each problem, then by the previous lemma,
    these vertices are not adjacent to each other, so they form an independent set in $N(u)$. $N(u)$ is a path on $\deg(u)$ vertices.
\end{proof}

\begin{lemma}[Deleting Problems]\label{lemm:deleteprob}
    Let $G$ be a skeletal triangulation with boundary vertex $u$.
    Suppose that $\Phi(G) = \phi(G, u) + 1.5$ and that $u$ is not a cut vertex.
    Then $u \in G$ is an ear tip or in every 3-pair of a bad 5-wheel
    and $H := G - u$ is a skeletal triangulation with
    \[
        \Phi(H) \le \phi(G, u) + 0.5 = \Phi(G) - 1.
    \]
    Let $G$ be a near-triangulation with boundary edge $u, v$. Suppose that $\Phi(G) = \phi(G, u, v) + 2.5$.
    Then at least one of $u, v$ is an ear tip or in the (unique) 3-pair of a bad 5-wheel, suppose it is $u$.
    Then $H := G - u$ is a near-triangulation with
    \[
        \Phi(H) \le \phi(G, u, v) + 1 + 0.5 = \Phi(G) - 1.
    \]
\end{lemma}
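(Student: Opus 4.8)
The plan is to handle both statements the same way: the structural claim about $u$ is immediate from Lemma~\ref{lemm:unroot}, and the content is verifying that $H := G - u$ is again a (near-/skeletal) triangulation and that the penalty drops by exactly $1$. For the first statement, since $u$ is not a cut vertex and $\Phi(G) = \phi(G,u) + 1.5$, the ``moreover'' part of Lemma~\ref{lemm:unroot} forces $u$ to be an ear tip (hence $\deg u = 2$) or to lie in every $3$-pair of some bad $5$-wheel $W$ (hence $\deg u = 3$, since $3$-pair vertices have degree $3$); the two cases are mutually exclusive. For the second statement, the near-triangulation part of Lemma~\ref{lemm:unroot} similarly forces one of $u, v$ — say $u$ — to be an ear tip, or $\{u,v\}$ to be a $3$-pair of a bad $5$-wheel with $G$ not the isolated $5$-wheel (the latter being automatic, since for the isolated $5$-wheel one has $\Phi = \phi + 2$).

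Next I would check that $H$ is well-formed. Internal triangulation is preserved because deleting a boundary vertex only merges its incident faces into the outer face; connectivity is preserved since $u$ is not a cut vertex (in the near-triangulation case, $2$-connectivity is checked by hand: if $u$ is an ear tip its two neighbours are adjacent, and if $u = v_1$ lies in a $3$-pair then $N(u) = \{v_2, x, v_4\}$ induces a path whose vertices stay mutually connected in $H$, so nothing in $H$ becomes a cut vertex). Minimum degree $\ge 2$ holds because every neighbour of $u$ has degree $\ge 3$ in $G$: for an ear tip this is the definition of an ear, and for a $3$-pair vertex $v_1$ of the wheel $W = N[x]$ with $4$-cycle $v_1 v_2 v_3 v_4$, its neighbours $v_2, v_4$ lie on the $4$-cycle and $x$ has degree $\ge 5$ in the wheel.

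For the penalty, I would track $\Phi(H) = \Phi(G) - 1 - \tfrac12 d + \tfrac12 c$, where $d$ and $c$ count destroyed and created low-degree problems; it suffices to find one destroyed problem and at most one created one. If $u$ is an ear tip, the ear with tip $u$ is destroyed, and since $\deg u = 2$ (so $N(u)$ is a $2$-vertex path on which any set of anchors of distinct new problems, being independent by the ``problems are not adjacent'' lemma, has size $\le 1$), at most one new problem appears. If $u = v_1$ lies in every $3$-pair of $W$, then $W$ is destroyed (as $u \in V(W)$), and I must rule out a second new problem. A short planarity argument gives the needed rigidity: $v_1 v_4$ is also a boundary edge (else $v_2 v_4$ is an edge, making $\deg v_2 \ge 4$ and contradicting that $v_2$ is in a $3$-pair), so the $4$-cycle bounds a disc tiled by the four wheel-triangles and $\deg_G x = 4$; and $\deg_G v_4 \ge 4$, because $\deg v_4 = 3$ would make $\{v_1, v_4\}$ a second $3$-pair and force $G = W_4$, contradicting $\Phi(G) = \phi(G,u)+1.5$. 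Deleting $v_1$ then turns the boundary segment $v_2 - v_1 - v_4$ into $v_2 - x - v_4$: the vertex $v_2$ (now of degree $2$) becomes an ear tip — the single new problem — while $x$ (degree $3$, adjacent to $v_2$) is excluded by ``problems are not adjacent'', and any new bad $5$-wheel through $v_4$ would force its centre to be the common neighbour $v_3$ of $v_4$ and its other boundary neighbour $w'$, and then the last wheel vertex would have to be a common neighbour of $w'$ and $x$, which does not exist. Hence $d \ge 1$ and $c \le 1$, so $\Phi(H) \le \Phi(G) - 1$. The near-triangulation case is analogous: delete the chosen $u$, use that base vertices are never involved in low-degree problems, and the same one-destroyed/one-created bookkeeping gives $\Phi(H) \le \phi(G,u,v) + 1.5 = \Phi(G) - 1$.

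The main obstacle is precisely this last count: Corollary~\ref{coro:fewproblems} only bounds the number of new problems created by deleting a degree-$3$ vertex by $\lfloor (\deg u + 1)/2 \rfloor = 2$, which is one too many, so the proof genuinely needs the planarity rigidity of the bad $5$-wheel (centre of degree $4$, $3$-pair partner forced to become a degree-$2$ ear tip) together with the ``problems are not adjacent'' lemma to eliminate the second candidate. The remaining verifications are routine.
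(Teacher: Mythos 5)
Your proposal is correct and follows the same route as the paper: derive the structural claim from the de-rooting lemma, dispatch the ear-tip case via the degree bound on newly created problems, and handle the 3-pair case by showing that exactly one ear (at the other vertex of the 3-pair) is created and no new bad 5-wheel can arise. The paper merely asserts this last fact without justification, so your planarity analysis of the wheel (forcing $G=W_4$ when $\deg v_4=3$, and excluding a new wheel at $v_4,w'$ via the nonexistent common neighbour) supplies exactly the missing detail; the only quibble is that $\deg_G x = 4$ is guaranteed only in the sub-case $\deg_G v_4 = 4$, which is precisely the sub-case where you use it.
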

\begin{proof}
    Lemma~\ref{lemm:unroot} shows that $u$ an ear tip or in a 3-pair.
    If $\deg(u) = 2$, then Corollary~\ref{coro:fewproblems} gives the result.
    If $u$ is in a 3-pair, then deleting $u$ creates exactly one ear and no new bad 5-wheels.
\end{proof}

\subsection{Toolbox: Covering by Fusing a small LR}

In the main proof, we sometimes modify the graph and would like to ``remember'' that some vertex $u$ is already
dominated in the original graph, meaning it does not have to be dominated again.
Fusing a small LR to $u$ achieves exactly this.

\begin{lemma}
    Let $G$ be a skeletal triangulation with boundary vertex $u$. Let $H$ be the graph resulting
    from fusing a small LR (Figure~\ref{fig:smallfuse}) to $u$. Then:
    \begin{itemize}
        \item $\Phi(H) \leq \Phi(G) + 3.5$, with equality if $\deg_G(u) \geq 4$.
        \item If $H$ has a dominating set of size $s$, then $G$ has a set of size $\leq s-1$ that
            dominates every vertex except maybe $u$.
    \end{itemize}
\end{lemma}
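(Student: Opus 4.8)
The plan is to prove the two bullet points separately: the penalty bound is pure bookkeeping via Lemmas~\ref{lemm:defuse} and~\ref{lemm:unroot}, while the covering statement is an elementary counting argument.

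For the penalty bound, I would first record the data of the small LR $(L,u)$ of Figure~\ref{fig:smallfuse}. It has four vertices; $u$ has degree $2$; its triangle through $u$ is an ear with tip $u$ and so contributes nothing to $\phi(L,u)$, while its other triangle is an ear with tip $D\neq u$ and so contributes; it has no pivoting or isolated triangles, no non-$u$ degree-$2$ cut vertices, and no $u$-avoiding bad $5$-wheels, and $\deg_L(u)=2$ makes the $r$-term vanish. Hence $\phi(L,u)=3.5$. Since $H$ is obtained by fusing $(L,u)$ to the boundary vertex $u$ of $G$ and $\deg_L(u)\neq 1$, Lemma~\ref{lemm:defuse} gives both the desired inequality $\Phi(H)\le\Phi(G)+\phi(L,u)=\Phi(G)+3.5$ and the exact identity
\[
    \Phi(H)=\phi(G,u)+\phi(L,u)+1=\phi(G,u)+4.5 .
\]
To upgrade the inequality to an equality when $\deg_G(u)\ge 4$, I would use Lemma~\ref{lemm:unroot}: a vertex of degree $\ge 4$ is neither an ear tip, nor a degree-$2$ cut vertex, nor a vertex of a $3$-pair, so $\Phi(G)=\phi(G,u)+1$; substituting into the identity yields $\Phi(H)=\phi(G,u)+4.5=\Phi(G)+3.5$.

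For the covering statement, let $S$ be a dominating set of $H$ of size $s$, and write $\{B,C,D\}$ for the three non-$u$ vertices of the attached small LR, with $D$ its ear tip. Since $N_H[D]=\{B,C,D\}$, the set $S$ must meet $\{B,C,D\}$, so $S':=S\cap V(G)$ has $|S'|\le s-1$. I claim $S'$ dominates every vertex of $G$ except possibly $u$: fusing identifies only the vertex $u$, so no vertex $v\in V(G)\setminus\{u\}$ is incident to an edge of the small LR, whence $N_H[v]=N_G[v]\subseteq V(G)$; as $S$ dominates $v$ in $H$, some vertex of $S\cap V(G)=S'$ dominates $v$. This proves the claim and hence the second bullet.

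I do not expect a genuine obstacle; the points requiring care are (a) computing $\phi(L,u)$ with the correct convention — in particular noticing that the triangle through $u$ is an ear with tip $u$ and is therefore not counted — and (b) observing that $\deg_G(u)\ge 4$ is exactly the hypothesis putting us in the $\Phi(G)=\phi(G,u)+1$ branch of Lemma~\ref{lemm:unroot}. One should also note that $H$ is a bona fide skeletal triangulation (so that $\Phi(H)$ is even defined): $G$ and the small LR are internally triangulated, fusing at one vertex preserves this, and every vertex of $H$ still has degree $\ge 2$.
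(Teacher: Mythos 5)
Your proof is correct and takes essentially the same route as the paper: the inequality and exact identity come from Lemma~\ref{lemm:defuse} with $\phi(\text{small LR}, u) = 3.5$, the equality case when $\deg_G(u)\geq 4$ comes from $u$ not being involved in any low-degree problem (you route this through Lemma~\ref{lemm:unroot}, the paper states it directly), and the covering statement is the same counting argument using that $u$ is the only identified vertex and the small LR contains a vertex that forces $S$ to meet its non-$u$ part.
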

\begin{proof}
    Let $H' \subseteq H$ be the small LR that was fused, including $u$. Then,
    \[
        \Phi(H) \leq \Phi(G) + \phi(H', u) = \Phi(G) + 3.5.
    \]
    If $\deg(u) \geq 4$, then $u$ is not involved in any low-degree problems, hence equality holds.
    For the second statement, let $D$ be a $H$-dominating set
    of size $s$. Put $D' = D \setminus (H' - u)$,
    then clearly $D'$ dominates $G - H'$ as $u$ is a cut vertex in $H$.
    Moreover $|D'| < |D| = s$ as $D$ contains at least one vertex of $H' - u$.
\end{proof}

This lemma generalizes to many vertices $u_i$:
\begin{lemma}[Covering with LRs]
    Let $G$ be a skeletal triangulation with boundary vertices $u_1, \dots, u_k$. Let $H$ be the graph resulting
    from fusing a small LR to each $u_i$. Then:
    \begin{itemize}
        \item $\Phi(H) \leq \Phi(G) + 3.5 k$.
        \item If $H$ has a dominating set of size $s$, then $G$ has a set of size $s-k$ that
            dominates every vertex except maybe some of the $u_i$.
    \end{itemize}
\end{lemma}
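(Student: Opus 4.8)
The plan is to derive this from the single-vertex covering lemma just proved, by fusing the $k$ copies of the small LR one at a time. Set $H_0 := G$ and, for $1 \le j \le k$, let $H_j$ be obtained from $H_{j-1}$ by fusing a small LR to $u_j$; then $H_k = H$. Since the $u_i$ are pairwise distinct and fusing at one boundary vertex alters neither the neighbourhood nor the boundary status of the others, each $H_j$ is again a skeletal triangulation in which $u_{j+1}, \dots, u_k$ remain boundary vertices, so every step is a legal instance of the single-vertex construction.

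For the penalty bound, apply the single-vertex covering lemma to the step from $H_{j-1}$ to $H_j$ (equivalently, Lemma~\ref{lemm:defuse} together with $\phi(L, u) = 3.5$ for a small LR $L$): this gives $\Phi(H_j) \le \Phi(H_{j-1}) + 3.5$. Summing over $j = 1, \dots, k$ yields $\Phi(H) \le \Phi(G) + 3.5k$.

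For the domination statement I would argue in a single pass rather than iterate the single-vertex lemma, because after one application one only obtains a set that dominates all but one root, which is no longer a dominating set and hence cannot feed the next step. Write $V(H)$ as the disjoint union of $V(G)$ and the vertex sets $L_i$ of the $k$ fused LRs minus their roots; each $L_i$ contains a degree-$2$ ``tip'' $d_i$ with $N_H[d_i] \subseteq L_i$. Given a dominating set $D$ of $H$ with $|D| = s$, domination of $d_i$ forces $D \cap L_i \neq \emptyset$; since the $L_i$ are pairwise disjoint and disjoint from $V(G)$, the set $D' := D \cap V(G)$ satisfies $|D'| \le s - k$. Each $u_i$ is a cut vertex of $H$, so every $v \in V(G) \setminus \{u_1, \dots, u_k\}$ has $N_H[v] \subseteq V(G)$; hence the element of $D$ that dominates $v$ already lies in $D'$, and therefore $D'$ dominates all of $G$ except possibly some of the $u_i$. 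This $D'$ is exactly the claimed set.

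The argument has no genuine obstacle; the only place demanding attention is the trap just mentioned in the second bullet — one must avoid chaining the single-vertex statement and instead use that all $k$ roots are \emph{simultaneously} cut vertices of $H$. A second, purely routine, check is that fusing at distinct boundary vertices keeps the intermediate graphs $H_j$ skeletal triangulations with the surviving $u_{j+1}, \dots, u_k$ still on the outer boundary, which is immediate from the definition of fusing.
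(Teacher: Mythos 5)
Your proof is correct and matches what the paper intends by ``similar to the previous proof'': the penalty bound by iterating Lemma~\ref{lemm:defuse} with $\phi(\mathrm{small\ LR},u)=3.5$, and the domination bound by removing all $k$ fused LRs at once, using that each LR's degree-2 tip forces a distinct vertex of $D$ outside $V(G)$ and that the $u_i$ are cut vertices. Your observation that the domination part cannot be obtained by chaining the single-vertex lemma is exactly the right subtlety to flag.
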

\begin{proof}
    Similar to the previous proof.
\end{proof}

\subsection{Toolbox: Neat dominating sets}
In some graphs, there are vertices that
appear ``weakly suboptimal'' to include in a dominating set.

\begin{definition}[Neat]
    Let $G$ be a skeletal triangulation. A dominating set $S \subseteq G$ is \emph{neat}
    if for every $u \in S$, there is no $v \in V(G)$ with $N[u] \subsetneq N[v]$.
\end{definition}

\begin{lemma}
    Every skeletal triangulation has a minimum dominating set that is neat.
\end{lemma}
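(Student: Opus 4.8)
The plan is to prove the (slightly stronger) statement for \emph{arbitrary} finite graphs, since neither the definition of ``neat'' nor the obstruction it forbids uses the planar or triangulated structure; a skeletal triangulation is in particular a finite graph. The tool is an extremal choice of minimum dominating set together with a potential function that rules out the bad configuration.

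First I would fix, among all minimum dominating sets of $G$, one set $S$ that maximises the quantity $\Psi(S) := \sum_{u \in S} |N[u]|$. Such an $S$ exists because $G$ is finite, so there are only finitely many dominating sets, and $\Psi$ takes nonnegative integer values. I then claim this $S$ is neat, and argue by contradiction: suppose there are $u \in S$ and $v \in V(G)$ with $N[u] \subsetneq N[v]$ (so $v \ne u$, the containment being strict). If $v \in S$, then $S \setminus \{u\}$ still dominates $G$: any $w \in V(G) = N[S]$ lies in $N[x]$ for some $x \in S$, and if $x = u$ then $w \in N[u] \subseteq N[v]$ with $v \in S \setminus \{u\}$; this makes $S \setminus \{u\}$ a dominating set of size $|S|-1$, contradicting minimality of $|S|$. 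If instead $v \notin S$, put $S' = (S \setminus \{u\}) \cup \{v\}$; then $|S'| = |S|$ and
\[
    N[S'] \;\supseteq\; N[S \setminus \{u\}] \cup N[v] \;\supseteq\; N[S\setminus\{u\}] \cup N[u] \;=\; N[S] \;=\; V(G),
\]
so $S'$ is again a minimum dominating set, but $\Psi(S') = \Psi(S) - |N[u]| + |N[v]| > \Psi(S)$, contradicting the choice of $S$. Hence no such pair $(u,v)$ exists, i.e. $S$ is neat.

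I do not expect a serious obstacle here. The only subtlety is that the naive recipe ``repeatedly replace a vertex $u \in S$ by a vertex $v$ with $N[u] \subsetneq N[v]$'' is not obviously terminating; the role of $\Psi$ is precisely to be a quantity that strictly increases under every such swap (and is bounded), which forces termination at a neat set. One could alternatively phrase this as an induction on $|V(G)|$ or as a lexicographic minimality argument over dominating sets, but the $\Psi$-maximisation is the cleanest packaging and will be what I write down.
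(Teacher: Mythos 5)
Your proof is correct and takes essentially the same approach as the paper: the paper also chooses a minimum dominating set maximising $\sum_{u \in S} |N[u]|$ and asserts neatness, while you additionally spell out the two-case verification (which is accurate, including the observation that $v \in S$ would contradict minimality rather than maximality of $\Psi$).
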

\begin{proof}
    Take a minimum dominating set $S$ that maximizes
    \[
        \sum_{u \in S} \big| N[u] \big|,
    \]
    then $S$ is neat.
\end{proof}

In many cases, neat dominating sets allow us to
assume that some vertices are not contained or have to be contained
in a dominating set. Let $S \subseteq G$ be a near dominating set. Here are some examples,
many of which occur in Figure~\ref{fig:neatdom}.
\begin{itemize}
    \item If $u$ is the tip of an ear or in a 3-pair of a bad 5-wheel, then $u \notin S$.
    \item If $\deg(u) = 3$ and $G$ is a 3-connected near-triangulation, then $u \notin S$.
    \item If $G$ is the result of fusing a small AB to $u$, then $u \in S$.
    \item If $G$ is the result of attaching a small A (or B) to $u, v$, then the red vertex (see Figure~\ref{fig:smallattach}) of the A (or B) is in $S$.
\end{itemize}

\begin{figure}
    \begin{center} \begin{tikzpicture}[scale=0.8]
        \node (A) at (0, 0) {};
        \node (B) at (2, 0) {};
        \node (E) at (1, 1) {};
        \node (C) at (2, 2) [red] {};
        \node (D) at (0, 2) {};
        \node (X) at (1, 4) [blue] {};
        \node (Y) at (3, 4) [blue] {};
        \draw (A) -- (B) -- (C) -- (D) -- (A);
        \draw (A) -- (E) -- (C) (B) -- (E) -- (D);
        \draw (D) -- (X) -- (C) -- (Y) -- (X);
    \end{tikzpicture} \hspace{2cm} \begin{tikzpicture}[scale=0.8]
        \node (A) at (0, 0) [blue] {};
        \node (B) at (2, 0) [blue] {};
        \node (E) at (1, 1) [orange] {};
        \node (C) at (2, 2) {};
        \node (D) at (0, 2) {};
        \node (X) at (1, 4) {};
        \node (Y) at (3, 4) {};
        \draw (A) -- (B) -- (C) -- (D) -- (A);
        \draw (A) -- (E) -- (C) (B) -- (E) -- (D);
        \draw (D) -- (X) -- (C) -- (Y) -- (X);
    \end{tikzpicture} \end{center}
    \caption{On the left: In the attached B, the red vertex is contained in every neat dominating set, whereas the blue vertices are not.
    On the right: The blue vertices, which form a 3-pair for the bad 5-wheel centered at the orange vertex,
    are not contained in any neat dominating set, as the orange vertex has strictly larger closed neighborhood.
    Note however that the orange vertex is not contained in every neat dominating set.}
    \label{fig:neatdom}
\end{figure}
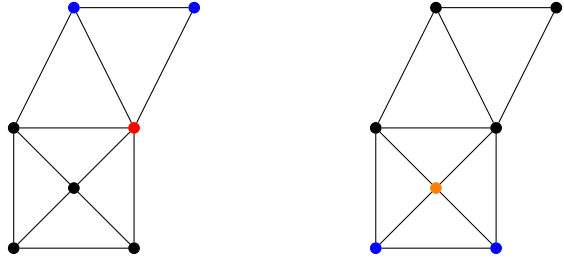

\subsection{Toolbox: Forcing a Vertex}

We have just seen that fusing a small AB allows us to force a vertex into every neat dominating set.
This turns out to be extremely useful.

\begin{lemma}[Forcing a vertex]
    Let $G$ be a skeletal triangulation with boundary vertex $u$. Let $H$ be the graph resulting
    from fusing a small AB to $u$. Then:
    \begin{itemize}
        \item $\Phi(H) \leq \Phi(G) + 2.5$.
        \item Any neat dominating set in $H$ is a dominating set in $G$ that contains $u$.
    \end{itemize}
\end{lemma}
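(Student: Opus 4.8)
The plan is to treat the two bullets separately; each is short. For the first, I would appeal directly to Lemma~\ref{lemm:defuse} (detaching and defusing): writing $H'$ for the attached small AB, i.e.\ the triangle $u a b$ whose two new vertices $a,b$ each have degree $2$, the graph $H$ is obtained by fusing $(H', u)$ to the boundary vertex $u$ of $G$, so
\[
    \Phi(H) \le \Phi(G) + \phi(H', u).
\]
It then remains to evaluate $\phi(H', u)$. The rooted skeletal triangulation $H'$ has two non-root vertices; its unique bounded face is an \emph{isolated triangle} (all three of $u,a,b$ have degree $2$), it contains no bad $5$-wheel, and $\deg_{H'}(u)=2\neq 1$, so $\phi(H',u) = 2 + \tfrac12 + 0 + 0 = 2.5$. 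This yields $\Phi(H)\le \Phi(G)+2.5$.

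For the second bullet, the key observation is that $u$ \emph{strictly dominates} both new vertices: $N_H[a] = N_H[b] = \{u,a,b\}$, while $N_H[u] = N_G[u]\cup\{a,b\}$ has size $\deg_G(u)+3 \ge 5$ since $\deg_G(u)\ge 2$; hence $N_H[a]\subsetneq N_H[u]$ and $N_H[b]\subsetneq N_H[u]$. Consequently, if $S$ is a \emph{neat} dominating set of $H$, then neatness forbids $a\in S$ and $b\in S$. Since $a$ must be dominated and its only closed neighbours are $u,a,b$ with $a,b\notin S$, we get $u\in S$. Finally $S\subseteq V(H)\setminus\{a,b\} = V(G)$, and for every $v\in V(G)$ we have $N_G[v]\subseteq N_H[v]$ with equality unless $v=u$ (the only vertex whose neighbourhood grows under fusing); since $S$ dominates all of $H$ and $u\in S$ handles $u$ itself, $S$ dominates every vertex of $G$ with respect to $G$-adjacency. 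Thus $S$ is a dominating set of $G$ that contains $u$, as claimed.

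I do not expect a genuine obstacle here; the only points needing a line of care are that $H$ is indeed a skeletal triangulation (so that ``neat'' is meaningful) — which holds because fusing a rooted skeletal triangulation of minimum degree $\ge 2$ to a boundary vertex preserves connectivity, internal triangulation, and minimum degree $\ge 2$ — and that the strict inclusions $N_H[a]\subsetneq N_H[u]$, $N_H[b]\subsetneq N_H[u]$ never degenerate to equality, which is guaranteed by $\deg_G(u)\ge 2$. The argument parallels the earlier ``covering by fusing a small LR'' lemma, just with the roles of ``dominated'' and ``forced into $S$'' swapped, and it reuses the same bookkeeping via Lemma~\ref{lemm:defuse}.
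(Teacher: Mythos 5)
Your proposal is correct and follows essentially the same route as the paper: the first bullet via Lemma~\ref{lemm:defuse} together with the computation $\phi(\text{small AB}, u) = 2.5$, and the second via the observation that a neat dominating set cannot use the two degree-2 vertices of the attached AB (their closed neighbourhoods are strictly contained in $N_H[u]$) and hence must contain $u$. Your write-up is merely more explicit than the paper's one-line argument; no discrepancy.
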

\begin{proof}
    Let $H' \subseteq H$ be the small AB that was fused, including $u$. Then,
    \[
        \Phi(H) \leq \Phi(G) + \phi(H', u, v) = \Phi(G) + 2.5.
    \]
    Any dominating set in $H$ contains at least one vertex of the AB (including $u$),
    so a neat one has to contain $u$.
\end{proof}

Due to the following reason, forcing a vertex is a lot more versatile than simply ``picking'' a vertex $u$ and deleting
$u$ together with some of its neighbors: If we delete $u$, this by necessity turns all interior neighbors of $u$ into boundary vertices,
which might create many low-degree problems. If we instead force $u$ and are somewhat picky with the other neighbors we delete,
we can keep most interior neighbors in the interior, which avoids having to discuss them in detail.
Intuitively, forcing is ``efficient'' in the sense that $\Phi$ increases by $2.5$ and keeping $u$ around instead of deleting it is another $+1$,
so we get an ``increase'' of $3.5$ while potentially having $1$ extra vertex in a minimum dominating set.

\subsection{Proof of Theorem~\ref{theo:skeletalact}}

In this section, we show that Theorem~\ref{theo:mainresult} implies Theorem~\ref{theo:skeletalact},
in a way that can be used inside the induction step of the main proof.

\begin{proposition} \label{prop:skeletalact}
    Let $(G, u)$ be a rooted skeletal triangulation. Let $s = s(G, u)$ and let $\phi = \phi(G, u)$.
    If $(G, u)$ acts as
    \begin{itemize}
        \item[AB] then $\phi \ge 3.5s - 1$ under the following assumption: 
            Let $H$ be the graph resulting from
            fusing a small LR to $u$. Assume Theorem~\ref{theo:mainresult} holds for $H$.
        \item[LR] then $\phi \ge 3.5s$ under the following assumption: 
            Let $\{u, v\}$ be a boundary edge incident to $u$.
            Let $H$ be the graph resulting from attaching an A with red vertex $u$ to $u, v$. Assume
            Theorem~\ref{theo:mainresult} holds for $H$.
        \item[Nope] then $\phi \ge 3.5s + 1.5$ under the following assumptions: If $G$ is a sporadic example, assume nothing.
            If $\deg(u) \ge 2$, assume Theorem~\ref{theo:mainresult} holds for $G$. If $\deg(u) = 1$,
            assume $H := G - u$ satisfies the attaching-an-A assumption of the LR case.
    \end{itemize}
\end{proposition}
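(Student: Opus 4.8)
The plan is to handle the three act-as cases by the same device used (more loosely) in the proof of Corollary~\ref{coro:skeletalbound}: from $(G,u)$ build a small auxiliary skeletal triangulation $H$ by fusing or attaching one of the minimal gadgets of Figures~\ref{fig:smallfuse} and~\ref{fig:smallattach}, observe that $H$ is never a sporadic example (it is the output of a fusing/attaching operation, hence not $3$-connected), so Theorem~\ref{theo:mainresult} applies to $H$ — and this applicability is exactly the assumption stated in the proposition — and then combine $\Phi(H)\ge 3.5\,\gamma(H)$ with an \emph{exact} value of $\gamma(H)$ in terms of $s:=s(G,u)$ and an upper bound for $\Phi(H)$ in terms of $\phi:=\phi(G,u)$. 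The value of $\gamma(H)$ will come from Lemma~\ref{lemm:forcecover} and the neat-set forcing lemmas together with the defining property of the act-as type; the estimate for $\Phi(H)$ is bookkeeping through Lemmas~\ref{lemm:unroot} (de-rooting), \ref{lemm:defuse} (de-fusing/de-attaching), Remark~\ref{rem:noninvolved} and Corollary~\ref{coro:fewproblems}.

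For \textbf{AB}, I would let $H$ be the result of fusing a small LR to $u$. Since $(G,u)$ acts as AB it has a rooted dominating set of size $s$ containing $u$, so Lemma~\ref{lemm:forcecover}(2) yields a dominating set of $H$ of size $s+1$, and the same lemma run backwards shows no smaller one exists, i.e.\ $\gamma(H)=s+1$. The small LR has rooted penalty $3.5$, so the equality case of Lemma~\ref{lemm:defuse} gives $\Phi(H)=\phi+4.5$ when $\deg_G(u)\ge 2$ and $\Phi(H)\le\phi+4$ when $\deg_G(u)=1$ (the extra $r/2$ in $\phi$ only helps). Hence $\phi\ge 3.5\,\gamma(H)-4.5=3.5s-1$. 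For \textbf{LR}, I would attach the small A of Figure~\ref{fig:smallattach} to a boundary edge $\{u,v\}$ incident to $u$, with the red vertex of the A identified with $u$. Because the red vertex lies in every neat dominating set, a minimum dominating set of $H$ restricts to a minimum dominating set of $G$ that contains $u$; since $(G,u)$ acts as LR but not as AB, no size-$s$ dominating set of $G$ contains $u$, so $\gamma(H)=s+1$. The attachment makes $u$ and $v$ chord-incident, hence uninvolved in any low-degree problem (Remark~\ref{rem:noninvolved}), and creates exactly one new low-degree problem (an ear at the new degree-$2$ vertex) and no new bad $5$-wheels; with Lemmas~\ref{lemm:unroot} and~\ref{lemm:defuse} this gives $\Phi(H)\le\phi+3.5$, whence $\phi\ge 3.5\,\gamma(H)-3.5=3.5s$.

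For \textbf{Nope} I would split on $\deg_G(u)$. If $\deg_G(u)\ge 2$ then $G$ is itself a skeletal triangulation; if it is one of the three sporadic examples the inequality $\phi\ge 3.5s+1.5$ is checked by hand (for instance the octahedron rooted at any vertex has $s=1$ and $\phi=5$), and otherwise Theorem~\ref{theo:mainresult} applies to $G$. Since no minimum rooted dominating set of $(G,u)$ dominates $u$, there is no size-$s$ dominating set of $G$, so $\gamma(G)\ge s+1$; together with $\phi\ge\Phi(G)-1.5$ (Lemma~\ref{lemm:unroot}) this gives $\phi\ge 3.5(s+1)-1.5=3.5s+2\ge 3.5s+1.5$. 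If $\deg_G(u)=1$, let $v$ be the unique neighbour and $H:=G-u$; one checks — using that Nope forbids any minimum rooted dominating set from containing or dominating $u$, and that necessarily $\deg_G(v)\ge 3$ — that $(H,v)$ acts as LR, that $s(H,v)=s$, and that $\phi(H,v)\le\phi-1.5$. The already-established LR case, whose hypothesis about attaching an A to $v$ is exactly what the Nope case assumes for $H$, then gives $\phi\ge\phi(H,v)+1.5\ge 3.5s+1.5$.

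The main obstacle is the penalty bookkeeping: one must verify gadget by gadget that each fuse/attach creates \emph{precisely} one new low-degree problem and that both root vertices become ``safe'' (chord- or cut-incident), so that $\Phi(H)$ sits exactly $2.5$, $3.5$ or $4.5$ above $\phi$ — this is where Lemmas~\ref{lemm:unroot}, \ref{lemm:defuse}, Remark~\ref{rem:noninvolved} and the non-adjacency of low-degree problems do the work. A secondary point is keeping the case analysis non-circular: the $\deg_G(u)=1$ branch of Nope invokes the LR case, whose statement is itself conditional on a further attachment, so the chain of assumptions must close up; and when this proposition is invoked inside the main induction, every auxiliary $H$ must be checked to be smaller than the current graph (fewer interior vertices, then fewer bridges, then smaller $\Phi$, and so on), so that the induction hypothesis indeed covers it.
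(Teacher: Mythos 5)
Your proposal matches the paper's own proof essentially step for step: fusing a small LR for the AB case, attaching a small A with red vertex $u$ for the LR case, and for Nope either applying Theorem~\ref{theo:mainresult} to $G$ directly (when $\deg(u)\ge 2$) or deleting the degree-1 root and reducing to the LR case, with the same penalty bookkeeping via Lemmas~\ref{lemm:unroot} and~\ref{lemm:defuse}. The only (harmless) difference is that you track the $\deg_G(u)=1$ subcase of AB explicitly and justify $\gamma(H)=s+1$ via neat dominating sets rather than asserting it from the act-as types.
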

\begin{proof}
    If $G$ is a sporadic example, then $G$ acts as Nope and we check $\phi \ge 3.5s + 1.5$ by hand.
    Otherwise, suppose $G$ acts as
    \begin{itemize}
        \item[AB] Fuse a small LR to $u$. By assumption, the resulting graph $H$ satisfies Theorem~\ref{theo:mainresult},
            hence $\Phi(H) \geq 3.5 s(H)$. As $G$ acts as AB and the small LR acts as LR,
            \[
                s(H) = s(G, u) + s(\mathrm{small\ LR}, u) = s(G, u) + 1.
            \]
            The small LR has $\phi = 3.5$, hence by Lemma~\ref{lemm:defuse}
            \[
                \Phi(H) = \phi(G, u) + 3.5 + 1 = \phi(G, u) + 4.5.
            \]
            Chaining inequalities yields
            \[
                \phi(G, u) = \Phi(H) - 4.5 \ge 3.5s(H) - 4.5 = 3.5 s(G, u) - 1.
            \]
        \item[LR] Let $\{u, v\}$ be a boundary edge incident to $u$. Attach a small A with red vertex $u$ to $u, v$.
            By assumption, the resulting graph $H$ satisfies Theorem~\ref{theo:mainresult},
            hence $\Phi(H) \geq 3.5 s(H)$. As $G$ acts as LR and the small A acts as A,
            \[
                s(H) = s(G, u) + s(\mathrm{small\ A}, u) = s(G) + 1.
            \]
            The small A has $\phi = 2.5$, hence by a slight modification of Lemma~\ref{lemm:defuse}
            \[
                \Phi(H) \le \phi(G, u) + \phi(\mathrm{small\ A}, u, v) + 1 = \phi(G, u) + 3.5.
            \]
            Chaining inequalities yields
            \[
                \phi(G, u) \ge \phi(H) - 3.5 \ge 3.5s(H) - 3.5 = 3.5 s(G, u).
            \]
        \item[Nope] We distinguish whether $u \in G$ is a degree-1 root.
            \begin{itemize}
                \item If $\deg(u) \geq 2$, then $G$ is a skeletal triangulation. By Lemma~\ref{lemm:unroot}, $\Phi(G) \leq \phi(G, u) + 1.5$. As $(G, u)$ acts as Nope,
                    $s(G) = s(G, u) + 1$.  By assumption, $G$ satisfies Theorem~\ref{theo:mainresult}, hence $\Phi(G) \geq 3.5 s(G)$. Chaining inequalities
                    yields
                    \[
                        \phi(G, u) \ge \Phi(G) - 1.5 \ge 3.5 s(G) - 1.5 = 3.5 s(G, u) + 2.
                    \]
                \item If $\deg(u) = 1$, then let $w$ be the unique neighbor of $u$ and let $H = G - w$. $H$ is a rooted skeletal triangulation with root $w$
                    and $\phi(H, w) = \phi(G, u) - 1.5$.
                    As $G$ acts as Nope, $H$ acts as LR and $s(G, u) = s(H, w)$. By assumption, $H$ satisfies the assumptions needed for
                    the LR case of this proposition. Therefore, by the LR case, $\phi(H, w) \geq 3.5 s(H, w)$. Chaining inequalities
                    yields
                    \[
                        \phi(G, u) = \phi(H, w) + 1.5 \ge 3.5 s(H, w) + 1.5 = 3.5 s(G, u) + 1.5.
                    \]
            \end{itemize}
    \end{itemize}
\end{proof}

\subsection{Proof of Theorem~\ref{theo:nearact}}

In this section, we show that Theorem~\ref{theo:mainresult} implies Theorem~\ref{theo:nearact},
in a way that can be used inside the induction step of the main proof.

\begin{proposition} \label{prop:nearact}
    Let $(G, u, v)$ be a rooted near-triangulation.
    Let $s = s(G, u, v)$ and let $\phi = \phi(G, u, v)$. 
    Assume Theorem~\ref{theo:mainresult} holds for any near-triangulation with the same number of interior vertices as $G$.
    Then: If $(G, u, v)$ acts as
    \begin{itemize}[itemindent=80pt]
        \item [A+B, OR] then $\phi \geq 3.5s - 2$.
        \item [A, B] then $\phi \geq 3.5s - 1$.
        \item [AND, L+R, OCTA, L OR R] then $\phi \geq 3.5s$.
        \item [L, R] then $\phi \geq 3.5s + 0.5$.
        \item [None] then $\phi \geq 3.5s + 1.5$.
    \end{itemize}
\end{proposition}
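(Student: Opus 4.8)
The plan is to mirror the structure of Proposition~\ref{prop:skeletalact}: for each act-as type, attach a small auxiliary near-triangulation (or skeletal triangulation) to $(G, u, v)$ along its base edge, chosen so that the act-as type of $(G,u,v)$ forces a clean relation between $s$ of the combined graph and $s(G,u,v)$, then invoke Theorem~\ref{theo:mainresult} on the combined graph and unwind via Lemmas~\ref{lemm:unroot} and~\ref{lemm:defuse} (or the attaching versions). Crucially, in each case the combined graph is a near-triangulation with the \emph{same number of interior vertices} as $G$ (attaching along a boundary edge only adds boundary/interior vertices in the attached piece, and the relevant small attachments are outerplanar or contribute no new interior vertices to $G$'s side), so the hypothesis ``Theorem~\ref{theo:mainresult} holds for any near-triangulation with the same number of interior vertices as $G$'' applies. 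One subtlety: we must make sure the combined graph is not a sporadic example; since all three sporadic examples are $3$-connected and the result of an attaching operation has a chord, this is automatic — except possibly when the attached piece is trivial, which we avoid by choosing non-trivial attachments.

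Concretely, I would proceed type by type, essentially reading off Table~\ref{tab:actbound} for the ``target'' small attachments.

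\emph{A+B and OR:} Attach a small A to $u,v$ (red vertex $u$) and, symmetrically in the OR case, use that $G$ has a min rooted dominating set containing $u$. For A+B: a min dominating set of the combined graph uses $s+1$ vertices (the A's red vertex forces $u$ and costs one extra), $\phi$ increases by the A's $\phi=2.5$ plus $1$ from detaching, giving $\Phi(H)\le\phi(G,u,v)+3.5$, hence $\phi(G,u,v)\ge 3.5(s+1)-3.5 = 3.5s$; but we actually want the sharper $3.5s-2$, so instead attach a small OR (which has $\phi=1.5$) and use $s(H)=s+1$ when both act as A+B/OR (the combining constant works out so that $s(H)=s(G,u,v)+1$): then $\phi(G,u,v)\ge 3.5(s+1) - (1.5+1) - \text{(slack)}$, and chasing the equality cases of Lemma~\ref{lemm:unroot}/\ref{lemm:defuse} yields $\phi\ge 3.5s-2$. (The exact bookkeeping is the routine part.)

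\emph{A, B:} attach a small A (resp.\ B), $\phi$-cost $2.5$, $s(H)=s+1$, so $\phi(G,u,v) = \Phi(H)-3.5 \ge 3.5(s+1)-3.5 = 3.5s$; to get $3.5s-1$ use instead the tighter detaching equality (the small A attached to a base gives $\Phi(H)\le\phi(G,u,v)+3.5$ but with equality only in a degenerate case, and the combined $s$ relation together with Table~\ref{tab:actbound}'s entry $\phi=2.5=3.5\cdot1-1$ propagates the $-1$). \emph{AND, L+R, OCTA, L OR R:} here attach a small LR-type skeletal piece or argue directly that $G$ itself (de-rooted by deleting $u$, or one of $u,v$) is a near-triangulation or skeletal triangulation with the right $\Phi$ and $s$; since these types guarantee a (rooted) dominating set of size $s$ that does not need to contain both base vertices, deleting a suitable base vertex gives a skeletal triangulation $H$ with $\gamma(H)=s$ and $\Phi(H)\le\phi(G,u,v)+2.5$ (Lemma~\ref{lemm:unroot}), so $\phi\ge \Phi(H)-2.5 \ge 3.5s-2.5$ — not quite enough, so one must instead attach a small L+R (with $\phi=3.5$) and use $s(H)=s$, giving $\phi\ge 3.5s$ cleanly. \emph{L, R:} attach a small A so that $u$ becomes dominated-or-forced, using that $G$ has a rooted dominating set dominating $u$; the act-as type only promises $s(H)=s+1$, and the attachment's $\phi=2.5$ combine to $\phi(G,u,v)\ge 3.5(s+1)-3.5-0.5 = 3.5s$ minus a half-unit of slack that the careful equality analysis of Lemma~\ref{lemm:unroot} removes in the ``$u$ not an ear tip'' subcase — yielding $3.5s+0.5$. \emph{None:} here $G$ has no rooted dominating set smaller than one forced to do extra work; delete one base vertex (or both, handling the isolated-$5$-wheel exception explicitly) to get a skeletal triangulation $H$, apply Theorem~\ref{theo:mainresult}, and use the ``Nope''-style bound from Proposition~\ref{prop:skeletalact} on the de-rooted graph to get $\phi\ge 3.5s+1.5$.

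The main obstacle I anticipate is the equality-case bookkeeping: Lemmas~\ref{lemm:unroot} and~\ref{lemm:defuse} give ranges ($\phi+2\le\Phi\le\phi+2.5$ etc.), and to extract the \emph{sharp} constants ($-2$, $-1$, $0$, $+0.5$, $+1.5$) one must track exactly when the upper versus lower end of each range is attained — in particular whether a base vertex is an ear tip or part of a $3$-pair of a bad $5$-wheel, and whether $G$ is an isolated $5$-wheel. Picking the ``right'' small attachment for each of the eleven act-as types (so that the combined $s$-relation and the $\Phi$-relation line up to give exactly the claimed constant, rather than something half a unit weaker) is the delicate design choice; once the attachments are fixed, each case is a three-line inequality chain exactly as in Proposition~\ref{prop:skeletalact}. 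A secondary nuisance is verifying the ``same number of interior vertices'' condition for every combined graph, and dispatching the finitely many small $G$ (including the isolated $5$-wheel and the sporadic examples) by hand.
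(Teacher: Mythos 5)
Your high-level strategy is the paper's: attach a small gadget along the base edge, apply Theorem~\ref{theo:mainresult} to the result, and unwind via the attaching equality $\Phi(H)=\phi(G,u,v)+\phi(\text{gadget})+2$ of Lemma~\ref{lemm:defuse}. But the concrete gadget you pick is wrong in four of the five groups, and in each case the $s(H)$ relation you assert is false, so the inequality chains do not produce the claimed constants. For A+B/OR you attach a small OR and claim $s(H)=s+1$; in fact a minimum rooted dominating set of $G$ containing $u$ already dominates the OR's apex, so $s(H)=s$ and you only get $\phi\ge 3.5s-3.5$. The paper attaches a small L+R ($\phi=3.5$), whose far vertex is at distance two from the base and hence genuinely costs an extra vertex, giving $s(H)=s+1$ and $\phi\ge 3.5s-2$. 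For A (resp.\ B) you attach a small A (same type); again the forced vertex $u$ of $G$'s own dominating set covers the whole attachment, $s(H)=s$, and the bound degenerates (your arithmetic $\Phi(H)=\phi+3.5$ is also off: it is $\phi+2+2.5=\phi+4.5$). The paper attaches the \emph{opposite} type (a small B to an A-graph), so that a size-$s$ dominating set of $H$ would force a minimum rooted dominating set of $G$ through $v$, contradicting the exclusive act-as type; hence $s(H)=s+1$ and $\phi\ge 3.5s-1$. For AND/L+R/OCTA/L OR R you attach a small L+R with ``$s(H)=s$''; this is impossible (the L+R always needs its own vertex), so $s(H)=s+1$ and you get only $\phi\ge 3.5s-2$. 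The paper attaches a small OR ($\phi=1.5$), and the act-as hypothesis (no minimum rooted dominating set containing $u$ or $v$) is exactly what forces $s(H)=s+1$. For L/R you attach a small A and hope for half a unit of ``equality-case slack''; the computation gives only $\phi\ge 3.5s-1$, a full $1.5$ short, and no equality analysis of the de-rooting lemma can recover that. The paper attaches a small L/R itself ($\phi=4.5$) oriented to dominate the \emph{same} base vertex, so that the other base vertex is dominated by neither side and $s(H)=s+2$.

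The underlying principle you are missing is that the gadget must be \emph{complementary} to $G$'s act-as type: it must be expensive enough (in domination) that $G$'s guaranteed rooted dominating sets cannot absorb it for free, forcing $s(H)$ up by exactly the amount that makes the chain tight, while being the cheapest such gadget in $\phi$ (these are precisely the tight rows of Table~\ref{tab:actbound}). Asserting the $s(H)$ values without verifying them against the exclusive act-as definitions is where the proof breaks. Your None case is closest in spirit to the paper's, but it too is incomplete: when $u,v$ form a 3-pair of a bad 5-wheel, deleting a base vertex does not give the right penalty accounting, and the paper instead contracts $\{u,v\}$ and invokes the Nope case of Proposition~\ref{prop:skeletalact}.
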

\begin{proof}
    If $G$ is a sporadic example, then $G$ acts as OCTA and $\phi \ge 3.5 s + 0.5$. Otherwise, suppose $G$ acts as
    \begin{description}
        \item [A+B, OR] Let $H$ be the graph resulting from attaching a small L+R to $u, v$. Then 
            $s(H) = s(G, u, v) + 1$, and $\Phi(H) = \phi(G, u, v) + 2 + 3.5$ by Lemma~\ref{lemm:defuse}.
            By assumption, Theorem~\ref{theo:mainresult} holds for $H$, therefore $\Phi(H) \geq 3.5 s(H)$.
            Chaining inequalities yields $\phi(G, u, v) \geq 3.5 s(G, u, v) - 2$.
        \item [A, B] Let $H$ be the graph resulting from attaching a small B / A to $u, v$, in a way that forces the other base vertex compared to $G$.
            Then $s(H) = s(G, u, v) + 1$ and $\Phi(H) = \phi(G, u, v) + 2 + 2.5$.
            By assumption, Theorem~\ref{theo:mainresult} holds for $H$, therefore $\Phi(H) \geq 3.5 s(H)$.
            Chaining inequalities yields $\phi(G, u, v) \geq 3.5 s(G, u, v) - 1$.
        \item [AND, L+R, L OR R, OCTA] Let $H$ be the graph resulting from attaching a small $OR$ to $u, v$. Then
            $s(H) = s(G, u, v) + 1$ as $G$ acts as AND, L+R, L OR R, OCTA, and $\Phi(H) = \phi(G, u, v) + 2 + 1.5$.
            By assumption, Theorem~\ref{theo:mainresult} holds for $H$, therefore $\Phi(H) \ge 3.5 s(H)$.
            Chaining inequalities yields $\phi(G, u, v) \geq 3.5 s(G, u, v)$
        \item [L, R] Let $H$ be the graph resulting from attaching a small L / R to $u, v$ in a way that dominates the same base vertex compared to
            $G$. Then $\Phi(H) = \phi(G, u, v) + 2 + 4.5$ and $s(H) = s(G, u, v) + 2$ (one +1 from the small L / R and one +1
            from a $u$ or $v$ that is not dominated by either side.) Chaining inequalities yields $\phi(G, u, v) \geq 3.5 s(G, u, v) + 0.5$.
        \item [None] Consider the two cases in the attachment part of Lemma~\ref{lemm:unroot}. Suppose first that $\Phi(G) = \phi(G, u, v) + 2$. As $G$ acts as None,
            $s(G) = s(G, u, v) + 1$. By assumption, Theorem~\ref{theo:mainresult} holds for $G$, therefore $\Phi(G) \ge 3.5 s(G)$.
            Chaining inequalities yields
            \[
                \phi(G, u, v) \ge \Phi(G) - 2 \ge 3.5 s(G) - 2 \ge 3.5 s(G, u, v) + 1.5.
            \]
            In all remaining cases, $\Phi(G) = \phi(G, u, v) + 2.5$.
            Suppose that $u$ (or $v$) is not involved in a low-degree problem. Then $\phi(G, v) = \phi(G, u, v) + 1$. As $(G, u, v)$ acts
            as None, $s(G, v) = s(G, u, v) + 1$. By Proposition~\ref{prop:skeletalact},
            $\phi(G, v) \ge 3.5 s(G, v)  - 1$, independent of what $(G, v)$ acts as. Chaining inequalities yields
            \[
                \phi(G, u, v) = \phi(G, v) - 1 \ge 3.5 s(G, v) - 2 = 3.5s(G) + 1.5.
            \]
            In the remaining case, $u, v$ is a 3-pair in a bad 5-wheel. Let $w$ be the interior vertex adjacent to $u, v$.
            Then $H = G / \{u, v\}$ is a skeletal triangulation. Let $x \in H$ be the vertex
            corresponding to $\{u, v\}$. Then $(H, x)$ acts as Nope with $s(H, x) = s(G, u, v)$ and $\phi(H, x) = \phi(G, u, v)$.
            By Proposition~\ref{prop:skeletalact}, $\phi(H, x) \ge 3.5 s(H, x) + 1.5$. Chaining inequalities yields
            \[
                \phi(G, u, v)  = \phi(H, x) \ge 3.5 s(H, x) + 1.5 = 3.5 s(G, u, v) + 1.5.
            \]
    \end{description}
\end{proof}

\section{The Proof} \label{sect:mainproof}

In this section, we prove Proposition~\ref{prop:inductionstep}. The proof consists of many cases. In each case, we assume
none of the previously discussed cases apply. In particular, for later cases, we get to make stronger and stronger assumptions on the given graph.

\subsection{Bridge} \label{sect:bridge}
Suppose $G$ has a bridge $u, v$. Let $G_1, G_2$ be the two components resulting from deleting $\{u, v\}$.

\begin{claim}
    Then
    \[
        \Phi(G) = \phi(G_1, u) + \phi(G_2, v) + 2. 
    \]
\end{claim}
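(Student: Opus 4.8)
The plan is to verify the identity by matching, term by term, the quantities that define $\Phi(G)$ with those that define $\phi(G_1,u)$ and $\phi(G_2,v)$; this is the bridge-analogue of the ``crucial observation'' behind Lemma~\ref{lemm:defuse}. Throughout, let $G_1$ be the component of $G$ minus the edge $\{u,v\}$ that contains $u$, and $G_2$ the one containing $v$.

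First I would record the structural facts. Since $\{u,v\}$ is a bridge, $V(G_1)$ and $V(G_2)$ partition $V(G)$, the only edge of $G$ between the two sides is $\{u,v\}$, and $\{u,v\}$ lies on the outer face of $G$ and on no cycle of $G$; hence the bounded faces of $G$ are exactly those of $G_1$ together with those of $G_2$, so $(G_1,u)$ and $(G_2,v)$ are rooted skeletal triangulations, and a vertex of $G_1$ other than $u$ lies on the outer face of $G_1$ iff it lies on the outer face of $G$. Writing $n_i$ for the number of non-root vertices of $G_i$, this gives $n(G)=n_1+n_2+2$. Moreover $\deg_G(u)=\deg_{G_1}(u)+1$, $\deg_G(v)=\deg_{G_2}(v)+1$, and every other vertex has the same degree in $G$ as in its component; and since each of $G_1,G_2$ has at least two vertices (otherwise $G$ would have a vertex of degree $\le 1$), both $u$ and $v$ are cut vertices of $G$.

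The core step is to prove $e(G)+f(G)=(e_1+f_1+r_1)+(e_2+f_2+r_2)$, where $e(G),f(G)$ are the counts appearing in $\Phi(G)$ and $e_i,f_i,r_i$ those appearing in $\phi(G_i,\cdot)$, by exhibiting a bijection between the low-degree problems of $G$ and the objects counted on the right. A bad $5$-wheel is $2$-connected, so it lies in a single block of $G$; as the blocks of $G$ are the blocks of $G_1$, the blocks of $G_2$, and the bridge $\{u,v\}$, every bad $5$-wheel of $G$ lies inside $G_1$ or inside $G_2$. A cut vertex of $G$ lying on a $W_5$ has $G$-degree $\ge 4$, hence is never in a $3$-pair of $G$; combined with the fact that the root is the only vertex of $G_i$ whose degree changes when passing to $G$, this shows that a $W_5$-subgraph of $G_i$ is a bad $5$-wheel of $G$ iff it is a bad $5$-wheel of $(G_i,\cdot)$ with a $3$-pair avoiding the root, which matches $f(G)$ with $f_1+f_2$. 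Every facial triangle of $G$ is a facial triangle of $G_1$ or of $G_2$, and a short case analysis -- splitting on whether the triangle contains the root and on the root's degree, and using that the root lies in a facial triangle of $G_i$ only if its $G_i$-degree is $\ge 2$ -- shows that each ear/pivoting/isolated triangle of $G$ corresponds to exactly one ear-with-tip-$\ne$-root / pivoting triangle / isolated triangle of the relevant $G_i$ (for instance, if $G_i$ is a triangle through its root, its isolated triangle becomes a pivoting triangle of $G$), and conversely. Finally, a degree-$2$ cut vertex of $G$ other than $u,v$ is a degree-$2$ cut vertex of the component containing it and vice versa; and $u$ is a degree-$2$ cut vertex of $G$ precisely when $\deg_{G_1}(u)=1$, i.e. when $r_1=1$, in which case $u$ is in no other low-degree problem of $G$ because its two $G$-neighbours lie on opposite sides of the bridge and hence are non-adjacent (symmetrically for $v$ and $r_2$). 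There is no double counting: a cut vertex of $G$ lies in no ear/pivoting/isolated triangle or $3$-pair of $G$, and the root of $G_i$ when $r_i=1$ is a leaf of $G_i$, so it is neither a non-root cut vertex nor in any facial triangle of $G_i$. Granting the claim, the identity follows by arithmetic:
\[
  \Phi(G)=n(G)+\frac{e(G)+f(G)}{2}=(n_1+n_2+2)+\frac{(e_1+f_1+r_1)+(e_2+f_2+r_2)}{2}=\phi(G_1,u)+\phi(G_2,v)+2.
\]
The main obstacle is the facial-triangle case analysis together with checking that $r_1$ and $r_2$ exactly absorb the degree shift at $u$ and $v$; this is routine but fiddly, just as in the proof of Lemma~\ref{lemm:defuse}.
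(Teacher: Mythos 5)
Your proof is correct and takes essentially the same approach as the paper's short argument: a term-by-term matching of the contributions to $\Phi(G)$ against those to $\phi(G_1,u)+\phi(G_2,v)+2$, with the $+2$ coming from the two root vertices and the $r_i$ terms absorbing the degree-2-cut-vertex contributions of $u,v$. Your accounting is considerably more careful than the paper's -- in particular you correctly handle the shift of type at the root (a pivoting, resp.\ isolated, triangle through the root of $G_i$ becomes an ear, resp.\ pivoting triangle, of $G$), which the paper's phrase ``all other low-degree problems are the same'' glosses over.
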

\begin{proof}
    If $u$ (or $v$) is a degree-1 root in $(G_1, u)$ (or $(G_2, v)$), then it is a degree-2 cut vertex in $G$ and vice versa.
    Otherwise, $u$ and $v$ are not involved in any low-degree problems, as they are cut vertices in $G$ and roots in $G_1, G_2$.
    All other low-degree problems are the same in $G$ and $G_1, G_2$,
    but $u, v$ are not counted in the later (as they are roots), so we get a $+2$.
\end{proof}

\begin{claim}\label{clai:bridgered}
    Theorem~\ref{theo:skeletalact} holds for $(G_1, u)$ and $(G_2, v)$. 
\end{claim}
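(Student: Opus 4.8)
We prove Claim~\ref{clai:bridgered}. By Proposition~\ref{prop:skeletalact}, for each $i\in\{1,2\}$ the desired statement---that Theorem~\ref{theo:skeletalact} holds for $(G_i,\cdot)$---follows once we know Theorem~\ref{theo:mainresult} for the handful of auxiliary graphs appearing in the three act-as cases of that proposition. Since we are inside the induction step (Proposition~\ref{prop:inductionstep}), the plan is simply to check that each of those auxiliary graphs is \emph{smaller} than $G$ and is not a sporadic example, so that the induction hypothesis supplies Theorem~\ref{theo:mainresult} for it.

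First I would record the structure of $G_1$ (the argument for $G_2$ is symmetric). A bridge of a skeletal triangulation lies on the outer face, so $u$ is a boundary vertex of $G$ and of $G_1$, and every bridge of $G_1$ is also a bridge of $G$. As $\{u,v\}\notin E(G_1)$, the graph $G_1$ has strictly fewer bridges than $G$; moreover the interior vertices of $G_1$ are precisely the interior vertices of $G$ lying in $G_1$, so $G_1$ has no more interior vertices than $G$. Hence $G_1$ is smaller than $G$, and the same holds for any graph obtained from $G_1$ without increasing the interior-vertex count and without creating new bridges.

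Next I would go through Proposition~\ref{prop:skeletalact} applied to $(G_1,u)$. If $(G_1,u)$ acts as AB, the auxiliary graph $H$ is obtained by fusing a small LR (Figure~\ref{fig:smallfuse}) to $u$. If it acts as LR, $H$ is obtained by attaching a small A with red vertex $u$ (Figure~\ref{fig:smallattach}) to a boundary edge $\{u,u'\}$ incident to $u$ (which exists since $u$ is a boundary vertex of positive degree). If it acts as Nope with $\deg_{G_1}(u)\ge 2$, then either $G_1$ is a sporadic example, in which case Proposition~\ref{prop:skeletalact} requires nothing, or $H:=G_1$. If it acts as Nope with $\deg_{G_1}(u)=1$ and neighbour $w$, then $G_1-u$ is a rooted skeletal triangulation with root $w$ and $H$ is obtained from it by attaching a small A with red vertex $w$ to a boundary edge at $w$. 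In every case $H$ is smaller than $G$: the small LR and small A are $2$-connected and are glued along a boundary vertex, respectively a boundary edge, so these operations add no interior vertices and create no new bridges, while deleting the leaf $u$ from $G_1$ removes the bridge $\{u,w\}$ and leaves the interior unchanged; combined with the previous paragraph, such an $H$ has no more interior vertices than $G$ and strictly fewer bridges. Finally, the sporadic examples are $3$-connected, whereas each $H$ has a cut vertex (fusing case) or a chord (attaching case), and in the sole remaining case $H=G_1$ we have excluded $G_1$ sporadic; so the induction hypothesis of Proposition~\ref{prop:inductionstep} applies to $H$. Feeding this into Proposition~\ref{prop:skeletalact} gives Theorem~\ref{theo:skeletalact} for $(G_1,u)$, and symmetrically for $(G_2,v)$.

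The main obstacle is not any individual case but the bookkeeping that keeps the induction well-founded: one must verify that none of the gadget-gluing operations---nor the leaf deletion---increases the interior-vertex count or introduces a bridge, so that the strict drop in the bridge count of $G_1$ (resp.\ $G_2$) relative to $G$ is preserved, and one must rule out the sporadic examples among the auxiliary graphs. The $\deg_{G_1}(u)=1$ Nope sub-case is the one spot where genuine care is needed: here one deletes the leaf $u$ before attaching the small A, not its neighbour $w$, since $G_1-w$ would have an isolated vertex.
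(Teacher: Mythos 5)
Your proof is correct and follows the same strategy as the paper: verify that every auxiliary graph $H$ appearing in Proposition~\ref{prop:skeletalact} (applied to $(G_1,u)$ or $(G_2,v)$) has no more interior vertices than $G$ and strictly fewer bridges, and is not $3$-connected (hence not sporadic), so the induction hypothesis applies. You spell out the bookkeeping that the paper compresses into one sentence plus a footnote, and you correctly handle the $\deg_{G_i}(u)=1$ Nope sub-case---in fact, your reading (delete the leaf $u$, not its neighbour $w$) is the right interpretation of a typo in the paper's statement of that case.
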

\begin{proof}
    We check that Proposition~\ref{prop:skeletalact} applies: 
    The graphs resulting from attaching an A / B, or fusing a small LR to $G_1$ or to $G_2$ all have
    fewer bridges than $G$\footnote{And the same number of interior vertices and at most the same $\Phi$.}, hence these satisfy Theorem~\ref{theo:mainresult} by the induction hypothesis.
\end{proof}

With the claim, we conclude as follows: If one of $G_1, G_2$ acts as AB, then
$s(G) = s(G_1, u_1) + s(G_2, u_2)$ and
\begin{align*}
    \Phi(G) &= \phi(G_1, u_1) +\phi(G_2, u_2) + 2\\
    &\geq 3.5 s(G_1, u_1) - 1 + 3.5 s(G_2, u_2) - 1 + 2 = 3.5 s(G).
\end{align*}
Otherwise, if one of $G_1, G_2$ acts as Nope, say $G_1$, then $s(G) = s(G_1, u_1) + s(G_2, u_2) + 1$ and
\begin{align*}
    \Phi(G) &= \phi(G_1, u_1) + \phi(G_2, u_2) + 2\\
    &\geq 3.5 s(G_1, u_1) + 1.5 + 3.5 s(G_2, u_2)  + 2 = 3.5 s(G).
\end{align*}
Otherwise, both $G_1$ and $G_2$ act as LR, then $s(G) = s(G_1, u_1) + s(G_2, u_2)$ and
\begin{align*}
    \Phi(G) &= \phi(G_1, u_1) + \Phi(G_2, u_1) + 2\\
    &\geq 3.5 s(G_1, u_1) + 3.5 s(G_2, u_2) + 2 = 3.5 s(G) + 2.
\end{align*}

\paragraph{Conclusion} From now on, we assume that $G$ does not contain any bridges.

\subsection{Cut vertex} \label{sect:cutvertex}

Suppose $G$ has a cut vertex $u$. Let us split $G$ at $u$ into two pieces in the obvious way: Let $G_1, G_2$ each be subgraphs induced by $u$ together with one or more components in $G - u$
such that each component occurs in exactly one of $G_1$ and $G_2$. Then, $(G_1, u)$ and $(G_2, u)$ are rooted skeletal triangulations
and the result of fusing them is $G$. As $G$ has no bridges, neither $(G_1, u)$ nor $(G_2, u)$ has a degree-1 root.
Therefore
\[
    \Phi(G) = \phi(G_1, u) + \phi(G_2, u) + 1
\]
by Lemma~\ref{lemm:defuse}.
Similar to the bridge case, we would like to conclude via Theorem~\ref{theo:skeletalact},
but some care has to be taken to avoid circular arguments.

\subsubsection{One side acts as Nope}

Suppose that $G_1$ (or $G_2$) acts as Nope.
The graphs in the Nope case of Proposition~\ref{prop:skeletalact} have fewer blocks that $G$,
hence $G_1$ satisfies Theorem~\ref{theo:mainresult},
therefore $\phi(G_1, u) \ge 3.5 s(G_1, u) + 1.5$.
If $G_2$ is a sporadic example, then $\Phi(G_2) \ge 3.5 s(G_2) - 1$. Otherwise, 
by the induction hypothesis, $\Phi(G_2) \geq 3.5 s(G_2)$.
By Lemma~\ref{lemm:defuse}, $\Phi(G) \ge \phi(G_1, u) + \Phi(G_2) - 0.5$. By Lemma~\ref{lemm:fusedom}, $s(G) \le s(G_2) + s(G_1, u)$. Combining everything yields
\begin{align*}
    \Phi(G) &\ge \phi(G_1, u) + \Phi(G_2, u) - 0.5\\
    &\ge 3.5 s(G_1, u) + 1.5 + 3.5 s(G_2) - 1 - 0.5 \ge 3.5 s(G).
\end{align*}

\subsubsection{Both sides act as AB}
Suppose both $G_1$ and $G_2$ act as AB, then $s(G) = S(G_1, u) + S(G_2, u) - 1$.
As $G_1, G_2$ are both smaller than $G$ and not a sporadic example, by the induction hypothesis, $\Phi(G_i) \ge 3.5 s(G_i)$.
By Lemma~\ref{lemm:unroot}, $\Phi(G_i) \le \phi(G_i, u) + 1.5$. Chaining inequalities yields
\begin{align*}
    \Phi(G) &= \phi(G_1, u) + \phi(G_2, u) + 1 \ge \Phi(G_1) + \Phi(G_2) - 2\\
    &\ge 3.5 s(G_1) + 3.5 s(G_2) - 2 = 3.5 s(G) + 1.
\end{align*}

\subsubsection{One side is a small LR}
Suppose $G_1$ is a small LR, then $s(G) = s(G_2, u) + 1$.
By Lemma~\ref{lemm:defuse}, $\Phi(G) = \phi(G_1, u) + \phi(G_2, u) + 1$.
As $G_2$ does not act as Nope, $s(G_2) = s(G_2, u)$.
As $G_2$ is smaller than $G$ and not a sporadic example, by the induction hypothesis, $\Phi(G_2) \ge 3.5 s(G_2)$.
If $\Phi(G_2) = \phi(G_2, u) + 1$, then chaining inequalities yields
\begin{align*}
    \Phi(G) &= \phi(G_1, u) + \phi(G_2, u) + 1 = 3.5 + \Phi(G_2)\\
    &\ge 3.5 + 3.5 s(G_2) = 3.5 s(G).
\end{align*}
Otherwise, $\Phi(G_2) = \phi(G_2, u) + 1.5$, then by Lemma~\ref{lemm:deleteprob},
$u \in G_2$ is an ear tip or part of a 3-pair in a bad 5-wheel.
Let $H = G_2 - u$, then $s(H) = s(G_2)$ as a neat rooted dominating set in $G_2$ does not contain $u$
and $s(G_2) = s(G_2, u)$ as $G_2$ does not act as Nope.
By Lemma~\ref{lemm:deleteprob}, $\Phi(H) \le \phi(G_2, u) + 0.5$.
Chaining inequalities yields
\begin{align*}
    \Phi(G) &= \phi(G_1, u) + \phi(G_2, u) + 1 \ge 3.5 + \Phi(H) + 0.5\\
    &\ge 3.5 + 3.5 s(H) + 0.5 = 3.5 s(G) + 0.5.
\end{align*}

\subsubsection{One side acts as LR}

In all remaining cases, $G_1$ (or $G_2$) acts as LR, then $s(G) = s(G_1, u) + s(G_2, u)$.
By Lemma~\ref{lemm:defuse}, $\Phi(G) = \phi(G_1, u) + \phi(G_2) + 1$.
As $G_1$ is not a small LR, both $G_1$ and $G_2$ satisfy the assumptions of Proposition~\ref{prop:skeletalact},
as the involved graphs have fewer blocks and/or have smaller $\Phi$ than $G$.
$G_1$ acts as LR, so this yields $\phi(G_1, u) \ge 3.5 s(G_1, u)$.
$G_2$ acts as LR or AB, so this yields $\phi(G_2, u) \ge 3.5 s(G_2, u) - 1$.
Chaining inequalities yields
\[
    \Phi(G) = \phi(G_1, u) + \phi(G_2, u) + 1 \ge 3.5 s(G_1, u) + 3.5 s(G_2, u) = 3.5 s(G).
\]

%

\paragraph{Conclusion} From now on, we may assume that $G$ does not contain any cut vertices.
In particular, from now on, $G$ is a near-triangulation.

\subsection{Outerplanar}

Suppose $G$ is an outerplanar near-triangulation with $k$ vertices of degree 2.
There are two known bounds:
\begin{enumerate}
    \item $s(G) \le n / 3$, see \cite{matheson_dominating_1996}.
    \item $s(G) \le (n + k) / 4$, see \cite{campos_dominating_2013}.
\end{enumerate}
The linear combination $\frac{3}{7} \cdot (1) + \frac{4}{7} \cdot (2)$ yields
\[
    s(G) \le \frac{3}{7} \cdot \frac{n}{3} + \frac{4}{7} \cdot \frac{n+k}{4} = \frac{2n + k}{7} \le \frac{2}{7} \cdot \Phi(G).
\]
In other words, we ignore bad 3-pairs and just use known bounds.

\begin{remark}
    There is a also a direct proof based on finding $7$ dominating sets such that every ear tip is contained in $3$ of them
    and every other vertex in $2$ of them.
\end{remark}

\paragraph{Conclusion} From now on, $G$ is not outerplanar. In particular, it has at least one interior vertex.

\subsection{Non-trivial chord} \label{sect:nontrivialchord}

Suppose $G$ has a chord $u, v$. Let $G_1, G_2$ be the two sides, with $G_1$ having at least one interior vertex\footnote{In particular, $G_1$ is not a small A, B, OR, L+R, L, R}.

\begin{claim}\label{clai:nearbound}
    Then, $G_2$ satisfies Proposition~\ref{prop:nearact}.
\end{claim}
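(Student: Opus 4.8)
The plan is to check the single hypothesis that Proposition~\ref{prop:nearact} requires in order to apply to the rooted near-triangulation $(G_2, u, v)$, namely that Theorem~\ref{theo:mainresult} holds for every near-triangulation whose number of interior vertices equals that of $G_2$. Everything else asserted by the claim is then just the conclusion of Proposition~\ref{prop:nearact}, quoted for $(G_2, u, v)$.

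First I would nail down the chord-splitting bookkeeping. Since $\{u, v\}$ is a chord of the near-triangulation $G$, its endpoints lie on the outer face of $G$ and $\{u, v\}$ becomes a boundary edge of each of $G_1$ and $G_2$; in particular $(G_1, u, v)$ and $(G_2, u, v)$ are genuine rooted near-triangulations, and neither $u$ nor $v$ is an interior vertex of either side. Every interior vertex of $G$ lies strictly in one of the two open regions cut out by the chord, hence is an interior vertex of exactly one of $G_1, G_2$, so the interior vertices of $G$ are partitioned between $G_1$ and $G_2$. As $G_1$ was chosen to contain at least one interior vertex, $G_2$ has strictly fewer interior vertices than $G$.

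Next I would feed this into the induction. Any near-triangulation $G'$ with the same number of interior vertices as $G_2$ then has strictly fewer interior vertices than $G$; since the interior-vertex count is the first and most significant criterion in the definition of \emph{smaller}, $G'$ is smaller than $G$ regardless of its other parameters. The induction hypothesis of Proposition~\ref{prop:inductionstep} therefore applies to $G'$: either $G'$ is one of the sporadic examples, in which case Theorem~\ref{theo:mainresult} holds for it by its exemption clause, or $G'$ has a dominating set of size $\lfloor \Phi(G')/3.5 \rfloor$. In either case Theorem~\ref{theo:mainresult} holds for $G'$, which is exactly the assumption Proposition~\ref{prop:nearact} imposes on $(G_2, u, v)$; applying that proposition completes the claim.

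There is no real obstacle here. The only points requiring care are the counting of interior vertices across the split — in particular that $u$ and $v$ are boundary vertices of $G$ and hence are not among the interior vertices being partitioned — and the observation that, because \emph{smaller} is lexicographically dominated by the interior-vertex count, a strict decrease in that count lets us disregard every other ordering criterion when comparing $G'$ to $G$.
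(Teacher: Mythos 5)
Your proposal is correct and follows exactly the paper's argument: the chord partitions the interior vertices between the two sides, so $G_1$ having at least one interior vertex forces $G_2$ (and hence any near-triangulation with the same interior-vertex count) to have strictly fewer interior vertices than $G$, making it smaller in the lexicographic order and thus covered by the induction hypothesis. The extra care you take with the sporadic-example exemption and with $u,v$ being boundary vertices is consistent with (and slightly more explicit than) the paper's one-line proof.
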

\begin{proof}
    $G_1$ has at least one interior vertex, hence any skeletal triangulation with the same number of interior vertices as $G_2$
    is smaller than $G$ due to having fewer interior vertices.
    In particular, any such graph satisfies Theorem~\ref{theo:mainresult} by the induction hypothesis.
\end{proof}

Intuitively speaking, the claim allows us to replace $G_2$ by a small OR, A, B that acts as the same type
or delete $G_2$ and argue about the low-degree problems we create. Formally, suppose $G_2$ acts as
\begin{itemize}[itemindent=60pt]
    \item [A+B, OR] Let $H$ be the result of attaching a small OR to $u, v \in G_2$. Then $H$ has smaller $\Phi$ than $G$
        as the small OR is the unique smallest rooted near triangulation that acts as OR,
        so by the induction hypothesis, $\Phi(H) \geq 3.5 s(H)$. By Claim~\ref{clai:nearbound} and Lemma~\ref{lemm:replaceattach}\footnote{%
            Technically speaking, the lemma as stated only applies to the OR case, but as A+B is ``strictly stronger'' than OR, a more careful%
            look shows that the A+B case also works.},
        also $\Phi(G) \geq 3.5 s(G)$.
    \item [A, B] Let $H$ be the result of attaching a small A, B to $u, v$. Then $H$ has smaller $\Phi$ than $G$, so by the induction hypothesis,
        $\Phi(H) \geq 3.5 s(H)$. By Claim~\ref{clai:nearbound} and Lemma~\ref{lemm:replaceattach},
        also $\Phi(G) \geq 3.5 s(G)$.
    \item [L, R, None] Let $H = G_1$, then $H$ has smaller $\Phi$ than $G$, so by the induction hypothesis, $\Phi(H) \geq 3.5 s(H)$ unless $H$ is
        a sporadic example. By
        Lemma~\ref{lemm:defuse}, $\Phi(G) + 0.5 \ge \Phi(H) + \phi(G_2, u_2, v_2)$. By Lemma~\ref{lemm:attachdom}, $s(G) \leq s(H) + s(G_2, u, v)$.
        By Claim~\ref{clai:nearbound}, $\phi(G_2, u_2, v_2) \ge 3.5 s(G_2, u_2, v_2) + 0.5$. Chaining inequalities yields
        \begin{align*}
            \Phi(G) &\ge \Phi(H) + \phi(G_2, u_2, v_2) - 0.5\\
            &\ge 3.5 s(H) + 3.5 s(G_2, u_2, v_2) + 0.5 - 0.5 \ge 3.5 s(G)
        \end{align*}
        If $H$ is a sporadic example, and $G_2$ acts as None, then $\Phi(H) \ge 3.5 s(H) - 1$ while $\phi(G_2, u, v) \ge 3.5s + 1.5$,
        so a similar chain of inequalities works. If $H$ is sporadic and $G_2$ acts as L, R, then $G_2$ can be used to cover one vertex of $H$:
        Then $\Phi(H) \ge 3.5s (H, u, v) + 2.5$ and, since $H$ acts as OCTA and $G_2$ as L / R, $s(G) \le s(H, u, v) + s(G_2, u, v)$. Chaining inequalities yields
        \begin{align*}
            \Phi(G) &\ge \Phi(H) + \phi(G_2, u_2, v_2) - 0.5\\
            &\ge 3.5 s(H, u, v) + 2.5 + 3.5 s(G_2, u_2,v_2) + 0.5 - 0.5 \ge 3.5 s(G) + 2.5
        \end{align*}
    \item [AND, L+R, L OR R, OCTA] By Claim~\ref{clai:nearbound}, $\phi(G_2, u_2, v_2) \ge 3.5 s(G_2, u_2, v_2) + 0.5$.
        Similar to lemma~\ref{lemm:unroot}, if there is no low-degree problem in $G_1$ at $u, v$, then $\Phi(G) = \Phi(G_1) + \phi(G_2, u_2, v_2)$
        and we conclude $\phi(G) \ge 3.5 s(G)$ as in the L, R case. This includes the case where $G_2$ is a sporadic example.
        Suppose now there is a low-degree problem in $G_1$ that involves $u$. Let $H = G_1 - u$, then by Lemma~\ref{lemm:deleteprob} $\Phi(H) \le \Phi(G_1) - 0.5$.
        As $G_2$ acts as AND, L+R, L OR R, OCTA, there is a rooted dominating set in $G_2$ that dominates $u$. Therefore, $s(G) \leq s(H) + s(G_2, u, v)$.
        Chaining inequalities similar to the L, R case yields
        \begin{align*}
            \Phi(G) &\ge \Phi(G_1) + \phi(G_2, u_2, v_2) - 0.5 \ge \Phi(H) + \phi(G_2, u_2, v_2)\\
            &\ge 3.5 s(H) + 3.5 s(G_2, u_2, v_2) \ge 3.5 s(G)
        \end{align*}
\end{itemize}

\subsection{Conclusion}
From now on, for every chord, one side is a small OR, A, B attachment.
More precisely, $G$ has exactly one 3-connected component with interior vertices. All other 3-connected components are copies of a small OR / A / B.
\begin{definition}
    The 3-connected component with interior vertices is called the \emph{polygon}. All other ones are
    the \emph{A, B, OR-attachments of $G$}.
\end{definition}
In fact, $G$ is the result of attaching its A, B, OR-attachments to its polygon.

\subsection{Notation for further cases}
\subsubsection{Polygon vertices}
As concluded in the previous case, $G$ now consists of a \emph{polygon} with \emph{attachments}.
\begin{definition}
    A \emph{polygon vertex} is a boundary vertex of $G$ that is part of the polygon.
\end{definition}

In the remaining cases, we (implicitly) use $s, t, u, v, w, x, y, z$ to denote a range of consecutive polygon vertices,
either in clockwise or counter-clockwise order. (These may not be distinct if the polygon is small.)
For example, if we say ``Suppose $\deg(v) = 3$ and $\deg(x) = 4$.'',
we really mean: Suppose there is a polygon vertex $v$ with $\deg(v) = 3$ and a polygon vertex $x$ with $\deg(x) = 4$
with exactly one polygon vertex $w$ in between.

\subsubsection{Simplified framework}
Explicitly arguing with inequalities and the induction hypothesis gets very tedious and distracts from more important parts of the proof. 
For the remaining cases, we use the following simplified framework:
Let $G$ be the graph we consider. By modifying $G$ slightly, we construct a new skeletal triangulation $H$.
We require $H$ to be smaller than $G$ and not one of the sporadic examples.
Then, by the induction hypothesis, $\Phi(H) \ge 3.5 s(H)$.
Let $\Delta \Phi := \Phi(G) - \Phi(H)$ and $\Delta s := s(G) - s(H)$.
We show that the decrease in $\Phi$ satisfies
\[
    \Delta \Phi = \Phi(G) - \Phi(H) \ge 3.5 (s(G) - s(H)) = 3.5 \Delta s.
\]
Together with the previous equation, this implies $\Phi(G) \ge 3.5 s(G)$,
as desired. To show this, we usually state a bound $\Delta s \le C$.
We then prove the bound $-\Delta \Phi \le -3.5 C$\footnote{Note the minus here. In general, $\Delta \Phi \ge 0$ describes the decrease in $\Phi$
whereas $-\Delta \Phi \le 0$ describes the net change in $\Phi$. We prefer this minus here, as it avoid a bunch of minuses in the next sentence.
Working with $\Delta s$ and $-\Delta \Phi$ also has the advantage of having to prove an \emph{upper} bound for both.}.
Our argument will be phrased as ``$\Phi$ decreases by $2$, decreases by $1.5$ and increases $0.5$'',
which really means $\Phi(H) = \Phi(G) - 2 - 1.5 + 0.5$, or equivalently, $-\Delta \Phi = -2 -1.5 + 0.5$.

\subsection{A, B attachments}
Suppose $G$ has one or more A, B attachments.

\subsubsection{Same red vertex}
Suppose $G$ has an A and a B attachment with the same red vertex $u$. See Figure~\ref{fig:samered}.
\begin{case}
\item[Construction] Construct $H$ by deleting the A attachment from $G$.
\item[Domination] A minimum neat dominating set $S \subseteq H$ contains $u$, due to the B attachment. In $G$, $u$ dominates the deleted A,
hence $S$ dominates $G$. Therefore $\Delta s \le 0$.
\item[Penalty] Deleting the $A$ decreases $\Phi$ by 2.5. This may create up to one low-degree problem, namely one involving $v$,
    increasing $\Phi$ by $\le 0.5$. Overall, $-\Delta \Phi \le -2.0$%
\item[Smaller] $H$ has smaller $\Phi$ than $G$\footnote{And the same number of interior vertices. We will only write down the highest ``priority'' difference between $H$ and $G$.}
\end{case}

\begin{figure}
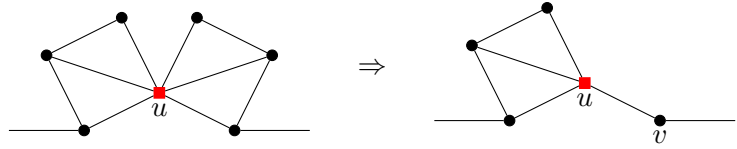

    \beforeafter{
        \node (A) at (0, 0) {};
        \node (B) [red, rectangle, label=below:$u$] at (1, 0.5) {};
        \node (C) at (2, 0) {};
        \node (w) at (-0.5, 1) {};
        \node (x) at (0.5, 1.5) {};
        \node (y) at (1.5, 1.5) {};
        \node (z) at (2.5, 1) {};
        \draw (A) -- (B) -- (C);
        \draw (A) -- (w) -- (x) -- (B) -- (w);
        \draw (C) -- (z) -- (y) -- (B) -- (z);
        \draw (A) -- +(-1, 0);
        \draw (C) -- +(1, 0);
    }{
        \node (A) at (0, 0) {};
        \node (B) [red, rectangle, label=below:$u$] at (1, 0.5) {};
        \node (C) [label=below:$v$] at (2, 0) {};
        \node (w) at (-0.5, 1) {};
        \node (x) at (0.5, 1.5) {};
        \draw (A) -- (B) -- (C);
        \draw (A) -- (w) -- (x) -- (B) -- (w);
        \draw (A) -- +(-1, 0);
        \draw (C) -- +(1, 0);
    }
    \caption{A and B attachment with same red vertex.}
    \label{fig:samered}
\end{figure}

\subsubsection{Consecutive red vertex}
Suppose $G$ has an A / B attachment with red vertex $v$ and another one with red vertex $w$.
Then, by the previous case, there are the only attachments on $u, v, w, x$. See Figure~\ref{fig:consecred}.
\begin{case}
    \item [Construction] Temporarily remove both A/ B attachments. Delete the edge $\{v, w\}$.
        Add the A / B attachments back, with the same red vertex as before, but possibly different base.
    \item[Domination] A minimum neat dominating set $S \subseteq H$ contains both $v$ and $w$, hence $S$ dominates $G$. Therefore $\Delta s \le 0$.
    \item[Penalty] Removing the A / B attachments and adding them back does not change $\Phi$.
        Deleting the edge does not create any low-degree problems.
        Therefore $- \Delta \Phi \le 0$.
    \item[Smaller] $H$ has one fewer interior vertex than $G$.
\end{case}
\begin{figure}
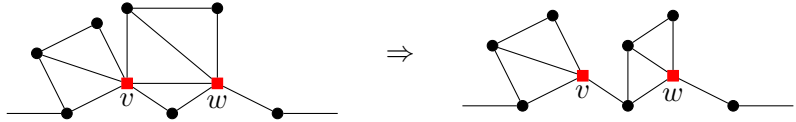

    \beforeafter[0.8]{
        \node (A) at (0, 0) {};
        \node (B) [red, rectangle, label=below:$v$] at (1, 0.5) {};
        \node (BB) [red, rectangle, label=below:$w$] at (2.5, 0.5) {};
        \node (I) at (1.75, 0) {};
        \node (C) at (3.5, 0) {};
        \node (w) at (-0.5, 1) {};
        \node (x) at (0.5, 1.5) {};
        \node (y) at (1, 1.75) {};
        \node (z) at (2.5, 1.75) {};
        \draw (A) -- (B) -- (BB) -- (C);
        \draw (B) -- (I) -- (BB);
        \draw (A) -- (w) -- (x) -- (B) -- (w);
        \draw (y) -- (BB) -- (z) -- (y) -- (B);
        \draw (A) -- +(-1, 0);
        \draw (C) -- +(1, 0);
    }{
        \node (A) at (0, 0) {};
        \node (B) [red, rectangle, label=below:$v$] at (1, 0.5) {};
        \node (BB) [red, rectangle, label=below:$w$] at (2.5, 0.5) {};
        \node (I) at (1.75, 0) {};
        \node (C) at (3.5, 0) {};
        \node (w) at (-0.5, 1) {};
        \node (x) at (0.5, 1.5) {};
        \node (y) at (1.75, 1) {};
        \node (z) at (2.5, 1.5) {};
        \draw (A) -- (B)  (BB) -- (C);
        \draw (B) -- (I) -- (BB);
        \draw (A) -- (w) -- (x) -- (B) -- (w);
        \draw (z) -- (y) -- (BB) -- (z) (I) -- (y);
        \draw (A) -- +(-1, 0);
        \draw (C) -- +(1, 0);
    }
    \caption{Two B attachments with consecutive red vertices.}
    \label{fig:consecred}
\end{figure}

\subsubsection{Red vertex with OR}

Suppose $G$ has an A / B attachment with red vertex $u$ and an OR attachment with base $u, v$. See Figure~\ref{fig:redwithor}.
\begin{case}
\item[Construction] Delete the OR.
\item[Domination] A minimum neat dominating set $S \subseteq H$ contains $u$ due to the A / B. In $G$, $u$ dominates the deleted OR,
hence $S$ dominates $G$. Therefore $\Delta s \le 0$.
\item[Penalty] Deleting the OR decreases $\Phi$ by 1.5. This may create up to one low-degree problem at $v$, increasing $\Phi$ by $\le 0.5$. Overall, $-\Delta \Phi \le -1.0$.
\item[Smaller] $H$ has smaller $\Phi$ than $G$.
\end{case}
\begin{figure}
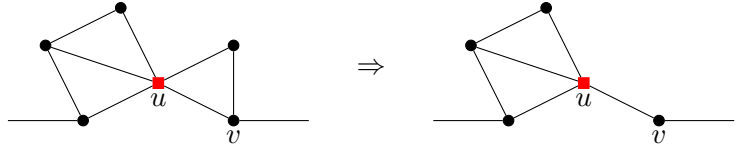

    \beforeafter{
        \node (A) at (0, 0) {};
        \node (B) [red, rectangle, label=below:$u$] at (1, 0.5) {};
        \node (C) [label=below:$v$] at (2, 0) {};
        \node (w) at (-0.5, 1) {};
        \node (x) at (0.5, 1.5) {};
        \node (y) at (2, 1) {};
        \draw (A) -- (B) -- (C);
        \draw (A) -- (w) -- (x) -- (B) -- (w);
        \draw (C) -- (y) -- (B);
        \draw (A) -- +(-1, 0);
        \draw (C) -- +(1, 0);
    }{
        \node (A) at (0, 0) {};
        \node (B) [red, rectangle, label=below:$u$] at (1, 0.5) {};
        \node (C) [label=below:$v$] at (2, 0) {};
        \node (w) at (-0.5, 1) {};
        \node (x) at (0.5, 1.5) {};
        \draw (A) -- (B) -- (C);
        \draw (A) -- (w) -- (x) -- (B) -- (w);
        \draw (A) -- +(-1, 0);
        \draw (C) -- +(1, 0);
    }
    \caption{B attachment with OR.}
    \label{fig:redwithor}
\end{figure}

\subsubsection{Red vertex next to $5^{+}$ vertex or next to vertex with attachment}
Suppose $G$ has an A / B attachment with red vertex $v$ and that $\deg(w) \geq 5$ or that there is an attachment with base $w, x$.
See Figure~\ref{fig:redfive}.
\begin{case}
\item[Construction] Delete the edge $v, w$.
\item[Domination] $H$ is a spanning subgraph of $G$, hence $\Delta s \le 0$.
\item[Penalty] After the deletion, $\deg(w) \geq 4$ or there is an attachment with base $w, x$.
    In either case, $w$ is not involved in a low-degree problem. Thus $\Delta \Phi = 0$.
\item[Smaller] $H$ has one fewer interior vertex than $G$.
\end{case}
\begin{figure}
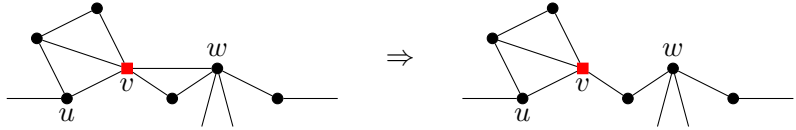

    \beforeafter[0.8]{
        \node (A) [label=below:$u$] at (0, 0) {};
        \node (B) [red, rectangle, label=below:$v$] at (1, 0.5) {};
        \node (BB) [label=above:$w$] at (2.5, 0.5) {};
        \node (I) at (1.75, 0) {};
        \node (C) at (3.5, 0) {};
        \node (w) at (-0.5, 1) {};
        \node (x) at (0.5, 1.5) {};
        \draw (A) -- (B) -- (BB) -- (C);
        \draw (B) -- (I) -- (BB);
        \draw (A) -- (w) -- (x) -- (B) -- (w);
        \draw (A) -- +(-1, 0);
        \draw (C) -- +(1, 0);
        \draw (BB) --+ (-105:1) (BB) --+ (-75:1);
    }{
        \node (A) [label=below:$u$] at (0, 0) {};
        \node (B) [red, rectangle, label=below:$v$] at (1, 0.5) {};
        \node (BB) [label=above:$w$] at (2.5, 0.5) {};
        \node (I) at (1.75, 0) {};
        \node (C) at (3.5, 0) {};
        \node (w) at (-0.5, 1) {};
        \node (x) at (0.5, 1.5) {};
        \draw (A) -- (B)  (BB) -- (C);
        \draw (B) -- (I) -- (BB);
        \draw (A) -- (w) -- (x) -- (B) -- (w);
        \draw (A) -- +(-1, 0);
        \draw (C) -- +(1, 0);
        \draw (BB) --+ (-105:1) (BB) --+ (-75:1);
    }
    \caption{Red vertex next to $5^{+}$ vertex.}
    \label{fig:redfive}
\end{figure}

\subsubsection{Red vertex next to $3,4$ vertex}
Suppose $G$ has an A / B attachment with base $u, v$ and red vertex $v$ and that $\deg(w) \in \{3, 4\}$.
By the previous cases, there is no attachment with base containing $w$. See Figure~\ref{fig:redthreefour}.
\begin{case}
\item[Construction] Delete $w$.
\item[Domination] A minimum neat dominating set $S \subseteq H$ contains $v$, which dominates $w$. Therefore $\Delta s \le 0$.
\item[Penalty] Deleting $w$ decreases $\Phi$ by $\ge 1.0$. As $\deg(w) \le 4$, this creates at most two new low-degree problems, increasing $\Phi$ by $\le 1.0$.
Overall, $-\Delta \Phi \le 0.0$.
\item[Smaller] $H$ has at least one fewer interior vertex than $G$.
\end{case}
\begin{figure}
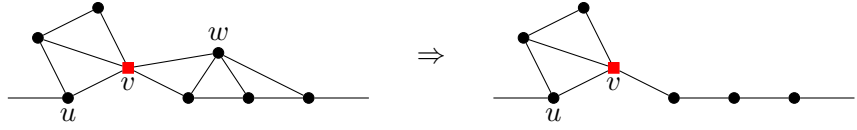

    \beforeafter[0.8]{
        \node (A) [label=below:$u$] at (0, 0) {};
        \node (B) [red, rectangle, label=below:$v$] at (1, 0.5) {};
        \node (BB) [label=above:$w$] at (2.5, 0.75) {};
        \node (I) at (2, 0) {};
        \node (J) at (3, 0) {};
        \node (C) at (4, 0) {};
        \node (w) at (-0.5, 1) {};
        \node (x) at (0.5, 1.5) {};
        \draw (A) -- (B) -- (BB) -- (C);
        \draw (B) -- (I) -- (BB);
        \draw (A) -- (w) -- (x) -- (B) -- (w);
        \draw (A) -- +(-1, 0);
        \draw (C) -- +(1, 0);
        \draw (BB) -- (J);
        \draw (I) -- (J) -- (C);
    }{
        \node (A) [label=below:$u$] at (0, 0) {};
        \node (B) [red, rectangle, label=below:$v$] at (1, 0.5) {};
        \node (I) at (2, 0) {};
        \node (J) at (3, 0) {};
        \node (C) at (4, 0) {};
        \node (w) at (-0.5, 1) {};
        \node (x) at (0.5, 1.5) {};
        \draw (A) -- (B);
        \draw (B) -- (I);
        \draw (A) -- (w) -- (x) -- (B) -- (w);
        \draw (A) -- +(-1, 0);
        \draw (C) -- +(1, 0);
        \draw (I) -- (J) -- (C);
    }
    \caption{Red vertex next to $3,4$ vertex.}
    \label{fig:redthreefour}
\end{figure}

\subsubsection{Conclusion}
From now on, $G$ has no A / B attachments. Thus, $G$ has only OR attachments. In particular, every boundary vertex of degree $\ge 3$ is a polygon
vertex and is hence adjacent to an interior vertex.

\subsection{Consecutive low-degree vertices.}

\subsubsection{Bad 5-wheel} \label{sect:badfive}
Suppose $v, w$ is a 3-pair in a bad 5-wheel. Then, as $u, x$ is an edge between boundary vertices and $p$ is an interior vertex, the polygon is just $u, v, w, x$, i.e. $y = u$.
There may or may not be at OR attachment at $x, u$ and there may or may not be vertices inside the triangle $u, p, x$. See Figure~\ref{fig:threepair}.
\begin{case}
\item[Construction] Let $p$ be the interior vertex adjacent to $v$ and $w$. Delete $v, w$. Force $p$ by attaching an B to $u, p$. Cover $x$ by fusing a small LR to $x$.
\item[Domination] A minimum neat dominating set $S \subseteq H$ contains $p$ and contains exactly one vertex, say $s \neq x$ in the small LR. Then $s$ dominates only the small LR (including $x$).
Then $S - s$ dominates $G$, as $x$ is dominated by $p$. $\Delta s \le -1$.
\item[Penalty] Deleting $v, w$ decreases $\Phi$ by $2.5$. Attaching the A increases $\Phi$ by 2.5. Fusing the LR increases $\Phi$ by 3.5. Overall, $-\Delta \Phi \le 3.5$ (no minus here as $\Delta s$ is negative).
\item[Smaller] $H$ has one fewer interior vertex than $G$, so $H$ is smaller despite $\Phi(H) > \Phi(G)$.
\end{case}
\begin{figure}
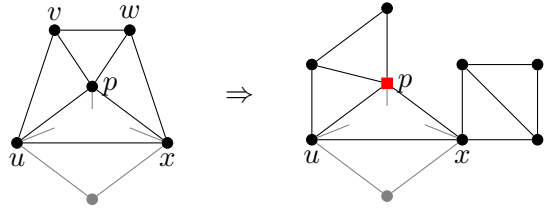

    \beforeafter[1.0]{
        \node (A) at (-1, 0)[label=below:$u$] {};
        \node (B) at (0, 0.75) [label=right:$p$] {};
        \node (C) at (1, 0) [label=below:$x$] {};
        \node (X) at (-0.5, 1.5) [label=above:$v$] {};
        \node (Y) at (0.5, 1.5) [label=above:$w$] {};
        \node (M) at (0, -0.75) [gray] {};
        \draw (A) -- (B) -- (C) -- (A);
        \draw (A) -- (X) -- (Y) -- (C);
        \draw (X) -- (B) -- (Y);
        \draw[gray] (A) -- (M) -- (C);
        \draw[gray] (A) --+ (0.5, 0.2) (C) --+ (-0.5, 0.2) (B) --+ (0, -0.3);
    }{
        \node (A) at (-1, 0) [label=below:$u$] {};
        \node (B) at (0, 0.75) [red,rectangle,label=right:$p$] {};
        \node (C) at (1, 0) [label=below:$x$] {};
        \node (X) at (0, 1.75) {};
        \node (Y) at (-1, 1) {};
        \node (U) at (2, 0) {};
        \node (V) at (2, 1) {};
        \node (W) at (1, 1) {};
        \node (M) at (0, -0.75) [gray] {};
        \draw (A) -- (B) -- (C) -- (A);
        \draw (A) -- (Y) -- (X);
        \draw (X) -- (B) -- (Y);
        \draw (C) -- (U) -- (V) -- (W) -- (C);
        \draw (U) -- (W);
        \draw[gray] (A) -- (M) -- (C);
        \draw[gray] (A) --+ (0.5, 0.2) (C) --+ (-0.5, 0.2) (B) --+ (0, -0.3);
    }
    \caption{Bad 5-wheel. The gray things may or may not exist.}
    \label{fig:threepair}
\end{figure}

\subsubsection{Consecutive OR attachments}
Suppose there is an OR attachment at $u, v$ and another one at $v, w$. See Figure~\ref{fig:consecor}.
\begin{case}
\item[Construction] Delete both ORs. Force $v$ by attaching a B to $u, v$.
\item[Domination] A minimum neat dominating set $S \subseteq H$ contains $v$. In $G$, $v$ dominates both ORs, hence $S$ dominates $G$.
    $\Delta s \le 0$.
\item[Penalty] Deleting both ORs decreases $\Phi$ by $3$. Attaching the A increases $\Phi$ by 2.5. The deletion might create a low-degree problem involving $w$,
    increasing $\Phi$ by 0.5. Overall, $-\Delta \Phi \le 0$.
\item[Smaller] $H$ has fewer degree-2 vertices than $G$.
\end{case}
\begin{figure}
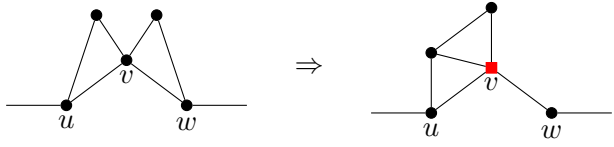

    \beforeafter[0.8]{
        \node (A) at (-1, 0) [label=below:$u$] {};
        \node (B) at (0, 0.75) [label=below:$v$] {};
        \node (C) at (1, 0) [label=below:$w$] {};
        \node (X) at (-0.5, 1.5) {};
        \node (Y) at (0.5, 1.5) {};
        \draw (A) -- (B) -- (C);
        \draw (A) -- (X) (Y) -- (C);
        \draw (X) -- (B) -- (Y);
        \draw (A) -- +(-1, 0);
        \draw (C) -- +(1, 0);
    }{
        \node (A) at (-1, 0) [label=below:$u$] {};
        \node (B) at (0, 0.75) [red,rectangle,label=below:$v$] {};
        \node (C) at (1, 0) [label=below:$w$] {};
        \node (X) at (0, 1.75) {};
        \node (Y) at (-1, 1) {};
        \draw (A) -- (B) -- (C);
        \draw (A) -- (Y) -- (X);
        \draw (X) -- (B) -- (Y);
        \draw (A) -- +(-1, 0);
        \draw (C) -- +(1, 0);
    }
    \caption{Consecutive OR attachments.}
    \label{fig:consecor}
\end{figure}

\subsubsection{Degree-3 triple}
Suppose that $\deg(v) = \deg(w) = \deg(x) = 3$.
We allow $u = x$, i.e. $G = K_4$.)
Let $p$ be the interior vertex adjacent to $v, w, x$. See Figure~\ref{fig:consecthreetriple}.
\begin{case}
\item[Construction] Delete $v, w, x$. Force $p$ by attaching an A to $p, y$. Fuse a small LR to $u$. 
\item[Domination] A minimum neat dominating set $S \subseteq H$ contains $p$ and contains exactly one vertex $s \neq u$ in the small LR.
    Then $S - s$ dominates $G$, as $p$ dominates $u, v, w, x, y$. $\Delta s \le -1$.
\item[Penalty] Deleting $v, w, x$ decreases $\Phi$ by $3$. Attaching the A increases $\Phi$ by 2.5. Fusing an LR increases $\Phi$ by 3.5. The deletions do not create any
    low-degree problems, as $u, p, y$ each get something fused / attached to them. Overall, $-\Delta \Phi \le 3$.
\item[Smaller] $H$ has fewer interior vertices.
\end{case}
\begin{figure}
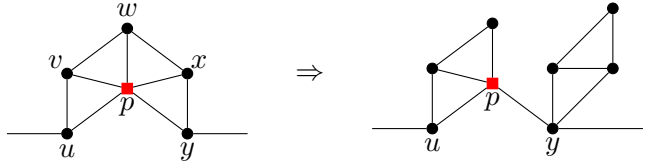

    \beforeafter[0.8]{
        \node (A) at (-1, 0) [label=below:$u$] {};
        \node (B) at (0, 0.75) [red,rectangle,label=below:$p$] {};
        \node (C) at (1, 0) [label=below:$y$] {};
        \node (X) at (0, 1.75) [label=above:$w$] {};
        \node (Y) at (-1, 1) [label=above left:$v$] {};
        \node (Z) at (1, 1) [label=above right:$x$] {};
        \draw (A) -- (B) -- (C);
        \draw (A) -- (Y) -- (X) -- (Z) -- (C);
        \draw (X) -- (B) -- (Y);
        \draw (B) -- (Z);
        \draw (A) -- +(-1, 0);
        \draw (C) -- +(1, 0);
    }{
        \node (A) at (-1, 0) [label=below:$u$] {};
        \node (B) at (0, 0.75) [red,rectangle,label=below:$p$] {};
        \node (C) at (1, 0) [label=below:$y$] {};
        \node (X) at (0, 1.75) {};
        \node (Y) at (-1, 1) {};
        \node (N) at (1, 1) {};
        \node (M) at (2, 1) {};
        \node (O) at (2, 2) {};
        \draw (A) -- (B) -- (C);
        \draw (A) -- (Y) -- (X);
        \draw (X) -- (B) -- (Y);
        \draw (C) -- (N) -- (M) -- (C);
        \draw (N) -- (O) -- (M);
        \draw (A) -- +(-1, 0);
        \draw (C) -- +(1.5, 0);
    }
    \caption{Degree-3 triple.}
    \label{fig:consecthreetriple}
\end{figure}

\subsubsection{Conclusion}
Now $G$ has no bad 5-wheels and all OR attachments have disjoint base vertices.
Moreover, no three consecutive boundary vertices all have degree $3$.

\subsection{Unproblematic ORs}

\subsubsection{OR next to \boldmath${5^{+}}$ vertex}
Suppose there is an OR attachment at $u, v$ and that $\deg(w) \geq 5$. See Figure~\ref{fig:orfive}.
\begin{case}
\item[Construction] Delete the edge $v, w$.
\item[Domination] $H$ is a spanning subgraph of $G$, hence $\Delta s \le 0$.
\item[Penalty] After the deletion, $\deg(w) \geq 4$, so this does not create low-degree problems. $-\Delta \Phi = 0$.
\item[Smaller] $H$ has one fewer interior vertex.
\end{case}
\begin{figure}
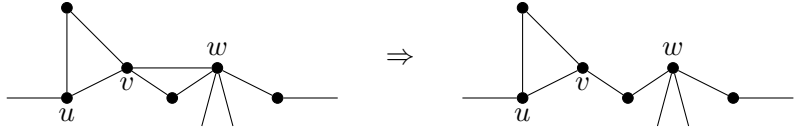

    \beforeafter[0.8]{
        \node (A) [label=below:$u$] at (0, 0) {};
        \node (B) [label=below:$v$] at (1, 0.5) {};
        \node (BB) [label=above:$w$] at (2.5, 0.5) {};
        \node (I) at (1.75, 0) {};
        \node (C) at (3.5, 0) {};
        \node (x) at (0, 1.5) {};
        \draw (A) -- (B) -- (BB) -- (C);
        \draw (B) -- (I) -- (BB);
        \draw (A) -- (x) -- (B);
        \draw (A) -- +(-1, 0);
        \draw (C) -- +(1, 0);
        \draw (BB) --+ (-105:1) (BB) --+ (-75:1);
    }{
        \node (A) [label=below:$u$] at (0, 0) {};
        \node (B) [label=below:$v$] at (1, 0.5) {};
        \node (BB) [label=above:$w$] at (2.5, 0.5) {};
        \node (I) at (1.75, 0) {};
        \node (C) at (3.5, 0) {};
        \node (x) at (0, 1.5) {};
        \draw (A) -- (B)  (BB) -- (C);
        \draw (B) -- (I) -- (BB);
        \draw (A) -- (x) -- (B);
        \draw (A) -- +(-1, 0);
        \draw (C) -- +(1, 0);
        \draw (BB) --+ (-105:1) (BB) --+ (-75:1);
    }
    \caption{OR next to $5^{+}$ vertex.}
    \label{fig:orfive}
\end{figure}

\subsubsection{Two ORs one edge apart}
Suppose there is an OR attachment at $u, v$ and another OR attachment attachment at $w, x$. See Figure~\ref{fig:orappart}.
\begin{case}
\item [Construction] Delete the edge $v, w$.
\item [Domination] $H$ is a spanning subgraph of $G$, hence $\Delta s \le 0$.
\item [Penalty] Only the degrees of $v, w$ are affected. Both are incident to a chord, so this does not create low-degree problems by Remark~\ref{rem:noninvolved}.
\item [Smaller] $H$ has one fewer interior vertex.
\end{case}
\begin{figure}
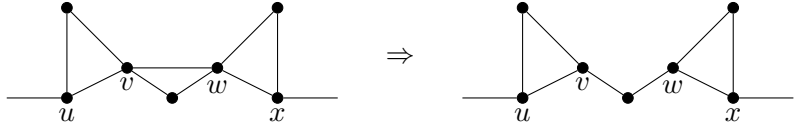

    \beforeafter[0.8]{
        \node (A) [label=below:$u$] at (0, 0) {};
        \node (B) [label=below:$v$] at (1, 0.5) {};
        \node (BB) [label=below:$w$] at (2.5, 0.5) {};
        \node (I) at (1.75, 0) {};
        \node (C) [label=below:$x$] at (3.5, 0) {};
        \node (y) at (3.5, 1.5) {};
        \node (x) at (0, 1.5) {};
        \draw (A) -- (B) -- (BB) -- (C);
        \draw (B) -- (I) -- (BB);
        \draw (A) -- (x) -- (B);
        \draw (A) -- +(-1, 0);
        \draw (C) -- +(1, 0);
        \draw (BB) -- (y) -- (C);
    }{
        \node (A) [label=below:$u$] at (0, 0) {};
        \node (B) [label=below:$v$] at (1, 0.5) {};
        \node (BB) [label=below:$w$] at (2.5, 0.5) {};
        \node (I) at (1.75, 0) {};
        \node (C) [label=below:$x$] at (3.5, 0) {};
        \node (x) at (0, 1.5) {};
        \node (y) at (3.5, 1.5) {};
        \draw (A) -- (B)  (BB) -- (C);
        \draw (B) -- (I) -- (BB);
        \draw (A) -- (x) -- (B);
        \draw (A) -- +(-1, 0);
        \draw (C) -- +(1, 0);
        \draw (BB) -- (y) -- (C);
    }
    \caption{Two ORs one edge apart.}
    \label{fig:orappart}
\end{figure}

\subsubsection{OR on triangle boundary}
Suppose there is an OR attachment at $v, w$ and that $x = u$, i.e. the polygon is a triangle. See Figure~\ref{fig:ortriangle}.
\begin{case}
\item[Construction] Delete $w$.
\item[Domination] A neat dominating set in $H$ contains either $u$ or $v$. That vertex then dominates $w$. $\Delta s = 0$.
\item[Penalty] Deleting $w$ decreases $\Phi$ by $1$. By the previous case, $\deg(w) \le 4$, hence the deletion creates at most
    one low-degree problem, increasing $\Phi$ by $\le 0.5$. Overall, $-\Delta \Phi \le -0.5$.
\item[Smaller] $H$ has at least one fewer interior vertex.
\end{case}
\begin{figure}
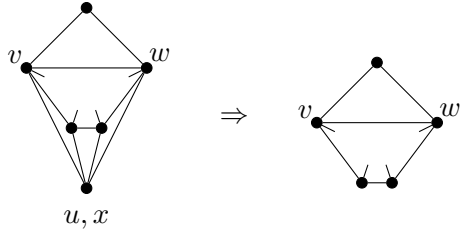

    \beforeafter[0.8]{
        \node (A) at (-1, 0.5) [label=above left:$v$] {};
        \node (B) at (1, 0.5) [label=above right:$w$] {};
        \node (C) at (0, 1.5) {};
        \node (D) at (0, -1.5) [label=below:{$u,x$}] {};
        \node (x) at (-0.25, -0.5) {};
        \node (y) at (0.25, -0.5) {};
        \draw (A) -- (B) -- (C) -- (A);
        \draw (A) -- (D) -- (B);
        \draw (A) -- (x) -- (D) -- (y) -- (B);
        \draw (x) -- (y);
        \draw (x) --+ (0.1, 0.3);
        \draw (y) --+ (-0.1, 0.3);
        \draw (A) --+ (0.3, -0.15);
        \draw (B) --+ (-0.3, -0.15);
    }{
        \node (A) at (-1, 0.5) [label=above left:$v$] {};
        \node (B) at (1, 0.5) [label=above right:$w$] {};
        \node (C) at (0, 1.5) {};
        \node (x) at (-0.25, -0.5) {};
        \node (y) at (0.25, -0.5) {};
        \draw (A) -- (B) -- (C) -- (A);
        \draw (A) -- (x) (y) -- (B);
        \draw (x) -- (y);
        \draw (x) --+ (0.1, 0.3);
        \draw (y) --+ (-0.1, 0.3);
        \draw (A) --+ (0.3, -0.15);
        \draw (B) --+ (-0.3, -0.15);
    }
    \caption{OR on triangle polygon.}
    \label{fig:ortriangle}
\end{figure}

\subsubsection{Conclusion}
Now every OR has a base that lies between two distinct polygon vertices, both of degree $\{3, 4\}$,
both not part of any attachment.

\subsection{Deleting ORs}
Suppose there is an OR attachment at $x, y$. Then $\deg(w), \deg(z) \in \{3, 4\}$ and $w \neq z$.
(It could happen that $v = z$.) Let $p$ be the interior vertex adjacent to $x, y$ s.t. $\{p, x, y\}$ is a facial triangle.

\subsubsection{Interior degree-3 neighbor}
Suppose there is an interior vertex $r$ with $\deg(r) = 3$ adjacent to $x$ (or $y$).
See Figure~\ref{fig:ordegtree}.

\begin{case}
\item[Construction] Delete $r$. Replace the OR by an A with red vertex $x$.
    Fuse a small LR to $w$.
\item[Domination] If $S$ is a minim neat dominating set in $H$ and $L \subseteq S$ is the vertex in the fused LR,
    then $S \setminus L$ is a dominating set in $G$ containing $x$, which dominates $w, r$. $\Delta s = -1$.
\item[Penalty] Deleting $r$ and the OR decreases $\Phi$ by $2.5$. Attaching the A increases $\Phi$ by $2.5$.
    Fusing the LR increases $\Phi$ by $3.5$. Deleting $r$ does not create any low-degree problem,
    as the only boundary vertices possibly adjacent to $r$ are $w, x, y$. Overall, $-\Delta \Phi \le 3.5$. 
\item[Smaller] $H$ has fewer interior vertices.
\end{case}
\begin{figure}
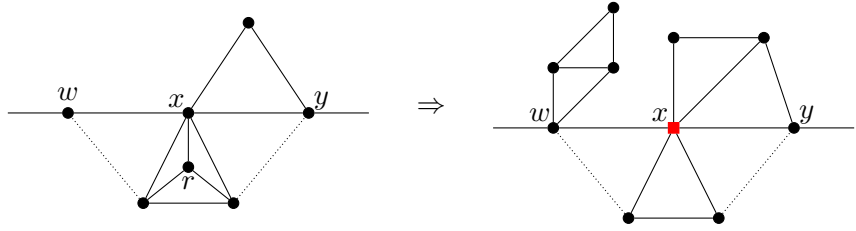

    \beforeafter[0.8]{
        \node (A) at (0, 0) [label=above left:$x$] {};
        \node (B) at (2, 0) [label=above right:$y$] {};
        \node (L) at (-2, 0) [label=above:$w$] {};
        \node (C) at (1, 1.5)  {};
        \node (q) at (-0.75, -1.5) {};
        \node (r) at (0.75, -1.5) {};
        \node (I) at (0, -0.9) [label=below:$r$] {};
        \draw (A) -- (B) -- (C) -- (A) -- (L);
        \draw (A) -- (q) -- (r) -- (A);
        \draw (I) -- (q) (I) -- (r) (I) -- (A);
        \draw (L) --+ (-1, 0);
        \draw (B) --+ (1, 0);
        \draw[densely dotted] (q) -- (L) (r) -- (B);
    }{
        \node (A) at (0, 0) [label=above left:$x$, red,rectangle] {};
        \node (B) at (2, 0) [label=above right:$y$] {};
        \node (L) at (-2, 0) [label=above left:$w$] {};
        \node (x) at (-2, 1) {};
        \node (y) at (-1, 1) {};
        \node (z) at (-1, 2) {};
        \node (C) at (1.5, 1.5) {};
        \node (D) at (0, 1.5) {};
        \node (q) at (-0.75, -1.5) {};
        \node (r) at (0.75, -1.5) {};
        \node (I) at (0, -0.9) [transparent] {};
        \draw (C) -- (D) -- (A) -- (B) -- (C) -- (A) -- (L);
        \draw (A) -- (q) -- (r) -- (A);
        \draw (L) -- (x) -- (y) -- (L) (x) -- (z) -- (y);
        \draw (L) --+ (-1, 0);
        \draw (B) --+ (1, 0);
        \draw[densely dotted] (q) -- (L) (r) -- (B);
    }
    \caption{OR with interior degree-3 neighbor.}
    \label{fig:ordegtree}
\end{figure}

\subsubsection{Antipodal 5-wheels}
Suppose there is an interior vertex $q \neq w$ adjacent to both $x$ and $y$ with $\deg(q) = 4$.
Then $N[q]$ is a 5-wheel with $x, y$ on antipodal sides of the wheel.
Let $r_1, r_2$ be the other neighbors of $q$, i.e. $N(q) = \{x, y, r_1, r_2\}$. See Figure~\ref{fig:orantiwheel}.
\begin{case}
\item[Construction] Delete $w$. Add the edge $r_1, r_2$.
\item[Domination] A neat dominating set in $H$ contains either $x$ or $y$. That vertex then dominates $w, r_1, r_2$. In particular,
    the added edge has no effect on $s$ and $\Delta s \le 0$.
\item[Penalty] Deleting $w$ decreases $\Phi$ by $1$. The deletion only decreases the degrees of $x, y$ so it does not create low degree problems.
    $-\Delta \Phi \le -1$.
\item[Smaller] $H$ has one fewer interior vertex.
\end{case}
\begin{figure}
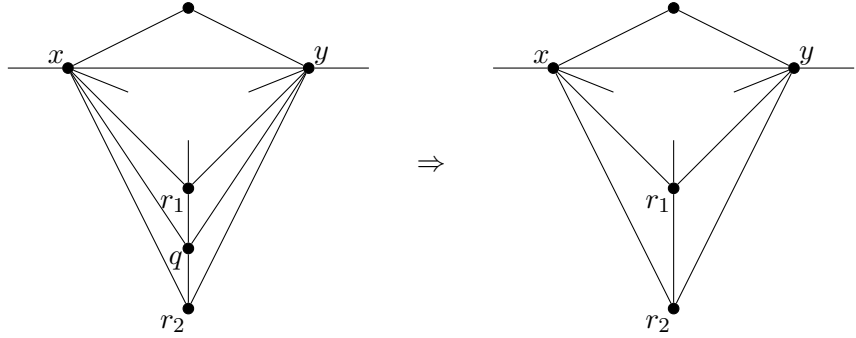

    \beforeafter[0.8]{
        \node (A) at (-2, 0.5) [label=above left:$x$] {};
        \node (B) at (2, 0.5) [label=above right:$y$] {};
        \node (C) at (0, 1.5)  {};
        \node (D) at (0, -1.5) [label=below left:$r_1$] {};
        \node (E) at (0, -2.5) [label=below left:$q$] {};
        \node (F) at (0, -3.5) [label=below left:$r_2$] {};
        \draw (A) -- (B) -- (C) -- (A);
        \draw (A) -- (D) -- (B);
        \draw (A) -- (E) -- (B);
        \draw (A) -- (F) -- (B);
        \draw (D) -- (E) -- (F);
        \draw (A) --+ (1, -0.4);
        \draw (B) --+ (-1, -0.4);
        \draw (D) --+ (0, 0.8);
        \draw (A) --+ (-1, 0);
        \draw (B) --+ (1, 0);
    }{
        \node (A) at (-2, 0.5) [label=above left:$x$] {};
        \node (B) at (2, 0.5) [label=above right:$y$] {};
        \node (C) at (0, 1.5) {};
        \node (D) at (0, -1.5) [label=below left:$r_1$] {};
        \node (F) at (0, -3.5) [label=below left:$r_2$] {};
        \draw (A) -- (B) -- (C) -- (A);
        \draw (A) -- (D) -- (B);
        \draw (A) -- (F) -- (B);
        \draw (D) -- (F);
        \draw (A) --+ (1, -0.4);
        \draw (B) --+ (-1, -0.4);
        \draw (D) --+ (0, 0.8);
        \draw (A) --+ (-1, 0);
        \draw (B) --+ (1, 0);
    }
    \caption{OR with Antipodal 5-wheel.}
    \label{fig:orantiwheel}
\end{figure}

\subsubsection{Octahedral interior 4-pairs}
Let $c(x)$ denote the number of octahedral interior 4-pairs adjacent to $x$.
Suppose $c(x) \geq 1$. See Figure~\ref{fig:orocta}.
\begin{case}
\item [Construction] Delete the OR and $x$, but keep $y$. Delete both vertices of every interior 4-pair adjacent to $x$.
    Fuse a small LR to every remaining neighbor of $x$. Suppose we fuse $k$ LRs this way.
\item [Domination] If $S$ is a minimum dominating set in $H$ and $L \subseteq S$ are the $k$ vertices in the fused LRs,
    then $S \cup \{x\} \setminus L$ is a dominating set in $G$. $\Delta s \le k-1$.
\item [Penalty] Deleting the OR and $x$ decreases $\Phi$ by 2.5. Deleting interior 4-pairs decreases $\Phi$ by $2 \cdot c(x)$.
    Any low-degree problem created by deleting the OR and $x$ get deleted or covered by an LR.
    Deleting interior 4-pairs creates exactly one ear per pair, increasing $\Phi$ by $0.5 \cdot c(x)$.
    Fusing the LRs increases $\Phi$ by $3.5 k$. Overall,
    \[
        -\Delta \Phi \le -2.5 -1.5 c(x) + 3.5k \le -4 + 3.5k.
    \]
\item [Smaller] $H$ has fewer interior vertices.
\end{case}
\begin{figure}
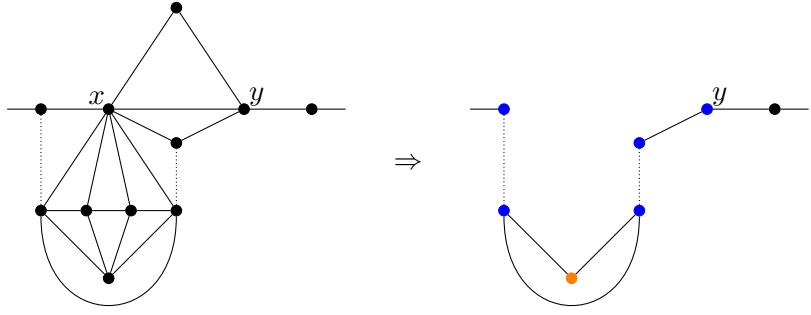

    \beforeafter[0.9]{
        \node (L) at (-1, 0) {};
        \node (R) at (3, 0) {};
        \node (A) at (0, 0) [label=above left:$x$] {};
        \node (B) at (2, 0) [label=above right:$y$] {};
        \node (C) at (1, 1.5)  {};
        \node (D) at (1, -0.5) {};
        \node (p) at (-1, -1.5) {};
        \node (q) at (-0.33, -1.5) {};
        \node (r) at (0.33, -1.5) {};
        \node (s) at (1, -1.5) {};
        \node (I) at (0, -2.5) {};
        \draw (A) -- (p) -- (I) -- (q) -- (A) -- (r) -- (I) -- (s) -- (A);
        \draw (p) -- (q) -- (r) -- (s);
        \draw (A) -- (B) -- (C) -- (A);
        \draw (L) --+ (-0.5, 0);
        \draw (R) --+ (0.5, 0);
        \draw (A) -- (D) -- (B);
        \draw (L) -- (A) (B) -- (R);
        \draw [densely dotted] (s) -- (D) (L) -- (p);
        \draw (p) to [in=-90, out=-90, distance=50] (s);
    }{
        \node[blue] (L) at (-1, 0) {};
        \node (R) at (3, 0) {};
        \node[blue] (B) at (2, 0) [label=above right:$y$] {};
        \node[transparent] (C) at (1, 1.5)  {};
        \node[blue] (D) at (1, -0.5) {};
        \node[blue] (p) at (-1, -1.5) {};
        \node[blue] (s) at (1, -1.5) {};
        \node[orange] (I) at (0, -2.5) {};
        \draw (p) -- (I) (I) -- (s);
        \draw (L) --+ (-0.5, 0);
        \draw (R) --+ (0.5, 0);
        \draw (D) -- (B);
        \draw (B) -- (R);
        \draw [densely dotted] (s) -- (D) (L) -- (p);
        \draw (p) to [in=-90, out=-90, distance=50] (s);
    }
    \caption{Octahedral interior 4-pairs. Here, $c(x)$. The dashed lines represent a path of vertices all adjacent to $x$.
    The blue vertices each have an LR fused to them (not drawn here, to keep the picture clean). The orange vertices mark locations where
    a low-degree problem might be created.}
    \label{fig:orocta}
\end{figure}

\subsubsection{No interior problems}
The previous cases now allow us to delete both $x$ and $y$, without creating too many low-degree problems.
If $v = z$, then $\deg(w) = \deg(z) = 3$  makes $w, z$ a 3-pair in the bad 5-wheel $w, x, y, z, p$,
which is covered by Case~\ref{sect:badfive}. Therefore,
suppose that $v \ne z$ or that WLOG $\deg(z) \ge 4$. See Figure~\ref{fig:ornofourpairs}.
\begin{case}
\item [Construction] Delete the OR, $w, x, y$. Fuse a small LR to every remaining neighbor of $x$. Suppose we fuse $k$ LRs this way.
\item [Domination] If $S$ is a minimum dominating set in $H$ and $L \subseteq S$ are the $k$ vertices in the LRs,
    then $S \cup \{x\} \setminus L$ is a dominating set in $G$. $\Delta s \le k-1$.
\item [Penalty] The deletions decrease $\Phi$ by 4.5. Fusing LRs increases $\Phi$ by $3.5 k$.
    The deletions may create one low-degree problem involving $z$ and,
    as $\deg(w) \le 4$, at most one involving a former neighbor of $w$.
    Overall, these increase $\Phi$ by $\le 1$.
    If $v = z$, then $\deg(z)$ decreases by two, so we need $\deg(z) \ge 4$ to avoid creating a leaf.  
    There are no other low-degree problems: All former neighbors of $x$
    get covered by an LR. Former neighbors of $y$ cannot be involved in a low degree problem in which $w, z$ are not involved,
    as that would be require an interior vertex of degree $3$ (for ears or pivoting triangles), an interior octahedral 4-pair (for 3-pairs in a bad 5-wheel)
    or a chord from $y$ to a degree-3 boundary vertex (for a degree-2 cut vertex)\footnote{%
        There is a bit of subtlety if $w$ and $y$ share interior neighbors of degree $4$.}.
    All of these are covered by previous cases.
    Overall,
    \[
        -\Delta \Phi \le -4.5 + 1 + 3.5k \le -3.5 (k-1).
    \]
\item [Smaller] $H$ has fewer interior vertices.
\end{case}
\begin{figure}
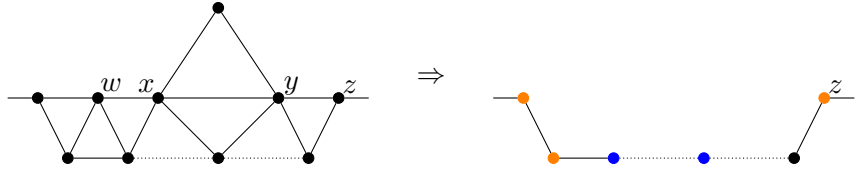

    \beforeafter[0.8]{
        \node (A) at (0, 0) [label=above left:$x$] {};
        \node (B) at (2, 0) [label=above right:$y$] {};
        \node (C) at (1, 1.5) {};
        \node (D) at (1, -1.0) {};
        \node (R) at (3, 0) [label=above right:$z$] {};
        \node (S) at (2.5, -1) {};
        \node (L) at (-1, 0) [label=above right:$w$] {};
        \node (M) at (-2, 0) {};
        \node (N) at (-1.5, -1) {};
        \node (O) at (-0.5, -1) {};
        \draw (B) -- (S) -- (R) {};
        \draw (M) --+ (-0.5, 0);
        \draw (R) --+ (0.5, 0);
        \draw (M) -- (L) -- (A) -- (B) -- (R);
        \draw (M) -- (N) -- (O) -- (A);
        \draw (N) -- (L) -- (O);
        \draw (A) -- (C) -- (B);
        \draw (A) -- (D) -- (B);
        \draw [densely dotted] (O) -- (D) (D) -- (S);
    }{
        \node[transparent] (C) at (1, 1.5) {};
        \node[blue] (D) at (1, -1.0) {};
        \node[orange] (R) at (3, 0) [label=above right:$z$] {};
        \node (S) at (2.5, -1) {};
        \node[orange] (M) at (-2, 0) {};
        \node[orange] (N) at (-1.5, -1) {};
        \node[blue] (O) at (-0.5, -1) {};
        \draw (S) -- (R) {};
        \draw (M) --+ (-0.5, 0);
        \draw (R) --+ (0.5, 0);
        \draw (M) -- (N) -- (O);
        \draw [densely dotted] (O) -- (D) (D) -- (S);
    }
    \caption{OR with no interior 4-pairs The dashed lines represent a path of vertices all adjacent to $x$ (or $y$).
    The blue vertices each have an LR fused to them (not drawn here). The orange vertices mark locations where a low-degree problem might arise.}
    \label{fig:ornofourpairs}
\end{figure}

\subsubsection{Conclusion}
Now $G$ has no OR attachments. If $G$ is a single triangle, then $\Phi = 3.5$ and $s=1$.
Otherwise, and $G$ is a 3-connected near-triangulation.
This is great for deleting boundary vertices:

\begin{definition}[Interior graph] \label{def:interiorgraph}
    Let $G$ be a skeletal triangulation. The \emph{interior graph} $\Interior(G)$
    is the graph induced by all interior vertices in $G$.
    An \emph{interior leaf} is a vertex $u \in \Interior(G)$ with $\deg_{\Interior(G)}(u) = 1$.
\end{definition}
\begin{lemma}
    If $G$ is a 3-connected near-triangulation $G$, then $\Interior(G)$ is connected.
\end{lemma}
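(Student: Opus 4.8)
\emph{Proof proposal.} The plan is to argue by contradiction, showing that a disconnected interior graph would in fact force $\Interior(G)$ to consist of \emph{all} interior vertices. If $\Interior(G)=\emptyset$ there is nothing to prove (indeed then $G=K_3$), so assume $\Interior(G)\neq\emptyset$ and suppose it has a connected component $C$ that is not all of $\Interior(G)$. First I would record the structural facts I need. Since $G$ is $2$-connected its outer face is bounded by a cycle $B$, and since $G$ is internally triangulated every bounded face is a triangle lying inside $B$. Since $G$ is $3$-connected, $B$ has no chords, i.e.\ the only edges joining two vertices of $B$ are the edges of $B$ itself. Finally, any neighbour of a vertex of $C$ that lies outside $C$ must be a boundary vertex, because an interior neighbour of $C$ would lie in the same component of $\Interior(G)$ as $C$; hence $N(C)\subseteq V(B)$.

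Next I would consider the closed region $R$ equal to the union of all closed bounded faces of $G$ whose boundary meets $C$; note $C\subseteq R$ and $R\neq\emptyset$. The key claim is that every edge on the topological boundary $\partial R$ of $R$ is an edge of $B$. Indeed, if $e=uv\in\partial R$ had an endpoint in $C$, then both faces incident to $e$ would contain that endpoint, hence both would be bounded faces meeting $C$ and therefore both would lie in $R$, contradicting $e\in\partial R$; so $u,v\notin C$. The face of $e$ that lies in $R$ is a triangle $uvw$ meeting $C$, and since $u,v\notin C$ this forces $w\in C$, whence $u,v\in N(w)\subseteq V(B)$. Thus $uv$ is an edge between two boundary vertices, so by the no-chord property $uv\in E(B)$, proving the claim.

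Since every edge of $\partial R$ lies on $B$ it is incident to the outer face, and as every edge is incident to exactly two faces, no bounded face outside $R$ can share an edge with a bounded face inside $R$. Equivalently, in the inner dual of $G$ (bounded faces, with two adjacent when they share an edge) the faces belonging to $R$ form a union of whole connected components. But the inner dual of $G$ is connected: the inside of the outer cycle $B$ is an open disk tiled by the bounded faces of $G$, so a generic arc in this open disk between interior points of any two bounded faces crosses only interior edges and yields a walk between them in the inner dual. Since $R$ contains at least one bounded face, $R$ therefore contains \emph{every} bounded face of $G$. Finally, any interior vertex $y$ lies on some bounded face, which then meets $C$; being a triangle, that face forces $y\in C$ (either $y$ is its $C$-vertex, or $y$ is adjacent to the $C$-vertex and hence, being interior, lies in $C$). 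Thus $\Interior(G)=C$, contradicting the assumption.

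The main obstacle I anticipate is not any single inequality but making the topological bookkeeping around $R$ and $\partial R$ fully rigorous: precisely, showing that $\partial R$ is a subgraph of $G$ whose edges are all edges of $B$, and that the inner dual of a $2$-connected plane graph is connected. Both are standard, but they are exactly the places where the planar embedding is genuinely used, so I would package each as a short self-contained observation before assembling the contradiction.
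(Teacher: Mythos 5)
Your proof is correct, but it takes a genuinely different route from the paper's. The paper argues directly: for any two interior vertices $s,t$, Menger's theorem gives three vertex-disjoint $s$--$t$ paths, and if all three met the boundary one could place an apex vertex in the outer face adjacent to all boundary vertices and extract a planar $K_{3,3}$-subdivision (branch vertices $s$, $t$, the apex versus the three boundary hit-points), a contradiction; hence some path stays interior. Your argument instead grows the region $R$ of bounded faces touching one component $C$ of $\Interior(G)$, shows via the chordlessness of the outer cycle (which is where $3$-connectivity enters for you) that $\partial R$ lies on the boundary cycle, and concludes from connectivity of the weak dual that $R$ swallows every bounded face, forcing $C=\Interior(G)$. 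What the paper's version buys is brevity, at the cost of invoking Menger and Kuratowski; yours is longer but uses only the Jordan-curve-level facts you flag (chordless outer cycle, connected inner dual), and as a bonus it makes explicit \emph{why} the lemma fails for merely $2$-connected near-triangulations (a chord can lie on $\partial R$). Two cosmetic points: you mean $N(w)\setminus C\subseteq V(B)$ rather than $N(w)\subseteq V(B)$ (harmless given your earlier convention for $N(C)$), and the parenthetical ``indeed then $G=K_3$'' in the empty-interior case is not quite right (empty interior means $G$ is outerplanar, not necessarily a triangle), but that case is vacuous anyway.
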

\begin{proof}
    Let $s, t \in \Interior(G)$ be arbitrary.
    By Menger's theorem, there are 3 vertex-disjoint s-t-paths.
    If all of these contain a boundary vertex, then adding a vertex connected to all boundary vertices to the unbounded face creates a planar $K_{3, 3}$-subdivision, contradiction.
    Hence, at least one of the s-t-paths lies fully in $\Interior(G)$.
\end{proof}
\begin{corollary}
    Let $G$ be a 3-connected near-triangulation and let $B \subseteq G$ be any set of boundary vertices.
    Then $G - B$ is connected.
\end{corollary}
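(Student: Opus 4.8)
The plan is to deduce this immediately from the lemma that $\Interior(G)$ is connected. First I would check that $\Interior(G) \neq \emptyset$: a $3$-connected graph on at least four vertices has minimum degree $\ge 3$ and is therefore not outerplanar (every outerplanar graph has a vertex of degree $\le 2$), so not all vertices of $G$ lie on the outer face. Since $B$ consists only of boundary vertices, deleting $B$ removes no interior vertex and no edge between two interior vertices; hence the subgraph of $G - B$ induced on the interior vertices of $G$ is exactly $\Interior(G)$, which is connected by the lemma. Write $I$ for its (nonempty) vertex set.

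Next I would show that in $G - B$ every vertex is joined to $I$. For interior vertices of $G$ this is vacuous, since they lie in $I$. For a boundary vertex $v \notin B$, recall that $N(v)$ induces a path on $\deg(v) \ge 3$ vertices whose two endpoints are precisely the two boundary neighbours of $v$ (this structural fact about boundary vertices of internally triangulated graphs is already used elsewhere in the paper). Any interior vertex occurring on this path lies in $I$ and remains adjacent to $v$ in $G - B$, so it suffices to rule out that all neighbours of $v$ are boundary vertices. But a third boundary neighbour of $v$, besides its two boundary-cycle neighbours, would give a non-boundary edge between two boundary vertices, i.e. a chord; such an edge is a $2$-cut (or else cuts off a degree-$2$ vertex), contradicting $3$-connectivity. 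Hence $v$ has an interior neighbour, which lies in $I$.

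Combining the two steps, every vertex of $G - B$ either lies in the connected set $I$ or has a neighbour in $I$, so $G - B$ is connected. I do not expect a real obstacle here; the corollary is essentially a repackaging of the $\Interior(G)$ lemma, and the only mildly delicate point is the structural input (neighbourhoods of boundary vertices are paths, and $3$-connectivity forbids chords), both of which are routine and consistent with facts already invoked in the excerpt.
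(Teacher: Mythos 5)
Your proof is correct and follows the paper's argument: the paper likewise deduces the corollary from the connectivity of $\Interior(G)$ together with the observation that every boundary vertex has an interior neighbour. You simply spell out the two supporting facts (non-emptiness of the interior and the chord/$3$-connectivity argument) that the paper leaves implicit.
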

\begin{proof}
    By the Lemma, $\Interior(G)$ is connected.
    Every boundary vertex of $G$ has a neighbor in $\Interior(G)$.
\end{proof}

\subsection{Deletable boundary edges} \label{sect:cutable}
\subsubsection{Sporadic examples}
Suppose deleting the (boundary) edge $\{v, w\}$ yields a sporadic example $H$.
In other words, $G$ is the result of adding an edge to a sporadic example in a way
that creates a facial triangle. 

$H$ contains at least four boundary vertices, so $H$ is not the octahedron.
If $H$ is the 3-bifan, then $n=6$ and either $G$ is the octahedron, or $G$
has two vertices of degree $5$ and hence $s = 1$. See Figure~\ref{fig:deledgebifan}.
Finally, if $H$ is the special 4343434-heptagon, then $G$ has six boundary vertices and $n=10$.
There are seven ways of adding an edge to the special 4343434-heptagon and one
can check that $s=2$ in each case. See Figure~\ref{fig:deledgeheptagon}.

\begin{figure}
    \begin{center}
    \begin{tikzpicture}[scale=0.6]
        \node (A) at (0, 0) {};
        \node (B) at (3, 0) {};
        \node (C) at (3, 3) {};
        \node (D) at (0, 3) {};
        \node (x) at (1, 1) {};
        \node (y) at (2, 2) {};
        \draw (A) -- (B) -- (C) -- (D) -- (A);
        \draw (A) -- (x) -- (y) -- (C);
        \draw (B) -- (x) -- (D) -- (y) -- (B);
        \begin{scope}[on background layer]
            \draw[blue] (A.center) to[out=135, in=135, distance=100] (C.center);
        \end{scope}
    \end{tikzpicture}%
    \hspace{1.5cm}%
    \begin{tikzpicture}[scale=0.6]
        \node (A) at (0, 0) {};
        \node (B) at (3, 0) {};
        \node (C) at (3, 3) {};
        \node (D) at (0, 3) {};
        \node (x) at (1, 1) {};
        \node (y) at (2, 2) {};
        \draw (A) -- (B) -- (C) -- (D) -- (A);
        \draw (A) -- (x) -- (y) -- (C);
        \draw (B) -- (x) -- (D) -- (y) -- (B);
        \begin{scope}[on background layer]
            \draw[blue] (B.center) to[out=45, in=45, distance=100] (D.center);
        \end{scope}
    \end{tikzpicture}
    \end{center}

    \caption{The two ways of adding one edge to the 3-bifan. On the left: the octahedron. On the right: a graph with $s=1$.}
    \label{fig:deledgebifan}
\end{figure}
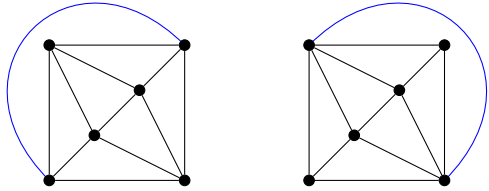

\begin{figure}
    \begin{center}
        \begin{tikzpicture}[scale=0.4]
            \begin{scope}[shift={(0, 0)}]
                \node (A) at (-1, 2) [red,rectangle] {};
                \node (B) at (-4, 0) {};
                \node (C) at (-2, -2) {};
                \node (D) at (0, -2) [red,rectangle] {};
                \node (E) at (2, -2) {};
                \node (F) at (4, 0) [orange] {};
                \node (G) at (1, 2) {};
                \node (X) at (-2, 0) {};
                \node (Y) at (0, 0) {};
                \node (Z) at (2, 0) {};
                \draw (A) -- (B) -- (C) -- (D) -- (E) -- (F) -- (G) -- (A) {};
                \draw (B) -- (X) -- (Y) -- (Z) -- (F) {};
                \draw (C) -- (X) -- (A) -- (Y) -- (G) -- (Z) -- (E) -- (Y) -- (C);
                \draw (D) -- (Y);
                \begin{scope}[on background layer]
                    \draw[blue] (A.center) to[out=45, in=90, distance=80] (F.center) {};
                    \draw[blue] (D.center) to[out=-45, in=-90, distance=90] (F.center) {};
                \end{scope}
            \end{scope}
            \begin{scope}[shift={(12, 0)}]
                \node (A) at (-1, 2) {};
                \node (B) at (-4, 0) [orange] {};
                \node (C) at (-2, -2) {};
                \node (D) at (0, -2) [red,rectangle] {};
                \node (E) at (2, -2) {};
                \node (F) at (4, 0) {};
                \node (G) at (1, 2) [red,rectangle] {};
                \node (X) at (-2, 0) {};
                \node (Y) at (0, 0) {};
                \node (Z) at (2, 0) {};
                \draw (A) -- (B) -- (C) -- (D) -- (E) -- (F) -- (G) -- (A) {};
                \draw (B) -- (X) -- (Y) -- (Z) -- (F) {};
                \draw (C) -- (X) -- (A) -- (Y) -- (G) -- (Z) -- (E) -- (Y) -- (C);
                \draw (D) -- (Y);
                \begin{scope}[on background layer]
                    \draw[blue] (G.center) to[out=135, in=90, distance=80] (B.center) {};
                    \draw[blue] (D.center) to[out=-135, in=-90, distance=90] (B.center) {};
                \end{scope}
            \end{scope}
            \begin{scope}[shift={(0, 8)}]
                \node (A) at (-1, 2) [red,rectangle] {};
                \node (B) at (-4, 0) {};
                \node (C) at (-2, -2) [orange] {};
                \node (D) at (0, -2) {};
                \node (E) at (2, -2) [red,rectangle] {};
                \node (F) at (4, 0) {};
                \node (G) at (1, 2) {};
                \node (X) at (-2, 0) {};
                \node (Y) at (0, 0) {};
                \node (Z) at (2, 0) {};
                \draw (A) -- (B) -- (C) -- (D) -- (E) -- (F) -- (G) -- (A) {};
                \draw (B) -- (X) -- (Y) -- (Z) -- (F) {};
                \draw (C) -- (X) -- (A) -- (Y) -- (G) -- (Z) -- (E) -- (Y) -- (C);
                \draw (D) -- (Y);
                \begin{scope}[on background layer]
                    \draw[blue] (A.center) to[out=180, in=-180, distance=120] (C.center) {};
                    \draw[blue] (E.center) to[out=-135, in=-45, distance=60] (C.center) {};
                \end{scope}
            \end{scope}
            \begin{scope}[shift={(12, 8)}]
                \node (A) at (-1, 2) {};
                \node (B) at (-4, 0) {};
                \node (C) at (-2, -2) [red,rectangle] {};
                \node (D) at (0, -2) {};
                \node (E) at (2, -2) [orange] {};
                \node (F) at (4, 0) {};
                \node (G) at (1, 2) [red,rectangle] {};
                \node (X) at (-2, 0) {};
                \node (Y) at (0, 0) {};
                \node (Z) at (2, 0) {};
                \draw (A) -- (B) -- (C) -- (D) -- (E) -- (F) -- (G) -- (A) {};
                \draw (B) -- (X) -- (Y) -- (Z) -- (F) {};
                \draw (C) -- (X) -- (A) -- (Y) -- (G) -- (Z) -- (E) -- (Y) -- (C);
                \draw (D) -- (Y);
                \begin{scope}[on background layer]
                    \draw[blue] (G.center) to[out=0, in=0, distance=120] (E.center) {};
                \end{scope}
            \end{scope}
        \end{tikzpicture}
    \end{center}

    \caption{The special 4343434-heptagon with four sets of size $2$ (red) that each dominate all vertices except one (orange).
    In each case, adding any one of the blue edges makes the red set dominating.}
    \label{fig:deledgeheptagon}
\end{figure}
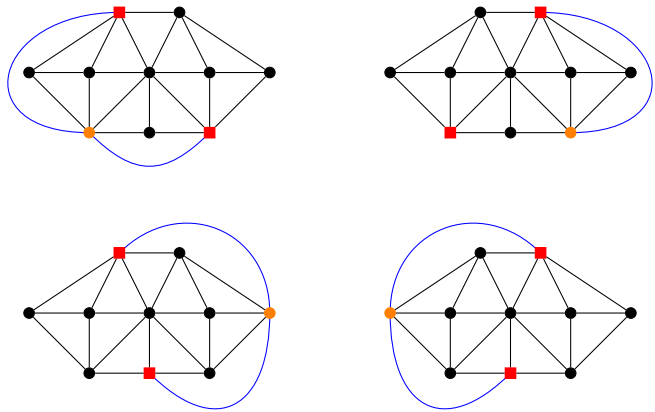

\subsubsection{No problems}\label{sect:noprob}
Suppose deleting the (boundary) edge $\{v, w\}$ does not create any low-degree problems.
\begin{case}
\item[Construction] Delete the edge $\{v, w\}$.
\item[Domination] $H \subseteq G$ is a spanning subgraph. $\Delta s \le 0$.
\item[Penalty] By assumption, no low-degree problems are created. $-\Delta \Phi \le 0$.
    By the previous case, $H$ is not a sporadic example.
\item[Smaller] $H$ has fewer interior vertices. 
\end{case}

\subsubsection{Interior degree-3 vertex.} \label{sect:surfacekfour}
Suppose there is an interior degree-3 vertex $p$ such that $v, p, w$ is a facial triangle.
By Case~\ref{sect:noprob}, deleting $\{v, w\}$ creates at least one low-degree problem.

If the problem involves $p$, then $v, p$ (or $p, w$) is a 3-pair into some bad 5-wheel, centered around some vertex $q$.
Then, $\{u, w\}$ is an edge, hence the polygon of $G$ is a triangle.
See Figure~\ref{fig:boundarydegthree}.
If $\deg(u) = 3$, then $G$ is a 5-wheel with one extra edge,
with $\Phi = 5$ and $s = 1$. Otherwise, deleting the boundary edge $\{u, v\}$
does not create any low-degree problems and Case~\ref{sect:noprob} applies.

Otherwise, if the problem is an ear, say at $v$, then $\deg_G(v) = 3$ so $G = K_4$.
If the problem is a 3-pair, say $u, v$, in some bad 5-wheel. Then $G$ is the 5-wheel with one extra edge.

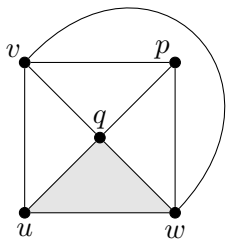
\begin{figure}
    \begin{center} \begin{tikzpicture}
        \begin{scope}[rotate=-90]
            \node (A) at (0, 0) [label=above left:$v$] {};
            \node (B) at (2, 0) [label=below:$u$] {};
            \node (C) at (2, 2) [label=below:$w$] {};
            \node (D) at (0, 2) [label=above left:$p$] {};
            \node (O) at (1, 1) [label=above:$q$] {};
            \draw (A) -- (B) -- (C) -- (D) -- (A);
            \draw (A) -- (O) -- (C) (B) -- (O) -- (D);
            \draw (A) to[out=135, in=135, distance=70] (C);
            \begin{scope}[on background layer]
                \path [fill=gray, opacity=0.2] (C.center) to (B.center) to (O.center) to cycle;
            \end{scope}
        \end{scope}
    \end{tikzpicture} \end{center}
    \caption{Boundary edge with interior degree-3 vertex. Any additional vertices lie in the shaded area.}
    \label{fig:boundarydegthree}
\end{figure}

\paragraph{Observation} A direct consequence this case is the following: If $p$ is an interior degree-3 vertex adjacent
to some boundary vertex $v$, then $p$ is not adjacent to $u$ (nor $w$). Moreover, as $G$ is 3-connected, $v$ is the only boundary vertex adjacent to $p$.
In addition, this implies $\deg(v) \ge 5$.

\subsubsection{Conclusion} \label{sect:alldegpatterns}
Now, deleting any boundary edge $\{v, w\}$ creates at least one low-degree problem,
which involves only boundary vertices in $G$.
Let $H$ be resulting graph.
As $G$ is 3-connected, $H$ is 2-connected, so any newly created low-degree problem is an ear tip or a bad 5-wheel,
and in both cases, a 2-cut is created.
More precisely, at least one of the following is true.
\begin{itemize}
    \item $\deg_G(v) = 3$ and $v \in H$ is an ear tip.
    \item $\deg_G(w) = 3$ and $w \in H$ is an ear tip.
    \item $\deg_G(u) = 3$, $\deg_G(v) = 4$ and $u, v \in H$ is a 3-pair in a bad 5-wheel.
    \item $\deg_G(w) = 4$, $\deg_G(x) = 3$ and $w, x \in H$ is a 3-pair in a bad 5-wheel.
\end{itemize}

Therefore, in $G$, there is at least one boundary vertex of degree 3, and, in between two boundary vertices of degree $3$,
the degrees of boundary vertices form one of the following patterns:
\begin{itemize}
    \item $3 4 5^{+} 4 3$, i.e. $\deg(u) = 3$, $\deg(v) = 4$, $\deg(w) \ge 5$, $\deg(x) = 4$ and $\deg(y) = 3$.
    \item $3 4 5^{+} 3$ or $3 5^{+} 4 3$.
    \item $3 5^{+} 3$.
    \item $3 4 4 4 3$, $3 4 4 3$, or $3 4 3$.
    \item $3 3$.
\end{itemize}

\subsection{Triangular boundary}
Suppose the polygon is a triangle, i.e. $x = u$.
If at least two of $u, v, w$ have degree 3, then $G = K_4$ with $\Phi = 4$ and $s = 1$.
Otherwise, WLOG let $\deg(u) = 3$ and $\deg(v), \deg(w) \ge 4$.
By Case~\ref{sect:cutable}, deleting the edge $\{v, w\}$ creates a 3-pair $u, v$ (or $w, u$) in a bad 5-wheel,
hence $\deg_G(v) = 4$. See Figure~\ref{fig:triangboundary}.
\begin{case}
\item[Construction] Delete $v$.
\item[Domination] A dominating set in $H$ contains a vertex that dominates $u$. As $N[u] \subseteq N[v]$,
    that vertex also dominates $v$. $\Delta s \le 0$.
\item[Penalty] Deleting $v$ decreases $\Phi$ by 1. As $\deg(v) = 4$, this creates at most two new low-degree problems, increasing $\Phi$ by $\le 1$. $-\Delta \Phi \le 0$.
\end{case}
\begin{figure}
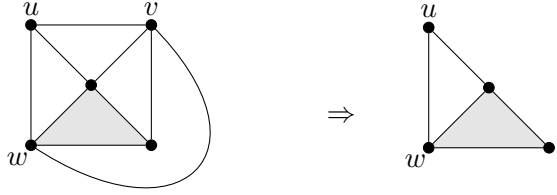

    \beforeafter[0.8]{
        \node (A) at (0, 0) [label=below left:$w$] {};
        \node (B) at (2, 0) {};
        \node (C) at (2, 2) [label=above:$v$] {};
        \node (D) at (0, 2) [label=above:$u$] {};
        \node (E) at (1, 1) {};
        \node (Z) at (0, -1.5) [transparent] {};
        \draw (A) -- (B) -- (C) -- (D) -- (A);
        \draw (A) -- (E) -- (C);
        \draw (B) -- (E) -- (D);
        \draw (A.center) to[out=-34, in=-45, distance=3cm] (C.center);
        \begin{scope}[on background layer]
            \path [fill=gray, opacity=0.2] (A.center) to (B.center) to (E.center) to cycle;
        \end{scope}
    }{
        \node (A) at (0, 0) [label=below left:$w$] {};
        \node (B) at (2, 0) {};
        \node (D) at (0, 2) [label=above:$u$] {};
        \node (E) at (1, 1) {};
        \node (Z) at (0, -1.5) [transparent] {};
        \draw (A) -- (B)  (D) -- (A);
        \draw (A) -- (E) ;
        \draw (B) -- (E) -- (D);
        \begin{scope}[on background layer]
            \path [fill=gray, opacity=0.2] (A.center) to (B.center) to (E.center) to cycle;
        \end{scope}
    }
    \caption{Triangular boundary. All additional vertices lie in the shaded area.}
    \label{fig:triangboundary}
\end{figure}

\paragraph{Conclusion} The polygon is not a triangle, hence there are at least four boundary vertices.

\subsection{Boundary vertex with interior octahedron} \label{sect:boundaryocta}
Suppose deleting $w$ creates a 3-pair $p, q$ in a bad 5-wheel with $p, q \notin \{v, x\}$.
Then, $p, q$ are interior vertices and $N[p, q]$ is the octahedron. Let $t \in N[p, q]$ be the vertex antipodal to $w$.
See Figure~\ref{fig:threeproblems}. If $v \in N[p, q]$ (or $x \in N[p, q]$), then deleting $\{v, w\}$ (or $\{w, x\}$) does not create a low-degree problem,
contradiction. In particular, $\deg(w) \ge 5$ and $\deg(v), \deg(x) \le 4$.

\begin{case}
\item[Construction] Delete $v, w, x, p, q$. Fuse a small LR to every remaining neighbor of $w$. Suppose we fuse $k$ LRs this way.
\item[Domination] If $S$ is a minimum dominating set in $H$ and $L \subseteq S$ are the $k$ vertices in the fused LRs, then $S \cup \{w\} \setminus L$
    is a dominating set in $G$. $\Delta s \le 1-k$.
\item[Penalty] The deletions decrease $\Phi$ by $5$. Fusing the LRs increases $\Phi$ by $3.5 k$.
    The deletions may create up to three low-degree problems, involving $u, y$ and $t$ respectively\footnote{One can show that it is actually $u$ and $y$, and not not their formerly-interior neighbor, but it is only important that low-degree problems are never adjacent.}
    This increases $\Phi$ by $\le 1.5$.
    Overall, $-\Delta \Phi \le -5 + 3.5 k + 1.5 = -3.5 (1-k)$.
\item[No Leaves] The only vertices than could end up as leaves are $u$ and $y$. If $u \ne y$,
    then $u$ and $y$ each loose only one neighbor and hence cannot end up as leaves. Therefore, $u = y$ and $\deg(u) = 3$.
    If $\deg(v) = 4$ and $\deg(x) = 3$, then deleting $\{v, w\}$ does not create a bad 5-wheel, contradicting Case~\ref{sect:alldegpatterns}.
    Finally, if $\deg(v) = \deg(x) = 4$, then deleting $\{v, w\}$ or $\{w, x\}$ cannot both create a bad 5-wheel,
    as that would force $\deg(w) = 3$.
\end{case}

\begin{figure}
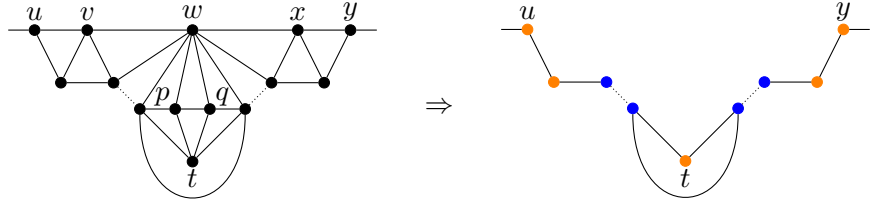

    \beforeafter[0.7]{
        \node (L) at (-3, 0) [label=above:$u$] {};
        \node (R) at (3, 0) [label=above:$y$] {};
        \node (A) at (0, 0) [label=above:$w$] {};
        \node (B) at (2, 0) [label=above:$x$] {};
        \node (C) at (1.5, -1) {};
        \node (D) at (2.5, -1) {};
        \node (U) at (-2, 0) [label=above:$v$] {};
        \node (V) at (-1.5, -1) {};
        \node (W) at (-2.5, -1) {};
        \node (p) at (-1, -1.5) {};
        \node (q) at (-0.33, -1.5) [label=above left:$p$] {};
        \node (r) at (0.33, -1.5) [label=above right:$q$] {};
        \node (s) at (1, -1.5) {};
        \node (I) at (0, -2.5) [label=below:$t$] {};
        \draw (A) -- (p) -- (I) -- (q) -- (A) -- (r) -- (I) -- (s) -- (A);
        \draw (p) -- (q) -- (r) -- (s);
        \draw (A) -- (B);
        \draw (A) -- (C) -- (D) -- (R);
        \draw (C) -- (B) -- (D);
        \draw (U) -- (V) -- (W) -- (L);
        \draw (V) -- (A) -- (U) -- (W);
        \draw (L) --+ (-0.5, 0);
        \draw (R) --+ (0.5, 0);
        \draw (L) -- (U) (B) -- (R);
        \draw [densely dotted] (s) -- (C) (V) -- (p);
        \draw (p) to [in=-90, out=-90, distance=60] (s);
    }{
        \node (L) at (-3, 0) [orange, label=above:$u$] {};
        \node (R) at (3, 0) [orange, label=above:$y$] {};
        \node (A) at (0, 0) [transparent] {};
        \node (B) at (2, 0) [transparent] {};
        \node (C) at (1.5, -1) [blue] {};
        \node (D) at (2.5, -1) [orange] {};
        \node (U) at (-2, 0) [transparent] {};
        \node (V) at (-1.5, -1) [blue] {};
        \node (W) at (-2.5, -1) [orange] {};
        \node (p) at (-1, -1.5) [blue] {};
        \node (q) at (-0.33, -1.5) [transparent] {};
        \node (r) at (0.33, -1.5) [transparent] {};
        \node (s) at (1, -1.5) [blue] {};
        \node (I) at (0, -2.5) [orange, label=below:$t$] {};
        \draw (p) -- (I) (I) -- (s);
        \draw (C) -- (D) -- (R);
        \draw (V) -- (W) -- (L);
        \draw (L) --+ (-0.5, 0);
        \draw (R) --+ (0.5, 0);
        \draw [densely dotted] (s) -- (C) (V) -- (p);
        \draw (p) to [in=-90, out=-90, distance=60] (s);
    }
    \caption{Boundary vertex with interior octahedron.}
    \label{fig:threeproblems}
\end{figure}

\subsubsection{Conclusion}
Now, deleting a boundary vertex never creates a 3-pair of former interior vertices in a bad 5-wheel.

\subsection{Boundary vertex with deletable \boldmath$K_4$} \label{sect:boundarykfour}
Suppose deleting $w$ creates an ear tip $q$ with $q \notin \{v, x\}$.
Then $\deg(w) \ge 5$, hence $\deg(v), \deg(x) \le 4$.
Suppose moreover that $\deg(v) = 3$ and that deleting $v$ does not
create a low-degree problem involving $u$. See Figure~\ref{fig:boundarykfour}.

\begin{case}
\item[Construction] Delete $v, w, x, q$. Fuse a small LR to every remaining neighbor of $w$. Suppose we fuse $k$ LRs this way.
\item[Domination] If $S$ is a minimum dominating set in $H$ and $L \subseteq S$ are the $k$ vertices in the fused LRs, then $S \cup \{w\} \setminus L$ is a dominating set in $G$.
    $\Delta s = 1-k$.
\item[Penalty] The deletions decrease $\Phi$ by $4$. Fusing the LRs increases $\Phi$ by $3.5 k$. The deletions may create a single low-degree problem, involving $y$.
    This increases $\Phi$ by $0.5$. Overall, $-\Delta \Phi \le -4 + 3.5k + 0.5 = -3.5(1-k)$.
\end{case}
\begin{figure}
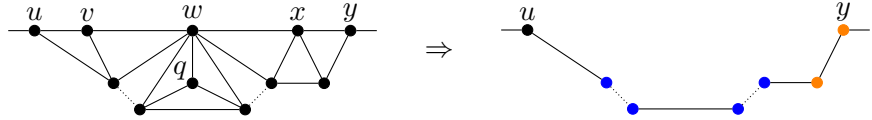

    \beforeafter[0.7]{
        \node (L) at (-3, 0) [label=above:$u$] {};
        \node (R) at (3, 0) [label=above:$y$] {};
        \node (A) at (0, 0) [label=above:$w$] {};
        \node (B) at (2, 0) [label=above:$x$] {};
        \node (C) at (1.5, -1) {};
        \node (D) at (2.5, -1) {};
        \node (U) at (-2, 0) [label=above:$v$] {};
        \node (V) at (-1.5, -1) {};
        \node (W) at (-2.5, -1) [transparent] {};
        \node (p) at (-1, -1.5) {};
        \node (q) at (0, -1) [label=above left:$q$] {};
        \node (s) at (1, -1.5) {};
        \draw (A) -- (p)  (q) -- (A)   (s) -- (A);
        \draw (p) -- (q) -- (s);
        \draw (A) -- (B);
        \draw (A) -- (C) -- (D) -- (R);
        \draw (C) -- (B) -- (D);
        \draw (U) -- (V) -- (L);
        \draw (V) -- (A) -- (U);
        \draw (L) --+ (-0.5, 0);
        \draw (R) --+ (0.5, 0);
        \draw (L) -- (U) (B) -- (R);
        \draw [densely dotted] (s) -- (C) (V) -- (p);
        \draw (p) -- (s);
    }{
        \node (L) at (-3, 0) [label=above:$u$] {};
        \node (R) at (3, 0) [label=above:$y$, orange] {};
        \node (A) at (0, 0) [transparent] {};
        \node (B) at (2, 0) [transparent] {};
        \node (C) at (1.5, -1) [blue] {};
        \node (D) at (2.5, -1) [orange] {};
        \node (U) at (-2, 0) [transparent] {};
        \node (V) at (-1.5, -1) [blue] {};
        \node (W) at (-2.5, -1) [transparent] {};
        \node (p) at (-1, -1.5) [blue] {};
        \node (q) at (0, -1) [transparent] {};
        \node (s) at (1, -1.5) [blue] {};
        \draw (C) -- (D) -- (R);
        \draw (V) -- (L);
        \draw (L) --+ (-0.5, 0);
        \draw (R) --+ (0.5, 0);
        \draw [densely dotted] (s) -- (C) (V) -- (p);
        \draw (p) -- (s);
    }
    \caption{Boundary vertex $w$ with interior $K_4$.}
    \label{fig:boundarykfour}
\end{figure}

\subsubsection{Conclusion}
If $w$ is a boundary vertex with an interior degree-3 neighbor, then
deleting either $v$ (or $x$) creates a low-degree problem involving $u$ (or $y$).
In particular, $\deg_G(u), \deg_G(y) \le 4$, hence $u$ and $y$ have no interior degree-3 neighbor.

\subsection{Interior vertex adjacent to two non-consecutive degree-3 boundary vertices} \label{sect:doublethree}
Suppose $\deg(v) \ge 4$, $\deg(w) = 3$ and $\deg(x) \ge 4$.
Let $p$ be the interior vertex adjacent to $v, w, x$.
Suppose $p$ is adjacent to another boundary vertex $s \ne w$ with $\deg(s) = 3$.
Let $r, t$ be the boundary neighbors of $s$. See Figure~\ref{fig:twosidethrees}.
If both $v$ and $x$ have an interior degree-3 neighbor, then Case~\ref{sect:boundarykfour} applies,
as deleting $w$ does not create any low-degree problems.
Hence, WLOG assume that $x$ does not have an interior degree-3 neighbor.
\begin{case}
\item[Construction] Delete $x$. Delete $w$ and $s$. Force $p$ by attaching a B to $v, p$. Fuse a small LRs to $r$ and $t$.
\item[Domination] If $S \subseteq H$ is a minimum neat dominating set and $L \subseteq S$ are the two vertices in the LRs,
    then $S \setminus L$ contains $p$, which dominates $H$. $\Delta s \le -2$.
\item[Penalty] Deleting three vertices decreases $\Phi$ by $3$. Attaching the B increases $\Phi$ by 2.5. Fusing two LRs
    increases $\Phi$ by $7$. The deletions may create a low-degree problem involving $y$ but no further ones, increasing $\Phi$ by $\le 0.5$.
    Overall, $-\Delta \Phi \le -3+2.5+7+0.5 = 7$.
\end{case}
\begin{figure}
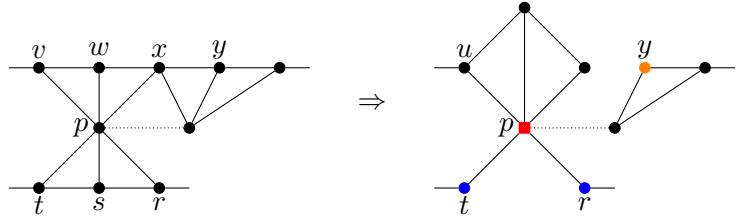

    \beforeafter[0.8]{
        \node (A) at (-1, 0) [label=above:$v$] {};
        \node (B) at (0, 0) [label=above:$w$] {};
        \node (AA) at (-1, 1) [transparent] {};
        \node (C) at (1, 0) [label=above:$x$] {};
        \node (D) at (2, 0) [label=above:$y$] {};
        \node (R) at (3, 0) {};
        \node (I) at (0, -1) [label=left:$p$] {};
        \node (X) at (1.5, -1) {};
        \node (x) at (-1, -2) [label=below:$t$] {};
        \node (y) at (0, -2) [label=below:$s$] {};
        \node (z) at (1, -2) [label=below:$r$] {};
        \draw (A) -- (B) -- (C) -- (D) -- (R);
        \draw (I) -- (A) (B) -- (I) -- (C);
        \draw (C) -- (X);
        \draw (X) -- (R);
        \draw (X) -- (D);
        \draw [densely dotted] (I) -- (X);
        \draw (x) -- (I) (y) -- (I) (z) -- (I);
        \draw (x) -- (y) -- (z);
        \draw (x) --+(-0.5, 0) (z) --+ (0.5, 0);
        \draw (A) --+ (-0.5, 0);
        \draw (R) --+ (0.5, 0);
    }{
        \node (L) at (-1, 0) [label=above:$u$] {};
        \node (A) at (0, 1) {};
        \node (B) at (1, 0) {};
        \node (D) at (2, 0) [orange, label=above:$y$] {};
        \node (R) at (3, 0) {};
        \node (I) at (-0, -1) [red,rectangle, label=left:$p$] {};
        \node (X) at (1.5, -1) {};
        \node (x) at (-1, -2) [blue, label=below:$t$] {};
        \node (y) at (0, -2) [transparent] {};
        \node (z) at (1, -2) [blue, label=below:$r$] {};
        \draw (L) -- (A) -- (B)  (D) -- (R);
        \draw (L) -- (I) -- (A) (B) -- (I);
        \draw (X) -- (R);
        \draw (X) -- (D);
        \draw [densely dotted] (I) -- (X);
        \draw (x) -- (I) (z) -- (I);
        \draw (x) --+(-0.5, 0) (z) --+ (0.5, 0);
        \draw (L) --+ (-0.5, 0);
        \draw (R) --+ (0.5, 0);
    }
    \caption{Degree-3 boundary vertices on two sides.}
    \label{fig:twosidethrees}
\end{figure}

\subsubsection{Conclusion}
Now any two non-consecutive degree-3 boundary vertices are adjacent to distinct interior vertices.

\subsection{Consecutive degree-3 boundary vertices}
Suppose $\deg(u) \ge 4$, $\deg(v) = \deg(w) = 3$ and $\deg(x) \ge 4$.
Let $p$ be the interior vertex adjacent to $u, v, w, x$.

\subsubsection{No interior degree-3 neighbor}

Suppose that $x$ (or $u$) has no interior degree-3 neighbor. See Figure~\ref{fig:consecthrees}.
\begin{case}
\item[Construction] Delete $x$. This turns $v, w$ into a B with red vertex $p$.
\item[Domination] A minimum neat dominating set in $H$ contains $p$, which dominates $x$. $\Delta s \le 0$.
\item[Penalty] Deleting one vertex decreases $\Phi$ by $1$.
    Deleting $x$ creates at most two low-degree problems. This increases $\Phi$ by $\le 1$.
    Overall, $-\Delta \Phi \le -1+1 = 0$.
\end{case}
\begin{figure}
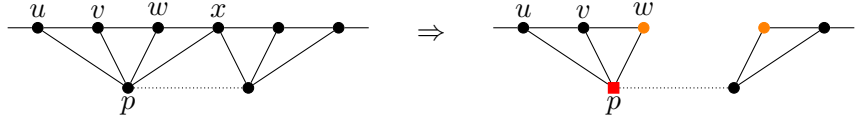

    \beforeafter[0.8]{
        \node (L) at (-2, 0) [label=above:$u$] {};
        \node (A) at (-1, 0) [label=above:$v$] {};
        \node (B) at (0, 0) [label=above:$w$] {};
        \node (AA) at (-1, 1) [transparent] {};
        \node (C) at (1, 0) [label=above:$x$] {};
        \node (D) at (2, 0) {};
        \node (R) at (3, 0) {};
        \node (I) at (-0.5, -1) [label=below:$p$] {};
        \node (X) at (1.5, -1) {};
        \draw (L) -- (A) -- (B) -- (C) -- (D) -- (R);
        \draw (L) -- (I) -- (A) (B) -- (I) -- (C);
        \draw (C) -- (X);
        \draw (X) -- (R);
        \draw (X) -- (D);
        \draw [densely dotted] (I) -- (X);
        \draw (L) --+ (-0.5, 0);
        \draw (R) --+ (0.5, 0);
    }{
        \node (L) at (-2, 0) [label=above:$u$] {};
        \node (A) at (-1, 0) [label=above:$v$] {};
        \node (B) at (0, 0) [label=above:$w$, orange] {};
        \node (AA) at (-1, 1) [transparent] {};
        \node (D) at (2, 0) [orange] {};
        \node (R) at (3, 0) {};
        \node (I) at (-0.5, -1) [label=below:$p$, red,rectangle] {};
        \node (X) at (1.5, -1) {};
        \draw (L) -- (A) -- (B)  (D) -- (R);
        \draw (L) -- (I) -- (A) (B) -- (I);
        \draw (X) -- (R);
        \draw (X) -- (D);
        \draw [densely dotted] (I) -- (X);
        \draw (L) --+ (-0.5, 0);
        \draw (R) --+ (0.5, 0);
    }
    \caption{Consecutive degree-3 boundary vertices with no interior $K_4$ at $x$.}
    \label{fig:consecthrees}
\end{figure}

\subsubsection{Both sides have an interior $K_4$.}
Suppose that both $u$ and $x$ have an interior degree-3 neighbor,
then $\deg(u), \deg(x) \ge 5$, and $t \ne y$ due to Case~\ref{sect:boundarykfour}.
See Figure~\ref{fig:doublekfour}.
\begin{case}
\item[Construction] For each of $u, x$, delete one interior degree-3 neighbor. Delete $t, v, w, y$.
    Force $u$ and $x$ by attaching a B with red vertex $u$ and an A with red vertex $x$. Fuse a small LR to $p$.
\item[Domination] If $S$ is a minimum neat dominating set in $H$ and $L$ are the vertices in the small LR, then $S \setminus L$
    is a dominating set in $G$, as it contains both $u$ and $x$. $\Delta s \le -1$.
\item[Penalty] Deleting six vertices decreases $\Phi$ by $6$. Attaching the A and B increases $\Phi$ by $5$.
    Fusing a small LR increases $\Phi$ by $3.5$. If $s \ne y$, then the deletion may
    create two low degree problems, involving $s$ and $z$ respectively, increasing $\Phi$ by $1$. If $s = z$,
    then Case~\ref{sect:boundarykfour} forces $\deg(y) = \deg(t) = 3$ and the deletions create at most one low-degree problem.
    No other low-degree problems
    are created, not even degree-2 cut vertices. Overall, $-\Delta \Phi \le -6 + 5 + 3.5 + 1  = 3.5$.
\item[No leaves] Me might create a leaf, but only if $s = z$, $\deg(s) = 3$ and $\deg(y) = \deg(t) = 4$. We treat this special case up next.
\end{case}

\begin{figure}
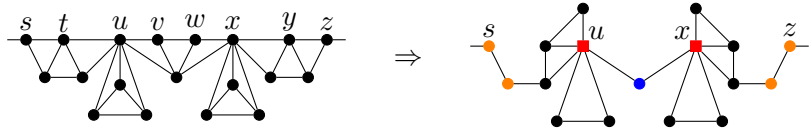

    \beforeafter[0.5]{
        \node (LL) at (-4, 0) [label=above:$s$] {};
        \node (L) at (-3, 0) [label=above:$t$] {};
        \node (A) at (-1.5, 0) [label=above:$u$] {};
        \node (B) at (-0.5, 0) [label=above:$v$] {};
        \node (D) at (0.5, 0) [label=above:$w$] {};
        \node (E) at (1.5, 0) [label=above:$x$] {};
        \node (R) at (3, 0) [label=above:$y$] {};
        \node (RR) at (4, 0) [label=above:$z$] {};
        \node (y) at (0, -1) {};
        \node (p) at (-2.2, -2) {};
        \node (q) at (-0.8, -2) {};
        \node (r) at (-1.5, -1.2) {};
        \node (s) at (0.8, -2) {};
        \node (t) at (2.2, -2) {};
        \node (u) at (1.5, -1.2) {};
        \node (a) at (-3.5, -1) {};
        \node (b) at (-2.5, -1) {};
        \node (d) at (2.5, -1) {};
        \node (e) at (3.5, -1) {};
        \draw (LL) -- (L) -- (A) -- (B) --  (D) -- (E) -- (R) -- (RR);
        \draw (B) -- (y) -- (D);
        \draw (A) -- (p) -- (q) -- (A);
        \draw (r) -- (p) (r) -- (q) (r) -- (A);
        \draw (E) -- (s) -- (t) -- (E);
        \draw (u) -- (E) (u) -- (s) (u) -- (t);
        \draw (A) -- (y) -- (E);
        \draw (LL) -- (a) -- (b) -- (A) (a) -- (L) -- (b);
        \draw (E) -- (d) -- (e) -- (RR) (d) -- (R) -- (e);
        \draw (LL) --+ (-0.5, 0) (RR) --+ (0.5, 0);
    }{
        \node (LL) at (-4, 0) [label=above:$s$, orange] {};
        \node (L) at (-3, 0) [transparent] {};
        \node (A) at (-1.5, 0) [label=above right:$u$, red,rectangle] {};
        \node (B) at (-0.5, 0) [transparent] {};
        \node (D) at (0.5, 0) [transparent] {};
        \node (E) at (1.5, 0) [label=above left:$x$, red,rectangle] {};
        \node (R) at (3, 0) [transparent] {};
        \node (RR) at (4, 0) [label=above:$z$, orange] {};
        \node (y) at (0, -1) [blue] {};
        \node (p) at (-2.2, -2) {};
        \node (q) at (-0.8, -2) {};
        \node (r) at (-1.5, -1.2) [transparent] {};
        \node (s) at (0.8, -2) {};
        \node (t) at (2.2, -2) {};
        \node (u) at (1.5, -1.2) [transparent] {};
        \node (a) at (-3.5, -1) [orange] {};
        \node (b) at (-2.5, -1) [] {};
        \node (d) at (2.5, -1) [] {};
        \node (e) at (3.5, -1) [orange] {};
        \node (l) at (-2.5, 0) {};
        \node (m) at (-1.5, 1) {};
        \node (n) at (1.5, 1) {};
        \node (o) at (2.5, 0) {};
        \draw (o) -- (E) -- (n) -- (o) -- (d);
        \draw (l) -- (A) -- (m) -- (l) -- (b);
        \draw (A) -- (p) -- (q) -- (A);
        \draw (E) -- (s) -- (t) -- (E);
        \draw (A) -- (y) -- (E);
        \draw (LL) -- (a) -- (b) -- (A);
        \draw (E) -- (d) -- (e) -- (RR);
        \draw (LL) --+ (-0.5, 0) (RR) --+ (0.5, 0);
    }
    \caption{Consecutive degree-3 boundary vertices with interior $K_4$s on both sides.}
    \label{fig:doublekfour}
\end{figure}

\subsubsection{The remaining special case.}
Suppose that both $u$ and $x$ have an interior degree-3 neighbor,
then $\deg(u), \deg(x) \ge 5$.
Suppose that there are exactly 7 polygon vertices, i.e. $s = z$,
and that $\deg(z) = 3$. Then $\deg(t) = \deg(y) = 4$.
Let $q$ be the interior vertex adjacent to $z$, then $N[q]$ is a 5-wheel. See Figure~\ref{fig:doublekfourspecial}.
\begin{case}
\item[Construction] Delete $y, z, t, q$.
\item[Domination] If $S$ is a minimum dominating set in $H$, then $S \cup {q}$ dominates $G$. $\Delta s = 1$.
\item[Penalty] Deleting four vertices decreases $\Phi$ by $4$. No low-degree problems are created. $-\Delta \Phi = -4$.
\end{case}
\begin{figure}
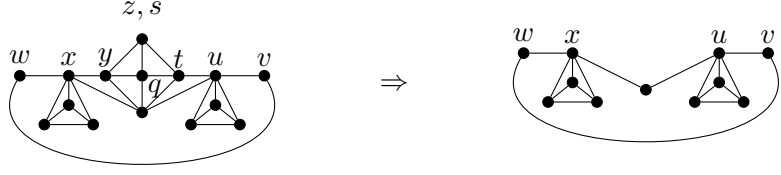

    \beforeafter[0.65]{
        \node (L) at (-2.5, 0) [label=above:$w$] {};
        \node (A) at (-1.5, 0) [label=above:$x$] {};
        \node (B) at (-0.75, 0) [label=above:$y$] {};
        \node (C) at (0, 0.75) [label={above:$z, s$}] {};
        \node (D) at (0.75, 0) [label=above:$t$] {};
        \node (E) at (1.5, 0) [label=above:$u$] {};
        \node (R) at (2.5, 0) [label=above:$v$] {};
        \node (x) at (0, 0) [label=below right:$q$] {};
        \node (y) at (0, -0.75) {};
        \node (p) at (-2, -1) {};
        \node (q) at (-1, -1) {};
        \node (r) at (-1.5, -0.6) {};
        \node (s) at (1, -1) {};
        \node (t) at (2, -1) {};
        \node (u) at (1.5, -0.6) {};
        \draw (L) -- (A) -- (B) -- (C) -- (D) -- (E) -- (R);
        \draw (B) -- (x) -- (D) (C) -- (x) -- (y);
        \draw (B) -- (y) -- (D);
        \draw (A) -- (p) -- (q) -- (A);
        \draw (r) -- (p) (r) -- (q) (r) -- (A);
        \draw (E) -- (s) -- (t) -- (E);
        \draw (u) -- (E) (u) -- (s) (u) -- (t);
        \draw (A) -- (y) -- (E);
        \draw (L) to[out=-120, in=-60, distance=75] (R);
    }{
        \node (L) at (-2.5, 0) [label=above:$w$] {};
        \node (A) at (-1.5, 0) [label=above:$x$] {};
        \node (B) at (-0.75, 0) [transparent] {};
        \node (C) at (0, 0.75) [transparent] {};
        \node (D) at (0.75, 0) [transparent] {};
        \node (E) at (1.5, 0) [label=above:$u$] {};
        \node (R) at (2.5, 0) [label=above:$v$] {};
        \node (x) at (0, 0) [transparent] {};
        \node (y) at (0, -0.75) {};
        \node (p) at (-2, -1) {};
        \node (q) at (-1, -1) {};
        \node (r) at (-1.5, -0.6) {};
        \node (s) at (1, -1) {};
        \node (t) at (2, -1) {};
        \node (u) at (1.5, -0.6) {};
        \draw (L) -- (A) (E) -- (R);
        \draw (A) -- (p) -- (q) -- (A);
        \draw (r) -- (p) (r) -- (q) (r) -- (A);
        \draw (E) -- (s) -- (t) -- (E);
        \draw (u) -- (E) (u) -- (s) (u) -- (t);
        \draw (A) -- (y) -- (E);
        \draw (L) to[out=-120, in=-60, distance=75] (R);

    }

    \caption{Consecutive degree-3 boundary vertices, the remaining special case.}
    \label{fig:doublekfourspecial}
\end{figure}

\subsubsection{Conclusion}
Now every degree-3 boundary vertex is adjacent to a distinct interior vertex.

\subsection{Degree Patterns \boldmath$5^{+}4$ and \boldmath$444$}

\subsubsection{Degree Pattern \boldmath$5^{+} 4$}
Suppose $\deg(v) \ge 5$ and $\deg(w) = 4$, then due to previous cases, $\deg(x) = 3$ and $\deg(y) \ge 4$.
Let $p$ be the interior vertex adjacent to $w, x, y$. See Figure~\ref{fig:degfivefour}.
Deleting $x$ does not create a low-degree problem at $w$, hence $y$ has no interior degree-3 neighbor due to Case~\ref{sect:boundarykfour}.
\begin{case}
\item[Construction] Delete $y$. Delete $x$ and $w$. Force $p$ by attaching a B to $v, p$.
\item[Domination] Any neat dominating set $S \subseteq H$ contains $p$, which dominates $w, x, y$. $\Delta s \le 0$.
\item[Penalty] Deleting three vertices decreases $\Phi$ by $3$. Attaching a B increases $\Phi$ by 2.5.
    Deleting $y$ creates exactly two low-degree problems, namely at $x$
    and $z$.
    Deleting $x, w$ removes the former and does not create a new one, as $\deg(v) \ge 5$.
    Overall, there remains a single low-degree problem, increasing $\Phi$ by 0.5.
    In total, $-\Delta \Phi \le -3+2.5+0.5 = 0$.
\item[Smaller] $H$ has fewer interior vertices.
\end{case}
\begin{figure}
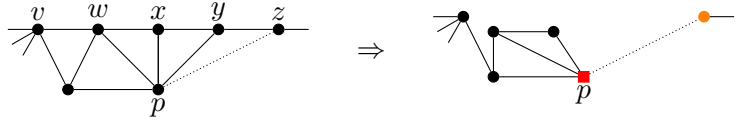

    \beforeafter[0.4]{
        \node (L) at (-2, 0) [label=above:$v$] {};
        \node (A) at (0, 0) [label=above:$w$] {};
        \node (B) at (2, 0) [label=above:$x$] {};
        \node (C) at (4, 0) [label=above:$y$] {};
        \node (R) at (6, 0) [label=above:$z$] {};
        \node (X) at (-1, -2) {};
        \node (Y) at (2, -2) [label=below:$p$] {};
        \draw (L) -- (A) -- (B) -- (C) -- (R);
        \draw (L) -- (X) -- (Y) -- (C);
        \draw (X) -- (A) -- (Y) -- (B);
        \draw (Y) -- (B);
        \draw [densely dotted] (Y) -- (R);
        \draw (L) --+ (-1, 0) (L) --+ (-150:1) (L) --+ (-120:1);
        \draw (R) --+ (1, 0);
    }{
        \node (L) at (-2, 0) {};
        \node (A) at (-1, -0.5) {};
        \node (B) at (1, -0.5) {};
        \node (C) at (4, 0) [transparent] {};
        \node (R) at (6, 0) [orange] {};
        \node (X) at (-1, -2) {};
        \node (Y) at (2, -2) [label=below:$p$, red,rectangle] {};
        \draw (L) -- (X) -- (Y) ;
        \draw [densely dotted] (Y) -- (R);
        \draw (X) -- (A) -- (B) -- (Y) -- (A);
        \draw (L) --+ (-1, 0) (L) --+ (-150:1) (L) --+ (-120:1);
        \draw (R) --+ (1, 0);
    }
    \caption{Degree Pattern $5^{+} 4$.}
    \label{fig:degfivefour}
\end{figure}

\subsubsection{Degree Pattern \boldmath$444$}
Suppose $\deg(u) = \deg(v) = \deg(w) = 4$, then $\deg(x) = 3$ and $\deg(y) \ge 4$.
Let $p$ be the interior vertex adjacent to $w, x, y$.
If $v \ne z$, then argue as in the previous case. It might happen that $\deg_H(v) = 3$,
but this cannot create a second low-degree problem:
Either $u = z$, or $\deg_H(u) = 4$ and $u, v$ is not a 3-pair. See Figure~\ref{fig:triplefour}.
If $v = z$, i.e. the polygon is a square, then the graph looks as in Figure~\ref{fig:squarefours}, but then, deleting the edge $\{u, v\}$
does not create a low degree problem, contradiction.

\begin{figure}
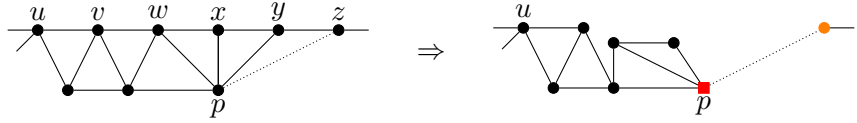

    \beforeafter[0.4]{
        \node (LL) at (-4, 0) [label=above:$u$] {};
        \node (L) at (-2, 0) [label=above:$v$] {};
        \node (I) at (-3, -2) {};
        \node (A) at (0, 0) [label=above:$w$] {};
        \node (B) at (2, 0) [label=above:$x$] {};
        \node (C) at (4, 0) [label=above:$y$] {};
        \node (R) at (6, 0) [label=above:$z$] {};
        \node (X) at (-1, -2) {};
        \node (Y) at (2, -2) [label=below:$p$] {};
        \draw (LL) -- (L) -- (A) -- (B) -- (C) -- (R);
        \draw (L) -- (X) -- (Y) -- (C);
        \draw (X) -- (A) -- (Y) -- (B);
        \draw (Y) -- (B);
        \draw [densely dotted] (Y) -- (R);
        \draw (LL) -- (I) -- (L) (I) -- (X);
        \draw (LL) --+ (-1, 0) (LL) --+ (-135:1);
        \draw (R) --+ (1, 0);
    }{
        \node (LL) at (-4, 0) [label=above:$u$] {};
        \node (L) at (-2, 0) {};
        \node (I) at (-3, -2) {};
        \node (A) at (-1, -0.5) {};
        \node (B) at (1, -0.5) {};
        \node (C) at (4, 0) [transparent] {};
        \node (R) at (6, 0) [orange] {};
        \node (X) at (-1, -2) {};
        \node (Y) at (2, -2) [label=below:$p$, red,rectangle] {};
        \draw (LL) -- (L) -- (X) -- (Y) ;
        \draw [densely dotted] (Y) -- (R);
        \draw (X) -- (A) -- (B) -- (Y) -- (A);
        \draw (LL) -- (I) -- (L) (I) -- (X);
        \draw (LL) --+ (-1, 0) (LL) --+ (-135:1);
        \draw (R) --+ (1, 0);
    }
    \caption{Degree Pattern $444$.}
    \label{fig:triplefour}
\end{figure}

\begin{figure}
    \begin{center} \begin{tikzpicture}[scale=0.6]
        \node (A) at (0, 0) [label=below left:$u$] {};
        \node (B) at (0, 3) [label=above left:$v$] {};
        \node (C) at (3, 3) [label=above right:$w$] {};
        \node (D) at (3, 0) [label=below right:$x$] {};
        \node (X) at (0.5, 1.5) {};
        \node (Y) at (1.5, 2.5) {};
        \node (Z) at (2, 1) {};
        \draw (A) -- (B) -- (C) -- (D) -- (A);
        \draw (A) -- (X) -- (Y) -- (C) -- (Z) -- (A);
        \draw (B) -- (X) -- (Z) -- (Y) -- (B);
        \draw (D) -- (Z);
        \path[fill=gray, opacity=0.4] (X.center) -- (Y.center) -- (Z.center) -- cycle;
    \end{tikzpicture} \end{center}

    \caption{Degrees $4443$ on a square. There might be additional vertices, but they all lie in the shaded triangle.}
    \label{fig:squarefours}
\end{figure}
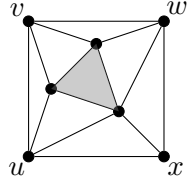

\subsubsection{Conclusion}

We can eliminate some degree patterns from Section~\ref{sect:alldegpatterns}.
The remaining possibilities are:
\begin{itemize}
    \item $3 5^{+} 3$.
    \item $3 4 4 3$.
    \item $3 4 3$.
\end{itemize}

\subsection{Degree \boldmath$3$ boundary vertex with interior problem} \label{sect:threeproblems}
Suppose $\deg(u) \ge 4$, $\deg(v) = 3$ and $\deg(w) \ge 4$.
Let $p$ be the interior vertex adjacent to $v$.
Suppose deleting $v$ and $p$ creates a low-degree problem not involving $u$ or $w$.
By Case~\ref{sect:doublethree}, it is not a degree-2 cut vertex.

\subsubsection{Interior octahedron} \label{sect:threeocta}
Suppose it is a 3-pair $r, s$ in a bad 5-wheel with central vertex $t$.
This 5-wheel together with $p$ forms an octahedron.
See Figure~\ref{fig:threeinteriorocta}.
\begin{case}
\item[Construction] Delete $v, p$. Delete $r, s$. Fuse a small LR to every remaining neighbor of $p$.
    Suppose $k$ small LRs are fused this way.
\item[Domination] If $S \subseteq H$ is a minimum dominating set and $L \subseteq S$ are the $k$ vertices in the small LRs,
    then $S \cup \{p\} \setminus L$ is a dominating set in $G$. $\Delta s \le 1-k$.
\item[Penalty] Deleting four vertices decreases $\Phi$ by 4. Fusing the small LRs increases $\Phi$ by $3.5 k$.
    The deletions may create an ear tip at $t$, but no other low degree problem: every other vertex that
    lost some neighbors got fused to a small LR. Overall, $-\Delta \Phi \le -4 + 3.5k + 0.5 = -3.5 (1-k)$.
\end{case}
\begin{figure}
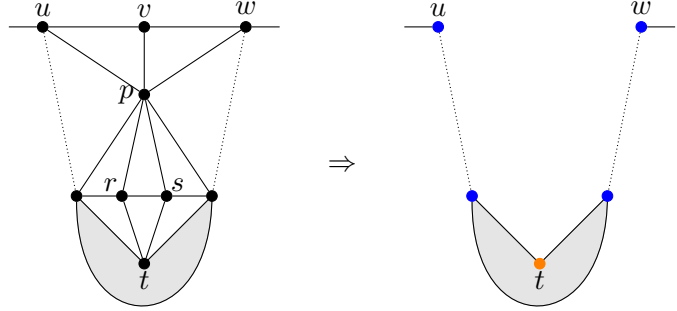

    \beforeafter[0.9]{
        \node (L) at (-1.5, 1) [label=above:$u$] {};
        \node (M) at (0, 1) [label=above:$v$] {};
        \node (R) at (1.5, 1) [label=above:$w$] {};
        \node (A) at (0, 0) [label=left:$p$] {};
        \node (p) at (-1, -1.5) {};
        \node (q) at (-0.33, -1.5) [label=above left:$r$] {};
        \node (r) at (0.33, -1.5) [label=above right:$s$] {};
        \node (s) at (1, -1.5) {};
        \node (I) at (0, -2.5) [label=below:$t$] {};
        \draw (A) -- (p)  (I) -- (q) -- (A) -- (r) -- (I)  (s) -- (A);
        \draw (p) -- (q) -- (r) -- (s);
        \draw (L) -- (M) -- (R);
        \draw (L) -- (A) (M) -- (A) (R) -- (A);
        \draw (L) --+ (-0.5, 0);
        \draw (R) --+ (0.5, 0);
        \draw [densely dotted] (L) -- (p) (R) -- (s);
        \begin{scope}[on background layer]
            \draw [transparent, fill=gray, opacity=0.2] (p) to[in=-90, out=-90, distance=60] (s.center) -- (I.center) -- (p.center);
            \draw (p) to[in=-90, out=-90, distance=60] (s.center) -- (I.center) -- (p.center);
        \end{scope}
    }{
        \node (L) at (-1.5, 1) [label=above:$u$, blue] {};
        \node (R) at (1.5, 1) [label=above:$w$, blue] {};
        \node (p) at (-1, -1.5) [blue] {};
        \node (s) at (1, -1.5) [blue] {};
        \node (I) at (0, -2.5) [label=below:$t$, orange] {};
        \draw (L) --+ (-0.5, 0);
        \draw (R) --+ (0.5, 0);
        \draw [densely dotted] (L) -- (p) (R) -- (s);
        \begin{scope}[on background layer]
            \draw [transparent, fill=gray, opacity=0.2] (p) to[in=-90, out=-90, distance=60] (s.center) -- (I.center) -- (p.center);
            \draw (p) to[in=-90, out=-90, distance=60] (s.center) -- (I.center) -- (p.center);
        \end{scope}
    }
    \caption{Degree $3$ vertex with interior Octahedron.  
    There may or may not be additional vertices in the shaded area.}
    \label{fig:threeinteriorocta}
\end{figure}

\subsubsection{Interior \boldmath$K_4$}
Suppose there is an interior degree-3 vertex $r$ adjacent to $p$.
By Case~\ref{sect:boundarykfour}, WLOG assume that $w$ has no interior degree-3 neighbor.
In particular, $r$ is not adjacent to $w$.
See Figures~\ref{fig:interiorkfourinterior} and~\ref{fig:interiorkfourproblems}.
\begin{case}
\item[Construction] Delete $w, v$. Force $p$ by attaching an A to $u, p$. Delete $r$.
    Fuse a small LR to every former neighbor of $r$ that ends up as a boundary vertex.
    Suppose $k$ LRs are fused this way.
\item[Domination] If $S$ is a minimum neat dominating set in $H$ and $L$ are the $k$ vertices in the LRs,
    then $S \setminus L$ contains $p$, which dominates $r, v, w$ and any former neighbor of $r$. $\Delta s = -k$.
\item[Penalty] Deleting three vertices decreases $\Phi$ by $3$. Attaching an A increases $\Phi$ by $2.5$.
    Fusing the LRs increases $\Phi$ by $3.5 k$.
    Deleting $w, v$ may create a low-degree problem involving $x$, but nowhere else.
    Deleting $r$ does not create low-degree problems due to the fused LRs.
    Overall, $-\Delta \Phi \le -3 + 2.5 + 3.5k + 0.5 = -3.5(-k)$.
\end{case}
\begin{figure}
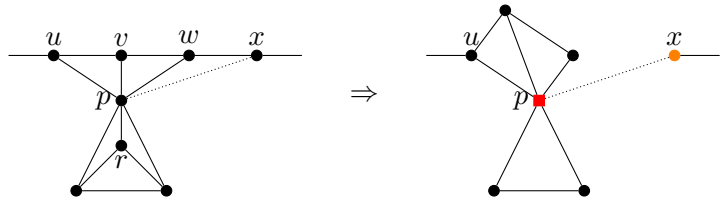

    \beforeafter[0.6]{
        \node (A) at (-1.5, 0) [label=above:$u$] {};
        \node (B) at (0, 0) [label=above:$v$] {};
        \node (C) at (1.5, 0) [label=above:$w$] {};
        \node (D) at (3, 0) [label=above:$x$] {};
        \node (O) at (0, -1) [label=left:$p$] {};
        \node (x) at (-1, -3) {};
        \node (y) at (0, -2) [label=below:$r$] {};
        \node (z) at (1, -3) {};
        \node (a) at (-0.75, 1) [transparent] {};
        \draw (A) -- (B) -- (C) -- (D);
        \draw (A)--+ (-1, 0) (D) --+ (1, 0);
        \draw (A) -- (O) -- (B) (C) -- (O) -- (y);
        \draw (x) -- (y) -- (z) -- (x) -- (O) -- (z);
        \draw[densely dotted] (O) -- (D);
    }{
        \node (A) at (-1.5, 0) [label=above:$u$] {};
        \node (B) at (0, 0) [transparent] {};
        \node (C) at (1.5, 0) [transparent] {};
        \node (D) at (3, 0) [label=above:$x$, orange] {};
        \node (O) at (0, -1) [label=left:$p$, red,rectangle] {};
        \node (a) at (-0.75, 1) {};
        \node (b) at (0.75, 0) {};
        \node (x) at (-1, -3) {};
        \node (y) at (0, -2) [transparent] {};
        \node (z) at (1, -3) {};
        \draw (A)--+ (-1, 0) (D) --+ (1, 0);
        \draw (A) -- (O);
        \draw (z) -- (x) -- (O) -- (z);
        \draw[densely dotted] (O) -- (D);
        \draw (A) -- (a) -- (b) -- (O) -- (a);
    }
    \caption{Interior $K_4$, deleting $r$ does not affect new boundary vertices.}
    \label{fig:interiorkfourinterior}
\end{figure}
\begin{figure}
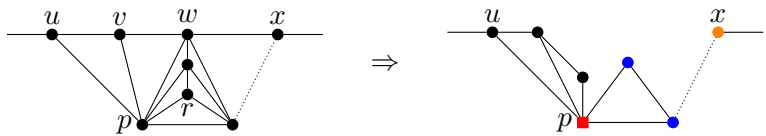

    \beforeafter[0.6]{
        \node (A) at (-2, 1) [label=above:$u$] {};
        \node (B) at (-0.5, 1) [label=above:$v$] {};
        \node (C) at (1, 1) [label=above:$w$] {};
        \node (D) at (3, 1) [label=above:$x$] {};
        \node (O) at (0, -1) [label=left:$p$] {};
        \node (x) at (2, -1) {};
        \node (y) at (1, -0.33) [label=below:$r$] {};
        \node (z) at (1, 0.33) {};
        \node (a) at (-0.75, 1) [transparent] {};
        \draw (A) -- (B) -- (C) -- (D);
        \draw (A)--+ (-1, 0) (D) --+ (1, 0);
        \draw (A) -- (O) -- (B) (C) -- (O) -- (y);
        \draw (x) -- (y) -- (z) -- (x) -- (O) -- (z);
        \draw (C) -- (z) (C) -- (x);
        \draw[densely dotted] (x) -- (D);
    }{
        \node (A) at (-2, 1) [label=above:$u$] {};
        \node (B) at (-0.5, 1) [transparent] {};
        \node (C) at (1, 1) [transparent] {};
        \node (D) at (3, 1) [label=above:$x$, orange] {};
        \node (O) at (0, -1) [label=left:$p$, red,rectangle] {};
        \node (x) at (2, -1) [blue] {};
        \node (y) at (1, -0.33) [transparent] {};
        \node (z) at (1, 0.33) [blue] {};
        \node (a) at (-1, 1)  {};
        \node (b) at (0, 0)  {};
        \draw (A)--+ (-1, 0) (D) --+ (1, 0);
        \draw (A) -- (O) (O) -- (z);
        \draw (x) -- (z) (x) -- (O);
        \draw[densely dotted] (x) -- (D);
        \draw (A) -- (a) -- (b) -- (O) -- (a);
    }
    \caption{Interior $K_4$, deleting $r$ after $v, w$ creates an ear.
    (There is a similar situation with a 3-pair in a bad 5-wheel instead of an ear.)}
    \label{fig:interiorkfourproblems}
\end{figure}

\subsubsection{Conclusion}
For $v, p$ as defined above, deleting $v$ and $p$ creates at most two low-degree problems and these involve $u$ and $w$ respectively.

\subsection{Deleting a degree-\boldmath$3$ boundary vertex and its interior neighbor} \label{sect:threedelete}
\begin{figure}
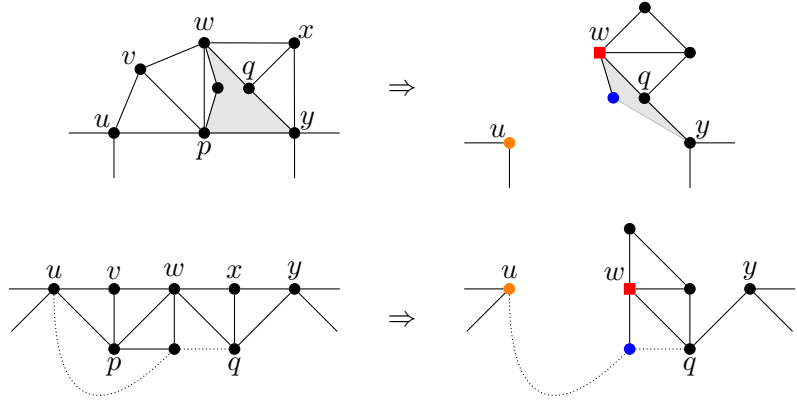

    \beforeafter[1.2]{
        \node (A) at (180:1) [label=above left:$u$] {};
        \node (B) at (135:1) [label=above left:$v$] {};
        \node (C) at (90:1) [label=above:$w$] {};
        \node (D) at (45:0.7) [label=above:$q$] {};
        \node (E) at (0:1) [label=above right:$y$] {};
        \node (X) at (45:1.41) [label=above right:$x$] {};
        \node (O) at (0:0) [label=below:$p$] {};
        \node (I) at (0.15, 0.5) {};
        \draw (A) -- (B) -- (C) -- (D) -- (E);
        \draw (X) -- (C) (X) -- (D) (X) -- (E);
        \draw (A) -- (O) -- (B) (C) -- (O) (E) -- (O);
        \draw (A) --+ (-0.5, 0) (E) --+ (0.5, 0);
        \draw (C) -- (I) -- (O);
        \begin{scope}[on background layer]
            \draw[fill=gray, opacity=0.2] (C.center) to (I.center) to (O.center) to (E.center) to (D) to cycle;
        \end{scope}
        \draw (E) --+ (0, -0.5) (A) --+ (0, -0.5);
    }{
        \node (A) at (180:1) [label=above left:$u$, orange] {};
        \node (B) at (135:1) [transparent] {};
        \node (C) at (90:1) [label=above:$w$, red,rectangle] {};
        \node (D) at (45:0.7) [label=above:$q$] {};
        \node (E) at (0:1) [label=above right:$y$] {};
        \node (X) at (45:1.41) [transparent] {};
        \node (O) at (0:0) [transparent] {};
        \node (S) at (0.5, 1.5) {};
        \node (I) at (0.15, 0.5) [blue] {};
        \node (T) at (1, 1) {};
        \draw (C) -- (D) -- (E);
        \draw (A) --+ (-0.5, 0) (E) --+ (0.5, 0);
        \draw (D) -- (T) -- (S) -- (C) -- (T);
        \draw (C) -- (I);
        \begin{scope}[on background layer]
            \draw[fill=gray, opacity=0.2] (C.center) to (I.center) to (E.center) to (D) to cycle;
        \end{scope}
        \draw (E) --+ (0, -0.5) (A) --+ (0, -0.5);
    }
    \beforeafter[0.8] {
        \node (A) at (-2, 0) [label=above:$u$] {};
        \node (B) at (-1, 0) [label=above:$v$] {};
        \node (C) at (0, 0) [label=above:$w$] {};
        \node (D) at (1, 0) [label=above:$x$] {};
        \node (E) at (2, 0) [label=above:$y$] {};
        \node (X) at (-1, -1) [label=below:$p$] {};
        \node (Y) at (1, -1) [label=below:$q$] {};
        \node (Z) at (0, -1) {};
        \node (S) at (0, 1) [transparent] {};
        \node (T) at (1, 0) [transparent] {};
        \draw (A) -- (B) -- (C) -- (D) -- (E);
        \draw (A) -- (X) -- (C) -- (Y) -- (E);
        \draw (B) -- (X) -- (Z) (D) -- (Y) (C) -- (Z);
        \draw (A) --+ (-0.75, 0);
        \draw (E) --+ (0.75, 0);
        \draw (A) --+ (-135:1) (E) --+ (-45:1); 
        \draw [densely dotted] (A) to[out=-90, in=-135, distance=50] (Z) to[out=0, in=-180, distance=0] (Y);
    }{
        \node (A) at (-2, 0) [label=above:$u$, orange] {};
        \node (C) at (0, 0) [label=above left:$w$, red,rectangle] {};
        \node (E) at (2, 0) [label=above:$y$] {};
        \node (Y) at (1, -1) [label=below:$q$] {};
        \node (Z) at (0, -1) [blue]{};
        \node (S) at (0, 1) {};
        \node (T) at (1, 0) {};
        \draw (Y) -- (T) -- (S) -- (C) -- (T);
        \draw (C) -- (Y) -- (E);
        \draw (C) -- (Z);
        \draw (A) --+ (-0.75, 0);
        \draw (E) --+ (0.75, 0);
        \draw (A) --+ (-135:1) (E) --+ (-45:1); 
        \draw [densely dotted] (A) to[out=-90, in=-135, distance=50] (Z) to[out=0, in=-180, distance=0] (Y);
    }
    \caption{Degree $3$ boundary vertex with deletable interior neighbor. Note: The blue vertex is equal to $q$ if $\deg(w) = 4$.
    Above: $p$ adjacent to $y$, but $\deg(y) \ge 5$.
    Below: $p$ not adjacent to $y$, then we assume that deleting $\{x, y\}$ does not create a 3-pair.}
    \label{fig:threedelete}
\end{figure}
The following technical case turns out to be useful in multiple later cases:
Suppose there are at least five polygon vertices.
Suppose $\deg(u) \ge 4$, $\deg(v) = 3$, $\deg(w) \ge 4$, $\deg(x) = 3$, then $\deg(y) \ge 4$.
Let $p$ be the interior vertex adjacent to $v$ and let $q$ be the interior vertex adjacent to $x$.
Suppose that at least one of the following is true (see Figure~\ref{fig:threedelete}):
\begin{itemize}
    \item $\deg(y) \ge 5$.
    \item $y$ is not adjacent to $p$ and deleting (only) the edge $\{x, y\}$ does not turn $y, z$ into a 3-pair in a bad 5-wheel.
\end{itemize}
\begin{case}
\item[Construction] Delete the edge $\{x, y\}$. Delete $v, p$. Delete $x$. Force $w$ by attaching an A to $w, q$.
    Fuse a small LR to the other $H$-boundary neighbor of $w$, which might be $q$.
\item[Domination] If $S \subseteq H$ is a minimum neat dominating set in $H$ and $L \subseteq S$ is the vertex in the small LR,
    then $S \setminus L$ is a $G$-dominating set, as it contains $w$,
    which dominates $v, p, x$ and the vertex to which the LR got fused. $\Delta s = -1$.
\item[Penalty] Deleting three vertices decreases $\Phi$ by $3$. Attaching the A increases $\Phi$ by $2.5$.
    Fusing the LR increases $\Phi$ by $3.5$. 
    Deleting the edge $\{x, y\}$ decreases the degree of $y$ by one, but does not create a low-degree problem involving $y$.
    By Case~\ref{sect:threeproblems}, deleting $v, p$ may create low degree problem at $u$, but nowhere else, as $w$ gets covered by the $A$.
    In total, $-\Delta \Phi \le -3 + 2.5 + 3.5 + 0.5 = -3.5 \cdot (-1)$.
\end{case}

\subsection{Degree pattern \boldmath$3443$, big polygon}
Suppose there are at least eight boundary vertices.
Suppose $\deg(v) = 3$, $\deg(w) = \deg(x) = 4$ and $\deg(y) = 3$.
Then $\deg(u) \ge 4$ and $\deg(z) \ge 4$.
Let $p$ and $q$ be the interior vertices adjacent to $v$ and $y$ respectively.
Let $r$ be the interior vertex adjacent to $p, w, x, q$.

\subsubsection{No degree-2 cut vertices}
Suppose that deleting $u$ and $z$ does not create any degree-2 cut vertices. See Figure~\ref{fig:bifourbig}.
Note that deleting $v$ (or $y$) does not create a low-degree problem at $w$ (or $x$),
hence $u$ and $z$ do not have interior degree-3 neighbors due to Case~\ref{sect:boundarykfour}.
\begin{case}
\item[Construction] Delete $u$ and $z$. Delete the edge $\{w, x\}$, turning $v, w$ and $x, y$ into an A with red vertex $p$ and a $B$ with red vertex $q$, respectively.
\item[Domination] A minimum neat dominating set in $H$ contains $p$ and $q$, which dominate $u$ and $z$, respectively. $\Delta s \le 0$.
\item[Penalty] Deleting two vertices decreases $\Phi$ by $2$. Deleting $u$ creates
    two low-degree problems, involving $t$ and $v$, due to Cases~\ref{sect:boundarykfour} and~\ref{sect:boundaryocta}. Similar for $z$.
    Deleting both $u$ and $z$ does not create any additional low-degree problems, by assumption.
    Deleting $\{w, x\}$ does not create any low-degree problems.
    Overall, $-\Delta \Phi \le -2 + 4 \cdot 0.5 = 0$.
\item[No Leaves] By assumption, there are at least eight boundary vertices, hence $u$ and $z$ do not share any boundary neighbors,
    that could end up as leaves.
\end{case}

\begin{figure}
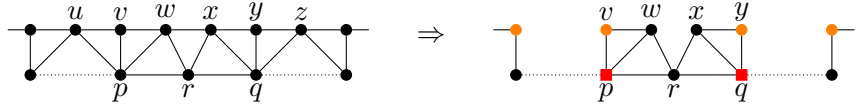

    \beforeafter[0.6]{
        \node (I) at (0, 1) [transparent] {};
        \node (A) at (-2, 0) [label=above:$u$] {};
        \node (B) at (-1, 0) [label=above:$v$] {};
        \node (C) at (0, 0) [label=above:$w$] {};
        \node (D) at (1, 0) [label=above:$x$] {};
        \node (E) at (2, 0) [label=above:$y$] {};
        \node (F) at (3, 0) [label=above:$z$] {};
        \node (L) at (-3, 0) {};
        \node (R) at (4, 0) {};
        \node (U) at (-3, -1) {};
        \node (V) at (-1, -1) [label=below:$p$] {};
        \node (W) at (0.5, -1) [label=below:$r$] {};
        \node (X) at (2, -1) [label=below:$q$] {};
        \node (Y) at (4, -1) {};
        \draw (L) -- (A) -- (B) -- (C) -- (D) -- (E) -- (F) -- (R);
        \draw (L) -- (U)  (V) -- (W) -- (X)  (Y) -- (R);
        \draw (U) -- (A) -- (V) -- (B);
        \draw (V) -- (C) -- (W) -- (D) -- (X) -- (E);
        \draw (X) -- (F) -- (Y);
        \draw[densely dotted] (U) -- (V) (X) -- (Y);
        \draw (L) --+ (-0.5, 0) (R) --+ (0.5, 0);
    }{
        \node (I) at (0, 1) [transparent] {};
        \node (A) at (-2, 0) [transparent] {};
        \node (B) at (-1, 0) [label=above:$v$, orange] {};
        \node (C) at (0, 0) [label=above:$w$] {};
        \node (D) at (1, 0) [label=above:$x$] {};
        \node (E) at (2, 0) [label=above:$y$, orange] {};
        \node (F) at (3, 0) [transparent] {};
        \node (L) at (-3, 0) [orange] {};
        \node (R) at (4, 0) [orange] {};
        \node (U) at (-3, -1) {};
        \node (V) at (-1, -1) [label=below:$p$, red,rectangle] {};
        \node (W) at (0.5, -1) [label=below:$r$] {};
        \node (X) at (2, -1) [label=below:$q$, red,rectangle] {};
        \node (Y) at (4, -1) {};
        \node (M) at (B) [transparent] {};
        \node (N) at (C) [transparent] {};
        \node (O) at (D) [transparent] {};
        \node (P) at (E) [transparent] {};
        \draw (L) -- (U)  (V) -- (W) -- (X)  (Y) -- (R);
        \draw[densely dotted] (U) -- (V) (X) -- (Y);
        \draw (L) --+ (-0.5, 0) (R) --+ (0.5, 0);
        \draw (N) -- (V) -- (M) -- (N) -- (W);
        \draw (O) -- (X) -- (P) -- (O) -- (W);
    }
    \caption{Degree pattern $3443$, big polygon, no degree-2 cut vertices.}
    \label{fig:bifourbig}
\end{figure}

\subsubsection{Interior 5-wheel}
Suppose that deleting $u$ and $z$ creates a degree-2 cut vertex $\ell$.
Then $N[\ell] \subseteq G$ is a 5-wheel with $u, z$ being antipodal vertices in $N(\ell)$.
The other two vertices in $N(\ell)$, say, $j, k$, are both interior vertices,
as there are at least eight boundary vertices. In particular, $\deg(u), \deg(z) \geq 5$
and $\deg(t) = 3$, $\deg(s) \geq 4$. See Figure~\ref{fig:bifourbigwheel}.
\begin{case}
\item[Construction] Delete $\ell$ and add the edge $\{j, k\}$. Delete $v$ and $t$. Force $u$ by attaching a small A to $u, p$.
\item[Domination] A minimum neat dominating set $S \subseteq H$ contains $u$ and hence dominates $G$, as $u$ dominates $v, t, j, k, \ell$. In particular, the added edge $\{j, k\}$ is irrelevant. $\Delta s \le 0$.
\item[Penalty] Deleting three vertices decreases $\Phi$ by $3$. Attaching a small A increases $\Phi$ by 2.5. Replacing $\ell$ by an edge
    only affects $u$ and $z$. Since $\deg_G(u), \deg_G(z) \ge 5$, this does not create low-degree problems. Deleting $v$ and $t$
    may create a low-degree problem involving $s$, but nothing else, increasing $\Phi$ by 0.5. Overall, $-\Delta \Phi \le -3 + 2.5 + 0.5 = 0$.
\item[Smaller] $H$ has fewer interior vertices.
\end{case}
\begin{figure}
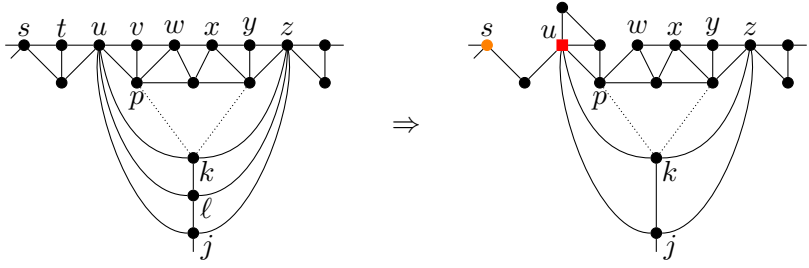

    \beforeafter[0.5]{
        \node (III) at (0, 1) [transparent] {};
        \node (J) at (-3, 0) [label=above:$t$] {};
        \node (A) at (-2, 0) [label=above:$u$] {};
        \node (B) at (-1, 0) [label=above:$v$] {};
        \node (C) at (0, 0) [label=above:$w$] {};
        \node (D) at (1, 0) [label=above:$x$] {};
        \node (E) at (2, 0) [label=above:$y$] {};
        \node (F) at (3, 0) [label=above:$z$] {};
        \node (L) at (-4, 0) [label=above:$s$] {};
        \node (R) at (4, 0) {};
        \node (U) at (-3, -1) {};
        \node (V) at (-1, -1) [label=below:$p$] {};
        \node (W) at (0.5, -1) {};
        \node (X) at (2, -1) {};
        \node (Y) at (4, -1) {};
        \node (a) at (0.5, -3) [label=below right:$k$] {};
        \node (b) at (0.5, -4) [label=below right:$\ell$] {};
        \node (c) at (0.5, -5) [label=below right:$j$] {};
        \draw (L) -- (J) -- (A) -- (B) -- (C) -- (D) -- (E) -- (F) -- (R);
        \draw (L) -- (U)  (V) -- (W) -- (X)  (Y) -- (R);
        \draw (U) -- (A) -- (V) -- (B);
        \draw (V) -- (C) -- (W) -- (D) -- (X) -- (E);
        \draw (X) -- (F) -- (Y);
        \draw (J) -- (U);
        \draw (L) --+ (-0.5, 0) (L) --+(-135:0.5) (R) --+ (0.5, 0);
        \draw (A) to[out=-80, in=-180, distance=40] (a);
        \draw (A) to[out=-90, in=-180, distance=40] (b);
        \draw (A) to[out=-100, in=-180, distance=40] (c);
        \draw (F) to[out=-80, in=0, distance=40] (c);
        \draw (F) to[out=-90, in=0, distance=40] (b);
        \draw (F) to[out=-100, in=0, distance=40] (a);
        \draw (a) -- (b) -- (c);
        \draw[densely dotted] (V) -- (a) -- (X);
        \draw (c) --+ (0, -0.5);
    }{
        \node (III) at (0, 1) [transparent] {};
        \node (J) at (-3, 0) [transparent] {};
        \node (A) at (-2, 0) [label=above left:$u$, red,rectangle] {};
        \node (B) at (-1, 0) [transparent] {};
        \node (C) at (0, 0) [label=above:$w$] {};
        \node (D) at (1, 0) [label=above:$x$] {};
        \node (E) at (2, 0) [label=above:$y$] {};
        \node (F) at (3, 0) [label=above:$z$] {};
        \node (L) at (-4, 0) [label=above:$s$, orange] {};
        \node (R) at (4, 0) {};
        \node (x) at (-2, 1) {};
        \node (y) at (-1, 0) {};
        \node (U) at (-3, -1) {};
        \node (V) at (-1, -1) [label=below:$p$] {};
        \node (W) at (0.5, -1) {};
        \node (X) at (2, -1) {};
        \node (Y) at (4, -1) {};
        \node (a) at (0.5, -3) [label=below right:$k$] {};
        \node (b) at (0.5, -4) [transparent] {};
        \node (c) at (0.5, -5) [label=below right:$j$] {};
        \draw (C) -- (D) -- (E) -- (F) -- (R);
        \draw (L) -- (U)  (V) -- (W) -- (X)  (Y) -- (R);
        \draw (U) -- (A) -- (V);
        \draw (V) -- (C) -- (W) -- (D) -- (X) -- (E);
        \draw (X) -- (F) -- (Y);
        \draw (L) --+ (-0.5, 0) (L) --+(-135:0.5) (R) --+ (0.5, 0);
        \draw (A) to[out=-80, in=-180, distance=40] (a);
        \draw (A) to[out=-100, in=-180, distance=40] (c);
        \draw (F) to[out=-80, in=0, distance=40] (c);
        \draw (F) to[out=-100, in=0, distance=40] (a);
        \draw (a) -- (c);
        \draw[densely dotted] (V) -- (a) -- (X);
        \draw (c) --+ (0, -0.5);
        \draw (y) -- (A) -- (x) -- (y) -- (V);
    }
    \caption{Degree pattern $3443$, big polygon, with interior 5-wheel.
    The red vertex is forced and the orange vertex marks a potential low-degree problem.
    There is a small LR fused to the blue vertex (not drawn).}
    \label{fig:bifourbigwheel}
\end{figure}

\subsubsection{Conclusion}
Now, if there are consecutive boundary vertices with degrees $3,4,4,3$,
then $G$ has at most seven boundary vertices.
Recall that due to previous cases, the polygon has at least four boundary vertices.

\subsection{Degree pattern \boldmath$3443$,  small polygon}
Suppose $\deg(v) = 3$, $\deg(w) = \deg(x) = 4$ and $\deg(y) = 3$.
Let $p$ be the interior vertex adjacent to $v$ and let $q$ be the interior vertex adjacent to $y$,
then $p \ne q$ due to Case~\ref{sect:doublethree}. 
Let $t$ be the shared interior neighbor of $p, w, x, q$.

\subsubsection{Square}
Suppose the polygon is a square. Then $v, y$ are adjacent and both of degree $3$, hence $p = q$,
which is covered by Case~\ref{sect:doublethree}.

\subsubsection{Pentagon}
Suppose the polygon is a pentagon. Then $u=z$ and $\deg(u) \ge 4$.
See Figure~\ref{fig:bifourpenta}. Note that $u$ has no interior degree-3 neighbor due to Case~\ref{sect:boundarykfour}.
\begin{case}
\item[Construction] Delete $u$. Delete the edge $\{w, x\}$, turning $v, w$ into an $A$ with red vertex $p$
    and $x, y$ into a B with red vertex $q$.
\item[Domination] There is a minimum neat dominating set in $H$ containing $p$ and $q$, which dominate $y$.
    This works even if $\deg_H(p) = 3$ or $\deg_H(q) = 3$.
\item[Penalty] Deleting $u$ decreases $\Phi$ by 1. This creates two low-degree problems, at $v$ and $y$,
    increasing $\Phi$ by 1, but nothing else. Deleting the edge $\{w, x\}$ does not create any low-degree problems.
    $-\Delta \Phi \le 0$.
\end{case}
\begin{figure}
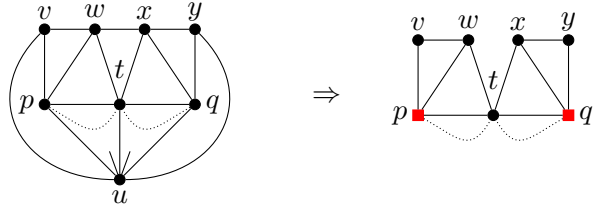

    \beforeafter[0.5]{
        \node (A) at (-2, 0) [label=above:$v$] {};
        \node (B) at (-0.66, 0) [label=above:$w$] {};
        \node (C) at (0.66, 0) [label=above:$x$] {};
        \node (D) at (2, 0) [label=above:$y$] {};
        \node (X) at (-2, -2) [label=left:$p$] {};
        \node (Y) at (0, -2) [label={[label distance=7]above:$t$}] {};
        \node (Z) at (2, -2) [label=right:$q$] {};
        \node (T) at (0, -4) [label=below:$u$] {};
        \draw (A) -- (B) -- (C) -- (D);
        \draw (X) -- (Y) -- (Z);
        \draw (A) -- (X) -- (B) -- (Y) -- (C) -- (Z) -- (D);
        \draw (T) -- (X) (T) -- (Z);
        \draw (A) to[out=-135, in=180, distance=70] (T);
        \draw (D) to[out=-45, in=0, distance=70] (T);
        \draw[densely dotted] (X) to[out=-30, in=-120, distance=30] (Y);
        \draw[densely dotted] (Z) to[out=-150, in=-60, distance=30] (Y);
        \draw (T) --+(110:0.8) (T) --+(70:0.8) (T) -- (Y);
    }{
        \node (A) at (-2, 0) [label=above:$v$] {};
        \node (B) at (-0.66, 0) [label=above:$w$] {};
        \node (C) at (0.66, 0) [label=above:$x$] {};
        \node (D) at (2, 0) [label=above:$y$] {};
        \node (X) at (-2, -2) [label=left:$p$, red,rectangle] {};
        \node (Y) at (0, -2) [label={[label distance=7]above:$t$}] {};
        \node (Z) at (2, -2) [label=right:$q$, red,rectangle] {};
        \node (T) at (0, -4) [transparent] {};
        \draw (A) -- (B)  (C) -- (D);
        \draw (X) -- (Y) -- (Z);
        \draw (A) -- (X) -- (B) -- (Y) -- (C) -- (Z) -- (D);
        \draw[densely dotted] (X) to[out=-30, in=-120, distance=30] (Y);
        \draw[densely dotted] (Z) to[out=-150, in=-60, distance=30] (Y);
    }
    \caption{Degree pattern $3443$, pentagon.}
    \label{fig:bifourpenta}
\end{figure}

\subsubsection{Hexagon}
Suppose the polygon is a hexagon. Then $\deg(u) = 4$, $\deg(z) = 4$ and $u, z$ are adjacent.
Let $s \notin \{p, q\}$ be the interior vertex adjacent to $u$ and $z$.
If $s = r$, then $G$ is the graph depicted in Figure~\ref{fig:bifourhexasmall}, with $\Phi = 9$ and $s = 2$.
Otherwise, $u$ and $z$ are not adjacent to $r$. Then, deleting the edge $\{u, z\}$
does not create any low-degree problems, contradiction. See Figure~\ref{fig:bifourhexabig}.

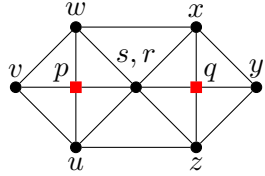
\begin{figure}
    \begin{center}\begin{tikzpicture}[scale=0.8]
        \node (A) at (-2, 0) [label=above:$v$] {};
        \node (B) at (-1, 0) [red,rectangle, label=above left:$p$] {};
        \node (C) at (0, 0) [label=above:{$s,r$}]{};
        \node (D) at (1, 0) [red,rectangle, label=above right:$q$] {};
        \node (E) at (2, 0) [label=above:$y$] {};
        \node (X) at (-1, 1) [label=above:$w$] {};
        \node (Y) at (1, 1) [label=above:$x$] {};
        \node (U) at (-1, -1) [label=below:$u$] {};
        \node (V) at (1, -1) [label=below:$z$] {};
        \draw (A) -- (X) -- (Y) -- (E) -- (V) -- (U) -- (A);
        \draw (A) -- (B) -- (C) -- (D) -- (E);
        \draw (X) -- (B) -- (U) -- (C) -- (Y) -- (D) -- (V) -- (C) -- (X);
    \end{tikzpicture} \end{center}
    \caption{Degree pattern $3443$, hexagon with few interior vertices. The depicted graph has $\Phi = 9$
    and a dominating set of size $2$, drawn in red.}
    \label{fig:bifourhexasmall}
\end{figure}

\begin{figure}
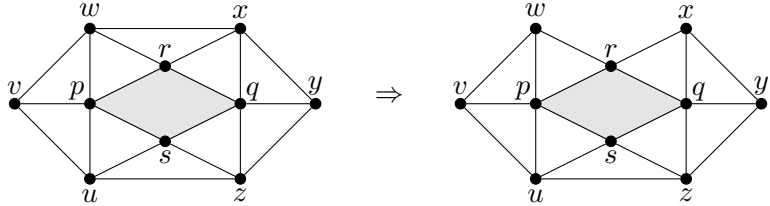

    \beforeafter[1.0]{
        \node (A) at (-2, 0) [label=above:$v$] {};
        \node (B) at (-1, 0) [label=above left:$p$] {};
        \node (C) at (0, 0.5) [label=above:$r$] {};
        \node (CC) at (0, -0.5) [label=below:$s$] {};
        \node (D) at (1, 0) [label=above right:$q$] {};
        \node (E) at (2, 0) [label=above:$y$] {};
        \node (X) at (-1, 1) [label=above:$w$] {};
        \node (Y) at (1, 1) [label=above:$x$] {};
        \node (U) at (-1, -1) [label=below:$u$] {};
        \node (V) at (1, -1) [label=below:$z$] {};
        \draw (A) -- (X) -- (Y) -- (E) -- (V) -- (U) -- (A);
        \draw (A) -- (B) -- (C) -- (D) -- (E);
        \draw (B) -- (CC) -- (D);
        \draw (X) -- (C) -- (Y) -- (D) -- (V) -- (CC) -- (U) -- (B) -- (X);
        \begin{scope}[on background layer]
            \draw[fill=gray, opacity=0.2] (B.center) to (C.center) to (D.center) to (CC.center) to cycle;
        \end{scope}
    }{
        \node (A) at (-2, 0) [label=above:$v$] {};
        \node (B) at (-1, 0) [label=above left:$p$] {};
        \node (C) at (0, 0.5) [label=above:$r$] {};
        \node (CC) at (0, -0.5) [label=below:$s$] {};
        \node (D) at (1, 0) [label=above right:$q$] {};
        \node (E) at (2, 0) [label=above:$y$] {};
        \node (X) at (-1, 1) [label=above:$w$] {};
        \node (Y) at (1, 1) [label=above:$x$] {};
        \node (U) at (-1, -1) [label=below:$u$] {};
        \node (V) at (1, -1) [label=below:$z$] {};
        \draw (A) -- (X)  (Y) -- (E) -- (V) -- (U) -- (A);
        \draw (A) -- (B) -- (C) -- (D) -- (E);
        \draw (B) -- (CC) -- (D);
        \draw (X) -- (C) -- (Y) -- (D) -- (V) -- (CC) -- (U) -- (B) -- (X);
        \begin{scope}[on background layer]
            \draw[fill=gray, opacity=0.2] (B.center) to (C.center) to (D.center) to (CC.center) to cycle;
        \end{scope}
    }
    \caption{Degree pattern $3443$, hexagon with many interior vertices.}
    \label{fig:bifourhexabig}
\end{figure}

\subsubsection{Conclusion}
Now, if there are consecutive boundary vertices with degrees $3,4,4,3$,
then $G$ has at exactly seven boundary vertices.

\subsection{Degree pattern \boldmath$3443$, Heptagon}
Suppose $G$ has exactly seven boundary vertices, namely $t, u, v, w, x, y, z$, with $t$ adjacent to $z$.
Suppose that $\deg(v) = 3, \deg(w) = \deg(x) = 4$ and $\deg(y) = 3$,
then always $\deg(t) = 3$, $\deg(u) \ge 4$, $\deg(z) = 3$ and $\deg(y) \ge 4$.
Let $p, q, s$ be the interior vertices adjacent to $v, y, t$ respectively
and let $r$ be the interior vertex adjacent to $p, w, x, q$. See Figure~\ref{fig:heptanames}.
Then, $p, q, s$ are interior vertices adjacent to a degree-3 boundary vertex and hence distinct,
and $p \ne r \ne q$ as $\deg(u), \deg(z) \ge 4$. To summarize, out of all vertices we defined so far,
the only two that can be equal are $s$ and $r$.

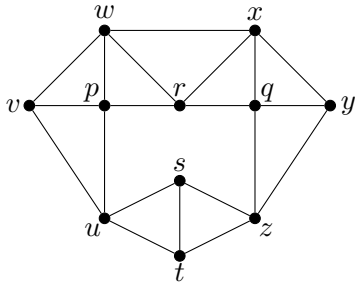
\begin{figure}
    \begin{center} \begin{tikzpicture}
        \node (A) at (-2, 0) [label=left:$v$] {};
        \node (B) at (-1, 0) [label=above left:$p$] {};
        \node (C) at (0, 0) [label=above:$r$] {};
        \node (D) at (1, 0) [label=above right:$q$] {};
        \node (E) at (2, 0) [label=right:$y$] {};
        \node (X) at (-1, 1) [label=above:$w$] {};
        \node (Y) at (1, 1) [label=above:$x$] {};
        \node (P) at (-1, -1.5) [label=below left:$u$] {};
        \node (Q) at (0, -1) [label=above:$s$] {};
        \node (R) at (0, -2) [label=below:$t$] {};
        \node (S) at (1, -1.5) [label=below right:$z$] {};
        \draw (A) -- (B) -- (C) -- (D) -- (E) -- (Y) -- (X) -- (A);
        \draw (B) -- (X) -- (C) -- (Y) -- (D);
        \draw (A) -- (P) -- (R) -- (S) -- (E);
        \draw (B) -- (P) -- (Q) -- (S) -- (D) (Q) -- (R);
    \end{tikzpicture} \end{center}
    \caption{Degree pattern $3443$, Heptagon.}
    \label{fig:heptanames}
\end{figure}

Note that deleting $\{x, y\}$ does not create a 3-pair involving $x$. Similar for $\{v, w\}$ and $w$.
If $s \ne r$, then $x$ is not adjacent to $s$ and Case~\ref{sect:threedelete} applies (which involves deleting $s, t, y$ and forcing $z$).
Therefore, assume $s = r$.
If $\deg(z) \ge 5$, then Case~\ref{sect:threedelete} applies once again.
Similar if $\deg(u) \ge 5$.
The only remaining case is $\deg(u) = \deg(z) = 4$,
then $G$ is the special 4343434-Heptagon.

\subsubsection{Conclusion}
Now $G$ has no consecutive boundary vertices with degrees $3, 4, 4, 3$.
The only remaining degree patters are $3, 5^{+}, 3$ and $3, 4, 3$.
In particular, every other boundary vertex has degree $3$.

\subsection{Degree \boldmath$5^{+}$ boundary vertices}
Suppose there is a boundary vertex of degree $\ge 5$.

\subsubsection{Big polygon}
Suppose there are at least five boundary vertices.
Let $\deg(y) \ge 5$. Then Case~\ref{sect:threedelete} applies.

\begin{figure}
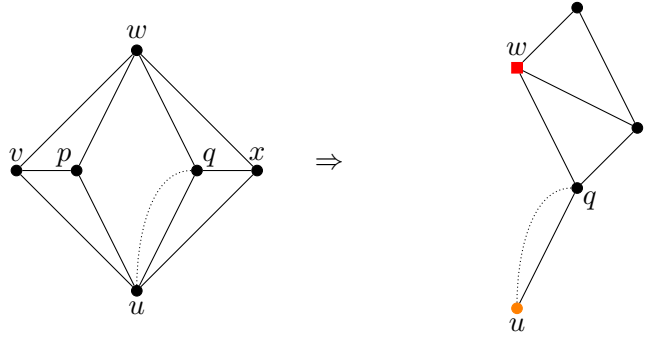

    \beforeafter[0.8]{
        \node (A) at (-2, 0) [label=above:$v$] {};
        \node (B) at (-1, 0) [label=above left:$p$] {};
        \node (C) at (1, 0) [label=above right:$q$] {};
        \node (D) at (2, 0) [label=above:$x$] {};
        \node (X) at (0, 2) [label=above:$w$] {};
        \node (Y) at (0, -2) [label=below:$u$] {};
        \draw (A) -- (X) -- (D) -- (Y) -- (A);
        \draw (A) -- (B) (C) -- (D);
        \draw (B) -- (X) -- (C) -- (Y) -- (B);
        \draw[densely dotted] (Y) to[out=90, in=180, distance=20] (C);
    }{
        \node (A) at (-2, 0) [transparent] {};
        \node (B) at (-1, 0) [transparent] {};
        \node (C) at (1, 0) [label=below right:$q$] {};
        \node (D) at (2, 0) [transparent] {};
        \node (X) at (0, 2) [label=above:$w$, red,rectangle] {};
        \node (Y) at (0, -2) [label=below:$u$, orange] {};
        \node (U) at (1, 3) {};
        \node (V) at (2, 1) {};
        \draw (X) -- (V) (X) -- (U) -- (V) -- (C);
        \draw (X) -- (C) -- (Y);
        \draw[densely dotted] (Y) to[out=90, in=180, distance=20] (C);
    }
    \caption{Degree $5^{+}$ boundary vertices, square polygon.}
    \label{fig:fiveplussquare}
\end{figure}

\subsubsection{Square polygon}
Suppose there are exactly four boundary vertices.
Let $\deg(u) \ge 5$, $\deg(v) = 3$, $\deg(w) \ge 4$ and $\deg(x) = 3$.
Let $p$ and $q$ be the interior vertices adjacent to $v$ and $x$, respectively.
See Figure~\ref{fig:fiveplussquare}.
The following construction closely mimics Case~\ref{sect:threedelete}.

\begin{case}
\item[Construction] Delete $v, p$. Delete $x$. Force $w$ by attaching an A to $w, q$.
\item[Domination] There is a minimum neat dominating set in $H$ that contains $w$, which dominates $v, p, x$. $\Delta s \le 0$.
\item[Penalty] Deleting three vertices decreases $\Phi$ by $3$. Attaching an A increases $\Phi$ by 2.5.
    The deletions may create a low-degree problem at $w$, but nowhere else. $-\Delta \Phi \le -3 + 2.5 + 0.5 = 0$.
\item[No leaves] $u$ does not end up as a leaf, as $\deg_G(u) \ge 5$.
\end{case}

\subsubsection{Conclusion}
Now, all boundary vertices have degrees $3$ and $4$, in alternating fashion.
If $\deg(u) = 3$, $\deg(v) = 4$ and $\deg(w) = 3$, then any interior vertex
adjacent to $v$ is adjacent to $u$ or $w$.
Therefore, $\Interior(G)$ is 2-connected unless it is a path on two vertices: any interior cut vertex would be adjacent to
two distinct degree-3 boundary vertices, see Figure~\ref{fig:interiorcut}.

\begin{figure}
    \begin{center} \begin{tikzpicture}[scale=0.8]
        \node (A) at (-2, 1) [orange] {};
        \node (B) at (-1, 2) {};
        \node (C) at (0, 2) {};
        \node (D) at (1, 2) {};
        \node (E) at (2, 1) [orange] {};
        \node (O) at (0, 0) [orange, label={[label distance=5]left:$p$}] {};
        \node (U) at (-2, -1) [orange] {};
        \node (V) at (-1, -2) {};
        \node (W) at (0, -2) {};
        \node (X) at (1, -2) {};
        \node (Y) at (2, -1) [orange]{};
        \draw (A) -- (B) -- (C) -- (D) -- (E);
        \draw (U) -- (V) -- (W) -- (X) -- (Y);
        \draw (B) --+ (-1, 0) (D) --+ (1, 0);
        \draw (V) --+ (-1, 0) (X) --+ (1, 0);
        \draw[densely dotted] (A) to[out=-35, in=35, distance=50] (U); 
        \draw[densely dotted] (E) to[out=-145, in=145, distance=50] (Y); 
        \foreach \u in {B,C,D,V,W,X}{ \draw (O) -- (\u); }
        \foreach \u in {A,E,U,Y}{ \draw[orange] (O) -- (\u); }
    \end{tikzpicture} \end{center}
    \caption{$\Interior(G)$ drawn in orange. The interior cut vertex $p$ is adjacent to multiple
    boundary vertices of degree $3$.}
    \label{fig:interiorcut}
\end{figure}
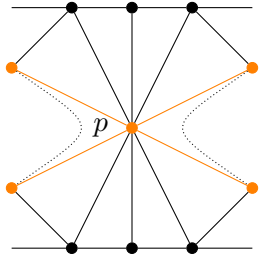

\subsection{Degree pattern \boldmath$34343$}
Let $\deg(t) = 3$, $\deg(u) = 4$, $\deg(v) = 3$, $\deg(w) = 4$ and $\deg(x) = 3$.
Suppose $G$ has at least five boundary vertices, then $t \ne x$.
Let $p$ be the interior vertex adjacent to $v$ and
let $q$ be the interior vertex adjacent to $x$.

As $G$ has at least five boundary vertices, $\Interior(G)$ is not a path on two vertices.
Suppose we delete $\{w, x\}$. If this results in a 3-pair $u, v$ in some
bad 5-wheel, then $q \in \Interior(G)$ is a cut vertex, contradiction. 
See Figure~\ref{fig:endfivewheel}.

Therefore, deleting any one boundary edge never creates a 3-pair in a bad 5-wheel.
Then, the second case of Case~\ref{sect:threedelete} applies: Deleting $\{x, y\}$
does not create a 3-pair in a bad 5-wheel and $y$ is not adjacent to $p$:
otherwise, deleting $\{v, w\}$ would create the 3-pair $w, x$ in the bad 5-wheel $N[q]$,
see Figure~\ref{fig:endnonadjacent}. 

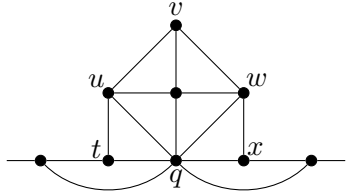
\begin{figure}
    \begin{center} \begin{tikzpicture}[scale=0.9]
        \node (L) at (-2, 0) {};
        \node (A) at (-1, 0) [label=above left:$t$] {};
        \node (B) at (-1, 1) [label=above left:$u$] {};
        \node (C) at (0, 2) [label=above:$v$] {};
        \node (D) at (1, 1) [label=above right:$w$] {};
        \node (E) at (1, 0) [label=above right:$x$] {};
        \node (R) at (2, 0) {};
        \node (I) at (0, 1) {};
        \node (J) at (0, 0) [label=below:$q$] {};
        \draw (L) -- (A) -- (B) -- (C) -- (D) -- (E) -- (R);
        \draw (A) -- (J) -- (B) -- (I) -- (D) -- (J) -- (E);
        \draw (C) -- (I) -- (J);
        \draw (L) to[out=-45, in=-135, distance=20] (J);
        \draw (R) to[out=-135, in=-45, distance=20] (J);
        \draw (L) --+ (-0.5, 0) (R) --+ (0.5, 0);
    \end{tikzpicture} \end{center}
    \caption{Degree pattern $34343$: If deleting $\{w, x\}$ creates a 3-pair $v, w$ in a bad 5-wheel,
    then $u, q$ is the base of the 5-wheel and $q$ is an interior cut vertex.}
    \label{fig:endfivewheel}
\end{figure}
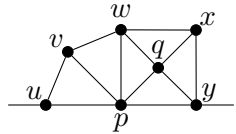
\begin{figure}
    \begin{center} \begin{tikzpicture}
        \node (A) at (180:1) [label=above left:$u$] {};
        \node (B) at (135:1) [label=above left:$v$] {};
        \node (C) at (90:1) [label=above:$w$] {};
        \node (D) at (45:0.7) [label=above:$q$] {};
        \node (E) at (0:1) [label=above right:$y$] {};
        \node (X) at (45:1.41) [label=above right:$x$] {};
        \node (O) at (0:0) [label=below:$p$] {};
        \node (I) at (0.15, 0.5) [transparent] {};
        \draw (A) -- (B) -- (C) -- (D) -- (E);
        \draw (X) -- (C) (X) -- (D) (X) -- (E);
        \draw (A) -- (O) -- (B) (C) -- (O) (E) -- (O);
        \draw (A) --+ (-0.5, 0) (E) --+ (0.5, 0);
        \draw (O) -- (D);
    \end{tikzpicture} \end{center}
    \caption{Degree pattern $34343$: If $y$ is adjacent to $p$, then deleting $\{v, w\}$ creates the 3-pair $w, x$ in a bad 5-wheel $N[q]$.}
    \label{fig:endnonadjacent}
\end{figure}

\subsubsection{Conclusion}
Now, $G$ has exactly four boundary vertices, with degrees $3, 4, 3, 4$.
Therefore, $G$ is the 3-bifan.

\section{A quadratic time algorithm}
Finally, let us discuss how to turn our proof into an algorithm.

\begin{theorem} \label{theo:mainalgorithmic}
Let $G$ be a skeletal triangulation on $n$ vertices that is not a sporadic example.
Then, there is an algorithm that finds a dominating set of size $\lfloor \frac{\Phi(G)}{3.5} \rfloor$
in $\cO(n^2)$ time.
\end{theorem}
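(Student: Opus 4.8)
The plan is to read the inductive proof of Proposition~\ref{prop:inductionstep} (together with the reductions of Theorems~\ref{theo:skeletalact} and~\ref{theo:nearact} to Theorem~\ref{theo:mainresult}) directly as a recursive algorithm. Every case there is constructive: it either exhibits an explicit dominating set for one of finitely many small graphs, or builds a single strictly smaller skeletal triangulation $H$ (by deleting vertices or edges, by fusing a small $\mathrm{AB}$/$\mathrm{LR}$, or by attaching a small $\mathrm{A}$/$\mathrm{B}$/$\mathrm{OR}$), or splits $G$ at a bridge or cut vertex into two smaller pieces $G_1, G_2$. So the algorithm $\textsc{Dominate}(G)$ first determines in linear time which case applies — which requires finding bridges, cut vertices and chords, computing the block and $3$-connected structure, reading off the cyclic sequence of boundary degrees, and recognizing which constant-size attachment sits across each chord, all linear on a planar graph — then recurses on the one or two smaller instances, and finally translates the returned dominating set(s) back to $G$ exactly as in the corresponding ``Domination'' paragraph (discard the vertex of a fused $\mathrm{LR}$, add a forced vertex, take the union across a bridge, and so on).

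The first thing I would address is that several ``Domination'' arguments invoke a \emph{neat minimum} dominating set, which we cannot compute. This turns out to be unnecessary: the recursion only needs to maintain the invariant that $\textsc{Dominate}$ returns \emph{some} dominating set $S$ with $|S|\le\lfloor\Phi/3.5\rfloor$ that additionally satisfies whatever \emph{local} neatness property the enclosing case relies on, and each such property is enforceable by cheap normalization of the recursively obtained set. For instance, if $H$ was obtained by fusing a small $\mathrm{AB}$ to force $u$, then any dominating set of $H$ meets that $\mathrm{AB}$, and if it does not already contain $u$ we swap $u$ in at no cost (since $u$ dominates the whole $\mathrm{AB}$); likewise one removes the $\mathrm{LR}$-vertex of a covering $\mathrm{LR}$, replaces an ear tip or a $3$-pair vertex by its common neighbour, etc. Each normalization touches only $O(1)$ attachments, hence costs $O(n)$, and the inequalities packaged in Lemmas~\ref{lemm:unroot}, \ref{lemm:defuse} and Theorems~\ref{theo:skeletalact}, \ref{theo:nearact} guarantee $|S|$ stays within budget, so correctness is immediate from Proposition~\ref{prop:inductionstep}.

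For the running time it suffices, given the $O(n)$ per-call bound above, to show the total number of invocations is $O(n)$. The bridge/cut-vertex splits form a binary tree whose subproblems partition the vertex set up to $O(1)$ overlap per split, so there are $O(n)$ of them and their sizes sum to $O(n)$; inside each resulting $2$-connected (then $3$-connected) piece, inspection of the cases shows that every reduction either terminates, strictly decreases the number of interior vertices, or is one of an $O(n)$-bounded family of ``attachment-cleanup'' steps that strictly decreases the vertex count without increasing the interior-vertex count. I expect the delicate point to be the transient blow-up from the forcing/covering steps, which add $\Theta(\deg(\cdot))=O(n)$ constant-size attachments at once: one must check that such a step simultaneously drops the interior-vertex count, that the added attachments are cleaned up again before any further forcing occurs, and hence that the lexicographic measure ``(number of interior vertices, then number of vertices)'' is strictly decreasing along every maximal chain of non-splitting reductions — making that chain $O(n)$ long. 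Verifying this accounting case by case is, I think, the main obstacle; once it is in place the algorithm performs $O(n)$ reductions at $O(n)$ cost each, for a total of $O(n^2)$, and in the case of a genuine triangulation $\Phi(G)=n$, so the output has size $\le \lfloor 2n/7\rfloor$.
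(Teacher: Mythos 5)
There is a genuine gap, and it is exactly the point the paper's proof is about. You treat ``determine in linear time which case applies'' as routine, but in the bridge, cut-vertex and chord cases (Sections~\ref{sect:bridge}, \ref{sect:cutvertex}, \ref{sect:nontrivialchord}) the subcase to execute — and the rooted dominating sets you must request from the recursive calls — depends on the \emph{acts-as type} of each side of the cut (AB/LR/Nope, resp.\ A+B/OR/A/B/\dots/None). That type encodes whether a \emph{minimum} rooted dominating set of the piece contains or dominates the root, i.e.\ it encodes domination-number information about the piece; it is not a combinatorial feature you can read off in linear time, and ``recognizing which constant-size attachment sits across each chord'' only covers chords whose far side is already a small attachment, not a general 2-cut. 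The obvious fix — recurse into the same piece several times, once per candidate type, to see which rooted dominating sets exist — destroys the $\cO(n^2)$ bound: nested splits then multiply and the running time becomes exponential, as the paper explicitly notes.

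The paper's resolution, which your proposal is missing, is to never compute the true acts-as type. Instead it computes the penalty $\phi$ of each piece (purely combinatorial, linear time) and \emph{guesses} the worst acts-as type consistent with the interval in which $\phi$ lies, using the converse direction of Theorems~\ref{theo:skeletalact}/\ref{theo:nearact}: Lemmas~\ref{lemm:skeletalboundalgo} and~\ref{lemm:nearboundalgo} show that whenever $\phi$ is small enough, the corresponding rooted dominating set (containing the root, dominating it, or neither) exists \emph{and} can be produced by a single recursive call on the piece with a suitable small A/OR/LR attached. The guess simultaneously satisfies the inequalities the induction needs and the constructive requirement, which is what caps the recursion at one call per piece and yields $\cO(n^2)$. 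Your remaining points — replacing neat minimum sets by local normalization, and the lexicographic accounting of interior vertices against the transient growth from fusing $\Theta(\deg(\cdot))$ LRs — are sensible and consistent with the paper, but without the $\phi$-based guessing mechanism the algorithm as you describe it cannot even decide which branch of the small-cut cases to take.
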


Roughly speaking, our proof of Theorem~\ref{theo:mainresult} involves two different types of steps:
\begin{itemize}
\item[(a)] Find a small configuration in $G$, delete some vertices and edges to obtain $H$ and inductively
find a dominating set in $H$. Turn that into a neat dominating set in $H$ and then into a dominating set in $G$.
\item[(b)] Split the graph into two or more parts, determine the act as type of each part and, for each part,
recursively find a rooted dominating set that conforms to this Act as type.
\end{itemize}

Note that sections~\ref{sect:bridge}, \ref{sect:cutvertex} and \ref{sect:nontrivialchord} are the only ones that involve steps of type (b).
Steps of type (a) can easily be implemented with a single recursive call to Theorem~\ref{theo:mainalgorithmic}
and $\cO(n)$ additional time. Steps of type (b) are problematic, as determining the Act as type
is hard. Trying multiple possible acts-as types is also not feasible,
as doing multiple recursive calls into the same part results in exponential running time.

To get around this, for each part, we instead compute $\phi$ and then guess
the acts type to be the
``worst'' possible one according to Theorems~\ref{theo:skeletalact} and~\ref{theo:nearact}.
This guess turns out to have all the properties needed for our proof,
even if it might not match the actual acts as type.
We also show that a ``conforming'' dominating set can be found with a single recursive call to Theorem~\ref{theo:mainalgorithmic} on each part. This ensures an $\cO(n^2)$ running time.

\begin{lemma} \label{lemm:skeletalboundalgo}
Let $(G, u)$ be a rooted skeletal triangulation. Let $s$ be an arbitrary integer and let $\phi = \phi(G, u)$.
\begin{enumerate}
\item If $\phi < 3.5s$, then $G$ has a rooted dominating set of size $s$ that contains $u$.
\item If $\phi < 3.5s + 1.5$, then $G$ has a dominating set of size $s$.
\item If $\phi < 3.5(s+1) - 1$, then $G$ has a rooted dominating set of size $s$.
\end{enumerate}
Moreover, these dominating sets can be found algorithmically via a single call to Theorem~\ref{theo:mainalgorithmic}.
\end{lemma}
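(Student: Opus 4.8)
The plan is to deduce all three statements from Theorem~\ref{theo:mainresult} (equivalently, from Theorems~\ref{theo:skeletalact} and~\ref{theo:nearact} applied to rooted skeletal triangulations) by the following device: given $(G,u)$, attach a suitable small fixed gadget to $u$ so that the resulting \emph{unrooted} skeletal triangulation $H$ satisfies the induction hypothesis (via a single recursive call to Theorem~\ref{theo:mainalgorithmic}), and then read off the desired rooted dominating set of $G$ from a neat minimum dominating set of $H$. Concretely, for (1) I would fuse a small LR to $u$; for (2) I would observe that either $G$ itself works (when $\deg(u)\ge 2$, $G$ is already a skeletal triangulation) or, if $\deg(u)=1$, pass to $G-u$ and reduce to case (3); for (3) I would attach a small A with red vertex $u$ to a boundary edge $\{u,v\}$ incident to $u$. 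In each case the penalty bookkeeping is exactly the one already carried out in the proof of Proposition~\ref{prop:skeletalact}: fusing a small LR raises $\Phi$ by at most $3.5$ over $\phi(G,u)+1$, i.e. $\Phi(H)\le \phi(G,u)+4.5$, so $\phi(G,u)<3.5s$ forces $\Phi(H)<3.5(s+1)$, hence $\Phi(H)\le 3.5(s+1)-1$ (as $\Phi(H)\in\frac12\NN$ and we can even arrange $\Phi(H)$ to be within the right residue), and so $H$ has a dominating set of size $\le s+1$; the fused LR contributes exactly one vertex to any dominating set of $H$, and in a \emph{neat} dominating set that vertex dominates only the LR and not $u$, so deleting it gives a set of size $\le s$ in $G$ that dominates everything except possibly $u$, and a neat dominating set of $H$ has the ``$u$-in-it'' property forced by the AB-like behaviour of the LR towards $u$ — wait, more carefully, one uses the small AB for the ``contains $u$'' conclusion of (1) and the small LR for (3); I would match these up exactly as in Proposition~\ref{prop:skeletalact} and Lemma~\ref{lemm:forcecover}.

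\textbf{Order of steps.} First I would state the three reductions precisely, using in each case the equality/near-equality relations of Lemma~\ref{lemm:unroot} and Lemma~\ref{lemm:defuse} to control $\Phi(H)$ in terms of $\phi(G,u)$, and then deriving the contrapositive: if $\Phi(H)\ge 3.5\,s(H)$ (the induction hypothesis), then the hypothesised strict inequality on $\phi(G,u)$ yields the claimed rooted dominating set of $G$. Second, I would check the base cases: if $H$ turns out to be a sporadic example, then — as noted in the remark on pitfalls — $H$ is $3$-connected, so it cannot be the result of a fusing or attaching operation, which rules this out; if $H$ has $\le 10$ vertices the three specific sporadic-adjacent triangulations are checked by hand, exactly as in Corollary~\ref{coro:skeletalbound}. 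Third, I would treat the degenerate sub-case $\deg_G(u)=1$ of statement (2) by passing to $G-u$, whose root is the unique neighbour $w$ of $u$; here $(G-u,w)$ is a rooted skeletal triangulation with $\phi(G-u,w)=\phi(G,u)-1.5$, and statement (3) applied to it (with $s$ replaced appropriately) yields the desired dominating set of $G$ after adding back $u$ or $w$ as needed. Finally, I would note that every step above is a single modification of the graph followed by one recursive invocation of Theorem~\ref{theo:mainalgorithmic} plus $\cO(n)$ post-processing (turning the returned set neat, via the ``maximise $\sum|N[u]|$'' characterisation, and pruning the gadget), so the algorithmic addendum is immediate.

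\textbf{Main obstacle.} The delicate point is not the inequality chasing — that is mechanical once Lemmas~\ref{lemm:unroot} and~\ref{lemm:defuse} are in hand — but rather ensuring that the \emph{strict} inequalities $\phi<3.5s$, $\phi<3.5s+1.5$, $\phi<3.5(s+1)-1$ convert to the \emph{non-strict} bounds $s(H)\le s+c$ needed to extract a set of exactly size $s$ in $G$, given that $\phi$ and $\Phi$ take values in $\frac12\NN$ while $s$ is an integer. The resolution is to observe that $3.5s$ and its translates are never half-integers unless $s$ is even, so the strict inequality $\phi(G,u)<3.5s$ combined with $\phi(G,u)\in\frac12\NN$ gives $\phi(G,u)\le 3.5s-0.5$, and then $\Phi(H)\le \phi(G,u)+(\text{gadget penalty})+1$ lands in the range where $\lfloor \Phi(H)/3.5\rfloor \le s(H)$ has the right value; I would spell this residue argument out once and reuse it. A secondary obstacle is the ``neat'' extraction: I need that a neat minimum dominating set of $H$ meets the attached gadget in exactly the predicted way (one vertex of an LR, the red vertex of an A), which is precisely the content of the bulleted examples after the definition of neat and of Lemma~\ref{lemm:forcecover}; invoking those, the extraction is routine.
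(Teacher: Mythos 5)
Your plan matches the paper's approach in broad strokes: reduce each of the three statements to a single application of Theorem~\ref{theo:mainalgorithmic} by modifying the graph near $u$ with a small gadget, then extract the desired rooted dominating set from a neat minimum dominating set of the modified graph. After your mid-paragraph self-correction, your gadget assignments (AB-type gadget to force $u$ for statement (1), small LR for statement (3)) are in the right spirit; the paper attaches a small A with red vertex $u$ in case (1) rather than fusing a small AB, but since both operations raise $\Phi$ to at most $\phi(G,u)+3.5$ and both force $u$ into every neat minimum dominating set of the result, that difference is immaterial.

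There is, however, a genuine error in your treatment of the degenerate sub-case $\deg_G(u)=1$ of statement (2). You pass to $H=G-u$ rooted at the unique neighbour $w$, note $\phi(H,w)=\phi(G,u)-1.5<3.5s$, and then say to apply statement (3). But (3) only yields a \emph{rooted} dominating set of $(H,w)$ of size $s$, i.e.\ a set that dominates $H-w$ but need not contain or dominate $w$; in particular, it need not dominate $u$ either. Your suggested fix of ``adding back $u$ or $w$'' produces a set of size $s+1$, which is too large. The correct move (and what the paper does) is to apply statement (1) to $(H,w)$, which is applicable precisely because $\phi(H,w)<3.5s$: this yields a rooted dominating set of size $s$ that \emph{contains} $w$, and since $w\in S$ dominates both $w$ and $u$, the same set is a dominating set of $G$ of size $s$.

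A smaller remark: the ``residue argument'' you describe as the main obstacle is unnecessary. Once $\Phi(H)<3.5(s+c)$ has been established, $\lfloor\Phi(H)/3.5\rfloor\le s+c-1$ follows directly from the definition of the floor, with no need to argue via half-integer values of $\phi$ or $\Phi$. (Incidentally, the side-claim that ``$3.5s$ and its translates are never half-integers unless $s$ is even'' has the parity reversed, though this has no bearing on the conclusion.)
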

\begin{proof}
We mimic the proof of Proposition~\ref{prop:skeletalact}.
\begin{enumerate}
\item Suppose $\phi < 3.5s$. Let $\{u, v\}$ be a boundary edge incident to $u$. Attach a small $A$ with red vertex $u$ to $u, v$.
Then $\phi(H) = \phi + 3.5 < 3.5 (s+1)$. By Theorem~\ref{theo:mainalgorithmic},
$H$ has a dominating set of size $\lfloor \frac{\Phi(H)}{3.5} \rfloor = s$.
In linear time, we turn this into a neat domianting set of size $s$, which contains $u$ due to the attached $A$.
\item Suppose $\phi < 3.5s + 1.5$. If $G$ is a sporadic example, check by hand.
If $\deg(u) = 1$, then apply case (1) to $G - u$. Otherwise,
$G$ is a skeletal triangulation, with $\Phi(G) < 3.5s + 3 + 0.5$.
Then, Theorem~\ref{theo:mainalgorithmic} yields a dominating set of size $\lfloor \frac{\Phi(G)}{3.5} \rfloor = s$.
\item Suppose $\phi < 3.5(s+1) - 1$. Fuse a small LR to $u$, then $\Phi(H) = \phi + 4.5 < 3.5 (s+2)$.
By Theorem~\ref{theo:mainalgorithmic}, $H$ has a dominating set $S$ of size $\lfloor \frac{\Phi(H)}{3.5} \rfloor = s+1$, then $S \cap H$ is a rooted $G$-dominating set of size $s$.
\end{enumerate}
\end{proof}

With this lemma, we guess as follows:
\begin{enumerate}
\item If $3.5s - 1 \le \phi < 3.5s$, guess that $G$ acts as AB.
\item If $3.5s \le \phi < 3.5s + 1.5$, guess that $G$ acts as LR.
\item If $3.5s + 1.5 \le \phi < 3.5(s+1) - 1$, guess that $G$ acts as Nope.
\end{enumerate}
This guessing strategy ensures that $G$ both has the requisite dominating sets (Lemma~\ref{lemm:skeletalboundalgo}) and that $\phi$ satisfies the bounds in Theorem~\ref{theo:skeletalact}. This ensures that the steps in Cases~\ref{sect:bridge} and~\ref{sect:cutvertex} work.

\paragraph{Chords} For Step~\ref{sect:nontrivialchord}, we use similar ideas, but some care has to be taken to distinguish A from B and L from R.

\begin{lemma} \label{lemm:nearboundalgo}
Let $(G, u, v)$ be a rooted near-triangulation. Let $s$ be an arbitrary integer and let $\phi = \phi(G, u, v)$.
\begin{enumerate}
\item If $\phi < 3.5s - 1$, then $G$ has two dominating set of size $s$, one containing $u$ and one containing $v$.
\item If $\phi < 3.5s$, then $G$ has a dominating set of size $s$ that contains $u$ \emph{or} $v$.
\item If $\phi < 3.5s + 0.5$, then $G$ has two rooted dominating set of size $s$, one dominating $u$ and one dominating $v$.
\item If $\phi < 3.5s + 1.5$, then $G$ has a rooted dominating set of size $s$ that dominates $u$ \emph{or} $v$.
\item If $\phi < 3.5(s+1) - 2$, then $G$ has a rooted dominating set of size $s$.
\end{enumerate}
Moreover, these dominating sets can be found algorithmically via Theorem~\ref{theo:mainalgorithmic}.
\end{lemma}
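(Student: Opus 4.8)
The plan is to mirror the proof of Proposition~\ref{prop:nearact} case by case, but to replace every appeal to the inductive bound of Theorem~\ref{theo:mainresult} by a single call to the algorithm of Theorem~\ref{theo:mainalgorithmic}, and to replace the existential ``some minimum rooted dominating set has property $X$'' arguments by the constructive toolbox: Lemma~\ref{lemm:skeletalboundalgo}, the neat-dominating-set lemmas, and the small attachments of Figure~\ref{fig:smallattach}. The recurring mechanism is: from $(G,u,v)$ build a skeletal triangulation $H$ by attaching a constant-size rooted near-triangulation $X$ to the base edge $\{u,v\}$, so that $\Phi(H) = \phi(G,u,v) + \phi(X) + 2$ by Lemma~\ref{lemm:defuse} and $H$ is not sporadic (it has the chord $\{u,v\}$, whereas the sporadic examples are $3$-connected); choose $X$ so that the hypothesis on $\phi$ forces $\Phi(H) < 3.5(s+1)$, hence $\lfloor \Phi(H)/3.5\rfloor \le s$; run Theorem~\ref{theo:mainalgorithmic} on $H$; turn its output into a \emph{neat} dominating set $S$ in $\cO(n)$ time; and return $S \cap V(G)$. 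Since the vertices of $H$ outside $V(G)$ lie in the interior of $X$ and attach to the rest only through $u,v$, the set $S \cap V(G)$ dominates $V(G)\setminus\{u,v\}$; and because $u\sim v$, whichever of $u,v$ lies in $S$ dominates the other, so $S\cap V(G)$ is actually an honest dominating set of $G$ of size $\le s$ (pad arbitrarily to size exactly $s$).

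For case~1, attach a small~$A$ with red vertex $u$: then $\Phi(H) = \phi(G,u,v)+2.5+2 < 3.5s-1+4.5 = 3.5(s+1)$, a neat $S$ contains the red vertex $u$ by the neat toolbox, and $S\cap V(G)$ is a dominating set of $G$ of size $\le s$ containing $u$; attaching instead a small~$B$ with red vertex $v$ gives one containing $v$. For case~2, attach a small $\mathrm{OR}$: then $\Phi(H) = \phi(G,u,v)+1.5+2 < 3.5(s+1)$; the attachment is a single vertex $c$ adjacent only to $u,v$, so $N_H[c]\subsetneq N_H[u]$, hence a neat $S$ avoids $c$, so $S$ dominates $c$ through $u$ or $v$, i.e.\ $u\in S$ or $v\in S$, and restricting gives the required set. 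For case~3 no attachment is needed: $\Phi(G) \le \phi(G,u,v)+2.5 < 3.5(s+1)$ by Lemma~\ref{lemm:unroot}, so (unless $G$ is one of the three sporadic examples, checked by hand) Theorem~\ref{theo:mainalgorithmic} applied to $G$ itself returns an honest dominating set of size $\le s$, which dominates both $u$ and $v$ and therefore serves simultaneously as ``the set dominating $u$'' and ``the set dominating $v$''.

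For cases~4 and~5 the plan is to root $G$ at a single base vertex, treating $(G,v)$ as a rooted skeletal triangulation. In the generic situation $\phi(G,v) = \phi(G,u,v)+1$ (the $+1$ equality case of Lemma~\ref{lemm:unroot}, valid when $u$ is not involved in a low-degree problem of $G$), so $\phi(G,v) < 3.5s + 2.5 = 3.5(s+1)-1$, and Lemma~\ref{lemm:skeletalboundalgo}(3) produces a rooted dominating set of $(G,v)$ of size $\le s$; it dominates everything except possibly $v$, hence it is a rooted dominating set of $(G,u,v)$ that dominates $u$. This handles case~4, and \emph{a fortiori} case~5 (whose conclusion is weaker). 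Each item above uses only $\cO(1)$ calls to Theorem~\ref{theo:mainalgorithmic} plus $\cO(n)$ work for neat-ification and set bookkeeping, matching what is needed for the $\cO(n^2)$ bound.

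The hard part will be the few configurations in which a base vertex of $G$ is itself a low-degree problem, so that $\phi(G,v) = \phi(G,u,v)+1.5$ and rooting at a single vertex wastes too much penalty for Lemma~\ref{lemm:skeletalboundalgo} to bite; as $G$ is $2$-connected, such a base vertex must be an ear tip or lie in a $3$-pair of a bad $5$-wheel. There I would copy the bookkeeping of the \textsc{None} case in the proof of Proposition~\ref{prop:nearact}: if $u$ is an ear tip, root at $u$ instead when $v$ is unproblematic, and otherwise ($G = K_3$, or a sporadic example) check by hand; if $\{u,v\}$ is a $3$-pair, contract the base edge to a skeletal triangulation $H = G/\{u,v\}$ with $\phi(H,x) = \phi(G,u,v)$, invoke Lemma~\ref{lemm:skeletalboundalgo} on $(H,x)$, and lift the result to $G$ by replacing the contracted vertex $x$, if it occurs in the set, by the centre of the $5$-wheel (which dominates the same vertices of $G$). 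Carefully verifying these degenerate small graphs and the sporadic examples by hand, together with the uniform mechanism above, completes the proof.
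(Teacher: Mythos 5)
Your proposal is correct and follows essentially the same route as the paper: case~2 attaches a small OR and extracts $u$ or $v$ via neatness, and case~4 splits on whether a base vertex is involved in a low-degree problem, roots at the opposite vertex in the generic situation via Lemma~\ref{lemm:skeletalboundalgo}(3), and contracts the base edge when $u,v$ form a 3-pair — exactly the two cases the paper writes out. Your completions of cases~1, 3 and~5 (small A/B attachments, running the algorithm on $G$ directly with a hand-check of the sporadic examples, and observing that the hypothesis of case~5 coincides with that of case~4) are the natural ones and check out, with your explicit replacement of the contracted vertex by the wheel centre being, if anything, slightly more robust than the paper's appeal to neatness.
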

\begin{proof}
The proof follows along the same lines as the proof of Proposition~\ref{prop:nearact}. We only prove (2) and (4), which are the most interesting parts.
\begin{itemize}
\item[2.] Suppose $\phi < 3.5s$. Attach a small OR to $u, v$. Then, $\Phi(H) = \phi(G) + 3.5 < 3.5(s+1)$. By Theorem~\ref{theo:mainalgorithmic},
$H$ has a dominating set of size $\lfloor \frac{\Phi(H)}{3.5} \rfloor = s$, which, due to the OR,
contains $u$ or $v$.
\item[4.] Suppose $\phi < 3.5s + 1.5$. If $\phi(G, u) = \phi(G, u, v) + 1$,
then $\phi(G, u) < 3.5(s+1) - 1$ and Lemma~\ref{lemm:skeletalboundalgo} yields a rooted dominating set of size $s$ that dominates $v$. Similarly, if $\phi(G, v) = \phi(G, u, v) + 1$, then there is a rooted dominating set of size $s$ that dominates $u$. In all other cases, $u, v$ is a 3-pair in a bad 5-wheel. Let $H = G / \{u, v\}$,
then $\phi(H, uv) = \phi(G, u, v) < 3.5s + 1.5$. By Lemma~\ref{lemm:skeletalboundalgo},
there is a $H$-dominating set of size $s$. A neat such set does not contain $uv$, and hence
is a rooted $G$-dominating set that dominates at least one of $u$ and $v$.
\end{itemize}
\end{proof}

With this lemma, we guess as follows:
\begin{enumerate}
\item If $3.5s -2 \le \phi < 3.5s - 1$, guess that $G$ acts as OR.
\item If $3.5s - 1 \le \phi < 3.5s$, guess that $G$ acts as one of A, B.
\item If $3.5s \le \phi < 3.5s + 0.5$, guess that $G$ acts as L OR R.
\item If $3.5s+0.5 \le \phi < 3.5s + 1.5$, guess that $G$ acts as one of L, R.
\item If $3.5s + 1.5 \le \phi < 3.5(s+1) - 2$, guess that $G$ acts as None. (This never happens.)
\end{enumerate}
Proceed as in Step~\ref{sect:nontrivialchord} of the proof. Note that in cases (2) and (4),
the algorithmic nature of Lemma~\ref{lemm:nearboundalgo} yields the actual (rooted) dominating sets,
which then allows us to distinguish A from B and L from R.
Also note that, instead of guessing that $G$ acts as None, we guess that $G$ acts as OR.
Intuitively, this makes sense, as we can just add one of $u, v$ to a rooted dominating set.

\bibliography{references,zotero}

\end{document}